\tikzset{
  .../.tip={[sep=2pt 1]
    Round Cap[]. Circle[length=0.5pt 1] Circle[length=0.5pt 1] Circle[length=0.5pt 1, sep=2pt]}}
\newcommand{\di}{\mathop{\text{\Large$\bigtriangleup$}}}
\newcommand{\res}{\mathrm{|}}
\newcommand{\REG}{{\rm REG}}
\newcommand{\ORD}{\mathop{{\rm ORD}}}
\renewcommand{\emptyset}{\varnothing}
\renewcommand{\P}{{\mathbb P}}
\newcommand{\B}{{\mathcal B}}
\newcommand{\T}{{\mathcal T}}
\newcommand{\restrict}{\upharpoonright}
\newcommand{\<}{\langle}
\renewcommand{\>}{\rangle}
\newcommand{\st}{:}
\newcommand{\ot}{\mathop{\rm ot}\nolimits}
\newcommand{\id}{\mathop{\rm id}}
\newcommand{\crit}{\mathop{\rm crit}}
\newcommand{\NS}{{\mathop{\rm NS}}}
\renewcommand{\and}{\mathop{\&}}
\newcommand{\s}{$\hat{\text{s}}$}
\newtheorem{theorem}{Theorem}[section]
\newtheorem{lemma}[theorem]{Lemma}
\newtheorem{corollary}[theorem]{Corollary}
\newtheorem{proposition}[theorem]{Proposition}
\newtheorem{fact}[theorem]{Fact}
\theoremstyle{definition}
\newtheorem{remark}[theorem]{Remark}
\newtheorem{definition}[theorem]{Definition}
\thanks{The author would like to thank Peter Holy for suggesting the use of generic ultrapowers for proving the results of Section 2, and for many additional helpful comments.}
\date{\today}
\begin{document}

\title{Higher indescribability and derived topologies}

\author[Brent Cody]{Brent Cody}
\address[Brent Cody]{ 
Virginia Commonwealth University,
Department of Mathematics and Applied Mathematics,
1015 Floyd Avenue, PO Box 842014, Richmond, Virginia 23284, United States
} 
\email[B. ~Cody]{bmcody@vcu.edu} 
\urladdr{http://www.people.vcu.edu/~bmcody/}

\begin{abstract}
We introduce reflection properties of cardinals in which the attributes that reflect are expressible by infinitary formulas whose lengths can be strictly larger than the cardinal under consideration. This kind of generalized reflection principle leads to the definitions of $L_{\kappa^+,\kappa^+}$-indescribability and $\Pi^1_\xi$-indescribability of a cardinal $\kappa$ for all $\xi<\kappa^+$. In this context, universal $\Pi^1_\xi$ formulas exist, there is a normal ideal associated to $\Pi^1_\xi$-indescribability and the notions of $\Pi^1_\xi$-indescribability yield a strict hierarchy below a measurable cardinal. Additionally, given a regular cardinal $\mu$, we introduce a diagonal version of Cantor's derivative operator and use it to extend Bagaria's \cite{MR3894041} sequence $\langle\tau_\xi\st\xi<\mu\rangle$ of derived topologies on $\mu$ to $\langle\tau_\xi\st\xi<\mu^+\rangle$. Finally, we prove that for all $\xi<\mu^+$, if there is a stationary set of $\alpha<\mu$ that have a high enough degree of indescribability, then there are stationarily-many $\alpha<\mu$ that are nonisolated points in the space $(\mu,\tau_{\xi+1})$.

\end{abstract}

\subjclass[2010]{Primary 03E55, 54A35; Secondary 03E05}

\keywords{Derived topology, diagonal Cantor derivative, indescribable cardinals, stationary reflection}

\maketitle

\tableofcontents


\section{Introduction}\label{section_introduction}


When working with certain large cardinals, set theorists often use reflection arguments. For example, if $\kappa$ is a measurable cardinal then it is inaccessible, and furthermore, there are normal measure one many $\alpha<\kappa$ which are inaccessible; we say that the inaccessibility of a measurable cardinal $\kappa$ \emph{reflects} below $\kappa$. In this article we consider generalizations of this kind of reflection so that we may reflect attributes of large cardinals that are expressible by formulas whose lengths can be strictly longer than the large cardinal under consideration. We will see that in many cases, if $\kappa$ is a measurable cardinal and $\kappa$ has some property, which is expressible by a formula $\varphi$ whose length is less than $\kappa^+$, then the set of $\alpha<\kappa$ such that a canonically defined \emph{restricted} version of this formula $\varphi\res^\kappa_\alpha$ is true of $\alpha$, is normal measure one. We use this kind of generalized reflection to define the $L_{\kappa^+,\kappa^+}$-indescribability and $\Pi^1_\xi$-indescribability of a cardinal $\kappa$ for all $\xi<\kappa^+$, thus generalizing the notions of indescribability previously considered in \cite{MR3894041}. Let us note that a precursor to this type of reflection principle was studied by Sharpe and Welch (see \cite[Definition 3.21]{MR2817562}). We then use our notion of $\Pi^1_\xi$-indescribability to establish the nondiscreteness of certain topological spaces which are generalizations of the derived topologies considered in \cite{MR3894041}, and which are defined by using a diagonal version of the Cantor derivative operator (see the definition of $\tau_\xi$ and $d_\xi$ in Section \ref{section_higher_derived_topologies} and see Remark \ref{remark_example} for a simple case). 

We believe the results presented below will open up new avenues for future work in many directions. For example, in order to define the restriction of formulas (Definition \ref{definition_restriction} and Definition \ref{definition_restriction_2}) and then to establish basic properties of $\Pi^1_\xi$-indescribability, we introduce the \emph{canonical reflection functions} (see Definition \ref{definition_canonical_reflection_functions} and Section \ref{section_canonical_reflection_functions}), which are interesting in their own right and will likely have applications in areas far removed from this paper. We also expect that the notion of restriction of formulas defined below will have applications in the study of infinitary logics and model theory. Note that \cite{MR457191} and \cite{MR360274} both contain results involving a notion of restriction of $L_{\infty,\omega}$ formulas to countable sets; we suspect that these results, as well as other results in this area \cite{MR457191}, will have analogues involving our notion of restriction. Furthermore, let us note that the notion of higher $\Pi^1_\xi$-indescribability also allows for a finer analysis of the large cardinal hierarchy as in \cite{MR4206111} and \cite{cody_holy_2022}. Finally, the notions and results contained herein, particularly those on higher $\xi$-stationarity and higher derived topologies (see Section \ref{section_higher_derived_topologies}), should also allow for generalizations of many results concerning iterated stationary reflection properties and characterizations of indescribability in G\"{o}del's constructible universe (see \cite{MR1029909}, \cite{MR3416912}, \cite{MR3894041} and \cite{MR4094556}).

Before we discuss the restriction of formulas in general, let us give some examples. For cardinals $\kappa$ and $\mu$, recall that $L_{\kappa,\mu}$ denotes the infinitary logic which allows for conjunctions of $<\kappa$-many formulas that together contain $<\mu$-many free variables and quantification (universal and existential) over $<\mu$-many variables at once. If $\kappa$ is a measurable cardinal and $\varphi$ is any sentence in the $L_{\kappa,\kappa}$ language of set theory such that $V_\kappa\models\varphi$, then the set of $\alpha<\kappa$ such that $V_\alpha\models\varphi$ is normal measure one in $\kappa$. On the other hand, for any cardinal $\kappa$ there are $L_{\kappa^+,\kappa^+}$ sentences which are true in $V_\kappa$ and false in $V_\alpha$ for all $\alpha<\kappa$. For example, for each $\eta<\kappa$ there is a natural $L_{\kappa^+,\kappa^+}$ formula $\chi_\eta(x)$ such that for all $\alpha\leq\kappa$ and all $a\in V_\alpha$ we have $V_\alpha\models\chi_\eta(a)$ if and only if $a$ is an ordinal and $a$ has order type at least $\eta$. Now $\chi=\bigwedge_{\eta<\kappa}\exists x\chi_\eta(x)$ is an $L_{\kappa^+,\kappa^+}$ sentence such that $V_\kappa\models\chi$, and yet there is no $\alpha<\kappa$ such that $V_\alpha\models\chi$. However, the \emph{restriction} $\chi\res^\kappa_\alpha:=\bigwedge_{\eta<\alpha}\exists x\chi_\eta(x)$ of $\chi$ to $\alpha$ holds in $V_\alpha$ for all $\alpha<\kappa$. In what follows we will define the restriction of $L_{\kappa^+,\kappa^+}$ formulas in generality, which will allow for similar reflection results. However, the main focus of this article is on a different kind of infinitary formula. 

Generalizing the notions of $\Pi^1_n$ and $\Sigma^1_n$ formulas (see \cite{MR0281606} or \cite[Section 0]{MR1994835}), Bagaria \cite{MR3894041} defined the classes of $\Pi^1_\xi$ and $\Sigma^1_\xi$ formulas for all ordinals $\xi$. For example, if $\xi$ is a limit ordinal, a formula is $\Pi^1_\xi$ if it is of the form $\bigwedge_{\zeta<\xi}\varphi_\zeta$ where each $\varphi_\zeta$ is $\Pi^1_\zeta$. A formula is $\Sigma^1_{\xi+1}$ if it is of the form $\exists X\psi$ where $\psi$ is $\Pi^1_\xi$. Throughout this article, first-order variables will be written as lower case letters and second-order variables will be written as upper case. For more on the definition of $\Pi^1_\xi$ and $\Sigma^1_\xi$ formulas, see Section \ref{section_definition_pi1xi}. Given a cardinal $\kappa$, Bagaria defined a set $S\subseteq\kappa$ to be $\Pi^1_\xi$-indescribable in $\kappa$ if and only if for all $A\subseteq V_\kappa$ and all $\Pi^1_\xi$ sentences $\varphi$, if $(V_\kappa,\in,A)\models\varphi$ then there is an $\alpha\in S$ such that $(V_\alpha,\in,A\cap V_\alpha)\models\varphi$. Bagaria pointed out that, using his definition, no cardinal $\kappa$ can be $\Pi^1_\kappa$-indescribable because the $\Pi^1_\kappa$ sentence $\chi$ defined above is true in $V_\kappa$ but false in $V_\alpha$ for all $\alpha<\kappa$. We introduce a modification of Bagaria's notion of $\Pi^1_\xi$-indescribability which allows for a cardinal $\kappa$ to be $\Pi^1_\xi$-indescribable for all $\xi<\kappa^+$. Given a cardinal $\kappa$ and an ordinal $\xi<\kappa^+$, we say that a set $S\subseteq\kappa$ is \emph{$\Pi^1_\xi$-indescribable in $\kappa$} if and only if for all $\Pi^1_\xi$ sentences $\varphi$ (with first and second-order parameters from $V_\kappa$), if $V_\kappa\models\varphi$\footnote{Note that $\varphi$ may involve finitely-many second-order parameters $A_1,\ldots,A_n\subseteq V_\kappa$, and when we write $V_\kappa\models\varphi$ we mean $(V_\kappa,\in,A_1,\ldots,A_n)\models\varphi$. Since this abbreviated notion will not cause confusion and greatly simplifies notation, we will use it throughout the paper without further comment.} then there is some $\alpha\in S$ such that a canonically defined restriction of $\varphi$ is true in $V_\alpha$, which we express by writing $V_\alpha\models\varphi\res^\kappa_\alpha$ (see Definition \ref{definition_indescribability} for details).

In order to define the notions of restriction of $L_{\kappa^+,\kappa^+}$ formulas and restriction of $\Pi^1_\xi$ formulas, we use a sequence of functions $\<F^\kappa_\xi\st\xi<\kappa^+\>$ we call the \emph{sequence of canonical reflection functions at $\kappa$}, which is part of the set theoretic folklore and which is closely related to the sequence $\<f^\kappa_\xi\st\xi<\kappa^+\>$ of canonical functions at $\kappa$. Before defining the canonical reflection functions, let us recall some basic properties of canonical functions. Given a regular cardinal $\kappa$, the ordering defined on $^\kappa\ORD$ by letting  $f<g$ if and only if $\{\alpha<\kappa\st f(\alpha)<g(\alpha)\}$ contains a club, is a well-founded partial ordering. The Galvin-Hajnal \cite{MR376359} norm $\|f\|$ of such a function is defined to be the rank $f$ in the relation $<$. For each $\xi<\kappa^+$, there is a \emph{canonical} function $f^\kappa_\xi:\kappa\to\kappa$ of norm $\xi$, in the sense that $\|f^\kappa_\xi\|=\xi$ and whenever $\|h\|=\xi$ the set $\{\alpha<\kappa\st f^\kappa_\xi(\alpha)\leq h(\alpha)\}$ contains a club (see \cite[Page 99]{MR2768680}). For concreteness, we will use the following definition of $f^\kappa_\xi$ for $\xi<\kappa^+$. If $\xi<\kappa$ we let $f^\kappa_\xi:\kappa\to\kappa$ be the function with constant value $\xi$. If $\kappa\leq\xi<\kappa^+$ we fix a bijection $b_{\kappa,\xi}:\kappa\to\xi$ and define $f^\kappa_\xi$ by letting $f^\kappa_\xi(\alpha)=\ot(b_{\kappa,\xi}[\alpha])$ for all $\alpha<\kappa$. For convenience, we take $b_{\kappa,\kappa}$ to be the identity function $\id_\kappa:\kappa\to\kappa$, which implies that $f^\kappa_\kappa=\id_\kappa$. It is easy to see that for all $\zeta<\xi<\kappa^+$ we have $f^\kappa_\zeta<f^\kappa_\xi$ and that $f^\kappa_\xi$ is a canonical function of norm $\xi$. The sequence $\vec{f}=\<f^\kappa_\xi\st\xi<\kappa^+\>$ is sometimes referred to as the sequence of canonical functions at $\kappa$. Although, this terminology is slightly misleading as the canonical functions are only well-defined modulo the nonstationary ideal.

\begin{definition}\label{definition_canonical_reflection_functions}
Suppose $\kappa$ is a regular cardinal. For each $\xi\in\kappa^+\setminus\kappa$ let $b_{\kappa,\xi}:\kappa\to\xi$ be a bijection. We define the corresponding sequence of \emph{canonical reflection functions $\vec{F}=\<F^\kappa_\xi\st\xi<\kappa^+\>$ at $\kappa$} where $F^\kappa_\xi:\kappa\to P_\kappa\kappa^+$ for each $\xi<\kappa^+$ as follows.
\begin{enumerate}
\item For $\xi<\kappa$ we let $F^\kappa_\xi(\alpha)=\xi$ for all $\alpha<\kappa$.
\item For $\kappa\leq\xi<\kappa^+$ we let $F^\kappa_\xi(\alpha)=b_{\kappa,\xi}[\alpha]$ for all $\alpha<\kappa$.
\end{enumerate}
For each $\xi<\kappa^+$ and $\alpha<\kappa$ we let $\pi^\kappa_{\xi,\alpha}:F^\kappa_\xi(\alpha)\to f^\kappa_\xi(\alpha)$ be the transitive collapse of $F^\kappa_\xi(\alpha)$.
\end{definition}

Notice that for all $\xi<\kappa^+$ we have $f^\kappa_\xi(\alpha)=\ot(F^\kappa_\xi(\alpha))$ by definition. It is not difficult to see that for $\xi\in\kappa^+\setminus\kappa$, the $\xi^{th}$ canonical reflection function $F^\kappa_\xi$ is independent, modulo the nonstationary ideal, of which bijection $b_{\kappa,\xi}:\kappa\to\xi$ is used in its definition. That is, if $b_{\kappa,\xi}^1:\kappa\to\xi$ and $b_{\kappa,\xi}^2:\kappa\to\xi$ are two bijections then the set $\{\alpha<\kappa\st b_{\kappa,\xi}^1[\alpha]=b_{\kappa,\xi}^2[\alpha]\}$ contains a club.

In Section \ref{section_canonical_reflection_functions}, we establish many basic structural properties of the canonical reflection functions which will be used later in the paper. A particularly useful application of canonical functions \cite[Proposition 2.34]{MR2768692} is that that the $\xi^{th}$ canonical function at a regular cardinal $\kappa$ represents the ordinal $\xi$ in any generic ultrapower by any normal ideal on $\kappa$. An easy result below (see Proposition \ref{proposition_useful_object}) shows that whenever $I$ is a normal ideal on $\kappa$, $G\subseteq P(\kappa)/I$ is generic and $j:V\to V^\kappa/G\subseteq V[G]$ is the corresponding generic ultrapower embedding, the $\xi^{th}$ canonical reflection function $F^\kappa_\xi$ represents $j"\xi$ in the generic ultrapower, that is, $j(F^\kappa_\xi)(\kappa)=j"\xi$.

In Section \ref{section_definition_pi1xi}, given a regular cardinal $\kappa$, we review the definitions of $\Pi^1_\xi$ and $\Sigma^1_\xi$ formulas over $V_\kappa$; when we say that $\varphi$ is $\Pi^1_\xi$ \emph{over} $V_\kappa$ we mean that $\varphi$ is $\Pi^1_\xi$ in Bagaria's sense, but $\varphi$ is also allowed to have any number of first-order parameters from $V_\kappa$ and finitely-many second-order parameters from $V_\kappa$ (see Definition \ref{definition_over}). In Definition \ref{definition_restriction}, we use canonical reflection functions to define the notion of restriction of $\Pi^1_\xi$ and $\Sigma^1_\xi$ formulas by transfinite induction on $\xi<\kappa^+$. For example, if $\varphi=\varphi(X_1,\ldots,X_m,A_1,\ldots, A_n)$ is a $\Pi^1_\xi$ formula over $V_\kappa$ and $\xi<\kappa$, then we define
\[\varphi\res^\kappa_\alpha=\varphi(X_1,\ldots,X_m,A_1\cap V_\alpha,\ldots,A_n\cap V_\alpha).\]
As another example, suppose $\xi\in\kappa^+\setminus\kappa$ and $\xi$ is a limit ordinal. If
\[\varphi=\bigwedge_{\zeta<\xi}\varphi_\zeta\] is a $\Pi^1_\xi$ formula and $\alpha<\kappa$, then we define
\[\varphi\res^\kappa_\alpha=\bigwedge_{\zeta<f^\kappa_\xi(\alpha)}\varphi_{(\pi^\kappa_{\xi,\alpha})^{-1}(\zeta)}\res^\kappa_\alpha\]
provided that this formula is a $\Pi^1_{f^\kappa_\xi(\alpha)}$ formula over $V_\alpha$. As a consequence of this definition, it follows that there is a club $C$ in $\kappa$ such that for all regular $\alpha\in C$, $\varphi$ is a $\Pi^1_\kappa$ formula over $V_\kappa$ and $\varphi\res^\kappa_\alpha=\bigwedge_{\zeta<\alpha}\varphi_\alpha$. One nice feature of our definition of restriction is that it leads to a convenient way to represent $\Pi^1_\xi$ formulas in normal generic ultrapowers. Suppose $\kappa$ is weakly Mahlo, $I$ is a normal ideal on $\kappa$ and $\varphi$ is a $\Pi^1_\xi$ formula over $V_\kappa$ for some $\xi<\kappa^+$. Then, a result of \cite{cody_holy_2022} (see Lemma \ref{lemma_represent} below) shows that whenever $G\subseteq P(\kappa)/I$ is generic over $V$ and $j:V\to V^\kappa/G$ is the corresponding generic ultrapower embedding, we have $j(\Phi)(\kappa)=\varphi$ where $\Phi$ is the function with domain $\kappa$ defined by $\Phi(\alpha)=\varphi\res^\kappa_\alpha$.

For a given cardinal $\kappa$ and ordinal $\xi<\kappa^+$, in Definition \ref{definition_indescribability} we say that $S\subseteq\kappa$ is \emph{$\Pi^1_\xi$-indescribable} if and only if for all $\Pi^1_\xi$ sentences $\varphi$ over $V_\kappa$, whenever $V_\kappa\models\varphi$ there must be an $\alpha\in S$ such that $V_\alpha\models\varphi\res^\kappa_\alpha$.\footnote{Sharpe and Welch \cite[Definition 3.21]{MR2817562} extended the notion of $\Pi^1_n$-indescribability of a cardinal $\kappa$ where $n<\omega$ to that of $\Pi^1_\xi$-indescribability where $\xi<\kappa^+$ by demanding that the existence of a winning strategy for a particular player in a certain finite game played at $\kappa$ implies that the same player has a winning strategy in the analogous game played at some cardinal less than $\kappa$. The relationship between their notion and the one defined here is not known.} Our last result in Section \ref{section_definition_pi1xi} states that if $\kappa$ is a measurable cardinal then $\kappa$ is $\Pi^1_\xi$-indescribable for all $\xi<\kappa^+$, and furthermore, the set of $\alpha<\kappa$ such that $\alpha$ is $\Pi^1_\zeta$-indescribable for all $\zeta<\alpha^+$ is normal measure one in $\kappa$.

In Section \ref{section_other}, given a regular cardinal $\kappa$ and an $L_{\kappa^+,\kappa^+}$ formula $\varphi$ in the language of set theory, we use the canonical reflection functions at $\kappa$ to define a notion of restriction $\varphi\res^\kappa_\alpha$ by induction on subformulas, for all $\alpha\leq\kappa$. For a regular cardinal $\kappa$, we say that a set $S\subseteq\kappa$ is \emph{$L_{\kappa^+,\kappa^+}$-indescribable} if and only if for all $L_{\kappa^+,\kappa^+}$ sentences in the language of set theory with $V_\kappa\models\varphi$ there is an $\alpha<\kappa$ such that $V_\alpha\models\varphi\res^\kappa_\alpha$. Proposition \ref{proposition_Lkappa+} states that if $\kappa$ is a measurable cardinal, then $\kappa$ is $L_{\kappa^+,\kappa^+}$-indescribable and furthermore, the set of regular cardinals $\alpha<\kappa$ that are $L_{\alpha^+,\alpha^+}$-indescribable is normal measure one in $\kappa$. 

Generalizing the results of L\'evy \cite{MR0281606} and Bagaria \cite{MR3894041} on universal formulas, in Section \ref{section_universal}, we establish the existence of universal $\Pi^1_\xi$ and $\Sigma^1_\xi$ formulas at a regular cardinal $\kappa$ for all $\xi<\kappa^+$ in an appropriate sense. Using universal formulas, we prove Theorem \ref{theorem_normal_ideal}, which states that if $\kappa$ is $\Pi^1_\xi$-indescribable where $\xi<\kappa^+$, then the collection
\[\Pi^1_\xi(\kappa)=\{X\subseteq\kappa\st\text{$X$ is not $\Pi^1_\xi$-indescribable}\}\]
is a nontrivial normal ideal on $\kappa$.

In Section \ref{section_hierarchy}, again using the existence of universal $\Pi^1_\xi$ formulas discussed above, we prove Theorem \ref{theorem_expressing_indescribability}, which states that given a regular cardinal $\kappa$ and $\xi<\kappa^+$, the $\Pi^1_\xi$-indescribability of a set $S\subseteq\kappa$ is, in an appropriate sense, expressible by a $\Pi^1_{\xi+1}$ formula. We then prove two hierarchy results for $\Pi^1_\xi$-indescribability. For example, as a consequence of these results, if $\kappa$ is $\kappa+n+1$-indescribable, where $n<\omega$, then the set of $\alpha<\kappa$ which are $\alpha+n$-indescribable is in the filter $\Pi^1_{\kappa+n+1}(\kappa)^*$. More generally, our first hierarchy result, Corollary \ref{corollary_hierarchy}, states that if $\kappa$ is $\Pi^1_\xi$-indescribable where $\xi<\kappa^+$ and $\zeta<\xi$, then the set of $\alpha<\kappa$ which are $\Pi^1_{f^\kappa_\zeta(\alpha)}$-indescribable is in the filter $\Pi^1_\xi(\kappa)^*$. Our second hierarchy result, Corollary \ref{corollary_proper}, states that if $\kappa$ is $\Pi^1_\xi$-indescribable where $\xi<\kappa^+$, then for all $\zeta<\xi$ we have $\Pi^1_\zeta(\kappa)\subsetneq\Pi^1_\xi(\kappa)$. The proofs of these two hierarchy results require several lemmas which are interesting in their own right. For example, Proposition \ref{proposition_double_restriction} state that for any weakly Mahlo cardinal $\kappa$ and ordinal $\xi<\kappa^+$, if $\varphi$ is any $\Pi^1_\xi$ or $\Sigma^1_\xi$ formula then there is a club $C\subseteq\kappa$ such that for all regular $\alpha\in C$, the set of $\beta<\alpha$ for which 
\[(\varphi\res^\kappa_\alpha)\res^\alpha_\beta=\varphi\res^\kappa_\beta\]
is club in $\alpha$.

Recall that, by results of Sun \cite{MR1245524} and Hellsten \cite{MR2026390}, one can characterize $\Pi^1_n$-indescribable subsets of a $\Pi^1_n$-indescribable cardinal $\kappa$ by using a natural base for the filter $\Pi^1_n(\kappa)^*$ dual to $\Pi^1_n(\kappa)$. For a regular cardinal $\kappa$, a set $C\subseteq\kappa$ is a \emph{$\Pi^1_0$-club in $\kappa$} if it is club in $\kappa$. We say that $C\subseteq\kappa$ is \emph{$\Pi^1_{n+1}$-club} in $\kappa$, where $n<\omega$, if it is $\Pi^1_n$-indescribable in $\kappa$ and whenever $C\cap\alpha$ is $\Pi^1_n$-indescribable in $\alpha$ we have $\alpha\in C$. Then, if $\kappa$ is $\Pi^1_n$-indescribable, a set $S\subseteq\kappa$ is $\Pi^1_n$-indescribable if and only if $S\cap C\neq\emptyset$ for all $\Pi^1_n$-clubs $C\subseteq\kappa$. This result is due to Sun \cite{MR1245524} for $n=1$ and to Hellsten \cite{MR2026390} for $n<\omega$. In Section \ref{section_higher_xi_clubs}, we generalize this to $\Pi^1_\xi$-indescribable subsets of $\Pi^1_\xi$-indescribable cardinals for all $\xi<\kappa^+$. That is, for all $\xi<\kappa^+$, we introduce a notion of $\Pi^1_\xi$-club subset of $\kappa$ such that if $\kappa$ is $\Pi^1_\xi$-indescribable then a set $S\subseteq\kappa$ is $\Pi^1_\xi$-indescribable if and only if $S\cap C\neq\emptyset$ for all $\Pi^1_\xi$-clubs $C\subseteq\kappa$. For more results involving $\Pi^1_\xi$-clubs, one should consult \cite{MR3985624}, \cite{MR4050036}, \cite{MR4230485} and \cite{MR4082998}.

Finally, in Section \ref{section_higher_derived_topologies}, we generalize some of the results of Bagaria \cite{MR3894041} on derived topologies on ordinals. 
Given a nonzero ordinal $\delta$, Bagaria defined a transfinite sequence of topologies $\<\tau_\xi\st\xi\in\ORD\>$ on $\delta$, called the \emph{derived topologies on $\delta$}, and proved---using the definitions of \cite{MR3894041}---that if there is an $\alpha<\delta$ which is $\Pi^1_\xi$-indescribable then the $\tau_{\xi+1}$ topology on $\delta$ is non-discrete. However, using the definitions of \cite{MR3894041}, $\alpha$ can be $\Pi^1_\xi$-indescribable only if $\xi<\alpha$. Thus, Bagaria obtained the non-discreteness of the $\tau_\xi$ topologies on $\delta$ only for $\xi<\delta$. Given a regular cardinal $\mu$, in Section \ref{section_higher_derived_topologies}, using \emph{diagonal Cantor derivatives}, we present a natural extension of Bagaria's notion of derived topologies on $\mu$ by defining a transfinite sequence of topologies $\<\tau_\xi\st\xi<\mu^+\>$ on $\mu$ such that for $\xi<\mu$ our $\tau_\xi$ is the same as that of \cite{MR3894041} and Bagaria's conditions for the nondiscreteness of the topologies $\tau_{\xi+1}$ for $\xi<\mu$ can be generalized to all $\xi<\mu^+$ (see Theorem \ref{theorem_xi_s_nonisolated} and Corollary \ref{corollary_nondiscreteness_from_indescribability}). 
\begin{remark}\label{remark_example}
Let us describe the simplest of the new topologies introduced in this article. If $\<\tau_\xi\st\xi<\mu\>$ is Bagaria's sequence of derived topologies on a regular $\mu$, we define $d_\mu:P(\mu)\to P(\mu)$ by letting
\[d_\mu(A)=\{\alpha<\mu\st\text{$\alpha$ is a limit point of $A$ in the $\tau_\alpha$ topology on $\mu$}\}.\]
We then define a new topology $\tau_\mu$ declaring $C\subseteq\mu$ to be closed in the space $(\mu,\tau_\mu)$ if and only if $d_\mu(C)\subseteq C$. That is, we let $U\in\tau_\mu$ if and only if $d_\mu(\mu\setminus U)\subseteq\mu\setminus U$ for $U\subseteq\mu$.
\end{remark}

\section{Canonical reflection functions}\label{section_canonical_reflection_functions}

In this section we establish the basic properties of the canonical reflection functions at a regular cardinal. Although some of these results are folklore, we include proofs for the reader's convenience. Many of the proofs in the current section will establish that certain sets defined using canonical reflection functions are in the club filter on a given regular cardinal $\kappa$. These results will be established by using generic ultrapower embeddings\footnote{The author would like to thank Peter Holy for suggesting the use of generic ultrapowers in the arguments of the current section.}; some background material on generic ultrapowers may be found in \cite{MR2768692}, but we will only that which is summarized here. Recall that if $\kappa$ is a regular cardinal, $I$ is a normal ideal on $\kappa$ and $G\subseteq P(\kappa)/I$ is generic over $V$, then, working in the forcing extension $V[G]$ there is a canonical $V$-normal $V$-ultrafilter $U_G\subseteq P(\kappa)$ obtained from $G$ such that $U_G$ extends the filter $I^*$ dual to $U$ and we may form the corresponding generic ultrapower $j:V\to V^\kappa/U_G\subseteq V[G]$. Further recall that the critical point of $j$ is $\kappa$ and equals the equivalence class of the identity function $\id:\kappa\to\kappa$. Thus, for all $X\in P(\kappa)^V$ we have $X\in U$ if and only if $\kappa\in j(X)$. Furthermore, the ultrapower $V^\kappa/U_G$ is wellfounded up to $(\kappa^+)^V$, $H(\kappa^+)\subseteq V^\kappa/U_G$ and when $\kappa$ is inaccessible we have $H(\kappa)=H(\kappa)^{V^\kappa/U_G}$. As is standard practice, in what follows we will often write $V^\kappa/G$ to mean $V^\kappa/U_G$. The following two propositions will be used throughout the article.

\begin{proposition}\label{proposition_framework}
Suppose $\kappa$ is a regular uncountable cardinal and $S\subseteq\kappa$. Then $S$ contains a club subset of $\kappa$ if and only if whenever $G$ is generic for $P(\kappa)/\NS_\kappa$ it follows that $\kappa\in j(S)$ where $j:V\to V^\kappa/G$ is the generic ultrapower embedding obtained from $G$.
\end{proposition}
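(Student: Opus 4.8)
The plan is to reduce the statement to the standard dictionary between membership in the generic ultrafilter $U_G$, membership in the generic filter $G$ on the Boolean algebra $P(\kappa)/\NS_\kappa$, and the action of $j$. First I would recall two facts from the summarized background on generic ultrapowers. Since $\crit(j)=\kappa$ is represented by the identity function $\id:\kappa\to\kappa$, for every $X\in P(\kappa)^V$ we have $\kappa\in j(X)$ if and only if $X\in U_G$; thus the condition ``$\kappa\in j(S)$'' is simply ``$S\in U_G$''. Moreover, the generic ultrafilter is recovered from $G$ by $U_G=\{X\subseteq\kappa\st [X]_{\NS_\kappa}\in G\}$, and $U_G$ extends the club filter $\NS_\kappa^*$ dual to $\NS_\kappa$. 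I would also recall that the nonzero conditions of $P(\kappa)/\NS_\kappa$ are exactly the classes $[T]_{\NS_\kappa}$ of stationary sets $T\subseteq\kappa$, and that $S\subseteq\kappa$ contains a club if and only if $S\in\NS_\kappa^*$, equivalently $\kappa\setminus S$ is nonstationary.

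For the forward direction, suppose $S$ contains a club $C$. Since $C$ is club, $C\in\NS_\kappa^*\subseteq U_G$ for every generic $G$, and as $U_G$ is a filter and $C\subseteq S$ we obtain $S\in U_G$; by the first recalled fact this gives $\kappa\in j(S)$. As $G$ was arbitrary, this establishes the left-to-right implication.

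For the converse I would argue by contraposition. If $S$ does not contain a club, then $\kappa\setminus S$ is stationary, so $[\kappa\setminus S]_{\NS_\kappa}$ is a nonzero condition of $P(\kappa)/\NS_\kappa$. By the forcing theorem there is a generic $G$ (in a suitable forcing extension) with $[\kappa\setminus S]_{\NS_\kappa}\in G$, and then $\kappa\setminus S\in U_G$, so $S\notin U_G$, whence $\kappa\notin j(S)$. This produces a generic $G$ for which $\kappa\in j(S)$ fails, as required.

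The only point requiring care is the identification $U_G=\{X\st [X]_{\NS_\kappa}\in G\}$ together with the fact that every nonzero condition lies in some generic filter; once these standard facts about the generic ultrapower and about forcing with $P(\kappa)/\NS_\kappa$ are in place, both directions are immediate. I do not anticipate a genuine obstacle here, as the content amounts to bookkeeping with the correspondence between $G$, $U_G$, and $j$.
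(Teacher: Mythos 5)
Your proof is correct, and it is exactly the argument the paper has in mind: the paper states this proposition without proof (treating it as folklore), but its proof of the companion Proposition \ref{proposition_framework2} uses the identical technique — the forward direction via the fact that $U_G$ extends the club filter, and the converse by forcing below the stationary complement to produce a generic with $\kappa\notin j(S)$. Your bookkeeping with the correspondence $X\in U_G \iff \kappa\in j(X)$ and with nonzero conditions being (classes of) stationary sets is precisely the intended content.
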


\begin{proposition}\label{proposition_framework2}
The following are equivalent when $\kappa$ is a regular uncountable cardinal\footnote{Notice that when the set of regular cardinals less than $\kappa$ is not stationary in $\kappa$, i.e. when $\kappa$ is not weakly Mahlo, then both (1) and (2) hold trivially. In later sections, when we apply Proposision \ref{proposition_framework2}, and the results derived from it in the current section, $\kappa$ will in fact be weakly Mahlo.} and $E\subseteq\kappa$.
\begin{enumerate}
\item There is a club $C\subseteq\kappa$ such that for all regular uncountable $\alpha\in C$ we have $\alpha\in E$.
\item Whenever $G\subseteq P(\kappa)/\NS_\kappa$ is generic over $V$ such that $\kappa$ is regular in $V^\kappa/G$ and $j:V\to V^\kappa/G$ is the corresponding generic ultrapower embedding, we have $\kappa\in j(E)$.
\end{enumerate}
\end{proposition}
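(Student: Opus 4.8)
The plan is to reduce both conditions to a single club-containment statement about an auxiliary set and then invoke Proposition \ref{proposition_framework}. Write $\mathrm{Reg}=\{\alpha<\kappa\st\alpha\text{ is regular and uncountable}\}$ and set
\[E'=E\cup(\kappa\setminus\mathrm{Reg}).\]
First I would observe that condition (1) is equivalent to the assertion that $E'$ contains a club subset of $\kappa$. Indeed, if a club $C$ witnesses (1) then $C\subseteq E'$, since every $\alpha\in C$ either lies in $\mathrm{Reg}$ (hence in $E$ by (1)) or lies in $\kappa\setminus\mathrm{Reg}$; conversely, if $C\subseteq E'$ is club then every regular uncountable $\alpha\in C$ lies in $E'=E\cup(\kappa\setminus\mathrm{Reg})$ but not in $\kappa\setminus\mathrm{Reg}$, hence in $E$, so $C$ witnesses (1).

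Next I would apply Proposition \ref{proposition_framework} to $E'$: it contains a club iff $\kappa\in j(E')$ for every $G$ generic for $P(\kappa)/\NS_\kappa$, where $j:V\to V^\kappa/G$ is the associated generic ultrapower embedding. By elementarity $j(E')=j(E)\cup(j(\kappa)\setminus j(\mathrm{Reg}))$, and since $\crit(j)=\kappa$ we always have $\kappa\in j(\kappa)$; hence
\[\kappa\in j(E')\iff \kappa\in j(E)\ \text{or}\ \kappa\notin j(\mathrm{Reg}).\]

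The crucial translation is that $\kappa\in j(\mathrm{Reg})$ holds exactly when $\kappa$ is regular and uncountable in $V^\kappa/G$. This is immediate from the elementarity of $j$ (via \L o\'s's theorem), since $j(\mathrm{Reg})$ is, in the sense of $V^\kappa/G$, the set of regular uncountable cardinals below $j(\kappa)$; one should note only that $\kappa$ is automatically uncountable in $V^\kappa/G$, because $\crit(j)=\kappa$ fixes all ordinals below $\kappa$ and $\omega<\kappa$, so regularity is the only nontrivial clause. Substituting this equivalence into the displayed line, the statement ``$\kappa\in j(E')$ for every generic $G$'' becomes ``for every generic $G$, either $\kappa\in j(E)$ or $\kappa$ is not regular in $V^\kappa/G$,'' which is logically identical to ``for every generic $G$ with $\kappa$ regular in $V^\kappa/G$ we have $\kappa\in j(E)$''---precisely condition (2). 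Chaining these equivalences yields (1)$\iff$(2).

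I do not expect a serious obstacle; the only point requiring care is the bookkeeping around the conditional in (2), namely checking that moving the hypothesis ``$\kappa$ regular in $V^\kappa/G$'' out of the universal quantifier matches the disjunction produced by the definition of $E'$. As a sanity check, when $\kappa$ is not weakly Mahlo the argument specializes correctly: $\mathrm{Reg}$ is then nonstationary, so $\kappa\notin j(\mathrm{Reg})$ for every generic $G$, making the hypothesis of (2) never satisfied and rendering both (1) and (2) trivially true, in agreement with the footnote.
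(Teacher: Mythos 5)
Your proof is correct. It differs from the paper's in packaging rather than in substance: the paper proves the implication from (1) to (2) directly (if $C$ witnesses (1), then $\kappa\in j(C)$ and elementarity gives $\kappa\in j(E)$ whenever $\kappa$ is regular in $V^\kappa/G$), and proves the converse by contraposition, forcing with the stationary set $S$ of regular cardinals in $\kappa\setminus E$ to produce a generic ultrapower violating (2). Your auxiliary set $E'=E\cup(\kappa\setminus\mathrm{Reg})$ has complement exactly that same set $S$, so your reduction to Proposition \ref{proposition_framework} silently invokes the identical forcing argument (it is the backward direction of that proposition), while your explicit work is the routine bookkeeping: (1) holds iff $E'$ contains a club, $\kappa\in j(E')$ iff $\kappa\in j(E)$ or $\kappa\notin j(\mathrm{Reg})$, and $\kappa\in j(\mathrm{Reg})$ iff $\kappa$ is regular (hence regular uncountable, as $j$ fixes $\omega$) in $V^\kappa/G$. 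What your route buys is modularity — the conditional quantifier in (2) is absorbed into a single unconditional club statement, so no new generic-ultrapower argument is needed; what the paper's route buys is transparency about which stationary set one forces with to refute (2), which is the mechanism a reader must ultimately understand anyway. Both hinge on the same observation that the non-regular points below $\kappa$ can be discarded.
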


\begin{proof}
It is trivial to see that (1) implies (2). If (1) is false then the set $S$ of regular cardinals in $\kappa\setminus E$ is stationary in $\kappa$. Let $G\subseteq P(\kappa)/\NS_\kappa$ be generic over $V$ with $S\in G$ and let $j:V\to V^\kappa/G$ be the corresponding generic ultrapower embedding. Then $\kappa\in j(S)$, which implies $\kappa$ is regular in $V^\kappa/G$ and $\kappa\notin j(E)$ contradicting (2).
\end{proof}

The next result shows that, for regular $\kappa$, the $\xi^{th}$ canonical reflection function $F^\kappa_\xi$ (see Definition \ref{definition_canonical_reflection_functions}) represents a useful object in any generic ultrapower obtained from a normal ideal on $\kappa$.

\begin{proposition}\label{proposition_useful_object}
Suppose $\kappa$ is a regular cardinal and $I$ is a normal ideal on $\kappa$. Let $G$ be generic for $P(\kappa)/I$ and let $j:V\to V^\kappa/G$ be the corresponding generic ultrapower embedding. Then, for all $\xi<\kappa^+$, the $\xi^{th}$ canonical reflection function $F^\kappa_\xi$ represents $j"\xi$ in the generic ultrapower, that is, $j(F^\kappa_\xi)(\kappa)=j"\xi$.
\end{proposition}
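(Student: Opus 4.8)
The plan is to unwind the definitions of $F^\kappa_\xi$ and of the generic ultrapower, and to verify the equation $j(F^\kappa_\xi)(\kappa) = j``\xi$ by elementarity, splitting into the two cases from Definition \ref{definition_canonical_reflection_functions}. Recall that in the generic ultrapower $j:V\to V^\kappa/G$, the critical point is $\kappa$, which equals the class of the identity function $\id_\kappa$; and for any function $h:\kappa\to V$ in $V$, the class $[h]_G$ represented by $h$ is exactly $j(h)(\kappa)$, since $j(h)$ applied to the class of the identity yields the class of $h$. So the whole statement $j(F^\kappa_\xi)(\kappa) = j``\xi$ is really the assertion that $F^\kappa_\xi$ represents the set $j``\xi = \{j(\eta) : \eta < \xi\}$ in the ultrapower, and I would frame the argument as computing $[F^\kappa_\xi]_G$.

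First I would dispose of the trivial case $\xi<\kappa$. Here $F^\kappa_\xi$ is the constant function with value $\xi$, so $[F^\kappa_\xi]_G = j(\xi)$ by elementarity (the class of a constant function $c_\xi$ is $j(\xi)$). Since $\xi<\kappa=\crit(j)$, we have $j(\xi)=\xi$, and also $j``\xi=\{j(\eta):\eta<\xi\}=\{\eta:\eta<\xi\}=\xi$ because each $\eta<\xi<\kappa$ is fixed by $j$. Hence $[F^\kappa_\xi]_G=\xi=j``\xi$, as desired. The substantive case is $\kappa\le\xi<\kappa^+$, where $F^\kappa_\xi(\alpha)=b_{\kappa,\xi}[\alpha]$ for the fixed bijection $b_{\kappa,\xi}:\kappa\to\xi$. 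By elementarity, $j(F^\kappa_\xi)$ is the function on $j(\kappa)$ given by $\beta\mapsto j(b_{\kappa,\xi})[\beta]$ (that is, $j(b_{\kappa,\xi})``\beta$), so evaluating at $\kappa$ gives
\[
j(F^\kappa_\xi)(\kappa) = j(b_{\kappa,\xi})[\kappa] = \{\, j(b_{\kappa,\xi})(\gamma) : \gamma<\kappa \,\}.
\]
Now I use the standard fact that because $\kappa=\crit(j)$, every $\gamma<\kappa$ satisfies $j(\gamma)=\gamma$, and so $j(b_{\kappa,\xi})(\gamma)=j(b_{\kappa,\xi})(j(\gamma))=j\bigl(b_{\kappa,\xi}(\gamma)\bigr)$ by elementarity applied pointwise. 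Therefore
\[
j(F^\kappa_\xi)(\kappa) = \{\, j(b_{\kappa,\xi}(\gamma)) : \gamma<\kappa \,\} = j``\,\ran(b_{\kappa,\xi}) = j``\,\xi,
\]
using that $b_{\kappa,\xi}$ is a surjection onto $\xi$, so $\ran(b_{\kappa,\xi})=\xi$.

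The one point that deserves genuine care — and which I expect to be the main (modest) obstacle — is justifying the pointwise identity $j(b_{\kappa,\xi})(\gamma)=j(b_{\kappa,\xi}(\gamma))$ for $\gamma<\kappa$, i.e.\ making rigorous that $j$ "commutes with" the bijection on arguments below the critical point. This is exactly the assertion $j(f)\restrict j``\kappa = j\circ f$ for $f$ a function on $\kappa$, which holds because $j(\gamma)=\gamma$ for $\gamma<\kappa$ together with the elementarity statement $j(f)(j(\gamma))=j(f(\gamma))$. A clean way to package this, and perhaps the cleanest overall route, is to observe directly that $j(F^\kappa_\xi)(\kappa)=j``\xi$ is equivalent to showing $\eta\in j(F^\kappa_\xi)(\kappa)$ iff $\eta\in j``\xi$, and to verify both directions using that $j$ restricted to $\xi$ is the relevant embedding; but the computation above is the most transparent. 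I would also remark that the result holds for an arbitrary normal ideal $I$ (not just $\NS_\kappa$), since the only features used are $\crit(j)=\kappa$ and the elementarity of $j$, both of which hold for the generic ultrapower by any normal ideal. No appeal to Proposition \ref{proposition_framework} or \ref{proposition_framework2} is needed for this proposition; it is a pure elementarity computation.
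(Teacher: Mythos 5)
Your proof is correct and follows essentially the same route as the paper's: dispose of small $\xi$ using $\crit(j)=\kappa$, then for $\kappa\le\xi<\kappa^+$ compute $j(F^\kappa_\xi)(\kappa)=j(b_{\kappa,\xi})[\kappa]=j``\xi$ via the pointwise identity $j(b_{\kappa,\xi})(\gamma)=j(b_{\kappa,\xi}(\gamma))$ for $\gamma<\kappa$, which is exactly the elementarity step the paper uses. The only (immaterial) difference is that the paper places $\xi=\kappa$ in the easy case, whereas you absorb it into the bijection case via $b_{\kappa,\kappa}=\id_\kappa$.
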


\begin{proof}
Let $j:V\to V^\kappa/G$ be the generic ultrapower obtained from a generic filter $G\subseteq P(\kappa)/I$ over $V$. Since $\crit(j)=\kappa$, it is easy to see that for $\xi\leq\kappa$ we have $j(F^\kappa_\xi)(\kappa)=j"\xi$. Now suppose $\kappa<\xi<\kappa^+$ and let $b_{\kappa,\xi}:\kappa\to\xi$ be the bijection such that $F^\kappa_\xi(\alpha)=b_{\kappa,\xi}[\alpha]$ for all $\alpha<\kappa$. By elementarity, $j(b_{\kappa,\xi}):j(\kappa)\to j(\xi)$ is a bijection in $M$ and $j(b_{\kappa,\xi})(\alpha)=j(b_{\kappa,\xi}(\alpha))$ for all $\alpha<\kappa$. Thus, $j(F^\kappa_\xi)(\kappa)=j(b_{\kappa,\xi})[\kappa]=j"\xi$.
\end{proof}

\begin{corollary}
Suppose $U$ is a normal measure on $\kappa$ and $j:V\to M$ is the corresponding ultrapower embedding. For all $\xi<\kappa^+$, the $\xi^{th}$ canonical reflection function $F^\kappa_\xi$ represents $j"\xi$ in the ultrapower, that is, $j(F^\kappa_\xi)(\kappa)=j"\xi$.
\end{corollary}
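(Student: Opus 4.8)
The plan is to deduce the statement directly from Proposition \ref{proposition_useful_object} by taking the normal ideal $I$ there to be the ideal dual to $U$. Concretely, I would set $I=\{X\subseteq\kappa\st X\notin U\}$. Since $U$ is a normal measure, it is a $\kappa$-complete nonprincipal normal ultrafilter on $\kappa$, so $I$ is a normal ideal on $\kappa$ whose dual filter $I^*$ is precisely $U$; moreover $\kappa$, being measurable, is regular, so the hypotheses of Proposition \ref{proposition_useful_object} are met. The key observation is that for an ultrafilter the quotient $P(\kappa)/I$ collapses to the trivial two-element Boolean algebra: for each $X\subseteq\kappa$ we have either $X\in U$, whence $[X]_I=[\kappa]_I$, or $\kappa\setminus X\in U$, whence $[X]_I=[\emptyset]_I$. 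Thus forcing with $P(\kappa)/I$ adds nothing, and the (unique) generic filter $G$ gives $V[G]=V$.

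Next I would check that, for this choice of $I$, the generic ultrapower machinery recalled just before Proposition \ref{proposition_framework} degenerates to the ordinary ultrapower. The canonical $V$-normal $V$-ultrafilter $U_G$ read off from $G$ extends $I^*=U$, and since $U$ is already an ultrafilter on $\kappa$, maximality forces $U_G=U$. Hence the generic ultrapower $V^\kappa/G=V^\kappa/U_G$ is literally the usual ultrapower $M=V^\kappa/U$, and the generic ultrapower embedding $j\colon V\to V^\kappa/G$ is the standard ultrapower embedding $j\colon V\to M$ of the corollary's hypothesis. With this identification, Proposition \ref{proposition_useful_object} gives $j(F^\kappa_\xi)(\kappa)=j"\xi$ for every $\xi<\kappa^+$, which is exactly what is to be shown.

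The only point requiring care is the bookkeeping of the previous paragraph rather than any genuine difficulty: one must make explicit that the general generic-ultrapower framework coincides with the classical ultrapower construction when the ideal is dual to a measure, so that the two readings of $j$ and of $V^\kappa/\cdot$ agree. Once that identification is in place, no further computation is needed, as the substantive content---that $F^\kappa_\xi$ represents $j"\xi$---is already delivered by Proposition \ref{proposition_useful_object}.
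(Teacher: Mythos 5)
Your proof is correct. The paper states this corollary without proof, treating it as immediate from Proposition \ref{proposition_useful_object}, and your argument is a rigorous way of making that specialization precise: when $I$ is the ideal dual to $U$, the quotient $P(\kappa)/I$ is the two-element Boolean algebra, the forcing is trivial, $V[G]=V$, and $U_G=U$ by maximality, so the ``generic'' ultrapower is just the ordinary one and the proposition applies verbatim. The only alternative reading of the paper's intent is slightly different in mechanism: rather than using the proposition as a black box, one observes that its two-line proof (for $\xi\leq\kappa$ use $\crit(j)=\kappa$; for $\kappa<\xi<\kappa^+$ use elementarity to get $j(F^\kappa_\xi)(\kappa)=j(b_{\kappa,\xi})[\kappa]=j"\xi$) uses nothing about genericity and runs word-for-word for the measure ultrapower embedding. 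Both routes are sound; yours has the advantage of needing only the statement of the proposition, at the cost of the bookkeeping you carried out. One tiny point worth making explicit if you write this up: the corollary's $M$ is the transitive collapse of $V^\kappa/U$, whereas the proposition speaks of $V^\kappa/U_G$ itself; since $U$ is $\kappa$-complete the ultrapower is well-founded, and the identity $j(F^\kappa_\xi)(\kappa)=j"\xi$ is preserved by the collapse isomorphism, so nothing is lost.
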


Next we show that at least some of the canonical reflection functions at a regular $\kappa$ are, in fact, canonical; in Remark \ref{remark_not_canonical}, we show that this partial canonicity result is the best possible.

\begin{lemma}\label{lemma_canonicity}
Suppose $\kappa$ is regular.\begin{enumerate}
\item For all $\xi<\kappa^+$ the set $\{\alpha<\kappa\st F^\kappa_\zeta(\alpha)\subsetneq F^\kappa_\xi(\alpha)\}$ contains a club subset of $\kappa$ for all $\zeta<\xi$.
\item If $\xi<\kappa^+$ is a limit ordinal then the set
\[\{\alpha<\kappa\st F^\kappa_\xi(\alpha)=\bigcup_{\zeta\in F^\kappa_\xi(\alpha)} F^\kappa_\zeta(\alpha)\}\]
contains a club subset of $\kappa$.
\item If $\xi<\kappa^+$ is a limit ordinal the function $F^\kappa_\xi$ is canonical in the sense that whenever $F:\kappa\to P_\kappa\kappa^+$ is a function such that for all $\zeta<\xi$ the set $\{\alpha<\kappa\st F^\kappa_\zeta(\alpha)\subseteq F(\alpha)\}$ contains a club, then the set $\{\alpha<\kappa\st F^\kappa_\xi(\alpha)\subseteq F(\alpha)\}$ contains a club subset of $\kappa$.
\end{enumerate}
\end{lemma}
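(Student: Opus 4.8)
The plan is to prove all three parts uniformly through the generic ultrapower framework established in Propositions~\ref{proposition_framework} and~\ref{proposition_useful_object}, which is exactly the method the section advertises. Fix a generic filter $G\subseteq P(\kappa)/\NS_\kappa$ over $V$ and let $j:V\to V^\kappa/G$ be the associated generic ultrapower embedding, so that $\crit(j)=\kappa$. By Proposition~\ref{proposition_framework}, a set $S\subseteq\kappa$ contains a club if and only if $\kappa\in j(S)$, so in each part it suffices to verify one membership statement about $\kappa$ in $V^\kappa/G$. The engine driving every computation is Proposition~\ref{proposition_useful_object}, giving $j(F^\kappa_\zeta)(\kappa)=j"\zeta$ for all $\zeta<\kappa^+$, together with the elementary fact that for a \emph{limit} ordinal $\xi$ one has
\[j"\xi=\bigcup_{\zeta<\xi}j"\zeta,\]
since every $\gamma<\xi$ satisfies $\gamma<\gamma+1<\xi$ and hence $j(\gamma)\in j"(\gamma+1)$.

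For part (1), by Proposition~\ref{proposition_framework} it is enough to check, for each $\zeta<\xi$, that $\kappa\in j(\{\alpha<\kappa\st F^\kappa_\zeta(\alpha)\subsetneq F^\kappa_\xi(\alpha)\})$; by elementarity this says $j(F^\kappa_\zeta)(\kappa)\subsetneq j(F^\kappa_\xi)(\kappa)$, i.e.\ $j"\zeta\subsetneq j"\xi$. The inclusion is clear as $\zeta<\xi$, and it is proper since $j(\zeta)\in j"\xi\setminus j"\zeta$. For part (2), writing $E$ for the set in question, I would show $\kappa\in j(E)$, which unwinds, using $j(\vec F)(j(\eta))=j(F^\kappa_\eta)$ for $\eta<\xi$, to the equation
\[j(F^\kappa_\xi)(\kappa)=\bigcup_{\zeta\in j(F^\kappa_\xi)(\kappa)}j(\vec F)(\zeta)(\kappa).\]
By Proposition~\ref{proposition_useful_object} the left side is $j"\xi$, and since $\zeta$ ranges over $j"\xi=\{j(\eta)\st\eta<\xi\}$ the right side equals $\bigcup_{\eta<\xi}j"\eta$; the two agree by the limit identity above.

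Part (3) carries the genuine content, and it is the step I expect to be the main obstacle for any direct argument. The hypothesis supplies, for \emph{each} $\zeta<\xi$, a club contained in $\{\alpha\st F^\kappa_\zeta(\alpha)\subseteq F(\alpha)\}$, and there can be possibly $\kappa$-many such clubs; a naive combinatorial proof would have to fuse them via a diagonal intersection while tracking how $F^\kappa_\xi(\alpha)$ compares to $\bigcup_{\zeta} F^\kappa_\zeta(\alpha)$ pointwise, which is delicate precisely because an ordinary intersection of $\kappa$-many clubs need not contain a club. The generic ultrapower sidesteps this: fixing a single $G$, Proposition~\ref{proposition_framework} applied to each hypothesized club gives $j(F^\kappa_\zeta)(\kappa)\subseteq j(F)(\kappa)$, that is $j"\zeta\subseteq j(F)(\kappa)$, simultaneously for all $\zeta<\xi$. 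Taking the union over $\zeta<\xi$ and invoking the limit identity yields
\[j(F^\kappa_\xi)(\kappa)=j"\xi=\bigcup_{\zeta<\xi}j"\zeta\subseteq j(F)(\kappa).\]
Hence $\kappa\in j(\{\alpha\st F^\kappa_\xi(\alpha)\subseteq F(\alpha)\})$, and a final application of Proposition~\ref{proposition_framework} shows this set contains a club, completing part (3).
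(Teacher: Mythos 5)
Your proposal is correct and takes essentially the same route as the paper's own proof: both parts reduce each claim to a single membership computation in a generic ultrapower by $\NS_\kappa$ via Propositions \ref{proposition_framework} and \ref{proposition_useful_object}, using the limit identity $j"\xi=\bigcup_{\zeta<\xi}j"\zeta$, and your treatment of the $\alpha$-dependent index set in part (2) via $j(\vec F)$ evaluated at $\zeta\in j"\xi$ is exactly the paper's computation with the functions $\overline F^{j(\kappa)}_\zeta$, namely $\overline F^{j(\kappa)}_{j(\eta)}(\kappa)=j(F^\kappa_\eta)(\kappa)=j"\eta$.
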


\begin{proof}

For (1), suppose $\zeta<\xi<\kappa^+$ and let $C=\{\alpha<\kappa\st F^\kappa_\zeta(\alpha)\subsetneq F^\kappa_\xi(\alpha)\}$. Let $G\subseteq P(\kappa)/\NS_\kappa$ be generic over $V$ and let $j:V\to V^\kappa/G$ be the corresponding generic ultrapower. By Proposition \ref{proposition_useful_object}, $\kappa\in j(C)$ and thus by Proposition \ref{proposition_framework} we see that $C$ contains a club subset of $\kappa$.

Similarly, for (2), suppose $\xi$ is a limit ordinal, let $C=\{\alpha<\kappa\st F^\kappa_\xi(\alpha)=\bigcup_{\zeta\in F^\kappa_\xi(\alpha)}F^\kappa_\zeta(\alpha)\}$ and let $j:V\to V^\kappa/G$ be the generic ultrapower obtained by forcing with $P(\kappa)/\NS_\kappa$. Working in $V^\kappa/G$, if $\zeta$ is an ordinal less than $j(\kappa)^+$, we let $\overline{F}^{j(\kappa)}_\zeta$ denote the $\zeta$-th cannonical reflection function at $j(\kappa)$. For each $\zeta<\xi$ we let $j(\<F^\kappa_\zeta(\alpha)\st\alpha<\kappa\>)=\<\overline F^{j(\kappa)}_{j(\zeta)}(\alpha)\st\alpha<j(\kappa)\>$. Notice that 
\begin{align*}j(C)&=\{\alpha<\kappa\st j(F^\kappa_\xi)(\alpha)=\bigcup_{\zeta\in j(F^\kappa_\xi)(\alpha)}\overline F^{j(\kappa)}_\zeta(\alpha)\}\\
	&=\{\alpha<\kappa\st j"\xi=\bigcup_{\zeta\in j"\xi}\overline{F}^{j(\kappa)}_\zeta(\alpha)\}
\end{align*}
For each $\zeta\in j"\xi$ we have $\overline F^{j(\kappa)}_\zeta(\kappa)=\overline F^{j(\kappa)}_{j(j^{-1}(\zeta))}(\kappa)=j(F^\kappa_{j^{-1}(\zeta)})(\kappa)=j"(j^{-1}(\zeta))=(j"\xi)\cap\zeta$, it follows that $\kappa\in j(C)$.

For (3), suppose $\xi<\kappa^+$ is a limit and let $F$ be as in the statement of the lemma. By assumption, if $j:V\to V^\kappa/G$ is any generic ultrapower obtained by forcing with $P(\kappa)/\NS_\kappa$, then $j(F^\kappa_\zeta)(\kappa)=j"\zeta\subseteq j(F)(\kappa)$ for all $\zeta<\xi$. By (2), we know that $j(F^\kappa_\xi)(\kappa)=j"\xi=\bigcup_{\zeta<\xi}j"\zeta$ and hence $j(F^\kappa_\xi)(\kappa)\subseteq j(F)(\kappa)$.
\end{proof}

\begin{remark}\label{remark_not_canonical}
Let us point out that Lemma \ref{lemma_canonicity}(3) does not hold if $\xi<\kappa^+$ is a successor ordinal. For example, supose $\xi=\kappa+1$ and $F:\kappa\to P_\kappa\kappa^+$ is defined by $F(\alpha)=\alpha$. Let $j:V\to V^\kappa/G$ be any generic ultrapower obtained by forcing with $P(\kappa)/\NS_\kappa$. Since $j(F)(\kappa)=\kappa$ and $j"(\kappa+1)=\kappa\cup\{j(\kappa)\}$ we see that $\{\alpha<\kappa\st F^\kappa_\kappa(\alpha)\subseteq F(\alpha)\}$ contains a club in $\kappa$ and $\{\alpha<\kappa\st F^\kappa_{\kappa+1}(\alpha)\subseteq F(\alpha)\}$ is nonstationary in $\kappa$.


\end{remark}

The following lemma shows that the canonical reflection functions at a regular cardinal satisfy a natural kind of coherence property.

\begin{lemma}\label{lemma_coherence}
Suppose $\kappa$ is a regular cardinal and $\xi<\kappa^+$ is a limit ordinal. Let $\pi^\kappa_{\xi,\alpha}:F^\kappa_\xi(\alpha)\to f^\kappa_\xi(\alpha)$ be the transitive collapse of $F^\kappa_\xi(\alpha)$ for each $\alpha<\kappa$. Then the set
\[C=\{\alpha<\kappa\st(\forall\zeta\in F^\kappa_\xi(\alpha))\ F^\kappa_\xi(\alpha)\cap\zeta=F^\kappa_\zeta(\alpha)\}\]
contains a club subset of $\kappa$.
\end{lemma}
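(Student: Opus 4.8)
The plan is to follow the generic ultrapower strategy already used in the proofs of Lemma \ref{lemma_canonicity}: rather than build a club inside $C$ by hand, I would force with $P(\kappa)/\NS_\kappa$, pass to the generic ultrapower $j:V\to V^\kappa/G$, and show that $\kappa\in j(C)$. By Proposition \ref{proposition_framework} this is exactly equivalent to $C$ containing a club, so the entire argument reduces to verifying the defining condition of $C$, read inside $V^\kappa/G$, at the single point $\alpha=\kappa$.

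First I would fix the notation from the proof of Lemma \ref{lemma_canonicity}(2): writing $\overline F^{j(\kappa)}_\zeta$ for the $\zeta$-th canonical reflection function at $j(\kappa)$ as computed in $V^\kappa/G$, elementarity gives $j(F^\kappa_\nu)=\overline F^{j(\kappa)}_{j(\nu)}$ for every $\nu<\kappa^+$. Consequently
\[j(C)=\{\alpha<j(\kappa)\st(\forall\zeta\in\overline F^{j(\kappa)}_{j(\xi)}(\alpha))\ \overline F^{j(\kappa)}_{j(\xi)}(\alpha)\cap\zeta=\overline F^{j(\kappa)}_\zeta(\alpha)\}.\]
Since $\crit(j)=\kappa<j(\kappa)$, the ordinal $\kappa$ is a legitimate value of $\alpha$, so it remains only to check the displayed condition when $\alpha=\kappa$.

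Here the two applications of Proposition \ref{proposition_useful_object} do all the work. On the one hand $\overline F^{j(\kappa)}_{j(\xi)}(\kappa)=j(F^\kappa_\xi)(\kappa)=j"\xi$, so the bound variable $\zeta$ ranges exactly over the ordinals $j(\eta)$ with $\eta<\xi$. On the other hand, for each such $\zeta=j(\eta)$ we again get $\overline F^{j(\kappa)}_{j(\eta)}(\kappa)=j(F^\kappa_\eta)(\kappa)=j"\eta$. Thus the condition to verify becomes the elementary set identity $(j"\xi)\cap j(\eta)=j"\eta$ for every $\eta<\xi$, which holds because $j$ is order preserving: $j(\beta)<j(\eta)$ iff $\beta<\eta$, so the members of $j"\xi$ lying below $j(\eta)$ are precisely the $j(\beta)$ with $\beta<\eta$. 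This yields $\kappa\in j(C)$ and hence the lemma.

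I expect the only real care to lie in the bookkeeping of the middle step, namely keeping straight which index is being collapsed by $j$ (so that $\overline F^{j(\kappa)}_{j(\eta)}$, not $\overline F^{j(\kappa)}_{\eta}$, is what appears) and confirming that the quantifier $\forall\zeta\in j"\xi$ internal to $V^\kappa/G$ genuinely ranges over the pointwise images $j(\eta)$, so that $(j"\xi)\cap j(\eta)=j"\eta$ may be applied term by term. It is worth noting that this computation never uses that $\xi$ is a limit ordinal; the limit hypothesis is harmless, and the identical argument would also handle successor $\xi$.
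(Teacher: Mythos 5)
Your proof is correct and essentially identical to the paper's: both pass to a generic ultrapower by $P(\kappa)/\NS_\kappa$, compute $j(C)$ by elementarity, and verify $\kappa\in j(C)$ using two applications of Proposition \ref{proposition_useful_object} together with the order-preservation identity $(j"\xi)\cap j(\eta)=j"\eta$. Your closing observation is also accurate: the computation nowhere uses that $\xi$ is a limit ordinal, so the same argument yields the coherence property for successor $\xi$ as well (compare Lemma \ref{lemma_successor}).
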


\begin{proof}
Let $G\subseteq P(\kappa)/\NS_\kappa$ be generic over $V$ and let $j:V\to V^\kappa/G$ be the corresponding generic ultrapower embedding. Let $\vec{F}=\<F^\kappa_\zeta\st\zeta<\kappa^+\>$ and notice that $j(\vec{F})=\<\overline F^{j(\kappa)}_\zeta\st\zeta<j(\kappa^+)\>$ where $\overline F^{j(\kappa)}_\zeta$ is the $\zeta$-th canonical reflection function at $j(\kappa)$ in $V^\kappa/G$. We have
\[j(C)=\{\alpha<j(\kappa)\st(\forall\zeta\in j(F^\kappa_\xi)(\alpha))\ j(F^\kappa_\xi)(\alpha)\cap\zeta = \overline F^{j(\kappa)}_\zeta(\alpha)\}.\]
Since $j(F^\kappa_\xi)(\kappa)=j"\xi$ and for each $\zeta\in j"\xi$ we have $\overline F^{j(\kappa)}_\zeta(\kappa)=\overline F^{j(\kappa)}_{j(j^{-1}(\zeta))}(\kappa)=j(F^\kappa_{j^{-1}(\zeta)})(\kappa)=j"\zeta$, it follows that $\kappa\in j(C)$.
\end{proof}

Next we will show that for all limit ordinals $\xi<\kappa^+$, for club many $\alpha<\kappa$, the value of $f^\kappa_\xi(\alpha)$ is determined by the values of $f^\kappa_\zeta(\alpha)$ for $\zeta\in F^\kappa_\xi(\alpha)$.

\begin{lemma}\label{lemma_canonical_functions_at_limits}
Suppose $\kappa$ is regular and $\xi<\kappa^+$ is a limit ordinal. Then the set
\[D=\{\alpha<\kappa\st f^\kappa_\xi(\alpha)=\bigcup_{\zeta\in F^\kappa_\xi(\alpha)}f^\kappa_\zeta(\alpha)\}\]
contains a club subset of $\kappa$.
\end{lemma}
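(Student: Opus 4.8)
The plan is to run the generic-ultrapower template used throughout this section, closely mirroring the proof of Lemma \ref{lemma_coherence}. By Proposition \ref{proposition_framework} it suffices to fix a generic $G \subseteq P(\kappa)/\NS_\kappa$ with corresponding generic ultrapower $j : V \to V^\kappa/G$ and verify that $\kappa \in j(D)$. Writing $\overline f^{j(\kappa)}_\zeta$ and $\overline F^{j(\kappa)}_\zeta$ for the canonical functions and canonical reflection functions at $j(\kappa)$ as computed in $V^\kappa/G$, elementarity gives $j(\vec f) = \<\overline f^{j(\kappa)}_\zeta \st \zeta < j(\kappa^+)\>$ and $j(\vec F) = \<\overline F^{j(\kappa)}_\zeta \st \zeta < j(\kappa^+)\>$, so that
\[ j(D) = \{\alpha < j(\kappa) \st j(f^\kappa_\xi)(\alpha) = \bigcup_{\zeta \in j(F^\kappa_\xi)(\alpha)} \overline f^{j(\kappa)}_\zeta(\alpha)\}. \]
Everything then reduces to evaluating the two sides of the defining equation at $\alpha = \kappa$.

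For the left side, Proposition \ref{proposition_useful_object} gives $j(F^\kappa_\xi)(\kappa) = j"\xi$; since the identity $f^\kappa_\xi(\alpha) = \ot(F^\kappa_\xi(\alpha))$ holds in $V$ and transfers under $j$, this yields $j(f^\kappa_\xi)(\kappa) = \ot(j"\xi) = \xi$, using that $j$ is order-preserving on the ordinals. For the right side I would evaluate each summand separately: every $\zeta \in j"\xi$ is of the form $\zeta = j(\eta)$ for a unique $\eta < \xi$, and then the commutation $\overline f^{j(\kappa)}_{j(\eta)} = j(f^\kappa_\eta)$ (again elementarity applied to $\vec f$), together with $j(f^\kappa_\eta)(\kappa) = \ot(j(F^\kappa_\eta)(\kappa)) = \ot(j"\eta) = \eta$, gives $\overline f^{j(\kappa)}_\zeta(\kappa) = \eta$. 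Hence the right side equals $\bigcup_{\eta < \xi} \eta$, which is exactly $\xi$ precisely because $\xi$ is a limit ordinal. The two sides agree, so $\kappa \in j(D)$, and the conclusion follows from Proposition \ref{proposition_framework}.

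The point requiring care — the would-be obstacle — is not any single hard estimate but the bookkeeping of which canonical object is being named in $V$ versus in $V^\kappa/G$: one must distinguish $j(f^\kappa_\eta)$, the $j$-image of the $V$-object, from the internally defined $\overline f^{j(\kappa)}_{j(\eta)}$, and argue via elementarity of $\vec f$ under $j$ that these coincide, and likewise for $\vec F$. One should also note that all the ordinal values computed ($\xi$ itself and each $\eta < \xi$) lie below $(\kappa^+)^V$, hence in the wellfounded part of the ultrapower, so that the displayed equalities genuinely hold between ordinals rather than in some illfounded region; this is exactly where the standing facts recalled at the start of the section, in particular that $V^\kappa/G$ is wellfounded up to $(\kappa^+)^V$, are invoked.
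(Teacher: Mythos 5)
Your proof is correct and takes essentially the same approach as the paper's: pass to a generic ultrapower by $\NS_\kappa$, use Proposition \ref{proposition_useful_object} and elementarity to evaluate both sides of the defining equation at $\kappa$, and conclude $\kappa\in j(D)$ via Proposition \ref{proposition_framework}. The only cosmetic difference is that you rederive $j(f^\kappa_\eta)(\kappa)=\eta$ from the identity $f^\kappa_\eta=\ot\circ F^\kappa_\eta$ together with wellfoundedness of the ultrapower up to $(\kappa^+)^V$, whereas the paper simply invokes the standard fact that the canonical functions represent the corresponding ordinals in any generic ultrapower by a normal ideal.
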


\begin{proof}
Let $G\subseteq P(\kappa)/\NS_\kappa$ be generic over $V$ and let $j:V\to V^\kappa/G$ be the corresponding generic ultrapower embedding. Let $j(\<f^\kappa_\zeta\st\zeta<\kappa^+\>)=\<\overline f^{j(\kappa)}_\zeta\st\zeta<j(\kappa^+)\>$. We have
\[j(D)=\{\alpha<j(\kappa)\st j(f^\kappa_\xi)(\alpha)=\bigcup_{\zeta\in j(F^\kappa_\xi)(\alpha)} \overline f^{j(\kappa)}_\zeta(\alpha)\}.\]
Since $\xi=\bigcup_{\zeta\in j"\xi} j^{-1}(\zeta)$, it follows that $\kappa\in j(D)$.
\end{proof}

The next two lemmas follow easily from Proposition \ref{proposition_framework} and confirm our intuition that for a regular cardinal $\kappa$ and ordinal $\xi<\kappa^+$, for club-many $\alpha<\kappa$ the value $f^\kappa_\xi(\alpha)$ behaves like $\alpha$'s version of $\xi$.

\begin{lemma}\label{lemma_limits}
Suppose $\kappa$ is regular and $\xi<\kappa^+$ is a limit ordinal. Then the set
\[D=\{\alpha<\kappa\st \text{$f^\kappa_\xi(\alpha)$ is a limit ordinal}\}\]
contains a club subset of $\kappa$.
\end{lemma}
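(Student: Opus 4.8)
The plan is to mimic the generic‑ultrapower arguments used throughout this section, as in the proofs of Lemma \ref{lemma_canonical_functions_at_limits} and Lemma \ref{lemma_coherence}. First I would let $G\subseteq P(\kappa)/\NS_\kappa$ be generic over $V$ and let $j:V\to V^\kappa/G$ be the corresponding generic ultrapower embedding. By Proposition \ref{proposition_framework}, it suffices to show that $\kappa\in j(D)$, where $D=\{\alpha<\kappa\st\text{$f^\kappa_\xi(\alpha)$ is a limit ordinal}\}$. Applying elementarity, $j(D)$ is the set of $\alpha<j(\kappa)$ for which $j(f^\kappa_\xi)(\alpha)$ is a limit ordinal in $V^\kappa/G$, so the whole task reduces to computing $j(f^\kappa_\xi)(\kappa)$ and checking that it is a limit ordinal.

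Next I would identify $j(f^\kappa_\xi)(\kappa)$. By Proposition \ref{proposition_useful_object} we have $j(F^\kappa_\xi)(\kappa)=j"\xi$, and recall from the remarks following Definition \ref{definition_canonical_reflection_functions} that $f^\kappa_\xi(\alpha)=\ot(F^\kappa_\xi(\alpha))$ for all $\alpha<\kappa$. By elementarity this identity transfers to give $j(f^\kappa_\xi)(\kappa)=\ot(j(F^\kappa_\xi)(\kappa))=\ot(j"\xi)$. Since $j$ is order‑preserving on ordinals, $j"\xi$ is an order‑isomorphic copy of $\xi$, so $\ot(j"\xi)=\xi$. Because $\xi$ is a limit ordinal by hypothesis, $\ot(j"\xi)=\xi$ is a limit ordinal, hence $j(f^\kappa_\xi)(\kappa)$ is a limit ordinal in $V^\kappa/G$.

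Therefore $\kappa\in j(D)$, and an appeal to Proposition \ref{proposition_framework} completes the argument: $D$ contains a club subset of $\kappa$. I do not anticipate a serious obstacle here; the only point requiring a little care is making sure the order‑type computation is carried out inside $V^\kappa/G$ (which is well‑founded up to $(\kappa^+)^V$, so the relevant ordinals below $\xi<\kappa^+$ are computed correctly) and that $\ot(j"\xi)=\xi$ is justified by the order‑preservation of $j$ rather than by any cardinality considerations. Alternatively, one could give a direct club‑construction proof from the definition $f^\kappa_\xi(\alpha)=\ot(b_{\kappa,\xi}[\alpha])$, taking $C$ to be the club of $\alpha$ closed under $b_{\kappa,\xi}$ in the appropriate sense, but the generic‑ultrapower route is cleaner and matches the style of the surrounding lemmas.
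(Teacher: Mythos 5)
Your proposal is correct and matches the paper's intended argument: the paper states this lemma without proof, noting only that it ``follows easily from Proposition \ref{proposition_framework},'' and your generic-ultrapower computation (showing $j(f^\kappa_\xi)(\kappa)=\ot(j"\xi)=\xi$, a limit ordinal, hence $\kappa\in j(D)$) is exactly the kind of argument used for the surrounding lemmas and implicitly intended here. The one point you rightly flag---that the order type must be evaluated inside $V^\kappa/G$---is handled by the standard fact, cited in the paper, that the canonical function $f^\kappa_\xi$ represents the ordinal $\xi$ in any such generic ultrapower, so no further work is needed.
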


\begin{lemma}\label{lemma_successor}
Suppose $\kappa$ is regular. For all $\zeta<\kappa^+$ the following sets are closed unbounded in $\kappa$.
\begin{align*}
D_0&=\{\alpha<\kappa\st F^\kappa_{\zeta+1}(\alpha)\cap\zeta=F^\kappa_\zeta(\alpha)\}\\
D_1&=\{\alpha<\kappa\st F^\kappa_{\zeta+1}(\alpha)=F^\kappa_\zeta(\alpha)\cup\{\zeta\}\}\\
D_2&=\{\alpha<\kappa\st f^\kappa_{\zeta+1}(\alpha)=f^\kappa_\zeta(\alpha)+1\}
\end{align*}
\end{lemma}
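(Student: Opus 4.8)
The plan is to follow the same generic-ultrapower strategy used in Lemmas \ref{lemma_coherence} and \ref{lemma_canonical_functions_at_limits}, reducing each of the three assertions to a one-line computation inside a generic ultrapower. Fix $\zeta<\kappa^+$, let $G\subseteq P(\kappa)/\NS_\kappa$ be generic over $V$, and let $j:V\to V^\kappa/G$ be the corresponding generic ultrapower embedding. By Proposition \ref{proposition_framework} it suffices to show that $\kappa\in j(D_i)$ for $i=0,1,2$, since this yields that each $D_i$ contains a club subset of $\kappa$, which is the club-filter content asserted. The two facts driving every computation are Proposition \ref{proposition_useful_object}, which gives $j(F^\kappa_\eta)(\kappa)=j"\eta$ for every $\eta<\kappa^+$, together with $j(\zeta+1)=j(\zeta)+1$. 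As in the earlier lemmas, applying $j$ to the defining condition of $D_i$ replaces $F^\kappa_\zeta$ and $F^\kappa_{\zeta+1}$ by the canonical reflection functions $\overline F^{j(\kappa)}_{j(\zeta)}$ and $\overline F^{j(\kappa)}_{j(\zeta)+1}$ computed in $V^\kappa/G$, and replaces the parameter $\zeta$ by $j(\zeta)$.

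I would then record the two elementary facts that make the successor case clean: $j"\zeta\subseteq j(\zeta)$ (since $\eta<\zeta$ implies $j(\eta)<j(\zeta)$) and $j(\zeta)\notin j"\zeta$. Evaluating at $\kappa$ we have $j(F^\kappa_\zeta)(\kappa)=j"\zeta$ and $j(F^\kappa_{\zeta+1})(\kappa)=j"(\zeta+1)=j"\zeta\cup\{j(\zeta)\}$. For $D_0$ this gives $j(F^\kappa_{\zeta+1})(\kappa)\cap j(\zeta)=(j"\zeta\cup\{j(\zeta)\})\cap j(\zeta)=j"\zeta=j(F^\kappa_\zeta)(\kappa)$, so $\kappa\in j(D_0)$. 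For $D_1$ it gives $j(F^\kappa_{\zeta+1})(\kappa)=j"\zeta\cup\{j(\zeta)\}=j(F^\kappa_\zeta)(\kappa)\cup\{j(\zeta)\}$, so $\kappa\in j(D_1)$. For $D_2$, since $f^\kappa_\eta$ represents $\eta$ in the generic ultrapower (as recalled in the introduction), we have $j(f^\kappa_\eta)(\kappa)=\eta$ for each $\eta<\kappa^+$; hence $j(f^\kappa_{\zeta+1})(\kappa)=\zeta+1$ while $j(f^\kappa_\zeta)(\kappa)+1=\zeta+1$, so the two sides of the defining equation of $D_2$ agree and $\kappa\in j(D_2)$.

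The argument is uniform in $\zeta$, but it is worth flagging the degenerate range separately: when $\zeta<\kappa$ the functions $F^\kappa_\zeta$ and $F^\kappa_{\zeta+1}$ are the constants $\zeta$ and $\zeta+1$ (here using that $\kappa$ is a limit ordinal, so $\zeta+1<\kappa$), and one checks directly that $D_0=D_1=D_2=\kappa$. The only genuinely substantive case is therefore $\kappa\le\zeta<\kappa^+$, which is exactly where the identities $j(F^\kappa_\eta)(\kappa)=j"\eta$ do the work. I do not anticipate a real obstacle here: the entire content is the bookkeeping of transporting the parameter $\zeta$ to $j(\zeta)$ and the functions $F^\kappa_\eta$, $f^\kappa_\eta$ to their $V^\kappa/G$-versions, after which the set-builder conditions collapse to the trivial identities above. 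The single point to state with care is that Proposition \ref{proposition_framework} delivers that each $D_i$ contains a club subset of $\kappa$, which is precisely the form in which the lemma is applied in the later sections.
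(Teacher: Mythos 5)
Your proposal is correct and is essentially the paper's own argument: the paper gives no separate proof of this lemma, saying only that it follows easily from Proposition \ref{proposition_framework}, and your computations $j(F^\kappa_{\zeta+1})(\kappa)=j"(\zeta+1)=j"\zeta\cup\{j(\zeta)\}$ and $j(f^\kappa_\eta)(\kappa)=\eta$ via Proposition \ref{proposition_useful_object} are exactly that argument written out. You are also right to flag that this method literally yields that each $D_i$ contains a club subset of $\kappa$ (rather than being closed unbounded as stated), which is precisely the form in which Lemma \ref{lemma_successor} is invoked later in the paper, e.g.\ in Corollary \ref{corollary_nondiscreteness_from_indescribability}.
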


Next we prove a proposition which generalizes a folklore result concerning canonical functions (see Corollary \ref{corollary_crazy}) to canonical reflection functions, and which draws a connection between the canonical reflection functions at a regular cardinal $\kappa$ and the canonical reflection functions at regular $\alpha<\kappa$. The following proposition was originally established in a previous version of this article using a more complicated proof; the proof below is due to Cody and Holy and appears in \cite{cody_holy_2022}.

\begin{proposition}\label{proposition_crazy}
Suppose $\kappa$ is regular and $\xi<\kappa^+$. For each $\alpha<\kappa$ let
\[\pi^\kappa_{\xi,\alpha}:F^\kappa_\xi(\alpha)\to f^\kappa_\xi(\alpha)\]
be the transitive collapse of $F^\kappa_\xi(\alpha)$. Then there is a club $C^\kappa_\xi\subseteq\kappa$ such that for all regular uncountable $\alpha\in C^\kappa_\xi$ the set
\[D^\alpha_\xi=\{\beta<\alpha \st \pi^\kappa_{\xi,\alpha}[F^\kappa_\xi(\beta)]=F^\alpha_{f^\kappa_\xi(\alpha)}(\beta)\}\]
is in the club filter on $\alpha$.
\end{proposition}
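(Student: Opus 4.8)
The plan is to reduce the entire statement to the elementary fact, recorded immediately after Definition~\ref{definition_canonical_reflection_functions}, that the canonical reflection function at a regular cardinal is independent, modulo the nonstationary ideal, of the bijection used to define it; that fact is the real engine of the argument, and everything else is bookkeeping. The case $\xi<\kappa$ is trivial and I would dispose of it first: then $F^\kappa_\xi$ is the constant function with value $\xi$, the collapse $\pi^\kappa_{\xi,\alpha}$ is the identity on $\xi$ for every $\alpha>\xi$, and for such $\alpha$ both sides of the defining equation of $D^\alpha_\xi$ equal $\xi$ for every $\beta<\alpha$ (the right-hand side because $F^\alpha_\xi$ is also constant with value $\xi$ when $\xi<\alpha$). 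So for $\xi<\kappa$ one simply takes $C^\kappa_\xi$ to be the tail $\{\alpha<\kappa\st\alpha>\xi\}$. Hence assume $\kappa\le\xi<\kappa^+$ and fix the bijection $b_{\kappa,\xi}\colon\kappa\to\xi$ used to define $F^\kappa_\xi$.

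Next I would fix an arbitrary regular uncountable $\alpha<\kappa$ and set $\eta=f^\kappa_\xi(\alpha)=\ot(F^\kappa_\xi(\alpha))$. Since $F^\kappa_\xi(\alpha)=b_{\kappa,\xi}[\alpha]$ has cardinality $|\alpha|=\alpha$, its order type satisfies $\alpha\le\eta<\alpha^+$; in particular $\eta\in\alpha^+\setminus\alpha$, so the canonical reflection function $F^\alpha_\eta$ at $\alpha$ is the one given by clause (2) of Definition~\ref{definition_canonical_reflection_functions}, namely $F^\alpha_\eta(\beta)=b_{\alpha,\eta}[\beta]$ for the fixed bijection $b_{\alpha,\eta}\colon\alpha\to\eta$. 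This order-type computation is the one point that genuinely needs care, as it is exactly what guarantees that $f^\kappa_\xi(\alpha)$ lies in $\alpha^+\setminus\alpha$ and hence that the independence remark is applicable at $\alpha$.

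The key observation is that
\[c:=\pi^\kappa_{\xi,\alpha}\circ(b_{\kappa,\xi}\restriction\alpha)\colon\alpha\to\eta\]
is itself a bijection, being the composite of the bijection $b_{\kappa,\xi}\restriction\alpha\colon\alpha\to F^\kappa_\xi(\alpha)$ with the transitive collapse $\pi^\kappa_{\xi,\alpha}\colon F^\kappa_\xi(\alpha)\to\eta$. For each $\beta<\alpha$ we have $(b_{\kappa,\xi}\restriction\alpha)[\beta]=b_{\kappa,\xi}[\beta]=F^\kappa_\xi(\beta)$, and therefore
\[c[\beta]=\pi^\kappa_{\xi,\alpha}[F^\kappa_\xi(\beta)].\]
Thus $c$ is precisely the bijection $\alpha\to\eta$ whose associated canonical reflection function at $\alpha$ sends $\beta$ to the left-hand side of the equation defining $D^\alpha_\xi$, while the fixed bijection $b_{\alpha,\eta}$ sends $\beta$ to the right-hand side $F^\alpha_\eta(\beta)=F^\alpha_{f^\kappa_\xi(\alpha)}(\beta)$.

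Finally I would invoke the independence remark at $\alpha$: applied to the two bijections $c$ and $b_{\alpha,\eta}$ from $\alpha$ onto $\eta$, it yields a club $C_\alpha\subseteq\alpha$ with $c[\beta]=b_{\alpha,\eta}[\beta]$ for all $\beta\in C_\alpha$. By the previous paragraph this says exactly that $C_\alpha\subseteq D^\alpha_\xi$, so $D^\alpha_\xi$ is in the club filter on $\alpha$. Since $\alpha$ was an arbitrary regular uncountable cardinal below $\kappa$, the conclusion holds with $C^\kappa_\xi=\kappa$. The main obstacle, such as it is, is organizational rather than deep: recognizing that composing the collapse $\pi^\kappa_{\xi,\alpha}$ with $b_{\kappa,\xi}\restriction\alpha$ produces an honest bijection defining a canonical reflection function at $\alpha$, thereby reducing the claim to the bijection-independence fact. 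If one preferred to remain inside the generic-ultrapower framework of this section, the same idea works: using $j(F^\kappa_\xi)(\kappa)=j"\xi$ from Proposition~\ref{proposition_useful_object} one checks that $j(\pi^\kappa_{\xi,\cdot})_\kappa[j(F^\kappa_\xi)(\beta)]=F^\kappa_\xi(\beta)$, and then Proposition~\ref{proposition_framework2} reduces the statement to comparing two canonical reflection functions for $\xi$ at $\kappa$ computed in $V$ and in the ultrapower, which is again handled by the independence remark applied inside the ultrapower.
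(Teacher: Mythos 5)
Your proof is correct, and it takes a genuinely different route from the paper's. The paper stays inside the generic-ultrapower framework of Section 2: it applies Proposition \ref{proposition_framework2}, so it must verify that $\kappa\in j(\{\alpha\in\REG\cap\kappa : D^\alpha_\xi\text{ contains a club}\})$ for a generic ultrapower $j:V\to V^\kappa/G$ in which $\kappa$ is regular; it does this by noting that $j(\vec\pi)_\kappa$ collapses $j(F^\kappa_\xi)(\kappa)=j"\xi$ onto $\xi$ (Proposition \ref{proposition_useful_object}), that $j(\vec F)_\kappa$ is the $\xi$-th canonical reflection function at $\kappa$ in $V^\kappa/G$, and that both families $\{j(\vec\pi)_\kappa[j(F^\kappa_\xi)(\beta)] : \beta<\kappa\}$ and $\{j(\vec F)_\kappa(\beta) : \beta<\kappa\}$ are cofinal in $[\xi]^{<\kappa}$, finishing with a catching-up argument inside the ultrapower. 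You instead argue locally at each $\alpha$: the map $\beta\mapsto\pi^\kappa_{\xi,\alpha}[F^\kappa_\xi(\beta)]$ is itself a canonical reflection function at $\alpha$, namely the one computed from the bijection $c=\pi^\kappa_{\xi,\alpha}\circ(b_{\kappa,\xi}\restrict\alpha):\alpha\to f^\kappa_\xi(\alpha)$, so the claim reduces to the bijection-independence remark following Definition \ref{definition_canonical_reflection_functions}, applied at $\alpha$ to $c$ and $b_{\alpha,f^\kappa_\xi(\alpha)}$; your order-type computation $\alpha\le f^\kappa_\xi(\alpha)<\alpha^+$, which legitimizes clause (2) of that definition at $\alpha$, is exactly the point that needs checking and you check it. This buys two things: the argument is entirely forcing-free, and it proves something slightly stronger than the stated proposition --- for $\xi\ge\kappa$ the set $D^\alpha_\xi$ is in the club filter on $\alpha$ for \emph{every} regular uncountable $\alpha<\kappa$ (and for $\xi<\kappa$, for every regular uncountable $\alpha>\xi$), so no thinning to a club $C^\kappa_\xi$ is actually needed, whereas the route through Proposition \ref{proposition_framework2} can only deliver club-many $\alpha$. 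The one caveat is that you lean on the independence remark, which the paper asserts without proof; a self-contained write-up should include its short proof (for bijections $b^1,b^2:\alpha\to\eta$, every $\beta$ closed under $(b^2)^{-1}\circ b^1$ and $(b^1)^{-1}\circ b^2$ satisfies $b^1[\beta]=b^2[\beta]$, and such $\beta$ form a club since $\alpha$ is regular uncountable), but that is a two-line addition rather than a gap in the idea.
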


\begin{proof}
In order to prove the existence of such a club, we will use Proposition \ref{proposition_framework2}. Suppose $G\subseteq P(\kappa)/\NS_\kappa$ is generic over $V$ such that $\kappa$ is regular in $V^\kappa/G$ and let $j:V\to V^\kappa/G$ be the corresponding generic ultrapower. 
For each regular uncountable $\alpha<\kappa$ let $D^\alpha_\xi=\{\beta<\alpha \st \pi^\kappa_{\xi,\alpha}[F^\kappa_\xi(\beta)]=F^\alpha_{f^\kappa_\xi(\alpha)}(\beta)\}$. We must show that
\[\kappa\in j(\{\alpha\in\REG\cap\kappa\st D^\alpha_\xi\text{ contains a club subset of $\kappa$}\}).\] 
Let $\vec{D}=\<D^\alpha_\xi\st\alpha\in\REG\cap\kappa\>$,
$\vec{\pi}=\<\pi^\kappa_{\xi,\alpha}\st\alpha<\kappa\>$ and $\vec{F}=\<F^\alpha_{f^\kappa_\xi(\alpha)}\st\alpha\in\REG\cap\kappa\>$. By elementarity it follows that in $V^\kappa/G$, $j(\vec{\pi})_\kappa$ is a bijection from $j(F^\kappa_\xi)(\kappa)=j"\xi$ to $j(f^\kappa_\xi)(\kappa)=\xi$. Thus the set $\{j(\vec{\pi})_\kappa[j(F^\kappa_\xi)(\beta)]\st\beta<\kappa\}$ is cofinal in $[\xi]^{<\kappa}$. Also by elementarity, we see that $j(\vec{F})_\kappa$ is the $\xi$-th canonical reflection function at $\kappa$ in $V^\kappa/G$ and hence the set $\{j(\vec{F})_\kappa(\beta)\st\beta<\kappa\}$ is cofinal in $[\xi]^{<\kappa}$. By the usual catching up argument, in $V^\kappa/G$ the set $j(\vec{D})_\kappa$ contains a club subset of $\kappa$.
\end{proof}

The following folklore result (see \cite[Section 5]{MR1077260}) easily follows from Proposition \ref{proposition_crazy}, or can be established directly using an argument which is easier than that of Proposition \ref{proposition_crazy}.

\begin{corollary}\label{corollary_crazy}
Suppose $\kappa$ is regular and $\xi<\kappa^+$. Then there is a club $C^\kappa_\xi\subseteq\kappa$ such that for all regular uncountable $\alpha\in C^\kappa_\xi$ the set
\[D^\alpha_\xi=\{\beta<\alpha \st f^\kappa_\xi(\beta)=f^\alpha_{f^\kappa_\xi(\alpha)}(\beta)\}\]
is in the club filter on $\alpha$.
\end{corollary}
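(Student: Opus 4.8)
The plan is to deduce Corollary~\ref{corollary_crazy} from Proposition~\ref{proposition_crazy}, which has already been established. First I would observe that the two statements share almost identical structure: both produce a club $C^\kappa_\xi\subseteq\kappa$ so that for regular uncountable $\alpha\in C^\kappa_\xi$, a certain set $D^\alpha_\xi$ is in the club filter on $\alpha$. The difference is that Proposition~\ref{proposition_crazy} concerns the canonical \emph{reflection} functions $F$ (which take values in $P_\kappa\kappa^+$), whereas the Corollary concerns the ordinary canonical functions $f$ (which take values in $\kappa$). The essential link between them is the identity $f^\kappa_\xi(\alpha)=\ot(F^\kappa_\xi(\alpha))$ together with the fact that $\pi^\kappa_{\xi,\alpha}:F^\kappa_\xi(\alpha)\to f^\kappa_\xi(\alpha)$ is the transitive collapse, so that $\pi^\kappa_{\xi,\alpha}$ is an order isomorphism.

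The key step is to show that the condition $\pi^\kappa_{\xi,\alpha}[F^\kappa_\xi(\beta)]=F^\alpha_{f^\kappa_\xi(\alpha)}(\beta)$ appearing in Proposition~\ref{proposition_crazy} implies the condition $f^\kappa_\xi(\beta)=f^\alpha_{f^\kappa_\xi(\alpha)}(\beta)$ appearing in the Corollary, at least for club-many $\beta$. To see this, I would take order types of both sides of the reflection-function equation. On the left, since $\pi^\kappa_{\xi,\alpha}$ is an order isomorphism, $\ot(\pi^\kappa_{\xi,\alpha}[F^\kappa_\xi(\beta)])=\ot(F^\kappa_\xi(\beta))=f^\kappa_\xi(\beta)$. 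On the right, $\ot(F^\alpha_{f^\kappa_\xi(\alpha)}(\beta))=f^\alpha_{f^\kappa_\xi(\alpha)}(\beta)$ directly by the defining relation $f^\gamma_\eta(\beta)=\ot(F^\gamma_\eta(\beta))$ applied at the cardinal $\alpha$ with ordinal $f^\kappa_\xi(\alpha)<\alpha^+$. Hence the reflection-function equation yields the canonical-function equation verbatim, so $D^\alpha_\xi$ in the sense of Proposition~\ref{proposition_crazy} is contained in $D^\alpha_\xi$ in the sense of the Corollary. Since a superset of a club-filter set is again in the club filter, the Corollary follows with the same club $C^\kappa_\xi$.

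I would then briefly note the alternative direct route promised in the excerpt's phrase ``or can be established directly,'' namely repeating the generic-ultrapower argument of Proposition~\ref{proposition_crazy} but with the ordinary canonical functions, using that $f^\kappa_\xi$ represents $\xi$ (rather than $j''\xi$) in the generic ultrapower by Proposition~2.34 of \cite{MR2768692} as cited in the introduction. In that version the ``catching up'' bookkeeping is lighter because one works with the ordinals $f^\kappa_\xi(\beta)$ directly instead of with the sets $F^\kappa_\xi(\beta)$ and their collapses, so no tracking of transitive-collapse maps is needed. I expect the main subtlety to be purely notational rather than mathematical: one must make sure that $f^\kappa_\xi(\alpha)$ genuinely lies in the domain $\alpha^+$ of the canonical functions at $\alpha$ for the regular uncountable $\alpha\in C^\kappa_\xi$, which holds because $f^\kappa_\xi(\alpha)<\alpha$ for club-many $\alpha$ (when $\xi<\kappa$) or because $f^\kappa_\xi(\alpha)=\ot(b_{\kappa,\xi}[\alpha])<\alpha^+$ in general, so that the symbol $f^\alpha_{f^\kappa_\xi(\alpha)}$ is well-defined on the relevant club.
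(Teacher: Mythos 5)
Your proposal is correct and follows exactly the route the paper intends: the paper derives Corollary~\ref{corollary_crazy} from Proposition~\ref{proposition_crazy}, and your order-type computation (using that $\pi^\kappa_{\xi,\alpha}$ is an order isomorphism and that $f^\gamma_\eta(\beta)=\ot(F^\gamma_\eta(\beta))$ by definition) is precisely the missing ``easily follows'' step, with the same club $C^\kappa_\xi$ and upward closure of the club filter finishing the argument. Your side remarks---the alternative direct generic-ultrapower argument and the check that $f^\kappa_\xi(\alpha)<\alpha^+$ so that $f^\alpha_{f^\kappa_\xi(\alpha)}$ is defined---also match the paper's parenthetical comment and resolve the only genuine subtlety.
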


\section{Restricting $\Pi^1_\xi$ formulas and consistency of higher $\Pi^1_\xi$-indescribability} \label{section_definition_pi1xi}

We begin this section with a precise definition of $\Pi^1_\xi$ and $\Sigma^1_\xi$ formulas \emph{over $V_\kappa$}, where $\kappa$ is a regular cardinal and $\xi$ is an ordinal. The following definition is similar to \cite[Definition 4.1]{MR3894041}, the only difference being that we allow for first and second order parameters from $V_\kappa$. Recall that throughout the article we use capital letters to denote second-order variables and lower case letters to denote first-order variables.

\begin{definition}\label{definition_over}
Suppose $\kappa$ is a regular cardinal. We define the notions of $\Pi^1_\xi$ and $\Sigma^1_\xi$ formula over $V_\kappa$, for all ordinals $\xi$ as follows. 
\begin{enumerate}
\item A formula $\varphi$ is $\Pi^1_0$, or equivalently $\Sigma^1_0$, over $V_\kappa$ if it is a first order formula in the language of set theory, however we allow for free variables and parameters from $V_\kappa$ of two types, namely of first and of second order. 
\item A formula $\varphi$ is $\Pi^1_{\xi+1}$ over $V_\kappa$ if it is of the form $\forall X_{k_1}\cdots\forall X_{k_m}\psi$ where $\psi$ is $\Sigma^1_\xi$ over $V_\kappa$ and $m\in\omega$. Similarly, $\varphi$ is $\Sigma^1_{\xi+1}$ over $V_\kappa$ if it is of the form $\exists X_{k_1}\cdots\exists X_{k_m}\psi$ where $\psi$ is $\Pi^1_\xi$ over $V_\kappa$ and $m\in\omega$.\footnote{We follow the convention that uppercase letters represent second order variables, while lower case letters represent first order variables. Thus, in the above, all quantifiers displayed are understood to be second order quantifiers, i.e., quantifiers over subsets of $V_\kappa$.}
\item When $\xi$ is a limit ordinal, a formula $\varphi$, with finitely many second-order free variables and finitely many second-order parameters, is $\Pi^1_\xi$ over $V_\kappa$ if it is of the form
\[\bigwedge_{\zeta<\xi}\varphi_\zeta\]
where $\varphi_\zeta$ is $\Pi^1_\zeta$ over $V_\kappa$ for all $\zeta<\xi$. Similarly, $\varphi$ is $\Sigma^1_\xi$ if it is of the form 
\[\bigvee_{\zeta<\xi}\varphi_\zeta\]
where $\varphi_\zeta$ is $\Sigma^1_\zeta$ over $V_\kappa$ for all $\zeta<\xi$.
\end{enumerate}
\end{definition}

\begin{definition}\label{definition_restriction}
By induction on $\xi<\kappa^+$, we define $\varphi\res^\kappa_\alpha$ for all $\Pi^1_\xi$ formulas $\varphi$ over $V_\kappa$ and all regular $\alpha<\kappa$ as follows. First assume that $\xi<\kappa$. If \[\varphi=\varphi(X_1,\ldots,X_m,A_1,\ldots,A_n),\] with free second order variables $X_1,\ldots,X_m$ and second order parameters $A_1,\ldots,A_n$, then we define
\[\varphi\res^\kappa_\alpha=\varphi(X_1,\ldots,X_m,A_1\cap V_\alpha,\ldots,A_n\cap V_\alpha).\]

If $\xi=\zeta+1$ is a successor ordinal and $\varphi=\forall X_{k_1}\ldots\forall X_{k_m}\psi$ is $\Pi^1_{\zeta+1}$ over $V_\kappa$, then we define 
\[\varphi\res^\kappa_\alpha=\forall X_{k_1}\ldots\forall X_{k_m}(\psi\res^\kappa_\alpha).\] 
We define $\varphi\res^\kappa_\alpha$ analogously when $\varphi$ is $\Sigma^1_{\zeta+1}$.

If $\xi\in\kappa^+\setminus\kappa$ is a limit ordinal, and 
\begin{align}\varphi=\bigwedge_{\zeta<\xi}\psi_\zeta\label{equation_defn_restriction}\end{align}
is $\Pi^1_\xi$ over $V_\kappa$, then we define
\[\varphi\res^\kappa_\alpha=\bigwedge_{\zeta\in f^\kappa_\xi(\alpha)}\psi_{(\pi^\kappa_{\xi,\alpha})^{-1}(\zeta)}\res^\kappa_\alpha\]
in case $\psi_{(\pi^\kappa_{\xi,\alpha})^{-1}(\zeta)}\res^\kappa_\alpha$ is a $\Pi^1_\zeta$ formula over $V_\alpha$ for every $\zeta<f^\kappa_\xi(\alpha)$. We leave $\varphi\res^\kappa_\alpha$ undefined otherwise. We define $\varphi\res^\kappa_\alpha$ similarly when $\xi\in\kappa^+\setminus\kappa$ is a limit ordinal and $\varphi$ is $\Sigma^1_\xi$.

\end{definition}

\begin{remark}\label{remark_definition_of_restriction} A few remarks about Definition \ref{definition_restriction} are in order.
\begin{enumerate}
\item An easy inductive argument on $\xi<\kappa^+$ shows that if $\varphi$ is a $\Pi^1_\xi$ or $\Sigma^1_\xi$ formula over $V_\kappa$, and $\alpha<\kappa$ is regular, then whenever $\varphi\res^\kappa_\alpha$ is defined, it is a $\Pi^1_{f^\kappa_\xi(\alpha)}$ or $\Sigma^1_{f^\kappa_\xi(\alpha)}$ formula over $V_\alpha$ respectively.
\item Recall that we defined the sequence of canonical reflection functions $\<F^\kappa_\xi\st\xi<\kappa^+\>$, the sequence of canonical functions $\<f^\kappa_\xi\st\xi<\kappa^+\>$ and the transitive collapses $\pi^\kappa_{\xi,\alpha}:F^\kappa_\xi(\alpha)\to f^\kappa_\xi(\alpha)$ in a particular way making use of fixed sequence of bijections $\<b_{\kappa,\xi}\st\xi\in\kappa^+\setminus\kappa\>$. Thus, the definition of $\varphi\res^\kappa_\alpha$ given above clearly depends on our choice of bijections $\<b_{\kappa,\xi}\st\xi\in\kappa^+\setminus\kappa\>$. Below we will see that our definition of $\varphi\res^\kappa_\alpha$ is independent of this choice of bijections modulo the nonstationary ideal. See the paragraph after Definition \ref{definition_indescribability} for details.
\end{enumerate}
\end{remark}

In order to establish some basic properties of the restriction operation from Definition \ref{definition_restriction}, let us consider how it behaves with respect to generic ultrapowers. We will want to apply elementary embeddings to $\Pi^1_\xi$ and $\Sigma^1_\xi$ formulas, which will be viewed as set theoretic objects.

\begin{remark}\label{remark_coding}  Assume that $\varphi$ is either a $\Pi^1_\xi$ or $\Sigma^1_\xi$ formula over $V_\kappa$ for some $\xi<\kappa^+$. Let $j:V\to V^\kappa/G$ be the generic ultrapower embedding obtained by forcing with $P(\kappa)/I$ where $I$ is some normal ideal on $\kappa$. We will leave it to the reader to check that any reasonable coding of formulas has the following properties.

\begin{enumerate}
  \item If $\xi<\kappa$, and $A_1,\ldots,A_n$ are all second order parameters appearing in $\varphi$, then \[j(\varphi(A_1,\ldots,A_n))=\varphi(j(A_1),\ldots,j(A_n)).\]
  \item $j(\forall X\,\varphi)=\forall X\,j(\varphi)$.
  \item If $\xi\ge\kappa$ is a limit ordinal, and $\varphi$ is either of the form $\varphi=\bigwedge_{\zeta<\xi}\psi_\zeta$, or of the form $\bigvee_{\zeta<\xi}\psi_\zeta$, let $\vec\psi=\langle\psi_\zeta\mid\zeta<\xi\rangle$. Then, \[j(\varphi)=\bigwedge_{\zeta<j(\xi)}j(\vec\psi)_\zeta\quad\textrm{or}\quad j(\varphi)=\bigvee_{\zeta<j(\xi)}j(\vec\psi)_\zeta\] respectively.
\end{enumerate}
\end{remark}

Regarding the assumption of the next lemma, and also of some later results, note that $\kappa$ will be regular in a generic ultrapower $V^\kappa/G$ obtained by forcing with a normal ideal on $\kappa$ if and only if $G$ contains the set of regular cardinals below $\kappa$. This is of course only possible if that latter set is a stationary subset of $\kappa$, i.e., if $\kappa$ is weakly Mahlo. Let us note that the assumption that $\kappa$ is regular in the generic ultrapower is needed to ensure that $j(\varphi)\res^{j(\kappa)}_\kappa$ is defined.

\begin{lemma}[{Cody-Holy \cite{cody_holy_2022}}]\label{lemma_j_of}
Suppose $\kappa$ is a regular cardinal and $\varphi$ is a $\Pi^1_\xi$ or $\Sigma^1_\xi$ formula over $V_\kappa$ for some $\xi<\kappa^+$. Whenever $I$ is a normal ideal on $\kappa$, $G\subseteq P(\kappa)/I$ is generic over $V$ and $j:V\to V^\kappa/G$ is the corresponding generic ultrapower such that $\kappa$ is regular in $V^\kappa/G$, it follows that $j(\varphi)\res^{j(\kappa)}_\kappa$ is $\Pi^1_\xi$ in $V^\kappa/G$ and furthermore,
\[j(\varphi)\res^{j(\kappa)}_\kappa=\varphi.\]
\end{lemma}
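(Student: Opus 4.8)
The plan is to prove the statement $j(\varphi)\res^{j(\kappa)}_\kappa=\varphi$ by induction on $\xi<\kappa^+$, following the recursive structure of Definition \ref{definition_restriction}. At each stage I will apply the coding facts of Remark \ref{remark_coding} to compute $j(\varphi)$, and then unpack the restriction operation $\res^{j(\kappa)}_\kappa$ applied at the critical point $\kappa$ of the generic ultrapower. The key arithmetic input will be Proposition \ref{proposition_useful_object}, which tells us $j(F^\kappa_\xi)(\kappa)=j"\xi$, so that $f^{j(\kappa)}_{j(\xi)}(\kappa)=\ot(j(F^\kappa_\xi)(\kappa))=\ot(j"\xi)=\xi$, and the transitive collapse $\pi^{j(\kappa)}_{j(\xi),\kappa}:j"\xi\to\xi$ is precisely the inverse of $j\restrict\xi$. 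I should note at the outset that $\kappa$ being regular in $V^\kappa/G$ guarantees (via Remark \ref{remark_definition_of_restriction}(1) and the definition) that $j(\varphi)\res^{j(\kappa)}_\kappa$ is actually defined and is $\Pi^1_\xi$ there.

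First I would handle the base case $\xi<\kappa$. Here $j(\xi)=\xi$ and $f^{j(\kappa)}_{j(\xi)}=f^{j(\kappa)}_\xi$ is the constant function $\xi$, so the limit clause is not invoked; the restriction only acts on second-order parameters by intersecting with $V_\kappa$. By Remark \ref{remark_coding}(1), if $\varphi=\varphi(X_1,\ldots,X_m,A_1,\ldots,A_n)$ then $j(\varphi)=\varphi(X_1,\ldots,X_m,j(A_1),\ldots,j(A_n))$, and restricting to $\kappa$ replaces each $j(A_i)$ by $j(A_i)\cap V_\kappa$. Since $\crit(j)=\kappa$ and each $A_i\in V_\kappa$ is fixed with $A_i\subseteq V_{\kappa}$, we have $j(A_i)\cap V_\kappa=A_i$, giving $j(\varphi)\res^{j(\kappa)}_\kappa=\varphi$. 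For the successor step $\xi=\zeta+1$, write $\varphi=\forall X_{k_1}\cdots\forall X_{k_m}\psi$; by Remark \ref{remark_coding}(2), $j(\varphi)=\forall X_{k_1}\cdots\forall X_{k_m}j(\psi)$, and the definition of restriction peels off the quantifiers, so $j(\varphi)\res^{j(\kappa)}_\kappa=\forall X_{k_1}\cdots\forall X_{k_m}(j(\psi)\res^{j(\kappa)}_\kappa)=\forall X_{k_1}\cdots\forall X_{k_m}\psi=\varphi$ by the induction hypothesis applied to $\psi$ (the $\Sigma^1_\zeta$ case is symmetric).

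The limit case $\kappa\le\xi<\kappa^+$ is where the real work lies. Writing $\varphi=\bigwedge_{\zeta<\xi}\psi_\zeta$ and $\vec\psi=\langle\psi_\zeta\mid\zeta<\xi\rangle$, Remark \ref{remark_coding}(3) gives $j(\varphi)=\bigwedge_{\eta<j(\xi)}j(\vec\psi)_\eta$. Applying the limit clause of the restriction at $\kappa$, and using that $f^{j(\kappa)}_{j(\xi)}(\kappa)=\xi$ and that $\pi^{j(\kappa)}_{j(\xi),\kappa}$ inverts $j\restrict\xi$, I get
\[
j(\varphi)\res^{j(\kappa)}_\kappa=\bigwedge_{\zeta<\xi}\,j(\vec\psi)_{j(\zeta)}\res^{j(\kappa)}_\kappa.
\]
Now $j(\vec\psi)_{j(\zeta)}=j(\psi_\zeta)$ by elementarity (since $j(\vec\psi)(j(\zeta))=j(\vec\psi(\zeta))=j(\psi_\zeta)$), and each $\psi_\zeta$ is $\Pi^1_\zeta$ over $V_\kappa$ with $\zeta<\xi$, so the induction hypothesis yields $j(\psi_\zeta)\res^{j(\kappa)}_\kappa=\psi_\zeta$. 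Substituting, the right-hand side becomes $\bigwedge_{\zeta<\xi}\psi_\zeta=\varphi$, as desired.

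The main obstacle I anticipate is bookkeeping the collapse map correctly: I must verify that under the limit clause the index $(\pi^{j(\kappa)}_{j(\xi),\kappa})^{-1}(\zeta)$ for $\zeta<f^{j(\kappa)}_{j(\xi)}(\kappa)=\xi$ equals exactly $j(\zeta)\in j"\xi$, so that the conjunct selected is $j(\vec\psi)_{j(\zeta)}=j(\psi_\zeta)$ rather than some mismatched term. This hinges on $j(F^\kappa_\xi)(\kappa)=j"\xi$ (Proposition \ref{proposition_useful_object}) and on the transitive collapse of $j"\xi$ being $\zeta\mapsto j(\zeta)^{-1}$ since $j$ is order-preserving; I would also invoke Lemma \ref{lemma_coherence} and Lemma \ref{lemma_canonical_functions_at_limits} if needed to confirm the coherence of the indexing at $\kappa$, though the direct computation via $j"\xi$ should suffice. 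A secondary point to check is that the side condition in the limit clause---that each restricted conjunct $\psi_\zeta$ is genuinely $\Pi^1_\zeta$ over $V_\kappa$ (so that $j(\varphi)\res^{j(\kappa)}_\kappa$ is defined and the resulting formula is $\Pi^1_\xi$)---is met; this follows from the induction hypothesis together with Remark \ref{remark_definition_of_restriction}(1), and it is here that the hypothesis of $\kappa$ being regular in $V^\kappa/G$ is essential.
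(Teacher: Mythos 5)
Your proposal is correct and follows essentially the same route as the paper's proof: induction on $\xi$ using the coding facts of Remark \ref{remark_coding}, with the limit case resolved by the identification $j(F^\kappa_\xi)(\kappa)=j"\xi$ (Proposition \ref{proposition_useful_object}) and the observation that the transitive collapse of $j"\xi$ inverts $j\restrict\xi$, so that the selected conjuncts are exactly the $j(\psi_\zeta)$ and the induction hypothesis applies. Your attention to the definedness side condition (each $j(\psi_\zeta)\res^{j(\kappa)}_\kappa$ being $\Pi^1_\zeta$ in $V^\kappa/G$) matches the paper's treatment as well.
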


\begin{proof}
We proceed by induction on $\xi<\kappa^+$. By Remark \ref{remark_coding}(1) and the definition of the restriction operation, the case when $\xi<\kappa$ is easy since \[j(\varphi(A_1,\ldots,A_n))\res^{j(\kappa)}_\kappa=\varphi(j(A_1),\ldots,j(A_n))\res^{j(\kappa)}_\kappa=\varphi(A_1,\ldots,A_n).\] 
 
 Suppose $\xi=\zeta+1$ is a successor ordinal above $\kappa$ and $\varphi=\forall X\psi(X)$ where $\psi(X)$ is a $\Sigma^1_\zeta$ formula over $V_\kappa$. By Remark \ref{remark_coding}(2), 
 \[j(\varphi)\res^{j(\kappa)}_\kappa=j(\forall X\psi(X))\res^{j(\kappa)}_\kappa=\forall X j(\psi(X))\res^{j(\kappa)}_\kappa=\forall X\psi(X).\]
Essentially the same argument works when $\varphi=\exists X\psi(X)$ and $\psi(X)$ is a $\Pi^1_\zeta$ formula over $V_\kappa$.
  
Suppose $\xi\in\kappa^+\setminus\kappa$ is a limit and $\varphi=\bigwedge_{\zeta<\xi}\psi_\zeta$ is a $\Pi^1_\xi$ formula over $V_\kappa$. Let $\vec\psi=\langle\psi_\zeta\mid\zeta<\xi\rangle$, and let $\vec\pi=\langle\pi^\kappa_{\xi,\alpha}\mid\alpha<\kappa\rangle$. By elementarity $j(\vec{\pi})_\kappa$ is the transitive collapse of $j(F^\kappa_\xi)(\kappa)=j"\xi$ to $j(f^\kappa_\xi)(\kappa)=\xi$ and hence $j(\vec{\pi})_\kappa\restrict j"\xi=j^{-1}\restrict j"\xi$. Furthermore, For each $\zeta<\xi$ we have $j(\vec{\psi})_{j(\vec{\pi})_\kappa^{-1}(\zeta)}\res^{j(\kappa)}_\kappa=j(\vec{\psi})_{j(\zeta)}\res^{j(\kappa)}_\kappa=j(\psi_\zeta)\res^{j(\kappa)}_\kappa$, which is $\Pi^1_\zeta$ in $V^\kappa/G$ by our inductive hypothesis. Thus we have
  
\begin{align*}
j(\varphi)\res^{j(\kappa)}_\kappa&=\bigwedge_{\zeta<f^{j(\kappa)}_{j(\xi)}(\kappa)}j(\vec{\psi})_{j(\vec{\pi})_\kappa^{-1}(\zeta)}\res^{j(\kappa)}_\kappa\\
	&=\bigwedge_{\zeta<\xi}j(\psi_\zeta)\res^{j(\kappa)}_\kappa\\
	&=\varphi.
\end{align*}
The case when $\varphi$ is a $\Sigma^1_\xi$ formula is treated in exactly the same way.
\end{proof}

A nice feature of our definition of restriction is that it provides a convenient way to represent $\Pi^1_\xi$ and $\Sigma^1_\xi$ formulas in generic ultrapowers.

\begin{lemma}[{Cody-Holy \cite{cody_holy_2022}}]\label{lemma_represent}
Suppose $I$ is a normal ideal on $\kappa$, $G\subseteq P(\kappa)/I$ is generic over $V$, $j:V\to V^\kappa/G$ is the corresponding generic ultrapower and $\kappa$ is regular in $V^\kappa/G$. Suppose $\varphi$ is a $\Pi^1_\xi$ or $\Sigma^1_\xi$ formula over $V_\kappa$ for some $\xi<\kappa^+$ and let $\Phi:\kappa\to V_\kappa$ be such that $\Phi(\alpha)=\varphi\res^\kappa_\alpha$ for every regular $\alpha<\kappa$. Then, $\Phi$ represents $\varphi$ in $V^\kappa/G$. That is, $j(\Phi)(\kappa)=\varphi$.
\end{lemma}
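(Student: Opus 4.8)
The plan is to prove Lemma \ref{lemma_represent} by transfinite induction on $\xi<\kappa^+$, mirroring exactly the inductive structure of Definition \ref{definition_restriction} and of the preceding Lemma \ref{lemma_j_of}. The key observation driving everything is the standard fact about generic ultrapowers: for any function $H:\kappa\to V$ in $V$, the equivalence class $[H]_G$ equals $j(H)(\kappa)$, since $\kappa=[\id]_G=\crit(j)$. Thus the statement $j(\Phi)(\kappa)=\varphi$ is literally the assertion that $\Phi$ represents $\varphi$, and what must be checked is that $j(\Phi)(\kappa)$, computed via elementarity from the pointwise definition $\Phi(\alpha)=\varphi\res^\kappa_\alpha$, simplifies to $\varphi$. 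The clean way to do this is to compute $j(\Phi)(\kappa)$ directly and recognize it as $j(\varphi)\res^{j(\kappa)}_\kappa$, which by Lemma \ref{lemma_j_of} equals $\varphi$.

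First I would set up the computation. Since $\Phi$ is the function $\alpha\mapsto\varphi\res^\kappa_\alpha$ (defined on regular $\alpha<\kappa$), by elementarity $j(\Phi)$ is the function on regular $\alpha<j(\kappa)$ sending $\alpha$ to $(j(\varphi))\res^{j(\kappa)}_\alpha$, where here the restriction operation is interpreted in $V^\kappa/G$ at the regular cardinal $j(\kappa)$. Here I am using that the defining schema of the restriction operation (Definition \ref{definition_restriction}) is uniform and first-order expressible from the sequences $\vec F$, $\vec f$, $\vec\pi$, so that $j$ carries "the restriction of $\varphi$ to $\alpha$ computed at $\kappa$" to "the restriction of $j(\varphi)$ to $\alpha$ computed at $j(\kappa)$." Evaluating at $\alpha=\kappa$, which is regular in $V^\kappa/G$ by hypothesis, gives
\[
j(\Phi)(\kappa)=j(\varphi)\res^{j(\kappa)}_\kappa.
\]
The hypothesis that $\kappa$ is regular in $V^\kappa/G$ is exactly what guarantees that $\kappa$ lies in the domain of $j(\Phi)$ and that $j(\varphi)\res^{j(\kappa)}_\kappa$ is defined, as noted in the remark before Lemma \ref{lemma_j_of}. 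Then I would invoke Lemma \ref{lemma_j_of} to conclude $j(\varphi)\res^{j(\kappa)}_\kappa=\varphi$, finishing the proof.

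The main subtlety, and the step I expect to require the most care, is justifying that $j$ really does commute with the restriction operation in the sense needed to pass from the pointwise definition of $\Phi$ to the conclusion $j(\Phi)(\kappa)=j(\varphi)\res^{j(\kappa)}_\kappa$. The issue is that the restriction $\varphi\res^\kappa_\alpha$ is defined recursively using $f^\kappa_\xi(\alpha)$, $\pi^\kappa_{\xi,\alpha}$, and the preimages $(\pi^\kappa_{\xi,\alpha})^{-1}(\zeta)$ at the limit stages, so one must verify that applying $j$ to the entire recursion produces the analogous recursion for $j(\varphi)$ at $j(\kappa)$. This is precisely the content already extracted in the proof of Lemma \ref{lemma_j_of}, where the coding facts of Remark \ref{remark_coding} and the computation $j(\vec\pi)_\kappa\restrict j''\xi=j^{-1}\restrict j''\xi$ are used. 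Rather than redo that induction, the cleanest exposition simply notes that $\Phi$ is defined by the \emph{same} first-order schema (applied at $\kappa$) that defines the operation $\psi\mapsto\psi\res^{j(\kappa)}_\kappa$ (applied at $j(\kappa)$) after pushing forward by $j$, so elementarity yields $j(\Phi)(\kappa)=j(\varphi)\res^{j(\kappa)}_\kappa$ immediately. Thus Lemma \ref{lemma_represent} is essentially a restatement of Lemma \ref{lemma_j_of} together with the identity $[H]_G=j(H)(\kappa)$, and the proof is short once that identification is made explicit.
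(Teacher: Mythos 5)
Your proof is correct and is essentially the paper's own argument: by elementarity $j(\Phi)(\kappa)=j(\varphi)\res^{j(\kappa)}_\kappa$ (using that $\kappa$ is regular in $V^\kappa/G$ so that $\kappa$ lies in the domain of $j(\Phi)$), and this equals $\varphi$ by Lemma \ref{lemma_j_of}. The transfinite induction announced in your opening paragraph is never needed or used---all the inductive work lives inside Lemma \ref{lemma_j_of}, exactly as in the paper's one-line proof.
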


\begin{proof}
This is an easy consequence of Lemma \ref{lemma_j_of} since \[j(\Phi)(\kappa)=j(\<\varphi\res^\kappa_\alpha\st\alpha\in\kappa\cap\REG\>)_\kappa=j(\varphi)\res^{j(\kappa)}_\kappa=\varphi.\]
\end{proof}


As an easy consequence of Lemma \ref{lemma_j_of}, we see that for each $\xi\in\kappa^+\setminus\kappa$, the definition of $\varphi\res^\kappa_\alpha$ where $\varphi$ is $\Pi^1_\xi$ or $\Sigma^1_\xi$ over $V_\kappa$ is independent of which bijection $b_{\kappa,\xi}$ is used in its computation, modulo the nonstationary ideal on $\kappa$.

\begin{corollary}\label{corollary_restriction_is_well_defined}

Suppose $\kappa$ is a regular cardinal, $\xi\in\kappa^+\setminus\kappa$ and $\varphi$ is $\Pi^1_\xi$ or $\Sigma^1_\xi$ formula over $V_\kappa$. Let $b_{\kappa,\xi}$ and $\bar{b}_{\kappa,\xi}$ be bijections from $\kappa$ to $\xi$, and let $\varphi\res^\kappa_\alpha$ and $\varphi\bar{\res}^\kappa_\alpha$ denote the restriction of $\varphi$ to a regular $\alpha<\kappa$ defined using $b_{\kappa,\xi}$ and $\bar{b}_{\kappa,\xi}$ respectively. Then there is a club $C\subseteq\kappa$ such that for all regular $\alpha\in C$ we have $\varphi\res^\kappa_\alpha=\varphi\bar\res^\kappa_\alpha$
\end{corollary}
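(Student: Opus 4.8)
The plan is to reduce the claim to a single computation inside a generic ultrapower via Proposition \ref{proposition_framework2}, and then to apply Lemma \ref{lemma_j_of} twice, once for each of the two bijection sequences. Set
\[E = \{\alpha < \kappa : \alpha \text{ is regular and } \varphi\res^\kappa_\alpha = \varphi\bar\res^\kappa_\alpha\}.\]
By Proposition \ref{proposition_framework2} it suffices to show that whenever $G \subseteq P(\kappa)/\NS_\kappa$ is generic over $V$ with $\kappa$ regular in $V^\kappa/G$ and $j : V \to V^\kappa/G$ is the corresponding generic ultrapower embedding, then $\kappa \in j(E)$.

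First I would observe that, although Definition \ref{definition_restriction} and Lemma \ref{lemma_j_of} are phrased with respect to one fixed sequence of bijections, nothing in the statement or proof of Lemma \ref{lemma_j_of} depends on which sequence is chosen: the argument goes through verbatim for any fixed sequence $\langle b_{\kappa,\xi} : \xi \in \kappa^+ \setminus \kappa \rangle$ together with its associated canonical reflection functions, transitive collapses, and restriction operation. In particular, Lemma \ref{lemma_j_of} applies both to the restriction $\res$ built from $\langle b_{\kappa,\xi}\rangle$ and to the restriction $\bar\res$ built from $\langle \bar b_{\kappa,\xi}\rangle$. Unwinding the definition of $E$ under $j$, and using that $\kappa$ is regular in $V^\kappa/G$, the statement $\kappa \in j(E)$ asserts precisely that
\[j(\varphi)\res^{j(\kappa)}_\kappa = j(\varphi)\bar\res^{j(\kappa)}_\kappa,\]
where the two restrictions of $j(\varphi)$ are computed in $V^\kappa/G$ using the images under $j$ of the $b$-sequence and of the $\bar b$-sequence respectively. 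But applying Lemma \ref{lemma_j_of} to the $b$-sequence gives $j(\varphi)\res^{j(\kappa)}_\kappa = \varphi$, and applying it to the $\bar b$-sequence gives $j(\varphi)\bar\res^{j(\kappa)}_\kappa = \varphi$. Hence both sides equal $\varphi$ and so agree, whence $\kappa \in j(E)$ and the desired club $C$ exists by Proposition \ref{proposition_framework2}.

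The one point requiring care, and the step I would single out as the crux, is the observation that Lemma \ref{lemma_j_of} is insensitive to the choice of bijection sequence, so that it may legitimately be invoked for both $\res$ and $\bar\res$ against the single embedding $j$; once this is granted, the two restrictions of $j(\varphi)$ collapse to the same object $\varphi$ and the conclusion is immediate. Everything else is routine bookkeeping, namely tracking that the reflection functions at $j(\kappa)$ in $V^\kappa/G$ arising from the $b$- and $\bar b$-sequences are exactly the $j$-images of the corresponding reflection functions in $V$.

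An equivalent route, which I would mention as an alternative, phrases the argument through Lemma \ref{lemma_represent}: the functions $\Phi(\alpha) = \varphi\res^\kappa_\alpha$ and $\bar\Phi(\alpha) = \varphi\bar\res^\kappa_\alpha$ both represent $\varphi$ in $V^\kappa/G$, so $j(\Phi)(\kappa) = \varphi = j(\bar\Phi)(\kappa)$. Thus $\{\alpha < \kappa : \Phi(\alpha) = \bar\Phi(\alpha)\} = E$ lies in the generic ultrafilter for every such $G$, which is again exactly condition (2) of Proposition \ref{proposition_framework2} and yields the club. Either formulation reduces the corollary to the same double application of the representation/restriction machinery already established.
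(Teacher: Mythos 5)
Your proposal is correct and follows essentially the same route as the paper: the paper's proof likewise invokes Proposition \ref{proposition_framework2} and then applies Lemma \ref{lemma_j_of} to each restriction operation to conclude $j(\varphi)\res^{j(\kappa)}_\kappa=\varphi=j(\varphi)\bar{\res}^{j(\kappa)}_\kappa$. Your explicit remark that Lemma \ref{lemma_j_of} is insensitive to the choice of bijection sequence is a worthwhile elaboration of a point the paper leaves implicit, but it does not constitute a different argument.
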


\begin{proof}
To prove the existence of such a club we use Proposition \ref{proposition_framework2}. Let $G\subseteq P(\kappa)/\NS_\kappa$ be generic over $V$ such that $\kappa$ is regular in $V^\kappa/G$ and let $j:V\to V^\kappa/G$ be the corresponding generic ultrapower. Lemma \ref{lemma_j_of} implies that $j(\varphi)\res^{j(\kappa)}_\kappa=\varphi=j(\varphi)\bar{\res}^{j(\kappa)}_\kappa$.
\end{proof}

The following lemma was essentially established in an earlier version of the current article using a more complicated proof. The following simplified proof is due to Cody-Holy and appears in \cite{cody_holy_2022}.

\begin{lemma}\label{lemma_restriction_is_nice}
  Suppose $\kappa$ is weakly Mahlo. For any $\xi<\kappa^+$, if $\varphi$ is a $\Pi^1_\xi$ or $\Sigma^1_\xi$ formula over $V_\kappa$, then there is a club subset $C_\varphi$ of $\kappa$ such that for any regular $\alpha\in C_\varphi$, $\varphi\res^\kappa_\alpha$ is defined, and therefore a $\Pi^1_{f^\kappa_\xi(\alpha)}$  or $\Sigma^1_{f^\kappa_\xi(\alpha)}$ formula over $V_\alpha$ respectively by Remark \ref{remark_definition_of_restriction}. 
\end{lemma}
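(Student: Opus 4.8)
The plan is to deduce this directly from Proposition \ref{proposition_framework2} together with Lemma \ref{lemma_j_of}, bypassing any direct transfinite induction on $\xi$. I would set
\[E=\{\alpha<\kappa\st\alpha\text{ is regular and }\varphi\res^\kappa_\alpha\text{ is defined}\}.\]
Since $\varphi\res^\kappa_\alpha$ is only ever defined at regular $\alpha$, condition (1) of Proposition \ref{proposition_framework2} applied to $E$ is precisely the conclusion of the lemma (after intersecting the resulting club with the club of uncountable limit ordinals to discard $\omega$). So it suffices to verify condition (2): whenever $G\subseteq P(\kappa)/\NS_\kappa$ is generic over $V$ with $\kappa$ regular in $V^\kappa/G$, and $j:V\to V^\kappa/G$ is the corresponding generic ultrapower, then $\kappa\in j(E)$.

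To check $\kappa\in j(E)$, I would first observe that ``$\varphi\res^\kappa_\alpha$ is defined'' is a first-order property of $\alpha$ with $\varphi$ and the ambient sequences of bijections, canonical functions, and transitive collapses $\vec\pi$ as parameters; hence by elementarity $j(E)$ is exactly the set of $\alpha<j(\kappa)$ for which $j(\varphi)\res^{j(\kappa)}_\alpha$ is defined in $V^\kappa/G$. Consequently $\kappa\in j(E)$ is equivalent to the assertion that $j(\varphi)\res^{j(\kappa)}_\kappa$ is defined in $V^\kappa/G$. But this is immediate from Lemma \ref{lemma_j_of}, which, under the very hypothesis that $\kappa$ is regular in $V^\kappa/G$, shows that $j(\varphi)\res^{j(\kappa)}_\kappa$ is $\Pi^1_\xi$ (in particular, defined) in $V^\kappa/G$ and in fact equals $\varphi$. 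Therefore $\kappa\in j(E)$, and Proposition \ref{proposition_framework2} delivers a club $C_\varphi\subseteq\kappa$ with $\varphi\res^\kappa_\alpha$ defined for every regular uncountable $\alpha\in C_\varphi$. The remaining clause, that $\varphi\res^\kappa_\alpha$ is then a $\Pi^1_{f^\kappa_\xi(\alpha)}$ or $\Sigma^1_{f^\kappa_\xi(\alpha)}$ formula over $V_\alpha$, is exactly Remark \ref{remark_definition_of_restriction}(1).

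I expect the only genuine subtlety to be the transfer step $\kappa\in j(E)\iff j(\varphi)\res^{j(\kappa)}_\kappa$ is defined: one must confirm that the metamathematical predicate ``the restriction is defined'' is truly first-order expressible (with $\varphi$, $\vec F$, $\vec f$, $\vec\pi$, and the fixed bijections as parameters), so that $j$ commutes with it, using the coding conventions recorded in Remark \ref{remark_coding}. The weak Mahlo hypothesis enters precisely to ensure that the regular cardinals below $\kappa$ form a stationary set, so that generic filters $G$ with $\kappa$ regular in $V^\kappa/G$ exist and Proposition \ref{proposition_framework2} has real content; this regularity of $\kappa$ in the ultrapower is also what makes $j(\varphi)\res^{j(\kappa)}_\kappa$ meaningful in the first place, since restriction is defined only at regular points.
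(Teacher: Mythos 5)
Your proposal is correct and is essentially the paper's own proof: the paper likewise applies Proposition \ref{proposition_framework2} to the set of regular $\alpha$ where $\varphi\res^\kappa_\alpha$ is defined, and then invokes Lemma \ref{lemma_j_of} to conclude that $j(\varphi)\res^{j(\kappa)}_\kappa=\varphi$ is defined (and $\Pi^1_\xi$) in $V^\kappa/G$, so that $\kappa\in j(E)$. Your write-up merely makes explicit the elementarity step identifying $j(E)$ with the set of $\alpha<j(\kappa)$ at which $j(\varphi)\res^{j(\kappa)}_\alpha$ is defined, which the paper leaves implicit.
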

\begin{proof}
Suppose $\varphi$ is a $\Pi^1_\xi$ formula over $V_\kappa$. To prove the existence of $C_\varphi$ we use Proposition \ref{proposition_framework2}. Suppose $G\subseteq P(\kappa)/\NS_\kappa$ is generic over $V$ such that $\kappa$ is regular in $V^\kappa/G$ and let $j:V\to V^\kappa/G$ be the corresponding generic ultrapower. By Lemma~\ref{lemma_j_of}, $j(\varphi)\res^{j(\kappa)}_\kappa=\varphi$ is clearly defined and is a $\Pi^1_\xi$ formula in $V^\kappa/G$.
\end{proof}

\begin{definition}\label{definition_indescribability}
Suppose $\kappa$ is a cardinal and $\xi<\kappa^+$. A set $S\subseteq\kappa$ is \emph{$\Pi^1_\xi$-indescribable} if for every $\Pi^1_\xi$ sentence $\varphi$ over $V_\kappa$, if $V_\kappa\models\varphi$ then there is some $\alpha\in S$ such that $V_\alpha\models\varphi|^\kappa_\alpha.$
\end{definition}

It easily follows from Corollary \ref{corollary_restriction_is_well_defined} that the above notion of indescribability does not depend on which sequence $\<b_{\kappa,\xi}\st\xi\in\kappa^+\setminus\kappa\>$ is used to compute restrictions of formulas.

In Proposition \ref{proposition_measurable} below, we establish that the notion of indescribability given in Definition \ref{definition_indescribability} is relatively consistent by showing that every measurable cardinal $\kappa$ is $\Pi^1_\xi$-indescribable for all $\xi<\kappa^+$ and, in terms of consistency strength, the existence of a cardinal $\kappa$ which is $\Pi^1_\xi$-indescribable for all $\xi<\kappa^+$ is strictly weaker than the existence of a measurable cardinal.

\begin{proposition}\label{proposition_measurable}
Suppose $U$ is a normal measure on a measurable cardinal $\kappa$. Then $\kappa$ is $\Pi^1_\xi$-indescribable for all $\xi<\kappa^+$ and the set
\[X=\{\alpha<\kappa\st\text{$\alpha$ is $\Pi^1_\xi$-indescribable for all $\xi<\alpha^+$}\}\]
is in $U$.
\end{proposition}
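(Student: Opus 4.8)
The plan is to work with the ultrapower $j : V \to M$ by the normal measure $U$, recording the standard facts $\crit(j) = \kappa$, $\kappa$ is regular (indeed inaccessible) in $M$, $V_\alpha^M = V_\alpha$ for $\alpha \leq \kappa$, $V_{\kappa+1}^M = V_{\kappa+1}$, and $(\kappa^+)^M = \kappa^+$, $H(\kappa^+)^M = H(\kappa^+)$ (these last two follow from $\crit(j)=\kappa$ together with $^\kappa M \subseteq M$). For the first part, fix $\xi < \kappa^+$ and a $\Pi^1_\xi$ sentence $\varphi$ over $V_\kappa$ with $V_\kappa \models \varphi$. Since the measure ultrapower satisfies $j(F^\kappa_\xi)(\kappa) = j"\xi$ by Proposition~\ref{proposition_useful_object} (and its corollary), the proof of Lemma~\ref{lemma_j_of} applies to $j$ verbatim, yielding $j(\varphi)\res^{j(\kappa)}_\kappa = \varphi$ in $M$; this is well-defined because $\kappa$ is regular in $M$. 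Letting $T_\varphi = \{\alpha < \kappa \st \alpha \text{ regular and } V_\alpha \models \varphi\res^\kappa_\alpha\}$, I would check that $\kappa \in j(T_\varphi)$ is witnessed by $\alpha = \kappa$: indeed $\kappa$ is regular in $M$, $V_\kappa^M = V_\kappa$, and since $\Pi^1_\xi$-satisfaction over $V_\kappa$ only quantifies over $V_{\kappa+1}^M = V_{\kappa+1}$, we get $(V_\kappa \models j(\varphi)\res^{j(\kappa)}_\kappa)^M \iff V_\kappa \models \varphi$, which holds. Hence $T_\varphi \in U$; in particular $T_\varphi \neq \emptyset$, so $\kappa$ is $\Pi^1_\xi$-indescribable.

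For the second part, to prove $X \in U$ it suffices to show $\kappa \in j(X)$, i.e., that $M \models$ ``$\kappa$ is $\Pi^1_\xi$-indescribable for all $\xi < \kappa^+$'' (using $(\kappa^+)^M = \kappa^+$). Since $\kappa$ need not be measurable in $M$, I cannot simply rerun the first part inside $M$; instead I would argue that this displayed statement is \emph{absolute} between $V$ and $M$ and then invoke the first part. From the agreements $V_{\kappa+1}^M = V_{\kappa+1}$ and $H(\kappa^+)^M = H(\kappa^+)$ it follows that $M$ and $V$ possess exactly the same $\Pi^1_\xi$ formulas over $V_\kappa$ for every $\xi < \kappa^+$ (these are coded in $H(\kappa^+)$ together with second-order parameters from $V_{\kappa+1}$), and that the satisfaction relation $V_\kappa \models \varphi$ is absolute, proved by induction on $\xi$ since its second-order quantifiers range over the common set $V_{\kappa+1}$.

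The one genuinely delicate point is that the restriction operation depends on the chosen sequence of bijections $\langle b_{\kappa,\zeta} \st \zeta \in \kappa^+ \setminus \kappa\rangle$, which may differ in $M$ and $V$. This is neutralized by Corollary~\ref{corollary_restriction_is_well_defined} and the remark following Definition~\ref{definition_indescribability}: the notion of $\Pi^1_\xi$-indescribability does not depend on this choice, so I may compute restrictions in both models using a common $\vec b \in H(\kappa^+)^M = H(\kappa^+)$, whereupon the canonical functions $f^\kappa_\zeta$, $F^\kappa_\zeta$ and collapses $\pi^\kappa_{\zeta,\alpha}$ defining $\varphi\res^\kappa_\alpha$ all agree between $M$ and $V$. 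Consequently, for each such $\varphi$ the witness set $\{\alpha < \kappa \st V_\alpha \models \varphi\res^\kappa_\alpha\}$ is a subset of $\kappa$ (hence lies in $M$) and is computed identically in both models, so ``$\exists \alpha < \kappa\ V_\alpha \models \varphi\res^\kappa_\alpha$'', and therefore the whole statement ``$\kappa$ is $\Pi^1_\xi$-indescribable for all $\xi < \kappa^+$'', is absolute. By the first part this holds in $V$, hence in $M$, giving $\kappa \in j(X)$ and thus $X \in U$.

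I expect the absoluteness verification in the second part to be the crux, specifically confirming that $M$ and $V$ agree on the class of $\Pi^1_\xi$ formulas over $V_\kappa$ with their parameters, on the second-order satisfaction relation, and on the restriction operation despite its dependence on the bijection sequence; the first two reduce to the standard agreements $V_{\kappa+1}^M = V_{\kappa+1}$ and $(\kappa^+)^M = \kappa^+$, while the third is exactly what Corollary~\ref{corollary_restriction_is_well_defined} is designed to handle. A secondary point to record is that Lemma~\ref{lemma_j_of}, though stated for generic ultrapowers, transfers to the non-generic measure ultrapower $j$ without change, since its proof uses only $j(F^\kappa_\xi)(\kappa) = j"\xi$ and the coding facts of Remark~\ref{remark_coding}, both available for $j$.
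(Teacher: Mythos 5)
Your proof is correct and follows essentially the same route as the paper's: both transfer Lemma \ref{lemma_j_of} to the measure ultrapower via Proposition \ref{proposition_useful_object} to obtain $j(\varphi)\res^{j(\kappa)}_\kappa=\varphi$, and both reduce $X\in U$ to showing that $\kappa$ is $\Pi^1_\xi$-indescribable inside $M$ using $H(\kappa^+)^M=H(\kappa^+)$ and $V_{\kappa+1}^M=V_{\kappa+1}$; the paper phrases your absoluteness step as an elementarity pull-back of a witness $\alpha<\kappa$ from $M$ to $V$ and back, which rests on exactly the same agreements (including the bijection-independence point you make explicit). One small repair to your write-up: the full sequence $\vec{b}=\langle b_{\kappa,\zeta}\st\zeta\in\kappa^+\setminus\kappa\rangle$ has length $\kappa^+$, so it lies neither in $H(\kappa^+)$ nor necessarily in $M$; for each fixed $\xi<\kappa^+$ you should instead work with the initial segment $\langle b_{\kappa,\zeta}\st\zeta\in(\kappa^+\setminus\kappa)\cap(\xi+1)\rangle$, which has hereditary size $\kappa$ and hence does lie in $H(\kappa^+)=H(\kappa^+)^M$, and this suffices because $\Pi^1_\xi$-indescribability only involves restrictions computed from bijections with index at most $\xi$.
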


\begin{proof}
Let $j:V\to M$ be the usual ultrapower embedding obtained from $U$ where $M$ is transitive and $j$ has critical point $\kappa$. Let us show that the set $X$ is in $U$; the fact that $\kappa$ is $\Pi^1_\xi$-indescribable for all $\xi<\kappa^+$ follows by a similar argument. Notice that it follows directly from Lemma \ref{lemma_restriction_is_nice} that for any $\xi<\kappa^+$ if $\varphi$ is any $\Pi^1_\xi$ formula over $V_\kappa$ then, in $M$, the formula $j(\varphi)\res^{j(\kappa)}_\kappa$ is $\Pi^1_\xi$ over $V_\kappa$ (because $\kappa\in j(C_\varphi)$). Furthermore, by Lemma \ref{lemma_j_of} we have 
\begin{align}
j(\varphi)\res^{j(\kappa)}_\kappa=\varphi.\label{equation_measurable}
\end{align}
It will suffice to show that, in $M$, $\kappa$ is $\Pi^1_\xi$-indescribable for all limit ordinals $\xi<\kappa^+$. Fix a limit ordinal $\xi<\kappa^+$ and suppose
\[(V_\kappa\models\varphi)^M\]
where $\varphi$ is $\Pi^1_\xi$ over $V_\kappa$ in $M$. Since $H(\kappa^+)^V= H(\kappa^+)^M$, we have $\varphi\in V$ and $\varphi$ is $\Pi^1_\xi$ over $V_\kappa$ in $V$. It follows by (\ref{equation_measurable}), that
\[\left((\exists\alpha<j(\kappa))\ V_\alpha\models j(\varphi)\res^{j(\kappa)}_\alpha\right)^M\]
and thus, by elementarity, there is some $\alpha<\kappa=\crit(j)$ such that
\[V_\alpha\models\varphi\res^\kappa_\alpha.\]
Thus, $(V_\alpha\models\varphi\res^\kappa_\alpha)^M$ and hence $\kappa$ is $\Pi^1_\xi$-indescribable in $M$.
\end{proof}

\section{Restricting $L_{\kappa^+,\kappa^+}$ formulas and consistency of $L_{\kappa^+,\kappa^+}$-indescribability}\label{section_other}


We will need to apply elementary embeddings to formulas of $L_{\kappa^+,\kappa^+}$, so let us consider some assumptions regarding the set-theoretic nature of these formulas. For example, we assume that if $j:V\to M$ is an elementary embedding with critical point $\kappa$ then $j(\lnot\varphi)=\lnot j(\varphi)$ and $j(\exists x\psi)=\exists j(\vec{x})\psi$; we also make additional assumption as in Remark \ref{remark_coding}(3) above, but we will not discuss this further.

Typically, when defining $L_{\kappa^+,\kappa^+}$ formulas, one begins by fixing a supply of $\kappa^+$-many variables that can be used to form $L_{\kappa^+,\kappa^+}$ sentences. However, without loss of generality, we will assume that we begin with a supply of $\kappa$-many variables $\{x_\eta\st\eta<\kappa\}$. This assumption does not weaken the expressive power of $L_{\kappa^+,\kappa^+}$ sentences or theories (without parameters) in the language of set theory, because any particular sentence defined using a supply of $\kappa^+$-many variables only actually mentions $\kappa$-many. This assumption allows us to write $L_{\kappa^+,\kappa^+}$ formulas beginning with existential quantifiers in the form $\exists\<x_{\alpha_\eta}\st\eta<\gamma\>\psi$, where the domain of the sequence of variables being quantified over is simply some $\gamma\leq\kappa$, rather than some $\xi<\kappa^+$ and $\<\alpha_\eta\st\eta<\gamma\>$ is some increasing sequence of ordinals less than $\kappa$. Another consequence of this assumption is that we may assume that all variables are elements of $V_\kappa$ when $\kappa=|V_\kappa|$, and hence if $j:V\to M$ is an elementary embedding with critical point $\kappa$, we will have $j(x)=x$ for all variables $x$. We will also assume that for all cardinals $\alpha<\kappa$ the set $\{x_\eta\st\eta<\alpha\}$ constitutes the supply of $\alpha$-many variables used to form all $L_{\alpha^+,\alpha^+}$ sentences.

For a regular cardinal $\kappa$ and an ordinal $\alpha<\kappa$, we define $\varphi\res^\kappa_\alpha$ for all $L_{\kappa^+,\kappa^+}$ formulas $\varphi$ by induction of subformulas. For more on such induction principles, see \cite[Page 64]{MR0539973}.

\begin{definition}\label{definition_restriction_2}
Suppose $\kappa$ is a regular cardinal and $\alpha<\kappa$ is a cardinal. We define $\varphi\res^\kappa_\alpha$ for all formulas $\varphi$ of $L_{\kappa^+,\kappa^+}$ in a given signature by induction on complexity of $\varphi$. 
\begin{enumerate}
\item If $\varphi$ is a term equation $t_1=t_2$ or a relational formula of the form $R(t_1,\ldots,t_k)$ we define $\varphi\res^\kappa_\alpha$ to be $\varphi$.
\item If $\varphi$ is of the form $\lnot\psi$ where $\psi\res^\kappa_\alpha$ has already been defined, we let $\varphi\res^\kappa_\alpha$ be the formula $\lnot(\psi\res^\kappa_\alpha)$.
\item If $\varphi$ is of the form $\bigwedge_{\zeta<\xi}\varphi_\zeta$ where $\xi<\kappa^+$ and $\varphi_\zeta\res^\kappa_\alpha$ has been defined for all $\zeta<\xi$, then we define
\[\varphi\res^\kappa_\alpha=\bigwedge_{\zeta < f^\kappa_\xi(\alpha)}\varphi_{(\pi^\kappa_{\xi,\alpha})^{-1}(\zeta)}\res^\kappa_\alpha,\]
provided that this definition of $\varphi\res^\kappa_\alpha$ is a formula of $L_{\alpha^+,\alpha^+}$; otherwise we leave $\varphi\res^\kappa_\alpha$ undefined.
\item If $\varphi$ is of the form $\exists\<x_{\alpha_\eta}\st\eta<\gamma\>\psi$ where $\gamma\leq\kappa$, $\<\alpha_\eta\st\eta<\gamma\>$ is an increasing sequence of ordinals less than $\kappa$ and $\psi\res^\kappa_\alpha$ has already been defined, we let 
\[\varphi\res^\kappa_\alpha=\exists\<x_{\alpha_\eta}\st\eta<\alpha\cap\gamma\>\ \psi\res^\kappa_\alpha,\]
provided that this definition of $\varphi^\kappa_\alpha$ is a formula of $L_{\alpha^+,\alpha^+}$; otherwise we leave $\varphi\res^\kappa_\alpha$ undefined.
\end{enumerate}
\end{definition}

As for the notion of restriction of $\Pi^1_\xi$ formulas considered in Section \ref{section_definition_pi1xi} above, one can easily show that for $L_{\kappa^+,\kappa^+}$ formulas, the definition of $\varphi\res^\kappa_\alpha$ is independent of our choice of bijections $\<b_{\kappa,\xi}\st\xi\in\kappa^+\setminus\kappa\>$ modulo the nonstationary ideal on $\kappa$. 

Notice that, in Definition \ref{definition_restriction_2}(4), it is at least conceivable that some of the bound variables of $\varphi$ could become free variables of $\varphi\res^\kappa_\alpha$. However, it easily follows from the next lemma that this can happen only for a nonstationary set of $\alpha$, and hence this aspect of the definition can be ignored in all of the cases that we care about.

\begin{lemma}\label{lemma_alternative_indescribability}
Suppose $\kappa$ is a regular cardinal and $\varphi$ is an $L_{\kappa^+,\kappa^+}$ formula in the language of set theory. If $I$ is a normal ideal on $\kappa$, $G\subseteq P(\kappa)/I$ is generic over $V$ and $j:V\to V^\kappa/G$ is the corresponding generic ultrapower such that $\kappa$ is regular in $V^\kappa/G$, then it follows that $j(\varphi)\res^{j(\kappa)}_\kappa=\varphi$ is a formula of $L_{\kappa^+,\kappa^+}$ in $V^\kappa/G$.

\end{lemma}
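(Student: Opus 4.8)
The plan is to mimic the proof strategy already used for the analogous statement about $\Pi^1_\xi$ formulas, namely Lemma \ref{lemma_j_of}. That earlier result was proved by induction on $\xi<\kappa^+$, and the corresponding statement here (Lemma \ref{lemma_alternative_indescribability}) should be proved by induction on the complexity of the $L_{\kappa^+,\kappa^+}$ formula $\varphi$, following exactly the four clauses of Definition \ref{definition_restriction_2}. The key structural fact I will exploit throughout is the coding assumptions on formulas: that $j$ fixes all variables (since they lie in $V_\kappa$ and $\crit(j)=\kappa$), that $j$ commutes with $\lnot$ and with existential quantification over a sequence of variables, and that $j$ acts on infinitary conjunctions as recorded in Remark \ref{remark_coding}(3). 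I would also record at the outset that since $\kappa$ is regular in $V^\kappa/G$, the transitive collapse $j(\vec\pi)_\kappa$ of $j(F^\kappa_\xi)(\kappa)=j{}"\xi$ onto $j(f^\kappa_\xi)(\kappa)=\xi$ satisfies $j(\vec\pi)_\kappa\restrict j{}"\xi = j^{-1}\restrict j{}"\xi$, which is precisely the computation that made the limit case of Lemma \ref{lemma_j_of} work.

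First I would handle the base case: if $\varphi$ is a term equation $t_1=t_2$ or a relational formula $R(t_1,\dots,t_k)$, then by Definition \ref{definition_restriction_2}(1) its restriction is itself, and since $j$ fixes all variables, $j(\varphi)\res^{j(\kappa)}_\kappa = j(\varphi) = \varphi$. For the negation clause, I would use $j(\lnot\psi)=\lnot j(\psi)$ together with Definition \ref{definition_restriction_2}(2) and the inductive hypothesis $j(\psi)\res^{j(\kappa)}_\kappa=\psi$ to conclude $j(\lnot\psi)\res^{j(\kappa)}_\kappa = \lnot\bigl(j(\psi)\res^{j(\kappa)}_\kappa\bigr)=\lnot\psi$. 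For the existential quantifier clause, where $\varphi=\exists\langle x_{\alpha_\eta}\st\eta<\gamma\rangle\psi$ with $\gamma\le\kappa$, I would apply $j(\exists\langle x_{\alpha_\eta}\st\eta<\gamma\rangle\psi)=\exists\langle x_{\alpha_\eta}\st\eta<j(\gamma)\rangle j(\psi)$ (variables fixed) and then Definition \ref{definition_restriction_2}(4): restricting back to $\kappa$ truncates the quantifier block to $\eta<\kappa\cap j(\gamma)$. Since $\crit(j)=\kappa$ we have $\kappa\cap j(\gamma)=\gamma$ and the restricted sequence of variables is exactly the original block, so combined with the inductive hypothesis on $\psi$ this yields $\varphi$.

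The main obstacle will be the infinitary conjunction clause, Definition \ref{definition_restriction_2}(3), where $\varphi=\bigwedge_{\zeta<\xi}\varphi_\zeta$. Here I would set $\vec\varphi=\langle\varphi_\zeta\st\zeta<\xi\rangle$ and use Remark \ref{remark_coding}(3) to write $j(\varphi)=\bigwedge_{\zeta<j(\xi)}j(\vec\varphi)_\zeta$. Applying Definition \ref{definition_restriction_2}(3) at $j(\kappa)$, the restriction to $\kappa$ becomes $\bigwedge_{\zeta<f^{j(\kappa)}_{j(\xi)}(\kappa)}j(\vec\varphi)_{(j(\vec\pi)_\kappa)^{-1}(\zeta)}\res^{j(\kappa)}_\kappa$. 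Now $f^{j(\kappa)}_{j(\xi)}(\kappa)=j(f^\kappa_\xi)(\kappa)=\xi$ by Proposition \ref{proposition_useful_object} applied in $V^\kappa/G$, and for each $\zeta<\xi$ the collapse computation gives $(j(\vec\pi)_\kappa)^{-1}(\zeta)=j(\zeta)$, so $j(\vec\varphi)_{(j(\vec\pi)_\kappa)^{-1}(\zeta)}=j(\vec\varphi)_{j(\zeta)}=j(\varphi_\zeta)$. By the inductive hypothesis each $j(\varphi_\zeta)\res^{j(\kappa)}_\kappa=\varphi_\zeta$ is a well-defined $L_{\kappa^+,\kappa^+}$ formula in $V^\kappa/G$, whence the whole conjunction reindexes to $\bigwedge_{\zeta<\xi}\varphi_\zeta=\varphi$. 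The delicate point here — and the part I expect to require the most care — is verifying that the restriction is genuinely \emph{defined} at each inductive stage (the clauses in Definition \ref{definition_restriction_2} leave it undefined unless the result is a legitimate $L_{\alpha^+,\alpha^+}$ formula), and in particular that no bound variable escapes to become free; but since the computation shows the restricted object coincides with the honest formula $\varphi$ living in $H(\kappa^+)^V = H(\kappa^+)^{V^\kappa/G}$, definedness follows automatically, parallel to how Lemma \ref{lemma_restriction_is_nice} was deduced from Lemma \ref{lemma_j_of}.
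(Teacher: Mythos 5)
Your proposal is correct and follows essentially the same route as the paper's proof: induction on the complexity of $\varphi$ through the clauses of Definition \ref{definition_restriction_2}, using the coding assumptions, the fact that $\crit(j)=\kappa$ fixes variables and gives $\kappa\cap j(\gamma)=\gamma$, and the identity $j(\vec{\pi})_\kappa\restrict j"\xi=j^{-1}\restrict j"\xi$ to reindex the infinitary conjunction. Your explicit remark that definedness of the restriction follows because the computed object coincides with the genuine formula $\varphi\in H(\kappa^+)^V=H(\kappa^+)^{V^\kappa/G}$ is a point the paper leaves implicit, but it is the right justification.
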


\begin{proof}
When $\varphi$ is a relational formula, it follows by our assumptions on the set-theoretic nature of such formulas that $j(\varphi)\res^{j(\kappa)}_\kappa=\varphi\res^{j(\kappa)}_\kappa=\varphi$. If the result holds for $\psi$ and $\varphi$ is of the form $\lnot \varphi$, then it clearly holds for $\varphi$ too. 

Now suppose $\varphi$ is of the form
\[\varphi=\bigwedge_{\zeta<\xi}\varphi_\zeta,\]
where $\xi<\kappa^+$. Define sequences $\vec{\varphi}=\<\varphi_\zeta\st\zeta<\xi\>$ and $\vec{\pi}=\<\pi^\kappa_{\xi,\alpha}\st\alpha<\kappa\>$. Recall that $j(\vec{\pi})_\kappa \restrict j"\xi=j^{-1}\restrict j"\xi$. We have
\begin{align*}
j(\varphi)=\bigwedge_{\zeta<j(\xi)} j(\vec{\varphi})_\zeta
\end{align*}
and thus
\begin{align*}
j(\varphi)\res^{j(\kappa)}_\kappa&=\bigwedge_{\zeta<j(f^\kappa_\xi)(\kappa)} j(\vec{\varphi})_{j(\vec{\pi})_\kappa^{-1}(\zeta)}\res^{j(\kappa)}_\kappa\\
  &=\bigwedge_{\zeta<\xi} j(\vec{\varphi})_{j(\zeta)}\res^{j(\kappa)}_\kappa\\
  &=\bigwedge_{\zeta<\xi} j(\varphi_\zeta)\res^{j(\kappa)}_\kappa\\
  &=\bigwedge_{\zeta<\xi}\varphi_\zeta\\
  &=\varphi.
\end{align*}
Now suppose $\varphi$ is of the form $\exists\<x_{\alpha_\eta}\st\eta<\gamma\>\psi$ where $\gamma\leq\kappa$ and $\<\alpha_\eta\st\eta<\gamma\>$ is an increasing sequence of ordinals less than $\kappa$. Let $\vec{x}=\<x_{\alpha_\eta}\st\eta<\gamma\>$. We have
\[j(\varphi)=\exists j(\vec{x}) j(\psi)\]
and thus
\begin{align*}
j(\varphi)\res^{j(\kappa)}_\kappa&=\exists j(\vec{x})\restrict(\kappa\cap j(\gamma)) \ j(\psi)\res^{j(\kappa)}_\kappa\\
  &=\exists\vec{x}\psi\\
  &=\varphi.
\end{align*}

\end{proof}

One can easily show that the following definition of $L_{\kappa^+,\kappa^+}$-indescribability is not dependent on which sequence of bijections is used to compute restrictions of $L_{\kappa^+,\kappa^+}$ formulas.

\begin{definition}
Suppose $\kappa$ is a regular cardinal. A set $S\subseteq\kappa$ is  \emph{$L_{\kappa^+,\kappa^+}$-indescribable} if for all sentences $\varphi$ of $L_{\kappa^+,\kappa^+}$ in the language of set theory, if $V_\kappa\models\varphi$ then there is some $\alpha<\kappa$ such that $V_\alpha\models\varphi\res^\kappa_\alpha$.
\end{definition}

From Lemma \ref{lemma_alternative_indescribability} and an argument similar to that given above for Proposition \ref{proposition_measurable} we obtain the following, which shows that the existence of a cardinal $\kappa$ which is $L_{\kappa^+,\kappa^+}$-indescribable is strictly weaker than the existence of a measurable cardinal.

\begin{proposition}\label{proposition_Lkappa+}
Suppose $U$ is a normal measure on a measurable cardinal $\kappa$. Then $\kappa$ is $L_{\kappa^+,\kappa^+}$-indescribable and the set
\[\{\alpha<\kappa\st\text{$\alpha$ is $L_{\alpha^+,\alpha^+}$-indescribable}\}\]
is in $U$.
\end{proposition}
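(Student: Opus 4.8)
The plan is to mimic the proof of Proposition \ref{proposition_measurable}, replacing the role of Lemma \ref{lemma_j_of} with Lemma \ref{lemma_alternative_indescribability}. Let $j:V\to M$ be the ultrapower embedding obtained from the normal measure $U$, so that $M$ is transitive, $\crit(j)=\kappa$, and $H(\kappa^+)^V=H(\kappa^+)^M$. I will prove the stronger assertion that the set $\{\alpha<\kappa\st\text{$\alpha$ is $L_{\alpha^+,\alpha^+}$-indescribable}\}$ is in $U$; the $L_{\kappa^+,\kappa^+}$-indescribability of $\kappa$ itself then follows by the same argument applied at $\kappa$ rather than reflecting one step down. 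By the standard correspondence for normal measures, membership of this set in $U$ is equivalent to the statement that, in $M$, $\kappa$ is $L_{j(\kappa)^+,j(\kappa)^+}$-indescribable when read at $\kappa$; more precisely, since $j(\{\alpha<\kappa\st\alpha\text{ is }L_{\alpha^+,\alpha^+}\text{-indescribable}\})=\{\alpha<j(\kappa)\st\alpha\text{ is }L_{\alpha^+,\alpha^+}\text{-indescribable}\}^M$, it suffices to show $\kappa$ belongs to this set as computed in $M$, i.e.\ that $\kappa$ is $L_{\kappa^+,\kappa^+}$-indescribable in $M$.

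To verify this, I would fix an $L_{\kappa^+,\kappa^+}$ sentence $\varphi$ in the language of set theory, as computed in $M$, with $(V_\kappa\models\varphi)^M$. Because $H(\kappa^+)^V=H(\kappa^+)^M$ and such sentences (having $\kappa$-many variables and conjunction length below $\kappa^+$) are coded by elements of $H(\kappa^+)$, the object $\varphi$ lies in $V$ and is an $L_{\kappa^+,\kappa^+}$ sentence there as well, with $V_\kappa\models\varphi$ holding in $V$ since $V_\kappa$ and its satisfaction relation agree between $V$ and $M$. Now apply $j$: by elementarity $(V_{j(\kappa)}\models j(\varphi))^M$. The key input is Lemma \ref{lemma_alternative_indescribability}, which (applied with $I=U$ realized as the ultrafilter giving this embedding, or more directly to the genuine ultrapower $j:V\to M$, noting $\kappa$ is regular in $M$) gives
\[
j(\varphi)\res^{j(\kappa)}_\kappa=\varphi,
\]
and guarantees that $j(\varphi)\res^{j(\kappa)}_\kappa$ is a genuine $L_{\kappa^+,\kappa^+}$ formula in $M$, so in particular $\varphi\res^{j(\kappa)}_\alpha$ is defined for $\alpha=\kappa$.

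From here the reflection is immediate. Since $V_{j(\kappa)}\models j(\varphi)$ holds in $M$ and $\kappa<j(\kappa)=\crit(j)$ witnesses that there exists $\alpha<j(\kappa)$ with $V_\alpha\models j(\varphi)\res^{j(\kappa)}_\alpha$ in $M$ — namely $\alpha=\kappa$ together with the equation $j(\varphi)\res^{j(\kappa)}_\kappa=\varphi$ and $(V_\kappa\models\varphi)^M$ — the statement ``there is $\alpha<j(\kappa)$ with $V_\alpha\models j(\varphi)\res^{j(\kappa)}_\alpha$'' is true in $M$. Applying $j^{-1}$ via elementarity, $V$ satisfies that for this $\varphi$ there is $\alpha<\kappa$ with $V_\alpha\models\varphi\res^\kappa_\alpha$; running this for every such $\varphi$ shows $\kappa$ is $L_{\kappa^+,\kappa^+}$-indescribable in $M$, which is what we needed. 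I expect the only genuinely delicate point to be the bookkeeping around the coding of $L_{\kappa^+,\kappa^+}$ formulas as elements of $H(\kappa^+)$ and the invocation of Lemma \ref{lemma_alternative_indescribability} for the honest (rather than generic) ultrapower; the combinatorial heart of the argument has already been absorbed into that lemma, exactly as Lemma \ref{lemma_j_of} absorbed it for Proposition \ref{proposition_measurable}.
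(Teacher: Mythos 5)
Your proposal is correct and follows essentially the same route as the paper: the paper's proof of Proposition \ref{proposition_Lkappa+} is precisely the combination of Lemma \ref{lemma_alternative_indescribability} (applied to the measure ultrapower, which is the trivial special case of a generic ultrapower by the dual ideal of $U$) with the reflection argument of Proposition \ref{proposition_measurable}, which is what you reproduce. The only elided step is the final downward-absoluteness remark: after elementarity yields some $\alpha<\kappa$ with $V_\alpha\models\varphi\res^\kappa_\alpha$ in $V$, one should note that $\varphi\res^\kappa_\alpha$ and this satisfaction are computed identically in $M$ (all relevant objects lie in $H(\kappa^+)^V=H(\kappa^+)^M$), so the reflection indeed holds in $M$ -- exactly the last sentence of the paper's proof of Proposition \ref{proposition_measurable}.
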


\section{Higher $\Pi^1_\xi$-indescribability ideals}

In this section, given a regular cardinal $\kappa$, we prove the existence of universal $\Pi^1_\xi$ formulas for all $\xi<\kappa^+$ and use such formulas to show that the natural ideal on $\kappa$ associated to $\Pi^1_\xi$-indescribability is normal. We then use universal formulas to show that $\Pi^1_\xi$-indescribability is, in a sense, expressible by a $\Pi^1_{\xi+1}$ formula. This leads to several hierarchy results and a characterization of $\Pi^1_\xi$-indescribability in terms of the natural filter base consisting of the $\Pi^1_\xi$-club subsets of $\kappa$.

\begin{remark}\label{remark_diagonal}
Let us make a brief remark about normal ideals and a notion of diagonal intersection we will use in several places below. Recall that an ideal $I$ on a regular cardinal $\kappa$ is \emph{normal} if and only if for any positive set $S\in I^+=\{X\subseteq\kappa\st X\notin I\}$ and every function $f:S\to\kappa$ with $f(\alpha)<\alpha$ for all $\alpha\in S$, there is a positive set $T\in P(S)\cap I^+$ such that $f$ is constant on $T$. Equivalently, $I$ is normal if and only if the filter $I^*$ dual to $I$ is closed under diagonal intersection; that is, whenever $\vec{C}=\<C_\alpha\st\alpha<\kappa\>$ is a sequences of sets in $I^*$ then $\di\vec{C}=\di_{\alpha<\kappa}C_\alpha=\{\alpha<\kappa\st\alpha\in\bigcap_{\beta<\alpha}C_\beta\}$ is in $I^*$. Since diagonal intersections are independent, modulo the nonstationary ideal, of the particular enumeration of the sets involved, it follows that an ideal $I$ on $\kappa$ is normal if and only if for all $\xi\in\kappa^+\setminus\kappa$ whenever $\vec{C}=\<C_\zeta\st\zeta<\xi\>$ is a sequence of sets in $I^*$, the set
\[\di\vec{C}=\di_{\zeta<\xi}C_\zeta=\{\alpha<\kappa\st \alpha\in \bigcap_{\zeta\in b_{\kappa,\xi}[\alpha]}C_\zeta\}\]
is in $I^*$, where $b_{\kappa,\xi}:\kappa\to\xi$ is a bijection. In what follows, we will often make use of the fact that the club filter on a regular $\kappa$ is closed under such diagonal intersections.
\end{remark}

\subsection{Universal $\Pi^1_\xi$ formulas and normal ideals}\label{section_universal}

For a regular cardinal $\kappa>\omega$, we define the notion of universal $\Pi^1_\xi$ formula, where $\xi<\kappa^+$, as follows. If $\xi<\kappa$ then we adopt a definition of universal $\Pi^1_\xi$ formula over $V_\kappa$, which is similar to that of \cite{MR3894041}, but we need a different notion for $\xi\in\kappa^+\setminus\kappa$.

\begin{definition}\label{definition_universal}
Suppose $\kappa$ is a regular cardinal and $\xi<\kappa$. We say that a $\Pi^1_\xi$ formula $\Psi(X_1,\ldots,X_n,Y_\xi)$ over $V_\kappa$, where $X_1,\ldots,X_n,Y_\xi$ are second-order variables, is a \emph{universal $\Pi^1_\xi$ formula at $\kappa$ for formulas with $n$ free variables} if for all $\Pi^1_\xi$ formulas $\varphi(X_1,\ldots,X_n)$ over $V_\kappa$, with all free variables displayed, there is a $K_\varphi\in V_\kappa$, referred to as a \emph{code for $\varphi$} such that for all $A_1,\ldots,A_n\subseteq V_\kappa$ and all regular $\alpha\in \kappa\setminus\xi$ we have
\[V_\alpha\models\varphi(A_1,\ldots,A_n)\text{ if and only if }V_\alpha\models\Psi(A_1,\ldots,A_n,K_\varphi).\]

On the other hand, suppose $\xi\in\kappa^+\setminus\kappa$. We say that a $\Pi^1_\xi$ formula $\Psi(X_1,\ldots,X_n,Y_\xi)$ over $V_\kappa$, where $X_1,\ldots,X_n,Y_\xi$ are second-order variables, is a \emph{universal $\Pi^1_\xi$ formula at $\kappa$ for formulas with $n$ free second order variables} if for all $\Pi^1_\xi$ formulas $\varphi(X_1,\ldots,X_n)$ over $V_\kappa$, with all free variables displayed, there is a $K_\varphi\subseteq\kappa$ and there is a club $C_\varphi\subseteq\kappa$ such that for all $A_1,\ldots,A_n\subseteq V_\kappa$ and all regular $\alpha\in C_\varphi\cup\{\kappa\}$ we have
\[V_\alpha\models\varphi(A_1,\ldots,A_n)\res^\kappa_\alpha\text{ if and only if }V_\alpha\models\Psi(A_1,\ldots,A_n,K_\varphi)\res^\kappa_\alpha.\]
When $n=0$, the intended meaning is that $\varphi$ is a $\Pi^1_\xi$ sentence over $V_\kappa$ and $\Psi_{\xi,0}(Y)$ has one free-variable. 
The notion of \emph{universal $\Sigma^1_\xi$ formula at $\kappa$ for formulas with $n$ free second-order variables} is defined similarly.
\end{definition}

We will use the following lemma to prove that universal $\Pi^1_\xi$ formulas exist at regular $\kappa$ where $\kappa\leq\xi<\kappa^+$.

\begin{lemma}\label{lemma_no_increase}
Suppose $\kappa$ is regular and $1\leq\zeta<\kappa^+$. Suppose $\psi_\zeta(W_1,\ldots,W_n,Y,Z)$ is a $\Pi^1_\zeta$ formula over $V_\kappa$ and $\varphi(X,Y)$ is a $\Pi^1_0$ formula over $V_\kappa$ where all free second-order variables are displayed. Then there is a $\Pi^1_\zeta$ formula $\varphi_\zeta(X,Z)$ over $V_\kappa$ and a club $C_\zeta$ in $\kappa$ such that for all $A,B\subseteq V_\kappa$ and for all regular $\alpha\in C_\zeta\cup\{\kappa\}$ we have
\[V_\alpha\models\forall Y\forall W_1\cdots\forall W_n(\varphi(A\cap V_\alpha,Y)\lor\psi_\zeta(W_1,\ldots, W_n,Y,B)\res^\kappa_\alpha)\]
if and only if 
\[V_\alpha\models \varphi_\zeta(A,B)\res^\kappa_\alpha.\]
Furthermore, a similar statement holds for $\Sigma^1_\zeta$ formulas $\psi_\zeta'$ over $V_\kappa$. 
\end{lemma}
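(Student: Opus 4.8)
The plan is to separate the logical content from the bookkeeping about restrictions. Observe first that at $\alpha=\kappa$ the restriction operator is trivial ($\chi\res^\kappa_\kappa=\chi$ and $A\cap V_\kappa=A$), so the asserted biconditional at $\kappa$ is nothing but a logical equivalence between $\forall Y\forall W_1\cdots\forall W_n(\varphi(A,Y)\lor\psi_\zeta(\vec W,Y,B))$ and a genuine $\Pi^1_\zeta$ formula $\varphi_\zeta(A,B)$. I would therefore first construct $\varphi_\zeta$ and prove this top-level equivalence, and then transfer it down to a club of $\alpha<\kappa$ by a generic ultrapower argument, letting Lemma~\ref{lemma_j_of} absorb all the delicate interaction between $j$, restriction, and the limit clauses of Definition~\ref{definition_restriction}.

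For the construction of $\varphi_\zeta$ I would run a simultaneous transfinite induction on $\zeta$ that also proves the dual $\Sigma^1_\zeta$ statement (existential quantifiers and conjunction in place of universal quantifiers and disjunction). The engine is the standard closure of the classes $\Pi^1_\rho$ and $\Sigma^1_\rho$ over $V_\kappa$ under (i) disjunction and conjunction with a $\Pi^1_0$ formula and (ii) universal, respectively existential, second-order quantification. Concretely, at a successor $\zeta=\eta+1$ one writes $\psi_\zeta=\forall\vec V\,\chi$ with $\chi$ being $\Sigma^1_\eta$, uses $\varphi\lor\forall\vec V\,\chi\equiv\forall\vec V(\varphi\lor\chi)$ together with the $\Sigma^1_\eta$ instance of the inductive hypothesis to fold $\varphi\lor\chi$ into a $\Sigma^1_\eta$ formula, and then reattaches the quantifier block to land in $\Pi^1_{\eta+1}$; at a limit $\zeta$ one writes $\psi_\zeta=\bigwedge_{\rho<\zeta}\sigma_\rho$ and distributes, $\varphi\lor\bigwedge_\rho\sigma_\rho\equiv\bigwedge_\rho(\varphi\lor\sigma_\rho)$, so that the inductive hypothesis at each $\rho$ yields a $\Pi^1_\rho$ conjunct and the conjunction is $\Pi^1_\zeta$. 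Since every manipulation used here (infinitary distributivity, commuting a quantifier past a subformula not containing its variable, folding a $\Pi^1_0$ formula into a conjunct) is logically valid in every structure and for any interpretation of the second-order quantifiers, the resulting $\varphi_\zeta$ satisfies $V_\lambda\models\forall Y\forall\vec W(\varphi(A,Y)\lor\psi_\zeta(\vec W,Y,B))$ iff $V_\lambda\models\varphi_\zeta(A,B)$ for every regular $\lambda$ and all $A,B\subseteq V_\lambda$; in particular this holds at $\lambda=\kappa$, giving the $\alpha=\kappa$ case of the lemma.

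For the transfer to a club below $\kappa$ I would invoke Proposition~\ref{proposition_framework2}. Let $E$ be the (definable, with the fixed formulas $\varphi,\psi_\zeta,\varphi_\zeta$ as parameters) set of regular $\alpha<\kappa$ at which the biconditional of the lemma holds for all $A,B\subseteq V_\alpha$; it suffices to show $\kappa\in j(E)$ whenever $G\subseteq P(\kappa)/\NS_\kappa$ is generic with $\kappa$ regular in $V^\kappa/G$ and $j\colon V\to V^\kappa/G$ is the induced generic ultrapower. Unpacking $\kappa\in j(E)$ by elementarity, and using Lemma~\ref{lemma_j_of} to compute $j(\psi_\zeta)\res^{j(\kappa)}_\kappa=\psi_\zeta$, $j(\varphi_\zeta)\res^{j(\kappa)}_\kappa=\varphi_\zeta$, and $j(\varphi)\res^{j(\kappa)}_\kappa=\varphi$, together with the coding conventions of Remark~\ref{remark_coding} so that $j$ and $\res^{j(\kappa)}_\kappa$ pass through the outer universal quantifiers and the disjunction, the statement ``$\kappa\in j(E)$'' reduces to exactly the top-level equivalence proved in the previous paragraph, now read inside $V^\kappa/G$. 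As that equivalence is a logical validity it holds in $V^\kappa/G$ regardless of how $P(V_\kappa)$ is computed there, so $\kappa\in j(E)$, and Proposition~\ref{proposition_framework2} yields the required club $C_\zeta$. The $\Sigma^1_\zeta$ half of the lemma follows from the dual branch of the same induction.

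I expect the only real work to be in the inductive construction of $\varphi_\zeta$, i.e.\ in verifying that the complexity stays at level exactly $\zeta$ rather than rising to $\zeta+1$: this is where the dual ($\Sigma$) hypotheses must be threaded through the successor step and where a $\Pi^1_0$ conjunct that gets promoted to $\Pi^1_1$ by an added quantifier must be harmlessly reindexed into the next conjunct at limit stages. By contrast, the interaction between restriction and the logical rewriting---normally the genuinely delicate point, since $f^\kappa_\zeta$ and $\pi^\kappa_{\zeta,\alpha}$ reshuffle conjuncts and restriction need not commute with the rewriting at a fixed $\alpha<\kappa$---is precisely what the generic ultrapower step outsources to Lemma~\ref{lemma_j_of}, avoiding any direct diagonal-intersection argument.
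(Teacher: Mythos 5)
Your proposal is correct, but it reaches the conclusion by a genuinely different route than the paper, and the difference is exactly where you predicted it: in how the club is produced. The syntactic core is the same in both arguments: the paper also constructs $\varphi_\zeta$ by induction on $\zeta$ (with the $\Sigma$ case threaded through the successor step), distributing the disjunction over the conjunction at limits and folding the quantifier block at successors. But the paper never isolates an ``unrestricted'' equivalence; instead it carries the restricted, club-relativized biconditional through the induction itself, reusing the inductive club at successor steps and, at limit steps, taking $C_\zeta$ to be a club inside the generalized diagonal intersection $\di_{\eta<\zeta}C_\eta=\{\alpha<\kappa\st\alpha\in\bigcap_{\eta\in F^\kappa_\zeta(\alpha)}C_\eta\}$ of Remark \ref{remark_diagonal}, so that for $\alpha$ in this set the conjuncts selected by the collapse $\pi^\kappa_{\zeta,\alpha}$ are precisely those whose equivalences the inductive clubs certify. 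You instead prove the equivalence once, as a validity over any $V_\lambda$ independent of the range of the second-order quantifiers, and extract the club in a single application of Proposition \ref{proposition_framework2}, letting Lemma \ref{lemma_j_of} trivialize all restrictions at $\kappa$ in $V^\kappa/G$; this matches the methodology the paper itself uses for other club-existence claims (e.g., in Theorem \ref{theorem_universal} and Lemma \ref{lemma_restricting_universal_formulas}). What your route buys is a clean separation of logic from restriction bookkeeping and no diagonal intersections; what it costs is two points you should make explicit: (i) in the statement $\kappa\in j(E)$ the parameter $B$ ranges over $P(V_\kappa)^{V^\kappa/G}$, which may contain sets not in $V$, so Lemma \ref{lemma_j_of} must be applied to $j(\psi_\zeta)$ \emph{as a formula with free variable} $Z$, after which one checks that restriction commutes with substituting a set $B\subseteq V_\kappa$ (easy, since $B\cap V_\kappa=B$); and (ii) the construction of $\varphi_\zeta$ from $(\varphi,\psi_\zeta)$ and the proof of its equivalence must be absolute to the (possibly ill-founded) generic ultrapower, which holds because the formulas and the recursion live in $H(\kappa^+)^V\subseteq V^\kappa/G$. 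The paper's diagonal-intersection argument avoids both issues and yields an explicit club, at the price of tracking clubs through every inductive step.
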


\begin{proof}
We provide a proof for the cases in which $\psi_\zeta$ is a $\Pi^1_\zeta$ formula over $V_\kappa$. The other case in which the formulas are $\Sigma^1_\zeta$ is similar. We proceed by induction on $\zeta$. If $\zeta=1$ then $\psi_1(W_1,\ldots, W_n,Y,Z)$ is of the form $\forall W\psi_0(W_1,\ldots, W_n,W,Y,Z)$ where $\psi_0(W_1,\ldots,W_n,W,Y,Z)$ is $\Pi^1_0$ over $V_\kappa$ and we see that 
\[\forall Y\forall W_1\cdots W_n(\varphi(X,Z)\lor\psi_1(W_1,\ldots,W_n,Y,Z))\]
is equivalent over $V_\kappa$ to the $\Pi^1_1$ formula 
\[\varphi_1(X,Z)=\forall Y\forall W_1\cdots\forall W_n\forall W(\varphi(X,Y)\lor\psi_0(W_1,\ldots,W_n,W,Y,Z)).\] 
Since restrictions of $\Pi^1_1$ formulas are trivial, this establishes the base case taking $C_1=\kappa$.

If $\zeta=\eta+1<\kappa^+$ is a successor ordinal, then $\psi_{\eta+1}(W_1,\ldots,W_n,Y,Z)$ is of the form $\forall W \psi_\eta'(W_1,\ldots,W_n,W,Y,Z)$ where $\psi_\eta'$ is $\Sigma^1_\eta$ over $V_\kappa$. Clearly the formula
\[\varphi_{\eta+1}:=\forall Y\forall W_1\cdots\forall W_n\forall W(\varphi(X,Y)\lor \psi_\eta'(W_1,\ldots,W_n,W,Y,Z))\]
is $\Pi^1_{\eta+1}$ over $V_\kappa$ and satisfies the desired property together with the club $C_{\eta+1}=C_\eta$ obtained from the inductive hypothesis.

If $\zeta<\kappa^+$ is a limit ordinal, then 
\[\psi_\zeta(W_1,\ldots,W_n,Y,Z)=\bigwedge_{\eta<\zeta}\psi_\eta(W_1,\ldots,W_n,Y,Z)\]
where $\psi_\eta$ is $\Pi^1_\eta$ over $V_\kappa$ for all $\eta<\zeta$. In this case, the formula 
\[\forall Y\forall W_1\cdots\forall W_n(\varphi(X,Y)\lor\psi_\zeta(W_1,\ldots,W_n,Y,Z))\]
is equivalent over $V_\kappa$ to
\[\bigwedge_{\eta<\zeta}\forall Y\forall W_1\cdots\forall W_n(\varphi(X,Y)\lor\psi_\eta(W_1,\ldots,W_n,Y,Z)),\]
and by our inductive hypothesis, for each $\eta<\zeta$, there is a $\Pi^1_\eta$ formula $\varphi_\eta(X,Z)$ over $V_\kappa$ and a club $C_\eta$ in $\kappa$ such that for all $A,B\subseteq V_\kappa$ and all regular $\alpha\in C_\eta\cup\{\kappa\}$ we have
\[V_\alpha\models\forall Y\forall W_1\cdots\forall W_n(\varphi(A,Y)\lor\psi_\eta(W_1,\ldots, W_n,Y,B)\res^\kappa_\alpha)\]
if and only if
\[V_\alpha\models\varphi_\eta(A,B)\res^\kappa_\alpha.\]
It is easy to verify that the formula
\[\varphi_\zeta(X,Y)=\bigwedge_{\eta<\zeta}\varphi_\eta(X,Y)\]
and a club subset $C_\zeta$ of $\di_{\eta<\zeta}C_\eta=\{\alpha<\kappa\st\alpha\in\bigcap_{\eta\in F^\kappa_\zeta(\alpha)}C_\eta\}$ are as desired.\footnote{Recall that $C_\zeta$ is in fact in the club filter on $\mu$ by Remark \ref{remark_diagonal}.}
\end{proof}

The following proposition generalizes results of L\'{e}vy \cite{MR0281606} and Bagaria \cite{MR3894041}; Levy proved the case in which $\xi<\omega$ and Bagaria proved the case in which $\xi<\kappa$.

\begin{theorem}\label{theorem_universal}
Suppose $\kappa>\omega$ is a regular cardinal and $\xi$ is an ordinal with $\xi<\kappa^+$. For each $n<\omega$ there is a universal $\Pi^1_\xi$ formula $\Psi^\kappa_{\xi,n}(X_1,\ldots,X_n,Y_\xi)$ and a universal $\Sigma^1_\xi$ formula $\bar{\Psi}^\kappa_{\xi,n}(X_1,\ldots,X_n,Y_\xi)$ at $\kappa$ for formulas with $n$ free second-order variables.
\end{theorem}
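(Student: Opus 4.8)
The plan is to prove the existence of universal formulas by transfinite induction on $\xi<\kappa^+$, constructing the formulas $\Psi^\kappa_{\xi,n}$ and $\bar\Psi^\kappa_{\xi,n}$ simultaneously, since the $\Pi^1$ and $\Sigma^1$ hierarchies feed into each other at successor stages. For the base case $\xi<\omega$ (and more generally $\xi<\kappa$) I would invoke the existing results of L\'evy and Bagaria, which already supply a universal $\Pi^1_\xi$ formula in the sense of Definition~\ref{definition_universal} (the first clause, with codes $K_\varphi\in V_\kappa$); the passage from finite to $\xi<\kappa$ is Bagaria's, so the genuinely new work is entirely in the range $\kappa\le\xi<\kappa^+$, where codes become subsets of $\kappa$ and the universality is only required on a club.

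The inductive step splits into the successor and limit cases. For the \emph{successor} case, if $\xi=\zeta+1$ then every $\Pi^1_\xi$ formula $\varphi(X_1,\dots,X_n)$ has the form $\forall W_1\cdots\forall W_k\,\psi$ with $\psi$ a $\Sigma^1_\zeta$ formula in the displayed variables together with the $W_i$. Using the inductive universal $\Sigma^1_\zeta$ formula $\bar\Psi^\kappa_{\zeta,\,n+k}$, I would code $\psi$ by a single code $Y$ and then set $\Psi^\kappa_{\xi,n}(X_1,\dots,X_n,Y):=\forall W_1\cdots\forall W_k\,\bar\Psi^\kappa_{\zeta,n+k}(X_1,\dots,X_n,W_1,\dots,W_k,Y)$; the key point to check is that the number of universal quantifiers $k$ can be absorbed into the code so that a \emph{single} formula works for all $k<\omega$ at once. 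This is exactly where Lemma~\ref{lemma_no_increase} does the heavy lifting: it lets one merge a leading block of universal quantifiers over a $\Pi^1_\zeta$ (or $\Sigma^1_\zeta$) matrix into a formula of the same complexity $\Pi^1_\zeta$, while only changing the relevant club by an intersection. Combining this with the inductive universal formula and taking $C_{\xi,n}$ to be the intersection of the clubs provided by Lemma~\ref{lemma_no_increase} and by the inductive hypothesis yields the successor-stage universal formula and its code $K_\varphi\subseteq\kappa$.

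For the \emph{limit} case $\kappa\le\xi<\kappa^+$, a $\Pi^1_\xi$ formula is $\varphi=\bigwedge_{\zeta<\xi}\varphi_\zeta$ with each $\varphi_\zeta$ of complexity $\Pi^1_\zeta$. By the inductive hypothesis each $\varphi_\zeta$ has a code $K_{\varphi_\zeta}\subseteq\kappa$ and a club $C_{\varphi_\zeta}$ witnessing universality via $\Psi^\kappa_{\zeta,n}$. The plan is to assemble a single code $K_\varphi\subseteq\kappa$ encoding the sequence $\langle K_{\varphi_\zeta}\st\zeta<\xi\rangle$ (using the fixed bijection $b_{\kappa,\xi}$), and to define $\Psi^\kappa_{\xi,n}$ as the $\Pi^1_\xi$ conjunction $\bigwedge_{\zeta<\xi}\Psi^\kappa_{\zeta,n}(X_1,\dots,X_n,(K_\varphi)_\zeta)$, where $(K_\varphi)_\zeta$ is the $\zeta$-th slice decoded inside the formula. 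The witnessing club $C_\varphi$ must be taken inside the diagonal intersection $\di_{\zeta<\xi}C_{\varphi_\zeta}$ from Remark~\ref{remark_diagonal}, and inside the club where the bookkeeping of the restriction operation behaves coherently. The crucial step is to verify that, for regular $\alpha$ in this club, applying the restriction $\res^\kappa_\alpha$ commutes correctly: on the one hand $\varphi\res^\kappa_\alpha=\bigwedge_{\zeta<f^\kappa_\xi(\alpha)}\varphi_{(\pi^\kappa_{\xi,\alpha})^{-1}(\zeta)}\res^\kappa_\alpha$, and on the other hand the restriction of $\Psi^\kappa_{\xi,n}$ re-indexes its conjuncts by the same collapse $\pi^\kappa_{\xi,\alpha}$, so that the $\zeta$-th conjunct of $\Psi^\kappa_{\xi,n}\res^\kappa_\alpha$ matches the universal-formula instance $\Psi^\kappa_{(\pi^\kappa_{\xi,\alpha})^{-1}(\zeta),n}$ applied to the appropriate slice of the code.

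The main obstacle I anticipate is precisely this last compatibility check at limit stages: the indexing of the conjuncts of the universal formula must survive the restriction operation in a way that keeps it aligned, conjunct by conjunct, with the restriction of an arbitrary $\varphi$ being coded. Because the restriction re-enumerates the conjuncts using the transitive collapse $\pi^\kappa_{\xi,\alpha}$ of $F^\kappa_\xi(\alpha)$, one needs the code $K_\varphi$ to be stored so that its slices are indexed by \emph{ordinals} $\zeta<\xi$ in a manner the collapse respects; this is where I expect to lean on the coherence and limit lemmas of Section~\ref{section_canonical_reflection_functions} (notably Lemma~\ref{lemma_coherence} and Lemma~\ref{lemma_canonical_functions_at_limits}) to guarantee, on a club, that $(\pi^\kappa_{\xi,\alpha})^{-1}(\zeta)$ matches the decoding and that the resulting restricted conjunction is genuinely a $\Pi^1_{f^\kappa_\xi(\alpha)}$ formula over $V_\alpha$. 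Once the clubs are chosen to lie below all the relevant diagonal intersections, the equivalence $V_\alpha\models\varphi(\vec A)\res^\kappa_\alpha \iff V_\alpha\models\Psi^\kappa_{\xi,n}(\vec A,K_\varphi)\res^\kappa_\alpha$ should follow by unwinding the definition of restriction and applying the inductive equivalences slice by slice.
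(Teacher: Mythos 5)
Your construction is the paper's construction: the base case $\xi<\kappa$ from L\'evy and Bagaria, the successor case by prefixing a universal quantifier to the inductive universal $\Sigma^1_\zeta$ formula, and the limit case by conjoining instances of the lower universal formulas evaluated on slices of a code $K_\varphi\subseteq\kappa$ indexed through $b_{\kappa,\xi}$. The difference is in how the club-equivalence is verified. You propose a direct combinatorial check --- diagonal intersections of the inductive clubs together with the coherence lemmas (Lemma~\ref{lemma_coherence}, Lemma~\ref{lemma_canonical_functions_at_limits}) to align conjuncts under the collapse $\pi^\kappa_{\xi,\alpha}$ --- whereas the paper runs the verification through generic ultrapowers (Proposition~\ref{proposition_framework2} together with Lemma~\ref{lemma_j_of}): in a generic ultrapower by $\NS_\kappa$ in which $\kappa$ remains regular, $j(\varphi)\res^{j(\kappa)}_\kappa=\varphi$ and $j(\Psi^\kappa_{\xi,1})\res^{j(\kappa)}_\kappa=\Psi^\kappa_{\xi,1}$, so the re-indexing coherence is absorbed by elementarity rather than checked conjunct by conjunct. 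Your route can be made to work, at the price of the bookkeeping the ultrapower hides (in particular you also need, for each conjunct, the clubs on which the relevant restrictions are defined at all).

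There is, however, one genuine soft spot. You invoke Lemma~\ref{lemma_no_increase} in the successor step, where it is not needed: a block $\forall W_1\cdots\forall W_k$ is collapsed to a single quantifier by pairing inside the matrix, which leaves the $\Sigma^1_\zeta$-complexity unchanged, and then $\bar{\Psi}^\kappa_{\zeta,n+1}$ suffices. Where Lemma~\ref{lemma_no_increase} is indispensable is the limit step, which you gloss with the phrase ``decoded inside the formula.'' Decoding a slice requires a quantifier: the conjunct has the shape $\forall Y\bigl(Y=(Y_\xi)_{b_{\kappa,\xi}^{-1}(\zeta)}\rightarrow\Psi^\kappa_{\zeta,n}(X_1,\ldots,X_n,Y)\bigr)$, and this is not syntactically $\Pi^1_\zeta$, so without further work your conjunction is not a $\Pi^1_\xi$ formula in the sense of Definition~\ref{definition_over} and the alleged universal formula is not well-formed. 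The coherence lemmas you cite align the indices but do not repair this. The paper's fix is exactly Lemma~\ref{lemma_no_increase}: it converts each sliced instance into an honest $\Pi^1_\zeta$ formula $\Theta^\kappa_\zeta$ together with a club $D_\zeta$ on which the conversion also respects restriction, and it is these $\Theta^\kappa_\zeta$ that get conjoined. Since you clearly have the right tool in hand, the repair is simply to move its application from the successor case to the limit case.
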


\begin{proof}
The case in which $\xi<\kappa$ follows directly from the proof of \cite[Proposition 4.4]{MR3894041}. Suppose $\xi=\zeta+1$ is a successor ordinal with $\kappa<\zeta+1<\kappa^+$ and the result holds for all $\eta\leq\zeta$. Let us show that there is a universal $\Pi^1_{\zeta+1}$ formula at $\kappa$ for formulas with $n$ free second-order variables; a similar argument works for $\Sigma^1_{\zeta+1}$ formulas, which we leave to the reader. Let $\bar{\Psi}_{\zeta,n+1}^\kappa(X_1,\ldots,X_n,X_{n+1},Y_\xi)$ be a universal $\Sigma^1_\zeta$ formula at $\kappa$ for formulas with $n+1$ free second-order variables obtained from the induction hypothesis. We will show that
\[\Psi_{\zeta+1,n}^\kappa(X_1,\ldots,X_n,Y_\xi)=\forall W\bar{\Psi}_{\zeta,n+1}^\kappa(X_1,\ldots,X_n,W,Y_\xi)\]
is the desired formula. Suppose $\varphi(X_1,\ldots,X_n)=\forall W\varphi_\zeta(X_1,\ldots,X_n,W)$ is any $\Pi^1_{\zeta+1}$ formula with $n$ free second-order variables, where $\varphi_\zeta$ is $\Sigma^1_\zeta$ with $n+1$ free second-order variables.\footnote{Note that if we had here a block of quantifiers $\forall W_1\cdots\forall W_k$, they could be collapsed to a single one by modifying $\varphi_\zeta$ without changing the fact that $\varphi_\zeta$ is $\Sigma^1_\zeta$.} Let $C_{\varphi_\zeta}$ and $K_{\varphi_\zeta}$ be as obtained from the inductive hypothesis. Fix $A_1,\ldots,A_n\subseteq V_\kappa$. Then for all regular $\alpha\in C_{\varphi_\zeta}\cup\{\kappa\}$ we have
\begin{align*}
V_\alpha\models\varphi(A_1,\ldots,A_n)\res^\kappa_\alpha&\iff (\forall W\subseteq V_\alpha) V_\alpha\models \varphi_\zeta(A_1,\ldots,A_n,W)\res^\kappa_\alpha\\
	&\iff (\forall W\subseteq V_\alpha) V_\alpha\models \bar{\Psi}_{\zeta,n+1}^\kappa(A_1,\ldots,A_n,W,K_{\varphi_\zeta})\res^\kappa_\alpha\\
	&\iff V_\alpha\models \forall W\bar{\Psi}_{\zeta,n+1}^\kappa(A_1,\ldots,A_n,W,K_{\varphi_\zeta})\res^\kappa_\alpha\\
	&\iff V_\alpha\models\Psi_{\zeta+1,n}^\kappa(A_1,\ldots,A_n, K_{\varphi_\zeta})\res^\kappa_\alpha,
\end{align*}
which establishes the successor case of the induction.

Suppose $\xi$ is a limit ordinal with $\kappa\leq\xi<\kappa^+$ and the result holds for all $\zeta<\xi$. We will show that there is a universal $\Pi^1_\xi$ formula at $\kappa$ for formulas with $1$ free second-order variable; the proof for $n$ free second-order variables is essentially the same but one must replace the single variable $X$ with a tuple $X_1,\ldots,X_n$ in the appropriate places. We let $\Gamma:\kappa\times\kappa\to\kappa$ be the usual definable pairing function and for $A\subseteq\kappa$ and $\eta<\kappa$ we let 
\[(A)_\eta=\{\beta<\kappa\st\Gamma(\eta,\beta)\in A\}\]
be the ``$\eta^{th}$ slice'' of $A$.

Suppose $\zeta<\xi$. Using the inductive hypothesis, we let $\Psi^\kappa_{\zeta,1}$ be a universal $\Pi^1_\zeta$ formula at $\kappa$ for formulas with $1$ free variable. We will define the desired universal formula $\Psi_{\xi,1}^\kappa(X,Y_\xi)$ by simply taking the conjunction of the $\Psi^\kappa_{\zeta,1}$'s for $\zeta<\xi$, with the proviso that we must take care to use the right slice of the code $K_\varphi\subseteq\kappa$ we will define for an arbitrary $\Pi^1_\xi$ formula $\varphi=\bigwedge_{\zeta<\xi}\varphi_\zeta$, where $(K_\varphi)_{b_{\kappa,\xi}^{-1}(\zeta)}=K_{\varphi_\zeta}$. With this in mind, note that we will define $\Psi_{\xi,1}^\kappa(X,Y_\xi)$ in such a way that it is equivalent to
\[\bigwedge_{\zeta<\xi}\Psi^\kappa_{\zeta,1}(X,(Y_\xi)_{b_{\kappa,\xi}^{-1}(\zeta)})\]
over $V_\kappa$. In order to verify that the following definition of $\Psi^\kappa_{\xi,1}$ produces a $\Pi^1_\xi$ formula over $V_\kappa$, we must check that $\Psi^\kappa_{\zeta,1}(X,(Y_\xi)_{b_{\kappa,\xi}^{-1}(\zeta)})$ is expressible by a $\Pi^1_\zeta$ formula over $V_\kappa$.

Suppose $\zeta<\xi$. Notice that for $A\subseteq V_\kappa$ and $B\subseteq\kappa$ the sentence $\Psi^\kappa_{\zeta,1}(A,(B)_{b_{\kappa,\xi}^{-1}(\zeta)})$ is equivalent to
\[\forall Y(Y=(B)_{b_{\kappa,\xi}^{-1}(\zeta)}\rightarrow \Psi^\kappa_{\zeta,1}(A,(B)_{b_{\kappa,\xi}^{-1}(\zeta)})),\]
over $V_\kappa$ where $Y=(B)_{b_{\kappa,\xi}^{-1}(\zeta)}$ is expressible as a $\Pi^1_0$ formula over $V_\kappa$ using $b_{\kappa,\xi}^{-1}(\zeta)$ as a parameter and using a first order quantifier over ordinals. Thus, by Lemma \ref{lemma_no_increase}, we may let $\Theta^\kappa_\zeta(X,Y_\xi)$ be a $\Pi^1_\xi$ formula over $V_\kappa$ such that for all $A\subseteq V_\kappa$ and $B\subseteq\kappa$, $V_\kappa\models\Theta_\zeta^\kappa(A,B)$ if and only if $V_\kappa\models \Psi^\kappa_{\zeta,1}(A,(B)_{b_{\kappa,\xi}^{-1}(\zeta)})$, and furthermore, we can assume $\Theta_\zeta^\kappa$ has the property that there is a club $D_\zeta$ in $\kappa$ such that for all regular $\alpha\in D_\zeta$ and all $A,B\subseteq V_\kappa$ we have
\[V_\alpha\models \Psi^\kappa_{\zeta,1}(A,(B)_{b_{\kappa,\xi}^{-1}(\zeta)})\res^\kappa_\alpha\text{ if and only if }V_\kappa\models\Theta_\zeta^\kappa(A,B)\res^\kappa_\alpha.\]

Now let us check that 
\[\Psi_{\xi,1}^\kappa(X,Y_\xi)=\bigwedge_{\zeta<\xi}\Theta_\zeta^\kappa(X,Y_\xi)\]
is a universal $\Pi^1_\xi$ formula at $\kappa$ for formulas with $1$ free variable. Suppose $\varphi(X)=\bigwedge_{\zeta<\xi}\varphi_\zeta(X)$ is any $\Pi^1_\xi$ formula over $V_\kappa$ with one free second-order variable, where each $\varphi_\zeta$ is $\Pi^1_\zeta$ over $V_\kappa$. Let $K_\varphi=\{\Gamma(b_{\kappa,\xi}^{-1}(\zeta),\beta)\st\zeta<\xi\land\beta\in K_{\varphi_\zeta}\}$ code the sequence $\<K_{\varphi_\zeta}\st\zeta<\xi\>$, where the codes $K_{\varphi_\zeta}$ are obtained by the inductive hypothesis. Notice that for each $\zeta<\xi$ we have $(K_\varphi)_{b_{\kappa,\xi}^{-1}(\zeta)}=K_{\varphi_\zeta}$.


Fix $A\subseteq V_\kappa$. It follows easily from the definitions of $K_\varphi$ and $\Psi_{\xi,1}^\kappa(X,Y_\xi)$ that 
\begin{align*}
V_\kappa\models\Psi_{\xi,1}^\kappa(A,K_\varphi)&\iff V_\kappa\models\bigwedge_{\zeta<\xi}\Psi^\kappa_{\zeta,1}(A,(K_{\varphi})_{b_{\kappa,\xi}^{-1}(\zeta)})\\
  &\iff V_\kappa\models\bigwedge_{\zeta<\xi}\Psi^\kappa_{\zeta,1}(A,(K_{\varphi_\zeta}))\\
&\iff V_\kappa\models\varphi(A).
\end{align*}

Next let us show that there is a club $C$ in $\kappa$ such that for all regular $\alpha\in C$ we have $V_\alpha\models\varphi(A)\res^\kappa_\alpha$ if and only if $V_\alpha\models\Psi_{\xi,1}^\kappa(A,K_\varphi)\res^\kappa_\alpha$. To prove that such a club exists we use Proposition \ref{proposition_framework2}. Let $G\subseteq P(\kappa)/\NS_\kappa$ be generic over $V$ such that $\kappa$ is regular in $V^\kappa/G$ and let $j:V\to V^\kappa/G$ be the corresponding generic ultrapower embedding. We must show that $\kappa\in j(T)$ where
\begin{align}
T=\{\alpha\in\REG\st V_\alpha\models\varphi(A)\res^\kappa_\alpha\iff V_\alpha\models\Psi_{\xi,1}^\kappa(A,K_\varphi)\res^\kappa_\alpha\}.
\end{align}
By Lemma \ref{lemma_j_of} we have
\[j(\varphi(A))\res^{j(\kappa)}_\kappa=\varphi(A)=\bigwedge_{\zeta<\xi}\varphi_\zeta(A)\]
and
\[j(\Psi_{\xi,1}^\kappa(A,K_\varphi))\res^{j(\kappa)}_\kappa=\Psi_{\xi,1}^\kappa(A,K_\varphi)=\bigwedge_{\zeta<\xi}\Theta_\zeta^\kappa(A,K_\varphi).\]
By our inductive hypothesis, for each $\zeta<\xi$ there is a club $C_{\varphi_\zeta}$ in $\kappa$ such that for all regular $\alpha\in C_{\varphi_\zeta}$ we have $V_\alpha\models\varphi_\zeta(A)\res^\kappa_\alpha$ if and only if $V_\alpha\models\Psi_{\zeta,1}^\kappa(A,K_{\varphi_\zeta})\res^\kappa_\alpha$. Since for all $\zeta<\xi$ we have $\kappa\in j(C_{\varphi_\zeta}\cap D_\zeta)$ and $(K_\varphi)_{b_{\kappa,\xi}^{-1}(\zeta)}=K_{\varphi_\zeta}$, it follows that in $V^\kappa/G$ we have
\begin{align*}
V_\kappa\models\varphi_\zeta(A)&\iff
V_\kappa\models\Psi_{\zeta,1}^\kappa(A,(K_\varphi)_{b_{\kappa,\xi}^{-1}(\zeta)})\\
  &\iff V_\kappa\models\Theta_\zeta^\kappa(A,K_\varphi).
\end{align*}
Hence $\kappa\in j(T)$.
\end{proof}

\begin{remark}\label{remark_universal_formula_depends_on_bijections}
Notice that contained within the proof of Theorem \ref{theorem_universal} is a construction via transfinite recursion on $\xi$ of a universal $\Pi^1_\xi$ formula at $\kappa$ for formulas with $n$ free second-order variables. Furthermore, when $\xi$ is a limit, let us emphasize that the definition of $\Psi^\kappa_{\xi,n}(X_1,\ldots,X_n,Y_\xi)$ depends not only on the chosen bijection $b_{\kappa,\xi}:\kappa\to\xi$, but on the entire history of bijections $b_{\kappa,\zeta}:\kappa\to\zeta$ chosen at previous limit steps $\zeta<\xi$ in the construction.
\end{remark}

Generalizing work of Bagaria \cite{MR3894041}, as our first application of the existence of universal formulas, we show that there are natural normal ideals on $\kappa$ associated to $\Pi^1_\xi$-indescribability for all $\xi<\kappa^+$.

\begin{theorem}\label{theorem_normal_ideal}
If a cardinal $\kappa$ is $\Pi^1_\xi$-indescribable where $\xi<\kappa^+$, then the collection
\[\Pi^1_\xi(\kappa)=\{X\subseteq\kappa\st\text{$X$ is not $\Pi^1_\xi$-indescribable}\}\]
is a nontrivial normal ideal on $\kappa$.
\end{theorem}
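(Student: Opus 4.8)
The plan is to verify the ideal axioms first and then normality, with the universal $\Pi^1_\xi$ formulas of Theorem~\ref{theorem_universal} as the main tool. Closure under subsets is immediate: if $X\subseteq Y$ and a $\Pi^1_\xi$ sentence $\varphi$ witnesses that $Y$ is not $\Pi^1_\xi$-indescribable (that is, $V_\kappa\models\varphi$ while $V_\alpha\not\models\varphi\res^\kappa_\alpha$ for all $\alpha\in Y$), then the same $\varphi$ witnesses $X\in\Pi^1_\xi(\kappa)$. Properness is exactly the hypothesis, since $\kappa\notin\Pi^1_\xi(\kappa)$ says precisely that $\kappa$ is $\Pi^1_\xi$-indescribable. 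For finite unions I would take witnesses $\varphi_0,\varphi_1$ for $X_0,X_1\in\Pi^1_\xi(\kappa)$ and use that a finite conjunction of $\Pi^1_\xi$ formulas is equivalent on a club to a $\Pi^1_\xi$ formula whose restriction is the conjunction of the restrictions (a routine induction on $\xi$ paralleling Definition~\ref{definition_over}), so that $\varphi_0\wedge\varphi_1$ witnesses $X_0\cup X_1\in\Pi^1_\xi(\kappa)$. Finally, for nontriviality I would show $\NS_\kappa\subseteq\Pi^1_\xi(\kappa)$: given a club $D\subseteq\kappa$, the first-order (hence $\Pi^1_\xi$) sentence $\chi_D$ asserting ``$D$ is closed and unbounded,'' with parameter $D$, satisfies $V_\kappa\models\chi_D$ while $\chi_D\res^\kappa_\alpha$ asserts ``$D\cap\alpha$ is club in $\alpha$,'' which fails for regular $\alpha\notin D$ since such $\alpha$ is not a limit point of the closed set $D$; thus $\kappa\setminus D\in\Pi^1_\xi(\kappa)$ and every bounded set lies in the ideal.

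The substance is normality, which by Remark~\ref{remark_diagonal} amounts to showing that whenever $\langle B_\gamma\mid\gamma<\kappa\rangle$ are all in $\Pi^1_\xi(\kappa)$, the diagonal union $B=\{\alpha<\kappa:\exists\gamma<\alpha\ \alpha\in B_\gamma\}$ is in $\Pi^1_\xi(\kappa)$. For each $\gamma$ I fix a $\Pi^1_\xi$ sentence $\varphi_\gamma$ over $V_\kappa$ with $V_\kappa\models\varphi_\gamma$ and $V_\alpha\not\models\varphi_\gamma\res^\kappa_\alpha$ for every regular $\alpha\in B_\gamma$. Using the universal $\Pi^1_\xi$ formula $\Psi:=\Psi^\kappa_{\xi,0}(Y_\xi)$ of Theorem~\ref{theorem_universal}, I obtain codes $K_{\varphi_\gamma}\subseteq\kappa$ and clubs $C_{\varphi_\gamma}$, and let $K\subseteq\kappa$ code the sequence via $(K)_\gamma=K_{\varphi_\gamma}$. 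Put $C^*=\di_{\gamma<\kappa}C_{\varphi_\gamma}$, a club by Remark~\ref{remark_diagonal}. The witnessing formula will be
\[\varphi:=\chi_{C^*}\wedge\forall\gamma\,\Psi((K)_\gamma),\]
where $\chi_{C^*}$ is the $\Pi^1_0$ sentence ``$C^*$ is club'' and $\Psi((K)_\gamma)$ abbreviates $\forall Y\,(Y=(K)_\gamma\to\Psi(Y))$, the slice being picked out by a $\Pi^1_0$ condition in the parameters $K,\gamma$. By Lemma~\ref{lemma_no_increase}, exactly as in the limit case of the proof of Theorem~\ref{theorem_universal}, this is equivalent over $V_\kappa$ to a genuine $\Pi^1_\xi$ sentence, and on a club $\varphi\res^\kappa_\alpha$ is the conjunction of ``$C^*\cap\alpha$ is club in $\alpha$'' with $\forall\gamma<\alpha\,\Psi((K)_\gamma)\res^\kappa_\alpha$. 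Clearly $V_\kappa\models\varphi$.

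Suppose toward a contradiction that $B$ is $\Pi^1_\xi$-indescribable. Then there is a regular $\alpha\in B$ with $V_\alpha\models\varphi\res^\kappa_\alpha$. The first conjunct forces $\alpha$ to be a limit point of the closed set $C^*$, hence $\alpha\in C^*$, so $\alpha\in C_{\varphi_\gamma}$ for every $\gamma<\alpha$. Since $\alpha\in B$ there is $\gamma_0<\alpha$ with $\alpha\in B_{\gamma_0}$, and the second conjunct yields $V_\alpha\models\Psi((K)_{\gamma_0})\res^\kappa_\alpha=\Psi(K_{\varphi_{\gamma_0}})\res^\kappa_\alpha$, because $(K\cap V_\alpha)_{\gamma_0}=K_{\varphi_{\gamma_0}}\cap V_\alpha$. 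As $\alpha\in C_{\varphi_{\gamma_0}}$, universality then gives $V_\alpha\models\varphi_{\gamma_0}\res^\kappa_\alpha$, contradicting the choice of $\varphi_{\gamma_0}$ since $\alpha\in B_{\gamma_0}$. Hence $B\in\Pi^1_\xi(\kappa)$, which is normality.

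The main obstacle is the construction and bookkeeping around $\varphi$: one must check that prepending the first-order quantifier $\forall\gamma$, substituting the definable slices $(K)_\gamma$ into the code position of $\Psi$, and conjoining $\chi_{C^*}$ all leave a formula that is genuinely $\Pi^1_\xi$ and not of higher complexity, and whose restriction to a regular $\alpha$ is exactly the $\gamma<\alpha$ conjunction of the restricted slices together with the club condition, so that the single failing conjunct at $\gamma_0$ really occurs in $\varphi\res^\kappa_\alpha$. This is precisely what Lemma~\ref{lemma_no_increase} and the slice apparatus of Theorem~\ref{theorem_universal} are engineered to supply; the diagonal intersection $C^*$ of the universality clubs, together with the conjunct $\chi_{C^*}$ forcing the reflecting point into $C^*$, is the device that aligns all $\kappa$-many universality equivalences simultaneously at the single point $\alpha$.
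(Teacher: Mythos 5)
Your route is the paper's own, down to the details: witnesses for each piece of the diagonal union, slice-coding of their universal-formula codes into a single $K\subseteq\kappa$, the diagonal intersection $C^*$ of the universality clubs, an application of Lemma \ref{lemma_no_increase} to fold the $\kappa$-fold conjunction into one genuine $\Pi^1_\xi$ sentence $\varphi$, and a reflection argument for the contradiction. The visible differences are cosmetic: you use the diagonal-union formulation of normality where the paper uses regressive functions, you absorb the parameters $A_\eta$ into the sentences and use $\Psi^\kappa_{\xi,0}$ where the paper keeps them explicit and uses $\Psi^\kappa_{\xi,1}$, and you conjoin $\chi_{C^*}$ where the paper carries $C$ as a parameter of $\varphi(A,K,C)$. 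Your explicit verification of the routine ideal axioms (which the paper dismisses as easy) is welcome.

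There is, however, a genuine gap at the decisive step. Lemma \ref{lemma_no_increase} guarantees that $\varphi\res^\kappa_\alpha$ agrees with the intended conjunction --- ``$C^*\cap\alpha$ is club'' together with the $\gamma<\alpha$ conjunction of restricted slices --- only for regular $\alpha$ in a club $D$ produced by that lemma (and the slice identity $(K\cap V_\alpha)_{\gamma_0}=K_{\varphi_{\gamma_0}}\cap V_\alpha$ likewise holds only for $\alpha$ closed under $\Gamma$). Applying the $\Pi^1_\xi$-indescribability of $B$ to $\varphi$ gives a regular $\alpha\in B$ with $V_\alpha\models\varphi\res^\kappa_\alpha$, but gives no control on whether $\alpha\in D$; off $D$ the formula $\varphi\res^\kappa_\alpha$ need not have the claimed conjunctive shape, so you may not speak of ``the first conjunct,'' and consequently you can conclude neither $\alpha\in C^*$ nor the failure of the $\gamma_0$-th slice. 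The $\chi_{C^*}$ guard cannot bootstrap itself out of this, since reading that guard off of $\varphi\res^\kappa_\alpha$ already presupposes $\alpha\in D$. This is exactly why the paper takes its reflection point in $S\cap C\cap D$ rather than merely in $S$: implicitly it uses that a $\Pi^1_\xi$-indescribable set meets every club in a $\Pi^1_\xi$-indescribable set. The repair is short. Either prove that conjoining a $\Pi^1_0$ \emph{sentence} $\chi_E$ into every leaf of a $\Pi^1_\xi$ sentence yields a $\Pi^1_\xi$ sentence whose restriction at \emph{every} regular $\alpha$ is equivalent to $\chi_E\res^\kappa_\alpha$ conjoined with the original restriction (this works everywhere, not just on a club, because a closed formula factors through quantifiers and infinitary connectives), and then guard $\varphi$ with $\chi_{C^*\cap D}$; or, once your first-paragraph facts are secured, note that $\kappa\setminus D\in\Pi^1_\xi(\kappa)$ and hence $B\cap D$ is still $\Pi^1_\xi$-indescribable, and reflect into $B\cap D$ instead of $B$. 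Be aware that the same ``equivalent on a club'' elision silently infects your finite-union step, where the ideal facts are not yet available, so there the leaf-wise guarding lemma (rather than the ideal facts) is the honest fix.
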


\begin{proof}
Suppose $\kappa$ is $\Pi^1_\xi$-indescribable where $\xi<\kappa^+$. It is easy to see that 
\[\Pi^1_\xi(\kappa)=\{X\subseteq\kappa\st\text{$X$ is not $\Pi^1_\xi$-indescribable}\}\]
is a nontrivial ideal on $\kappa$, so we just need to prove it is normal. Suppose $S\in\Pi^1_\xi(\kappa)^+$ and fix a regressive function $f:S\to\kappa$. For the sake of contradiction, assume that for all $\eta<\kappa$ the set $f^{-1}(\{\eta\})=\{\alpha\in S\st f(\alpha)=\eta\}$ is not in $\Pi^1_\xi(\kappa)^+$. Then, for each $\eta<\kappa$ there is some $\Pi^1_\xi$ formula $\varphi_\eta(X)$ over $V_\kappa$ and some $A_\eta\subseteq V_\kappa$ such that $V_\kappa\models\varphi_\eta(A_\eta)$ but 
\begin{align}V_\alpha\models\lnot\varphi_\eta(A_\eta)\res^\kappa_\alpha\text{ for all }\alpha\in S \text{ such that }f(\alpha)=\eta.\label{equation_will_contradict}
\end{align}
Let $\Psi^\kappa_{\xi,1}(X,Y_\xi)$ be the universal $\Pi^1_\xi$ formula at $\kappa$ for formulas with one free second-order variable, let $K_{\varphi_\eta}\subseteq\kappa$ be the code for $\varphi_\eta$ and let $C_{\varphi_\eta}$ be the club subset of $\kappa$ as in Definition \ref{definition_universal}. Then for all $\eta<\kappa$ we have
\[V_\kappa\models\Psi^\kappa_{\xi,1}(A_\eta,K_{\varphi_\eta}).\]

We would like to show that the formula $\bigwedge_{\eta<\kappa}\Psi^\kappa_{\xi,1}(A_\eta,K_{\varphi_\eta})$ is equivalent to a single $\Pi^1_\xi$ formula over $V_\kappa$. Let $A=\{\Gamma(\eta,\beta)\st \eta<\kappa\land\beta\in A_\eta\}\subseteq\kappa$ and $K=\{\Gamma(\eta,\beta)\st\eta<\kappa\land\beta\in K_{\varphi_\eta}\}\subseteq\kappa$ code the sequences $\<A_\eta\st\eta<\kappa\>$ and $\<K_{\varphi_\eta}\st\eta<\kappa\>$ respectively. Let
\[C=\di_{\eta<\kappa}C_{\varphi_\eta}=\{\zeta<\kappa\st\zeta\in\bigcap_{\eta<\zeta}C_{\varphi_\eta}\}\]
and notice that $C$ is in the club filter on $\kappa$. By a straightforward application of Lemma \ref{lemma_no_increase}, there is a $\Pi^1_\xi$ sentence $\varphi(A,K,C)$ such that 
\[V_\kappa\models\varphi(A,K,C) \text{ if and only if } V_\kappa\models\bigwedge_{\eta<\kappa}\Psi^\kappa_{\xi,1}(A_\eta,K_{\varphi_\eta}),\]
and furthermore, there is a club $D\subseteq\kappa$ such that for all regular $\alpha\in D$ we have
\[V_\alpha\models\varphi(A,K,C)\res^\kappa_\alpha\text{ if and only if }V_\alpha\models\bigwedge_{\eta<\alpha}\Psi^\kappa_{\xi,1}(A_\eta,K_{\varphi_\eta})\res^\kappa_\alpha.\]
Since $S$ is $\Pi^1_\xi$-indescribable in $\kappa$, there is some regular $\alpha\in S\cap C\cap D$ such that $V_\alpha\models\varphi(A,K,C)\res^\kappa_\alpha$. Since $\alpha\in D$ we have $V_\alpha\models \bigwedge_{\eta<\alpha}\Psi_{\xi,1}^\kappa(A_\eta,K_{\varphi_\eta})\res^\kappa_\alpha$ and since $\alpha\in C$ we have $V_\alpha\models\bigwedge_{\eta<\alpha} \varphi_\eta(A_\eta)$, which contradicts (\ref{equation_will_contradict}) since $f(\alpha)<\alpha$.
\end{proof}

As an easy consequence of Theorem \ref{theorem_normal_ideal} we obtain the following a characterization of $\Pi^1_\xi$-indescribable subsets of a cardinal in terms of generic elementary embeddings, which we will use below to characterize $\Pi^1_\xi$-indescribable sets in terms of a natural filter base (see Theorem \ref{theorem_xi_clubs}(1)).

\begin{proposition}\label{proposition_generic_characterization}
Suppose $\kappa$ is a regular cardinal, $\xi<\kappa^+$ and $S\subseteq\kappa$. The following are equivalent.
\begin{enumerate}
\item The set $S\subseteq\kappa$ is $\Pi^1_\xi$-indescribable in $\kappa$.
\item There is some poset $\P$ such that whenever $G\subseteq \P$ is generic over $V$, there is an elementary embedding $j:V\to M\subseteq V[G]$ in $V[G]$ with critical point $\kappa$ such that 
\begin{enumerate}
\item $\kappa\in j(S)$ and 
\item for all $\Pi^1_\xi$ sentences $\varphi$ over $V_\kappa$ in $V$ we have $j(\varphi)\res^{j(\kappa)}_\kappa=\varphi$ and 
\[(V_\kappa\models\varphi)^V \implies (V_\kappa\models\varphi)^M.\]
\end{enumerate}
\end{enumerate}
\end{proposition}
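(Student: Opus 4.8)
The plan is to view condition (2) as a generic-ultrapower reformulation of $\Pi^1_\xi$-indescribability and to prove the two implications separately, using the normal ideal $I:=\Pi^1_\xi(\kappa)$ supplied by Theorem~\ref{theorem_normal_ideal} to build the forcing and Lemma~\ref{lemma_j_of} to verify the key clause of (2)(b). Throughout I would use the standard features of generic ultrapowers recalled before Proposition~\ref{proposition_framework}: $\crit(j)=\kappa$, $H(\kappa^+)^V\subseteq M$, $S\in U_G$ iff $\kappa\in j(S)$, and $\kappa$ is regular in $M$ iff $\REG\cap\kappa\in U_G$.

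For $(1)\Rightarrow(2)$, assume $S$ is $\Pi^1_\xi$-indescribable, so $S\in I^+$. I would let $\P$ be the quotient $P(\kappa)/I$ restricted below $[S]$, so that every generic $G\subseteq\P$ contains the top condition $[S]$; the induced $V$-normal generic ultrafilter $U_G$ then contains $S$, and the generic ultrapower $j\colon V\to M=V^\kappa/G$ satisfies $\crit(j)=\kappa$ and $\kappa\in j(S)$, which is clause (a). For clause (b), the implication $(V_\kappa\models\varphi)^V\Rightarrow(V_\kappa\models\varphi)^M$ is immediate since $V_\kappa\in M$ with $V_\kappa^M=V_\kappa^V$ and satisfaction in $V_\kappa$ is absolute; and the identity $j(\varphi)\res^{j(\kappa)}_\kappa=\varphi$ for every $\Pi^1_\xi$ sentence $\varphi$ over $V_\kappa$ is exactly Lemma~\ref{lemma_j_of}, provided $\kappa$ is regular in $M$.

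For $(2)\Rightarrow(1)$, suppose (2) holds and fix a $\Pi^1_\xi$ sentence $\varphi$ over $V_\kappa$ with $V_\kappa\models\varphi$. Forcing with the given $\P$ and passing to the embedding $j\colon V\to M$, clause (b) yields both $(V_\kappa\models\varphi)^M$ and $j(\varphi)\res^{j(\kappa)}_\kappa=\varphi$; combined with $\kappa\in j(S)$ from (a), this means $M$ satisfies ``there is $\alpha\in j(S)$ with $V_\alpha\models j(\varphi)\res^{j(\kappa)}_\alpha$'', as witnessed by $\alpha=\kappa$. Because the restriction operation is uniformly definable from the sequence of canonical reflection functions and $j$ carries the sequence at $\kappa$ to the corresponding sequence at $j(\kappa)$ in $M$, elementarity pulls this statement back to ``there is $\alpha\in S$ with $V_\alpha\models\varphi\res^\kappa_\alpha$'' in $V$. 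Since this concerns only $V_\alpha$ for $\alpha<\kappa$ together with the canonical reflection functions of $V$, it is absolute and genuinely holds in $V$; as $\varphi$ was an arbitrary true $\Pi^1_\xi$ sentence, $S$ is $\Pi^1_\xi$-indescribable.

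The one genuine obstacle is guaranteeing that $\kappa$ is regular in $M$, which is needed for Lemma~\ref{lemma_j_of} to apply (and for $j(\varphi)\res^{j(\kappa)}_\kappa$ to even be defined) once $\kappa\le\xi<\kappa^+$. Since $\kappa$ is regular in $M$ exactly when $\REG\cap\kappa\in U_G$, it suffices to force below $S\cap\REG$, i.e. to know $S\cap\REG\in I^+$. When $\xi<\kappa$ this is unnecessary: there $j(\varphi)\res^{j(\kappa)}_\kappa$ is computed by intersecting the second-order parameters with $V_\kappa$, and as $\crit(j)=\kappa$ this returns $\varphi$ directly with no regularity hypothesis, so I would simply take $\P=P(\kappa)/I\restriction S$. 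When $\kappa\le\xi$, I would argue $S\cap\REG\in I^+$ as follows. By the nesting of the ideals in Corollary~\ref{corollary_proper}, $\Pi^1_\xi$-indescribability of $\kappa$ implies $\Pi^1_1$-indescribability, whence $\Pi^1_1(\kappa)\subseteq\Pi^1_\xi(\kappa)=I$. Moreover, any $\Pi^1_1$-indescribable set must contain a regular cardinal: the $\Pi^1_1$ sentence asserting that $\mathrm{Ord}$ is regular holds in $V_\kappa$ and, since $1<\kappa$ its restriction is trivial, reflects into the set, producing a regular $\alpha$ in it. Consequently $\kappa\setminus\REG$, which contains no regular cardinal, cannot be $\Pi^1_1$-indescribable, so $\kappa\setminus\REG\in\Pi^1_1(\kappa)\subseteq I$ and hence $\REG\cap\kappa\in I^*$; as $S\in I^+$, this gives $S\cap\REG\in I^+$. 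Forcing below $[S\cap\REG]$ then secures both $\kappa\in j(S)$ and the regularity of $\kappa$ in $M$, so Lemma~\ref{lemma_j_of} applies and clause (b) is verified.
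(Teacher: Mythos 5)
Your backward direction matches the paper's argument, and your care in securing the regularity of $\kappa$ in the generic ultrapower (forcing below $[S\cap\REG]$, using that $\kappa\setminus\REG\in\Pi^1_1(\kappa)\subseteq\Pi^1_\xi(\kappa)$) is a legitimate refinement of a point the paper leaves implicit. However, the forward direction has a genuine gap at exactly the substantive clause of (2b). You assert that $(V_\kappa\models\varphi)^V\Rightarrow(V_\kappa\models\varphi)^M$ is ``immediate since $V_\kappa^M=V_\kappa^V$ and satisfaction in $V_\kappa$ is absolute.'' That absoluteness argument is valid only for first-order ($\Pi^1_0$) sentences. A $\Pi^1_\xi$ sentence with $\xi\geq 1$ has second-order (and possibly infinitary) quantifiers ranging over $P(V_\kappa)$, and agreement of the structures $V_\kappa^M=V_\kappa^V$ says nothing about agreement of the power sets: while $H(\kappa^+)^V\subseteq M$ gives $P(V_\kappa)^V\subseteq P(V_\kappa)^M$, the generic ultrapower typically also contains subsets of $\kappa$ that are \emph{not} in $V$, for instance sets of the form $[f]_{U_G}=\{\beta<\kappa\st\{\alpha<\kappa\st\beta\in f(\alpha)\}\in U_G\}$ with $f\in V$, which genericity can force to differ from every member of $P(\kappa)^V$. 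Hence a true $\Pi^1_1$ sentence $\forall X\,\psi(X)$ could a priori fail in $M$ because $M$ sees new candidate counterexamples $X$; nothing in your argument rules this out, and for higher $\xi$ the quantifier alternations make the situation worse.

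The implication you called immediate is precisely where the $\Pi^1_\xi$-indescribability of $S$ must be used beyond mere positivity, and this is what the paper's proof does: given $V_\kappa\models\varphi$, the set $C=\{\alpha<\kappa\st\varphi\res^\kappa_\alpha\text{ is defined and }V_\alpha\models\varphi\res^\kappa_\alpha\}$ has complement in $\Pi^1_\xi(\kappa)$ (a $\Pi^1_\xi$-indescribable subset of the complement would have to contain some $\alpha$ with $V_\alpha\models\varphi\res^\kappa_\alpha$, which is absurd), hence $C\in\Pi^1_\xi(\kappa)^*\subseteq U_G$ and so $\kappa\in j(C)$. By elementarity of $j$ this says exactly that $j(\varphi)\res^{j(\kappa)}_\kappa$ is defined and $(V_\kappa\models j(\varphi)\res^{j(\kappa)}_\kappa)^M$, and then Lemma \ref{lemma_j_of} identifies $j(\varphi)\res^{j(\kappa)}_\kappa$ with $\varphi$, yielding $(V_\kappa\models\varphi)^M$. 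With this repair, which your setup (forcing with $P(\kappa)/\Pi^1_\xi(\kappa)$ below a positive set of regulars) otherwise supports, the proof goes through.
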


\begin{proof}
Suppose $S\subseteq\kappa$ is $\Pi^1_\xi$-indescribable in $\kappa$. Let $G\subseteq P(\kappa)/\Pi^1_\xi(\kappa)$ be generic over $V$ with $S\in G$ and let $j:V\to V^\kappa/G$ be the corresponding generic ultrapower embedding. Note that the normality of the ideal $\Pi^1_\xi(\kappa)$ implies that the critical point of $j$ is $\kappa$ and $\kappa\in j(S)$. Suppose $\varphi$ is a $\Pi^1_\xi$ sentence over $V_\kappa$ and $V_\kappa\models\varphi$. Then the set 
\[C=\{\alpha<\kappa\st\text{$\varphi\res^\kappa_\alpha$ is defined and $V_\alpha\models\varphi\res^\kappa_\alpha$}\}\]
is in the filter dual to $\Pi^1_\xi(\kappa)$. Thus $\kappa\in j(C)$ and since $j(\varphi)\res^{j(\kappa)}_\kappa=\varphi$ by Lemma \ref{lemma_j_of}, we see that (2b) holds.

Conversely, let $j$ be as in (2). Fix a $\Pi^1_\xi$ sentence $\varphi$ over $V_\kappa$ with $V_\kappa\models\varphi$. Then it follows by (2) that, in $M$, there is some $\alpha\in j(S)$ such that $V_\alpha\models j(\varphi)\res^{j(\kappa)}_\alpha$. Hence by elementarity, there is an $\alpha\in S$ such that $V_\alpha\models\varphi\res^\kappa_\alpha$.
\end{proof}

\subsection{A hierarchy result}\label{section_hierarchy}

In order to prove the hierarchy results below (Corollary \ref{corollary_hierarchy} and Corollary \ref{corollary_proper}), we first need to establish a connection between universal formulas at $\kappa$ and universal formulas at regular $\alpha<\kappa$.

\begin{lemma}\label{lemma_restricting_universal_formulas}
Suppose $\kappa>\omega$ is regular. Fix any $\xi<\kappa^+$ and $n<\omega$, let $\Psi^\kappa_{\xi,n}(X_1,\ldots,X_n,Y_\xi)$ and $\bar{\Psi}^\kappa_{\xi,n}(X_1,\ldots,X_n,Y_\xi)$ be, respectively, universal $\Pi^1_\xi$ and $\Sigma^1_\xi$ formulas at $\kappa$ for formulas with $n$ free second-order variables, which were defined by transfinite recursion in the proof of Theorem \ref{theorem_universal}. There are clubs $C_{\xi,n}$ and $D_{\xi,n}$ in $\kappa$ such that the following hold.
\begin{enumerate}
\item For all regular $\alpha\in C_{\xi,n}$ the formula $\Psi^\kappa_{\xi,n}(X_1,\ldots,X_n,Y_\xi)\res^\kappa_\alpha$ is a universal $\Pi^1_{f^\kappa_\xi(\alpha)}$ formula at $\alpha$ for formulas with $n$ free second-order variables.
\item For all regular $\alpha\in D_{\xi,n}$ the formula $\bar{\Psi}^\kappa_{\xi,n}(X_1,\ldots,X_n,Y_\xi)\res^\kappa_\alpha$ is a universal $\Sigma^1_{f^\kappa_\xi(\alpha)}$ formula at $\alpha$ for formulas with $n$ free second-order variables.
\end{enumerate}
\end{lemma}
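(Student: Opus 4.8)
The plan is to prove both clauses at once, by induction on $\xi<\kappa^+$ (treating the $\Pi^1_\xi$ and $\Sigma^1_\xi$ cases in parallel), and to convert each inductive step into a single statement about a generic ultrapower via Proposition \ref{proposition_framework2}. Fix $n<\omega$ and set
\[E_{\xi,n}=\{\alpha\in\REG\cap\kappa\st\Psi^\kappa_{\xi,n}\res^\kappa_\alpha\text{ is a universal }\Pi^1_{f^\kappa_\xi(\alpha)}\text{ formula at }\alpha\text{ for formulas with }n\text{ free variables}\},\]
with $\bar E_{\xi,n}$ defined analogously for $\bar\Psi^\kappa_{\xi,n}$. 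By Proposition \ref{proposition_framework2}, producing the clubs $C_{\xi,n}$ and $D_{\xi,n}$ is equivalent to showing $\kappa\in j(E_{\xi,n})$ and $\kappa\in j(\bar E_{\xi,n})$ whenever $G\subseteq P(\kappa)/\NS_\kappa$ is generic with $\kappa$ regular in $M:=V^\kappa/G$ and $j\colon V\to M$ is the induced embedding. (When $\kappa$ is not weakly Mahlo both clauses are vacuous, as in the footnote to Proposition \ref{proposition_framework2}.)

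First I would unwind what $\kappa\in j(E_{\xi,n})$ says inside $M$. Since $j(\Psi^\kappa_{\xi,n})$ is $\Pi^1_{j(\xi)}$ over $V_{j(\kappa)}$, Lemma \ref{lemma_j_of} gives $j(\Psi^\kappa_{\xi,n})\res^{j(\kappa)}_\kappa=\Psi^\kappa_{\xi,n}$, while Proposition \ref{proposition_useful_object} gives $j(f^\kappa_\xi)(\kappa)=\ot(j''\xi)=\xi$. Hence $\kappa\in j(E_{\xi,n})$ is equivalent to the single assertion that, \emph{in $M$}, the formula $\Psi^\kappa_{\xi,n}$ is a universal $\Pi^1_\xi$ formula at $\kappa$ for formulas with $n$ free variables. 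The entire problem thus reduces to verifying this one statement inside $M$; crucially, the cumbersome bijection-and-collapse bookkeeping that would appear in a direct inductive argument is absorbed here into Lemma \ref{lemma_j_of}.

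The engine of the induction is that the inductive hypothesis transports into $M$ in exactly the shape required. At a level $\zeta<\xi$ the hypothesis asserts that a club as in the lemma exists in $V$; by the forward direction of Proposition \ref{proposition_framework2} this is equivalent to $\kappa\in j(E_{\zeta,n'})$ for every $n'$, that is, to $M\models$ ``$\Psi^\kappa_{\zeta,n'}$ is a universal $\Pi^1_\zeta$ formula at $\kappa$'' (and similarly for $\bar\Psi$). So inside $M$ I already possess universal $\Pi^1_\zeta$ and $\Sigma^1_\zeta$ formulas at $\kappa$ for all $\zeta<\xi$ — namely the very formulas $\Psi^\kappa_{\zeta,n'},\bar\Psi^\kappa_{\zeta,n'}$ out of which $\Psi^\kappa_{\xi,n}$ was built. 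I would then re-run the matching case of the proof of Theorem \ref{theorem_universal} \emph{inside} $M$: the base case $\xi=0$ is unchanged because $\Pi^1_0$ formulas over $V_\kappa$ are finitary with finitely many parameters from $V_\kappa=V_\kappa^M$; the successor case quotes the step $\Psi^\kappa_{\zeta+1,n}=\forall W\,\bar\Psi^\kappa_{\zeta,n+1}$ of Theorem \ref{theorem_universal} verbatim using the $M$-universal $\bar\Psi^\kappa_{\zeta,n+1}$; and the limit case uses that $M$ decomposes $\Psi^\kappa_{\xi,n}$ as $\bigwedge_{\zeta<\xi}\Theta^\kappa_\zeta$ with the \emph{same} $\Theta^\kappa_\zeta$ (these lie in $H(\kappa^+)^V\subseteq M$) and repeats the limit computation of Theorem \ref{theorem_universal}, assembling a code for an arbitrary $M$-formula $\varphi=\bigwedge_{\zeta<\xi}\varphi_\zeta$ over $V_\kappa$ from the codes $K_{\varphi_\zeta}$ supplied by the $M$-universality of the $\Psi^\kappa_{\zeta,n}$ together with Lemma \ref{lemma_no_increase}.

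The step I expect to be the main obstacle is the last point in the limit case. Because $M$ may contain new subsets of $\kappa$, it may contain $\Pi^1_\xi$ formulas over $V_\kappa$ that do not lie in $V$, so it is \emph{not} enough to know that $\Psi^\kappa_{\xi,n}$ is universal in $V$: universality must be checked against every formula of $M$. The resolution is that universality is never transported wholesale but is \emph{re-derived} in $M$ from the transported inductive hypotheses, so that a code is produced for each $M$-formula by decomposing it in $M$ and coding its lower-rank constituents. What this requires of $M$ is only that $H(\kappa^+)^V\subseteq M$, that $V_\kappa^M=V_\kappa^V$ and the ordinals below $(\kappa^+)^V$ are computed correctly (so that Lemma \ref{lemma_no_increase} and the slicing apparatus are available and absolute), and that the witnessing clubs lie in the generic ultrafilter. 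One must also confirm that the restriction-operation identities and the satisfaction relations $V_\beta\models(\cdot)$ for $\beta\le\kappa$ occurring in the definition of universality are absolute between $V$ and $M$; these follow from the agreement of the two models below $\kappa$ together with Lemma \ref{lemma_j_of}.
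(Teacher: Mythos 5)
Your proposal is correct and follows essentially the same route as the paper's own proof: induction on $\xi$, reduction via Proposition \ref{proposition_framework2} together with Lemma \ref{lemma_j_of} to the single assertion that $\Psi^\kappa_{\xi,n}$ is a universal $\Pi^1_\xi$ formula at $\kappa$ \emph{inside} the generic ultrapower, transporting the inductive hypothesis into $V^\kappa/G$ by the same mechanism, and then re-running the proof of Theorem \ref{theorem_universal} within $V^\kappa/G$ against all $\Pi^1_\xi$ formulas of the ultrapower (including the new ones not in $V$), exactly as the paper does. The only difference is cosmetic: your closing concern about $V$-to-$M$ absoluteness of the satisfaction relation $V_\kappa\models(\cdot)$ is unnecessary (and for formulas with second-order quantifiers at $V_\kappa$ it would in general fail), since, as in the paper, the universality verification is carried out entirely internally to $V^\kappa/G$.
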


\begin{proof}
We proceed by induction on $\xi$. Suppose $\xi<\kappa^+$ is a limit ordinal. The case in which $\xi$ is a successor ordinal is easier and is left to the reader. We will now prove (1); the proof of (2) is similar. Recall that 
\[\Psi^\kappa_{\xi,1}(X,Y_\xi)=\bigwedge_{\zeta<\xi}\Theta^\kappa_{\zeta,1}(X,Y_\xi)\]
where each $\Theta^\kappa_{\zeta,1}(X,Y_\xi)$ is a $\Pi^1_\zeta$ formula over $V_\kappa$ equivalent to $\Psi^\kappa_{\zeta,1}(X,(Y_\xi)_{b_{\kappa,\xi}^{-1}(\zeta)})$ in the sense that there is a club $D_\zeta$ in $\kappa$ such that for all regular $\alpha\in D_\zeta\cup\{\kappa\}$ we have $V_\alpha\models\Theta^\kappa_{\zeta,1}(A,B)\res^\kappa_\alpha$ if and only if $V_\alpha\models \Psi^\kappa_{\zeta,1}(A,(B)_{b_{\kappa,\xi}^{-1}(\zeta)})\res^\kappa_\alpha$ for all $A\subseteq V_\kappa$ and $B\subseteq\kappa$. To prove (1), we will use Proposition \ref{proposition_framework2}. Suppose $G\subseteq P(\kappa)/\NS_\kappa$ is generic such that $\kappa$ is regular in $V^\kappa/G$ and let $j:V\to V^\kappa/G$ be the corresponding generic ultrapower embedding. To show that the desired club $C_{\xi,1}$ exists, we must show that $\kappa\in j(T)$ where $T$ is the set of regular cardinals $\alpha<\kappa$ such that $\Psi^\kappa_{\xi,1}(X,Y_\xi)\res^\kappa_\alpha$ is a universal $\Pi^1_{f^\kappa_\xi(\alpha)}$ formula at $\alpha$ for formulas with $1$ free variable. By Lemma \ref{lemma_j_of} we have $j(\Psi^\kappa_{\xi,1}(X,Y_\xi))\res^{j(\kappa)}_\kappa=\Psi^\kappa_{\xi,1}(X,Y_\xi)$ and $\kappa\in j(D_\zeta)$ for all $\zeta<\xi$. Thus, working in $V^\kappa/G$, if $A\subseteq V_\kappa$ and $B\subseteq\kappa$ then
\begin{align*}
V_\kappa\models\Psi^\kappa_{\xi,1}(A,B)&\iff V_\kappa\models \bigwedge_{\zeta<\xi}j(\Psi^\kappa_{\zeta,1}(A,(B)_{b_{\kappa,\xi}^{-1}(\zeta)}))\res^{j(\kappa)}_\kappa\\
  &\iff V_\kappa\models \bigwedge_{\zeta<\xi}\Psi^\kappa_{\zeta,1}(A,(B)_{b_{\kappa,\xi}^{-1}(\zeta)}).\\
\end{align*}
Now it is straightforward to verify $\kappa\in j(T)$, that is, $\Psi^\kappa_{\xi,1}(X,Y_\xi)$ is a universal $\Pi^1_\xi$ formula at $\kappa$ for formulas with $1$ free variable in $V^\kappa/G$; we give a brief outline of how to do this here. Still working in $V^\kappa/G$, fix a $\Pi^1_\xi$ formula $\varphi_\xi(X)=\bigwedge_{\zeta<\xi}\varphi_\zeta(X)$. From our inductive assumption, working in $V^\kappa/G$, we may fix codes $K_{\varphi_\zeta}\subseteq\kappa$ such that $V_\kappa\models\varphi_\zeta(A)$ if and only if $V_\kappa\models\Psi^\kappa_{\zeta,1}(A,K_{\varphi_\zeta})$. Then we let $K_\varphi=\{\Gamma(b_{\kappa,\xi}^{-1}(\zeta),\beta)\st\zeta<\xi\land\beta\in K_{\varphi_\zeta}\}$ and proceed exactly as in the proof of Theorem \ref{theorem_universal}, except that here we work in $V^\kappa/G$. Thus we conclude $\kappa\in j(T)$.
\end{proof}

Next we show that for $\xi<\kappa^+$, the $\Pi^1_\xi$-indescribability of a set $S\subseteq\kappa$, is expressible by a $\Pi^1_{\xi+1}$ formula over $V_\kappa$ in the following sense.

\begin{theorem}\label{theorem_expressing_indescribability}
Suppose $\kappa>\omega$ is inaccessible and $\xi<\kappa^+$. There is a $\Pi^1_{\xi+1}$ formula $\Phi^\kappa_\xi(Z)$ over $V_\kappa$ and a club $C\subseteq\kappa$ such that for all $S\subseteq\kappa$ we have
\[\text{$S$ is a $\Pi^1_\xi$-indescribable subset of $\kappa$ if and only if $V_\kappa\models\Phi^\kappa_\xi(S)$}\]
and for all regular $\alpha\in C$ we have
\[\text{$S\cap\alpha$ is a $\Pi^1_{f^\kappa_\xi(\alpha)}$-indescribable subset of $\alpha$ if and only if $V_\alpha\models\Phi^\kappa_\xi(S)\res^\kappa_\alpha$}.\]
\end{theorem}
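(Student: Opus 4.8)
The plan is to express $\Pi^1_\xi$-indescribability through the universal $\Pi^1_\xi$ formula $\Psi^\kappa_{\xi,0}(Y)$ for sentences, supplied by Theorem \ref{theorem_universal}. The guiding idea is that $S$ is $\Pi^1_\xi$-indescribable exactly when every code $Y$ with $V_\kappa\models\Psi^\kappa_{\xi,0}(Y)$ is reflected by some $\alpha\in S$, i.e.\ $V_\alpha\models\Psi^\kappa_{\xi,0}(Y)\res^\kappa_\alpha$. To turn the reflection clause into a low-complexity formula over $V_\kappa$, I would use that $\kappa$ is inaccessible, so $V_\kappa=H(\kappa)$ and, for each regular $\alpha<\kappa$ (in the club where it is defined, by Lemma \ref{lemma_restriction_is_nice}), the object $\Psi^\kappa_{\xi,0}(Y)\res^\kappa_\alpha$ is a $\Pi^1_{f^\kappa_\xi(\alpha)}$ formula over $V_\alpha$ of length $f^\kappa_\xi(\alpha)<\alpha^+<\kappa$, hence an element of $V_\kappa$. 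Thus the $\kappa$-sequence $B=\langle\Psi^\kappa_{\xi,0}(Y)\res^\kappa_\alpha:\alpha<\kappa\rangle$ (a template in the free variable $Y$, with a default value off the club) is codable by a single subset of $V_\kappa$ and may be used as a second-order parameter. With this $B$, the reflection clause ``$\exists\alpha\in Z\,(V_\alpha\models\Psi^\kappa_{\xi,0}(Y)\res^\kappa_\alpha)$'' becomes a first-order ($\Pi^1_0$) formula $\theta(Y,Z,B)$ over $V_\kappa$, since looking up the $\alpha$-th slice of $B$ and evaluating the satisfaction relation of a set-sized second-order formula over $V_\alpha\in V_\kappa$ is first-order over $H(\kappa)$. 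I would then set
\[\Phi^\kappa_\xi(Z)=\forall Y\,\big(\lnot\Psi^\kappa_{\xi,0}(Y)\lor\theta(Y,Z,B)\big),\]
and verify, using Lemma \ref{lemma_no_increase} to absorb the $\Pi^1_0$ disjunct $\theta$ into the $\Sigma^1_\xi$ disjunct $\lnot\Psi^\kappa_{\xi,0}(Y)$, that the matrix is $\Sigma^1_\xi$ over $V_\kappa$; hence $\Phi^\kappa_\xi(Z)$ is $\Pi^1_{\xi+1}$ over $V_\kappa$ with exactly the free second-order variable $Z$.

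The global equivalence should follow almost immediately from universality. First I would note that $V_\kappa\models\Phi^\kappa_\xi(S)$ iff for every $K\subseteq\kappa$ with $V_\kappa\models\Psi^\kappa_{\xi,0}(K)$ there is $\alpha\in S$ with $V_\alpha\models\Psi^\kappa_{\xi,0}(K)\res^\kappa_\alpha$. For any $\Pi^1_\xi$ sentence $\varphi$ over $V_\kappa$, Definition \ref{definition_universal} yields a code $K_\varphi$ and a club $C_\varphi$ with $V_\alpha\models\varphi\res^\kappa_\alpha\iff V_\alpha\models\Psi^\kappa_{\xi,0}(K_\varphi)\res^\kappa_\alpha$ for regular $\alpha\in C_\varphi\cup\{\kappa\}$; taking $\alpha=\kappa$ and using $\varphi\res^\kappa_\kappa=\varphi$ gives $V_\kappa\models\varphi\iff V_\kappa\models\Psi^\kappa_{\xi,0}(K_\varphi)$. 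Conversely, every code $K$ arises as $K_\varphi$ for $\varphi=\Psi^\kappa_{\xi,0}(K)$. Matching witnesses on these clubs then shows that $V_\kappa\models\Phi^\kappa_\xi(S)$ holds precisely when $S$ is $\Pi^1_\xi$-indescribable in $\kappa$.

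For the local equivalence I would produce the club $C$ via the generic ultrapower criterion of Proposition \ref{proposition_framework2}: take $G\subseteq P(\kappa)/\NS_\kappa$ generic with $\kappa$ regular in $V^\kappa/G$ and $j:V\to V^\kappa/G$ the embedding, and show $\kappa\in j(E)$, where $E$ is the set of regular $\alpha$ at which, for every $S$, $V_\alpha\models\Phi^\kappa_\xi(S)\res^\kappa_\alpha$ iff $S\cap\alpha$ is $\Pi^1_{f^\kappa_\xi(\alpha)}$-indescribable in $\alpha$. The computation unwinds
\[\Phi^\kappa_\xi(S)\res^\kappa_\alpha=\forall Y\,\big(\lnot(\Psi^\kappa_{\xi,0}(Y)\res^\kappa_\alpha)\lor\theta(Y,S,B)\res^\kappa_\alpha\big)\]
and identifies it with the sentence expressing $\Pi^1_{f^\kappa_\xi(\alpha)}$-indescribability of $S\cap\alpha$ at $\alpha$. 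Three ingredients drive this: Lemma \ref{lemma_restricting_universal_formulas}, which for regular $\alpha$ in a club makes $\Psi^\kappa_{\xi,0}(Y)\res^\kappa_\alpha$ a universal $\Pi^1_{f^\kappa_\xi(\alpha)}$ formula at $\alpha$; Proposition \ref{proposition_double_restriction}, which on a club gives $(\Psi^\kappa_{\xi,0}(Y)\res^\kappa_\alpha)\res^\alpha_\beta=\Psi^\kappa_{\xi,0}(Y)\res^\kappa_\beta$, so that the restricted parameter $B\cap V_\alpha$ genuinely codes $\langle\Psi^\kappa_{\xi,0}(Y)\res^\kappa_\beta:\beta<\alpha\rangle$ and $\theta(Y,S,B)\res^\kappa_\alpha$ is the correct reflection clause for $S\cap\alpha$; and Lemma \ref{lemma_j_of} with elementarity, which in $V^\kappa/G$ give $j(\Psi^\kappa_{\xi,0})\res^{j(\kappa)}_\kappa=\Psi^\kappa_{\xi,0}$ and place $\kappa$ in the images of all auxiliary clubs. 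I expect the main obstacle to be exactly this local bookkeeping: checking that restricting $\Phi^\kappa_\xi$ commutes, modulo a club, with all three operations simultaneously --- restricting the universal formula, restricting the coded parameter $B$ via the slice structure, and restricting the first-order satisfaction predicate --- so that $\Phi^\kappa_\xi(S)\res^\kappa_\alpha$ really captures indescribability at $\alpha$ rather than merely something equivalent to it. Ensuring the definability of the satisfaction predicate through the second-order parameter $B$ and that its restriction behaves correctly is where inaccessibility (for $V_\kappa=H(\kappa)$) and Proposition \ref{proposition_double_restriction} must be applied with care.
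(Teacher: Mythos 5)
Your overall strategy --- quantify universally over codes $Y$ for the universal $\Pi^1_\xi$ formula $\Psi^\kappa_{\xi,0}$, and use a second-order parameter to make the reflection clause low-complexity --- is the same as the paper's, and your treatment of the local equivalence (Proposition \ref{proposition_framework2}, Lemma \ref{lemma_restricting_universal_formulas}, Proposition \ref{proposition_double_restriction}) matches the paper's proof. The difference in how the parameter is built is minor: you code the restricted formulas themselves into $B$ and invoke first-order definability of satisfaction over $H(\kappa)$, whereas the paper sidesteps the satisfaction predicate by coding directly into a set $R$ the family of subsets $a\subseteq\alpha$ with $V_\alpha\models\Psi^\kappa_{\xi,0}(a)\res^\kappa_\alpha$, so that the reflection clause is a pure slice-lookup. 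Either coding is workable.

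However, there is a genuine gap in your reflection clause: $\theta(Y,Z,B)$ asserts only that \emph{some single} $\alpha\in Z$ satisfies $V_\alpha\models\Psi^\kappa_{\xi,0}(Y)\res^\kappa_\alpha$, and this is too weak for the converse direction of the global equivalence. Suppose $V_\kappa\models\Phi^\kappa_\xi(S)$ and $\varphi$ is a true $\Pi^1_\xi$ sentence; universality gives $V_\kappa\models\Psi^\kappa_{\xi,0}(K_\varphi)$, and your formula then produces one $\alpha\in S$ with $V_\alpha\models\Psi^\kappa_{\xi,0}(K_\varphi)\res^\kappa_\alpha$. But for $\xi\in\kappa^+\setminus\kappa$, Definition \ref{definition_universal} only guarantees the equivalence $V_\alpha\models\varphi\res^\kappa_\alpha\iff V_\alpha\models\Psi^\kappa_{\xi,0}(K_\varphi)\res^\kappa_\alpha$ for regular $\alpha$ in a club $C_\varphi$ \emph{depending on $\varphi$}; your single witness $\alpha$ need not lie in $C_\varphi$, so you cannot conclude $V_\alpha\models\varphi\res^\kappa_\alpha$. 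Replacing $\varphi$ by a strengthened sentence (e.g., conjoining unboundedness of $C_\varphi$) just regenerates the problem with a new club, so "matching witnesses on these clubs" does not go through. This is precisely why the paper's formula demands not a single reflection point but a \emph{stationary} set of them: its clause reads $(\exists Y\subseteq Z\cap\REG)(Y\in\NS_\kappa^+)\land(\forall\eta\in Y)(\exists\beta)(X\cap\eta=R_{(\eta,\beta)})$, which is $\Sigma^1_2$ rather than $\Pi^1_0$, and stationarity is exactly what lets one intersect the set of reflection points with $C_\varphi$ (and the club $D_\varphi$ from Theorem \ref{theorem_universal}) to land inside $S$ at a point where the universal formula is faithful to $\varphi$. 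Note that for $\xi<\kappa$ your single-point clause would be fine, since there universality holds at all regular $\alpha\in\kappa\setminus\xi$ with no club; the club-dependence is a new feature of the case $\xi\geq\kappa$, and your proposal does not address it. Once the clause is strengthened to assert stationarily many reflection points (and the complexity count redone as in the paper), the rest of your argument, including the forward direction and the local part, goes through essentially as you describe.
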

\begin{proof}
We let $R\subseteq\kappa$ be a set, defined as follows, coding information about which $\alpha<\kappa$ and which $a\subseteq\alpha$ satisfy $V_\alpha\models\Psi^\kappa_{\xi,0}(a)\res^\kappa_\alpha$. For each regular $\alpha<\kappa$ let $\<a^\alpha_\beta\st\beta<\delta_\alpha\>$ be a sequence of subsets of $\alpha$ such that for all $a\subseteq\alpha$ we have $V_\alpha\models\Psi^\kappa_{\xi,0}(a)\res^\kappa_\alpha$ if and only if $a=a^\alpha_\beta$ for some $\beta<\delta_\alpha$. Let $\Gamma:\kappa\times\kappa\times\kappa\to\kappa$ be the usual definable bijection. We let 
\[R=\{\Gamma(\alpha,\beta,\gamma)\st \text{($\alpha$ is regular)}\land \beta<\delta_\alpha\land \gamma\in a^\alpha_\beta\}.\] 
For $\alpha,\beta<\kappa$ we define
\[R_{(\alpha,\beta)}=\{\gamma\st\Gamma(\alpha,\beta,\gamma)\in R\}\]
to be the $(\alpha,\beta)^{th}$ slice of $R$ so that when $\alpha$ is regular and $\beta<\delta_\alpha$ we have $R_{(\alpha,\beta)}=a^\alpha_\beta$.
Now we let
\[\Phi^\kappa_\xi(Z)=\forall X[\Psi^\kappa_{\xi,0}(X)\rightarrow(\exists Y\subseteq Z\cap\REG)(Y\in\NS_\kappa^+)\land(\forall\eta\in Y)(\exists\beta)(X\cap\eta=R_{(\eta,\beta)})].\]
Since the part of $\Phi^\kappa_\xi$ to the right of the $\rightarrow$ is $\Sigma^1_2$ over $V_\kappa$, and since $\Psi^\kappa_{\xi,0}(X)$ is $\Pi^1_\xi$ over $V_\kappa$ and appears to the left of the $\rightarrow$ in $\Phi^\kappa_\xi$, it follows that $\Phi^\kappa_\xi$ is expressible by a $\Pi^1_{\xi+1}$ formula over $V_\kappa$. In what follows, we will identify $\Phi^\kappa_\xi$ with this $\Pi^1_{\xi+1}$ formula.

First let us show that $S\subseteq\kappa$ is $\Pi^1_\xi$-indescribable in $\kappa$ if and only if $V_\kappa\models\Phi^\kappa_\xi(S)$. Suppose $S$ is $\Pi^1_\xi$-indescribable in $\kappa$. To see that $V_\kappa\models\Phi^\kappa_\xi(S)$, fix $K\subseteq\kappa$ such that $V_\kappa\models\Psi^\kappa_{\xi,0}(K)$. Then $D_0=\{\alpha<\kappa\st V_\alpha\models\Psi^\kappa_{\xi,0}(K)\res^\kappa_\alpha\}$ is in the filter $\Pi^1_\xi(\kappa)^*$ and thus $Y=S\cap D_0\cap\REG$ is, in particular, stationary in $\kappa$. If $\alpha\in Y$ then we have $V_\alpha\models\Psi^\kappa_{\xi,0}(K)\res^\kappa_\alpha$ and hence $V_\alpha\models\Psi^\kappa_{\xi,0}(K\cap\alpha)\res^\kappa_\alpha$, which implies that $K\cap\alpha=R_{(\alpha,\beta)}$ for some $\beta<\delta_\alpha$. Conversely, suppose $V_\kappa\models \Phi^\kappa_\xi(S)$ and let us show that $S$ is $\Pi^1_\xi$-indescribable in $\kappa$. Fix a $\Pi^1_\xi$ sentence $\varphi$ such that $V_\kappa\models\varphi$. Then, by Theorem \ref{theorem_universal}, $V_\kappa\models\Psi^\kappa_{\xi,0}(K_\varphi)$ and thus there is a $Y\subseteq S\cap\REG$ stationary in $\kappa$ such that for all $\alpha\in Y$ we have $V_\alpha\models\Psi^\kappa_{\xi,0}(K_\varphi)\res^\kappa_\alpha$. By Theorem \ref{theorem_universal} there is a club $D_\varphi\subseteq\kappa$ such that for all regular $\alpha\in D_\varphi$ we have $V_\alpha\models\varphi\res^\kappa_\alpha$ if and only if $V_\alpha\models\Psi_{\xi,0}(K_\varphi)\res^\kappa_\alpha$. Thus we may choose a regular $\alpha\in Y\cap D_\varphi\cap\REG\subseteq S$ and observe that $V_\alpha\models\varphi\res^\kappa_\alpha$. Hence $S$ is $\Pi^1_\xi$-indescribable in $\kappa$.

To prove the second part of the statement we will use Proposition \ref{proposition_framework2}. Fix $S\subseteq\kappa$. Suppose $G\subseteq P(\kappa)/\NS_\kappa$ is generic, $\kappa$ is regular in $V^\kappa/G$ and $j:V\to V^\kappa/G$ is the corresponding generic ultrapower. Let $E$ be the set of ordinals $\alpha<\kappa$ such that $S\cap\alpha$ is a $\Pi^1_{f^\kappa_\xi(\alpha)}$-indescribable subset of $\alpha$ if and only if $V_\alpha\models\Phi^\kappa_{\xi,0}(S)\res^\kappa_\alpha$. We must show that $\kappa\in j(E)$. By Lemma \ref{lemma_j_of} we have $j(\Phi^\kappa_{\xi,0}(S))\res^{j(\kappa)}_\kappa=\Phi^\kappa_{\xi,0}(S)$, and thus we must show that in $V^\kappa/G$, $S$ is $\Pi^1_\xi$-indescribable in $\kappa$ if and only if $V_\kappa\models \Phi^\kappa_{\xi,0}(S)$. By Lemma \ref{lemma_restricting_universal_formulas}, it follows that in $V^\kappa/G$, $\Psi^\kappa_{\xi,0}(X)$ is a universal $\Pi^1_\xi$ formula at $\kappa$ and therefore we can proceed to verify $\kappa\in j(E)$ by using the argument in the previous paragraph, but working in $V^\kappa/G$.
\end{proof}

We obtain our first hierarchy result as an easy corollary of Theorem \ref{theorem_expressing_indescribability}.

\begin{corollary}\label{corollary_hierarchy}
Suppose $S\subseteq\kappa$ is $\Pi^1_\xi$-indescribable in $\kappa$ where $\xi<\kappa^+$ and let $\zeta<\xi$. Then the set
\[C=\{\alpha<\kappa\st\text{$S\cap\alpha$ is $\Pi^1_{f^\kappa_\zeta(\alpha)}$-indescribable}\}\]
is in the filter $\Pi^1_\xi(\kappa)^*$.
\end{corollary}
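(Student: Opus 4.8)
The plan is to deduce this from Theorem~\ref{theorem_expressing_indescribability} applied not to $\xi$ but to the smaller ordinal $\zeta<\xi$. That application produces a $\Pi^1_{\zeta+1}$ formula $\Phi^\kappa_\zeta(Z)$ over $V_\kappa$ and a club $C_0\subseteq\kappa$ such that $S$ is $\Pi^1_\zeta$-indescribable in $\kappa$ iff $V_\kappa\models\Phi^\kappa_\zeta(S)$, and such that for every regular $\alpha\in C_0$,
\[\text{$S\cap\alpha$ is $\Pi^1_{f^\kappa_\zeta(\alpha)}$-indescribable}\iff V_\alpha\models\Phi^\kappa_\zeta(S)\res^\kappa_\alpha.\]
Thus, on the club-many regular $\alpha\in C_0$, the set $C$ coincides with the \emph{reflection set} of the single formula $\Phi^\kappa_\zeta(S)$. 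First I would note that since $S\subseteq\kappa$ is $\Pi^1_\xi$-indescribable the cardinal $\kappa$ itself is, so by Theorem~\ref{theorem_normal_ideal} the collection $\Pi^1_\xi(\kappa)$ is a genuine normal ideal; the strategy is then to feed a suitable $\Pi^1_\xi$ sentence built from $\Phi^\kappa_\zeta(S)$ into Definition~\ref{definition_indescribability} and conclude that $\kappa\setminus C$ is swallowed by $\Pi^1_\xi(\kappa)$.

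The bookkeeping with the ideal would run as follows. For \emph{any} $\Pi^1_\xi$ sentence $\psi$ with $V_\kappa\models\psi$, the non-reflection set $N_\psi=\{\alpha<\kappa:\neg(V_\alpha\models\psi\res^\kappa_\alpha)\}$ (counting the case where the restriction is undefined) is, directly from Definition~\ref{definition_indescribability}, not $\Pi^1_\xi$-indescribable, so $N_\psi\in\Pi^1_\xi(\kappa)$ and its complement $R_\psi=\{\alpha:V_\alpha\models\psi\res^\kappa_\alpha\}$ lies in the dual filter $\Pi^1_\xi(\kappa)^*$. I would also record two filter facts: clubs lie in $\Pi^1_\xi(\kappa)^*$ because a normal ideal extends $\NS_\kappa$, and $\REG\cap\kappa\in\Pi^1_\xi(\kappa)^*$ because $\psi\res^\kappa_\alpha$ is only defined at regular $\alpha$, so no true $\Pi^1_\xi$ sentence is reflected at a non-regular point and hence $\kappa\setminus\REG\in\Pi^1_\xi(\kappa)$. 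Intersecting, $R_\psi\cap C_0\cap\REG\in\Pi^1_\xi(\kappa)^*$; and on this set the displayed equivalence shows that membership in $R_\psi$ coincides with membership in $C$, so $R_\psi\cap C_0\cap\REG\subseteq C$, whence $C\in\Pi^1_\xi(\kappa)^*$ by upward closure.

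The hard part will be that $\Phi^\kappa_\zeta(S)$ is only $\Pi^1_{\zeta+1}$, while the argument above needs a genuine $\Pi^1_\xi$ sentence $\psi$. Crucially I cannot invoke the ideal inclusion $\Pi^1_{\zeta+1}(\kappa)\subseteq\Pi^1_\xi(\kappa)$, since that is Corollary~\ref{corollary_proper}, proved afterward; so the level-raising must be carried out at the level of formulas in a way that respects restriction. When $\zeta+1=\xi$ there is nothing to do. For limit $\xi$ I would take $\psi=\bigwedge_{\eta<\xi}\psi_\eta$ with $\psi_\eta$ a trivially true $\Pi^1_\eta$ sentence for $\eta<\zeta+1$, with $\psi_{\zeta+1}=\Phi^\kappa_\zeta(S)$, and with $\psi_\eta$ for $\eta>\zeta+1$ a copy of $\Phi^\kappa_\zeta(S)$ padded up to level $\eta$ by dummy second-order quantifiers. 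Since restriction commutes with the quantifier clause of Definition~\ref{definition_restriction} and distributes over conjunctions, the conjunct indexed by the original ordinal $\zeta+1$ contributes exactly $\Phi^\kappa_\zeta(S)\res^\kappa_\alpha$ whenever $\zeta+1\in F^\kappa_\xi(\alpha)$ (a club condition), while every other conjunct restricts to something equivalent to it over $V_\alpha$; this gives both equivalence over $V_\kappa$ and agreement of restrictions modulo a club. The very same padding, applied instead to a witnessing $\Pi^1_\zeta$ sentence, yields the needed monotonicity of higher indescribability (if $S$ is $\Pi^1_\xi$-indescribable and $\zeta<\xi$ then $S$ is $\Pi^1_\zeta$-indescribable), so that $V_\kappa\models\Phi^\kappa_\zeta(S)$ and hence $V_\kappa\models\psi$, supplying the last input to the second paragraph. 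I expect this restriction-respecting level-raising to be the only genuine work; everything else is routine manipulation of the normal ideal $\Pi^1_\xi(\kappa)$.
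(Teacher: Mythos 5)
Your skeleton is the same as the paper's: apply Theorem~\ref{theorem_expressing_indescribability} at level $\zeta$ to obtain $\Phi^\kappa_\zeta(Z)$ and a club on which membership in $C$ is equivalent to reflecting $\Phi^\kappa_\zeta(S)$, note $V_\kappa\models\Phi^\kappa_\zeta(S)$, and then push the reflection set of this sentence into $\Pi^1_\xi(\kappa)^*$. Your second paragraph (non-reflection sets of true $\Pi^1_\xi$ sentences lie in the ideal; clubs and $\REG\cap\kappa$ lie in the dual filter) is correct. The paper makes the transfer to level $\xi$ by simply citing monotonicity of the ideals ($\Pi^1_{\zeta}(\kappa)^*\subseteq\Pi^1_\xi(\kappa)^*$, treated there as an easy consequence of inclusion of the formula classes), whereas you insist on realizing the transfer at the level of formulas. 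That is a legitimate, more self-contained ambition, but your execution of it --- which you yourself identify as the only genuine work --- has a real gap.

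The gap is this: your level-raising is described only for $\xi=\zeta+1$ and for limit $\xi$; successor $\xi>\zeta+1$ (e.g.\ $\zeta=\kappa$, $\xi=\kappa+2$) is never treated, and the tool you name, padding by dummy second-order quantifiers, provably cannot treat it. By Definition~\ref{definition_over}, a $\Pi^1_{\mu+1}$ formula is a (possibly empty) block of universal quantifiers followed by a $\Sigma^1_\mu$ formula, so a dummy quantifier prepended to a formula whose outermost block has the same type is absorbed into that block; hence from the $\Pi^1_{\zeta+1}$ formula $\Phi^\kappa_\zeta(S)$, alternating dummy quantifiers reach only $\Sigma^1_{\zeta+2},\Pi^1_{\zeta+3},\Sigma^1_{\zeta+4},\dots$, never $\Pi^1_{\zeta+2}$, and never across a limit level. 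To place $\Phi^\kappa_\zeta(S)$ at a successor level $\mu+1$ you must first re-express it, restriction-respectingly, as a $\Sigma^1_\mu$ formula; that cumulativity fact is true but its proof requires the coding/``challenge'' technique of Lemma~\ref{lemma_no_increase} and Theorem~\ref{theorem_universal} (code the relevant formulas into slices or initial segments of a second-order parameter and quantify over a challenge set), not dummy variables. The same defect undermines your monotonicity step: even when $\xi=\zeta+1$, to get $V_\kappa\models\Phi^\kappa_\zeta(S)$ you must raise true $\Pi^1_\zeta$ sentences to $\Pi^1_{\zeta+1}$, and since a $\Pi^1_\zeta$ formula is not in general equivalent to a $\Sigma^1_\zeta$ one (that would collapse the hierarchy), dummy quantification again does not suffice once $\zeta\geq\omega$. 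Your limit-$\xi$ construction does work after a repair --- replace the ``padded copies'' at levels $\eta>\zeta+1$ by trivially true $\Pi^1_\eta$ conjuncts, whose restrictions collapse to the correct levels on a club by Lemma~\ref{lemma_coherence} --- but for successor $\xi$ you need either the coding machinery just described or the paper's shortcut of invoking the filter inclusion $\Pi^1_{\zeta+1}(\kappa)^*\subseteq\Pi^1_\xi(\kappa)^*$ directly.
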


\begin{proof}
Since $\zeta<\xi$, it follows that $S$ is $\Pi^1_\zeta$-indescribable in $\kappa$, and thus $V_\kappa\models\Phi^\kappa_\zeta(S)$, where $\Phi^\kappa_\zeta(Z)$ is the $\Pi^1_{\zeta+1}$ formula over $V_\kappa$ obtained from Theorem \ref{theorem_expressing_indescribability}. By Theorem \ref{theorem_expressing_indescribability}, there is a club $D$ in $\kappa$ such that for every regular $\alpha\in D$,
\[\text{$S\cap\alpha$ is $\Pi^1_{f^\kappa_\zeta(\alpha)}$-indescribable if and only if $V_\alpha\models\Phi^\kappa_\zeta(S)\res^\kappa_\alpha$}.\] Since the set
\[D\cap\{\alpha<\kappa\st V_\alpha\models\Phi^\kappa_\zeta(S)\res^\kappa_\alpha\}\]
is in the filter $\Pi^1_\zeta(\kappa)^*$, we see that
\[\{\alpha<\kappa\st\text{$S\cap\alpha$ is $\Pi^1_{f^\kappa_\zeta(\alpha)}$-indescribable}\}\in\Pi^1_\zeta(\kappa)^*\subseteq \Pi^1_\xi(\kappa)^*.\]
\end{proof}

Next, in order to show that when $\kappa$ is $\Pi^1_\xi$-indescribable, we have a proper containment $\Pi^1_\zeta(\kappa)\subsetneq\Pi^1_\xi(\kappa)$ for all $\zeta<\xi$ (see Corollary \ref{corollary_proper}), we need several preliminary results.

Before we show that the restriction of a restriction of a given $\Pi^1_\xi$ formula $\varphi$, is often equal to a single restriction of $\varphi$, we need a lemma, which is established using an argument similar to that of Lemma \ref{lemma_j_of}.

\begin{lemma}[{Cody-Holy \cite{cody_holy_2022}}]\label{lemma_double_restriction}
Suppose $I$ is a normal ideal on $\kappa$ and $G\subseteq P(\kappa)/I$ is generic such that $\kappa$ is regular in $V^\kappa/G$ and let $j:V\to V^\kappa/G$ be the corresponding generic ultrapower. If $\varphi$ is either a $\Pi^1_\xi$ or $\Sigma^1_\xi$ formula over $V_\kappa$ for some $\xi<\kappa^+$, and $\alpha<\kappa$ is regular such that $\varphi\res^\kappa_\alpha$ is defined, then \[j(\varphi)\res^{j(\kappa)}_\alpha=\varphi\res^\kappa_\alpha,\] with the former being calculated in $V^\kappa/G$, and the latter being calculated in $V$.
\end{lemma}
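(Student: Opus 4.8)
The plan is to argue by induction on $\xi<\kappa^+$, closely paralleling the proof of Lemma~\ref{lemma_j_of}; the only change is that we restrict to a fixed regular $\alpha<\kappa$ instead of to the critical point $\kappa$. Throughout I would write $\overline{F}^{j(\kappa)}_\zeta$, $\overline{f}^{j(\kappa)}_\zeta$ and $\overline{\pi}^{j(\kappa)}_{\zeta,\beta}$ for the canonical reflection functions, canonical functions and transitive collapses at $j(\kappa)$ as computed in $V^\kappa/G$, recording that by elementarity $j(F^\kappa_\zeta)=\overline{F}^{j(\kappa)}_{j(\zeta)}$ and $j(f^\kappa_\zeta)=\overline{f}^{j(\kappa)}_{j(\zeta)}$ for every $\zeta<\kappa^+$. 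The base case $\xi<\kappa$ is immediate: by Remark~\ref{remark_coding}(1), $j(\varphi(A_1,\ldots,A_n))=\varphi(j(A_1),\ldots,j(A_n))$, and since $\alpha<\kappa=\crit(j)$ gives $j{\restrict}V_\kappa=\id$ and $j(V_\alpha)=V_\alpha$, we have $j(A_i)\cap V_\alpha=j(A_i\cap V_\alpha)=A_i\cap V_\alpha$, whence $j(\varphi)\res^{j(\kappa)}_\alpha=\varphi\res^\kappa_\alpha$. The successor case is equally routine: for $\varphi=\forall X\psi$ with $\kappa\le\xi=\zeta+1$, Remark~\ref{remark_coding}(2) gives $j(\varphi)\res^{j(\kappa)}_\alpha=\forall X\bigl(j(\psi)\res^{j(\kappa)}_\alpha\bigr)$, and the inductive hypothesis supplies $j(\psi)\res^{j(\kappa)}_\alpha=\psi\res^\kappa_\alpha$.

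The limit case $\xi\in\kappa^+\setminus\kappa$ is the heart of the argument. Writing $\varphi=\bigwedge_{\zeta<\xi}\psi_\zeta$ and $\vec{\psi}=\langle\psi_\zeta\mid\zeta<\xi\rangle$, Remark~\ref{remark_coding}(3) gives $j(\varphi)=\bigwedge_{\zeta<j(\xi)}j(\vec{\psi})_\zeta$; since $j(\xi)\ge j(\kappa)$ is a limit ordinal in $V^\kappa/G$, the limit clause of the restriction operation applies there, yielding
\[j(\varphi)\res^{j(\kappa)}_\alpha=\bigwedge_{\eta<\overline{f}^{j(\kappa)}_{j(\xi)}(\alpha)}j(\vec{\psi})_{(\overline{\pi}^{j(\kappa)}_{j(\xi),\alpha})^{-1}(\eta)}\res^{j(\kappa)}_\alpha.\]
I would then pin down the three $V^\kappa/G$-objects appearing here when they are evaluated at $\alpha<\kappa$. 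This identification is the main obstacle, and it hinges on the observation that these objects are moved \emph{pointwise} by $j$.

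The key identities are the following. First, since $\alpha=j(\alpha)$ and $f^\kappa_\xi(\alpha)<\kappa$, we get $\overline{f}^{j(\kappa)}_{j(\xi)}(\alpha)=j(f^\kappa_\xi)(\alpha)=j(f^\kappa_\xi(\alpha))=f^\kappa_\xi(\alpha)$, so the two index sets coincide. Second, and crucially, $F^\kappa_\xi(\alpha)=b_{\kappa,\xi}[\alpha]$ has cardinality $|\alpha|<\kappa=\crit(j)$, so $j$ maps it pointwise and $\overline{F}^{j(\kappa)}_{j(\xi)}(\alpha)=j(F^\kappa_\xi)(\alpha)=j(F^\kappa_\xi(\alpha))=j[F^\kappa_\xi(\alpha)]$. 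Hence $j{\restrict}F^\kappa_\xi(\alpha)$ is an order isomorphism of $F^\kappa_\xi(\alpha)$ onto $\overline{F}^{j(\kappa)}_{j(\xi)}(\alpha)$, and since $\pi^\kappa_{\xi,\alpha}$ and $\overline{\pi}^{j(\kappa)}_{j(\xi),\alpha}$ are both the transitive collapse onto the same ordinal $f^\kappa_\xi(\alpha)$, uniqueness of order isomorphisms gives $\overline{\pi}^{j(\kappa)}_{j(\xi),\alpha}\circ\bigl(j{\restrict}F^\kappa_\xi(\alpha)\bigr)=\pi^\kappa_{\xi,\alpha}$, and therefore $(\overline{\pi}^{j(\kappa)}_{j(\xi),\alpha})^{-1}(\eta)=j\bigl((\pi^\kappa_{\xi,\alpha})^{-1}(\eta)\bigr)$ for each $\eta<f^\kappa_\xi(\alpha)$.

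Feeding these identities into the display, together with $j(\vec{\psi})_{j(\zeta)}=j(\psi_\zeta)$ for $\zeta<\xi$, I would rewrite the right-hand side as $\bigwedge_{\eta<f^\kappa_\xi(\alpha)}j\bigl(\psi_{(\pi^\kappa_{\xi,\alpha})^{-1}(\eta)}\bigr)\res^{j(\kappa)}_\alpha$. Each $\psi_{(\pi^\kappa_{\xi,\alpha})^{-1}(\eta)}$ is a $\Pi^1_\zeta$ formula over $V_\kappa$ of norm $\zeta=(\pi^\kappa_{\xi,\alpha})^{-1}(\eta)<\xi$, and its restriction to $\alpha$ is defined since $\varphi\res^\kappa_\alpha$ is assumed to be; so the inductive hypothesis applies and gives $j\bigl(\psi_{(\pi^\kappa_{\xi,\alpha})^{-1}(\eta)}\bigr)\res^{j(\kappa)}_\alpha=\psi_{(\pi^\kappa_{\xi,\alpha})^{-1}(\eta)}\res^\kappa_\alpha$. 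By Definition~\ref{definition_restriction} the resulting conjunction is precisely $\varphi\res^\kappa_\alpha$, closing the induction. The $\Sigma^1_\xi$ case is identical after replacing conjunctions and universal quantifiers by disjunctions and existential quantifiers. The only genuinely delicate point is the pointwise-image identity $\overline{F}^{j(\kappa)}_{j(\xi)}(\alpha)=j[F^\kappa_\xi(\alpha)]$ and its effect on the collapses; the rest is bookkeeping that mirrors Lemma~\ref{lemma_j_of}.
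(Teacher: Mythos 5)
Your proposal is correct and takes essentially the same approach as the paper's proof: induction on $\xi<\kappa^+$, with the base and successor cases dispatched by Remark \ref{remark_coding}, and the limit case settled by identifying $j(f^\kappa_\xi)(\alpha)=f^\kappa_\xi(\alpha)$ and showing that the inverse collapse at $\alpha$ computed in $V^\kappa/G$ agrees with $j\circ(\pi^\kappa_{\xi,\alpha})^{-1}$, after which the inductive hypothesis applied to each conjunct closes the argument. Your route to the collapse identity (the pointwise image $j[F^\kappa_\xi(\alpha)]=j(F^\kappa_\xi)(\alpha)$ together with uniqueness of the transitive collapse) is just a mild rephrasing of the paper's computation, which expresses the same fact as $(\pi^\kappa_{\xi,\alpha})^{-1}[f^\kappa_\xi(\alpha)]=F^\kappa_\xi(\alpha)=(j^{-1}\circ j(\vec\pi)_\alpha^{-1})[j(f^\kappa_\xi)(\alpha)]$ with order-preservation noted in a footnote.
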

\begin{proof}
  By induction on $\xi<\kappa^+$. This is immediate in case $\xi<\kappa$, for then by Remark \ref{remark_coding}(1), $j(\varphi(A_1,\ldots,A_n))=\varphi(j(A_1),\ldots,j(A_n))$, and thus $j(\varphi)\res^{j(\kappa)}_\alpha=\varphi\res^\kappa_\alpha$ by the definition of the restriction operation in this case. It is also immediate for successor steps above $\kappa$, for then by Remark \ref{remark_coding}(2), $j(\forall\vec X\psi)=\forall\vec X j(\psi)$.
  
    At limit steps $\xi\ge\kappa$, if $\varphi=\bigwedge_{\zeta<\xi}\psi_\zeta$ is a $\Pi^1_\xi$ formula, let $\vec\psi=\langle\psi_\zeta\mid\zeta<\xi\rangle$, and let $\vec\pi=\langle\pi^\kappa_{\xi,\alpha}\mid\alpha<\kappa\rangle$. Then, by Remark \ref{remark_coding}(3), $j(\varphi)=\bigwedge_{\zeta<j(\xi)}j(\vec\psi)_\zeta$, and therefore, assuming for now that $j(\varphi)\res^{j(\kappa)}_\alpha$ is defined, \[j(\varphi)\res^{j(\kappa)}_\alpha=\bigwedge_{\zeta\in j(f^\kappa_\xi)(\alpha)}j(\vec\psi)_{j(\vec\pi)_\alpha^{-1}(\zeta)}\res^{j(\kappa)}_\alpha=\bigwedge_{\zeta\in j(f^\kappa_\xi)(\alpha)}j(\psi_{j^{-1}(j(\vec\pi)_\alpha^{-1}(\zeta))})\res^{j(\kappa)}_\alpha,\]
using that $j(\vec\pi)_\alpha^{-1}[j(f^\kappa_\xi)(\alpha)]=j(F^\kappa_\xi)(\alpha)\subseteq j(F^\kappa_\xi)(\kappa)=j"\xi$. 
By our inductive hypothesis, for each $\gamma\in\xi$ and every regular $\alpha<\kappa$, $j(\psi_\gamma)\res^{j(\kappa)}_\alpha=\psi_\gamma\res^\kappa_\alpha$. Thus,
\[j(\varphi)\res^{j(\kappa)}_\alpha=\bigwedge_{\zeta\in j(f^\kappa_\xi)(\alpha)}\psi_{j^{-1}(j(\vec\pi)_\alpha^{-1}(\zeta))}\res^\kappa_\alpha.\]
Now, \[\varphi\res^\kappa_\alpha=\bigwedge_{\zeta\in f^\kappa_\xi(\alpha)}\psi_{(\pi^\kappa_{\xi,\alpha})^{-1}(\zeta)}\res^\kappa_\alpha.\] 
Since $\alpha<\kappa$ we have $j(f^\kappa_\xi)(\alpha)=f^\kappa_\xi(\alpha)$, and furthermore \[(\pi^\kappa_{\xi,\alpha})^{-1}[f^\kappa_\xi(\alpha)]=F^\kappa_\xi(\alpha)=(j^{-1}\circ j(\vec\pi)_\alpha^{-1})[j(f^\kappa_\xi)(\alpha)],\]
showing the above restrictions of $\varphi$ and of $j(\varphi)$ to be equal,\footnote{Being somewhat more careful here, this in fact also uses that the maps $\pi^\kappa_{\xi,\alpha}$, $j$, and $j(\vec\pi)_\alpha$ are order-preserving, so that both of the above conjunctions are taken of the same formulas \emph{in the same order}.} and thus in particular also showing that $j(\varphi)\res^{j(\kappa)}_\alpha$ is defined, as desired.

 The case when $\varphi$ is a $\Sigma^1_\xi$ formula is treated in exactly the same way.
\end{proof}

We can now easily deduce the following, which was originally established in an earlier version of this article using a different proof. The proof included below is due to the author and Peter Holy.

\begin{proposition}\label{proposition_double_restriction}
Suppose $\kappa$ is weakly Mahlo, and $\xi<\kappa^+$. For any formula $\varphi$ which is either $\Pi^1_\xi$ or $\Sigma^1_\xi$ over $V_\kappa$, there is a club $D\subseteq\kappa$ such that for all regular uncountable $\alpha\in D$, $\varphi\res^\kappa_\alpha$ is defined, and the set $D_\alpha$ of all ordinals $\beta<\alpha$ such that $(\varphi\res^\kappa_\alpha)\res^\alpha_\beta$ is defined and $(\varphi\res^\kappa_\alpha)\res^\alpha_\beta=\varphi\res^\kappa_\beta$, is in the club filter on $\alpha$. 
\end{proposition}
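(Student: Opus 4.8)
The plan is to deduce this from Proposition~\ref{proposition_framework2}, reducing the whole statement to a single computation inside a generic ultrapower. Let $E$ be the set of regular uncountable $\alpha<\kappa$ such that $\varphi\res^\kappa_\alpha$ is defined and the set $D_\alpha=\{\beta<\alpha\st(\varphi\res^\kappa_\alpha)\res^\alpha_\beta\text{ is defined and equals }\varphi\res^\kappa_\beta\}$ is in the club filter on $\alpha$. By Proposition~\ref{proposition_framework2} it suffices to show that $\kappa\in j(E)$ whenever $G\subseteq P(\kappa)/\NS_\kappa$ is generic over $V$ with $\kappa$ regular in $V^\kappa/G$ and $j\colon V\to V^\kappa/G$ is the corresponding generic ultrapower embedding; the required club $D\subseteq\kappa$ is then produced by that proposition.

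Working in $V^\kappa/G$, membership of $\kappa$ in $j(E)$ amounts to three assertions: that $\kappa$ is regular uncountable (immediate, since $\kappa=\crit(j)$ is regular in $V^\kappa/G$ by hypothesis); that $j(\varphi)\res^{j(\kappa)}_\kappa$ is defined; and that the set $D^*_\kappa=\{\beta<\kappa\st(j(\varphi)\res^{j(\kappa)}_\kappa)\res^\kappa_\beta\text{ is defined and equals }j(\varphi)\res^{j(\kappa)}_\beta\}$ contains a club of $\kappa$ in $V^\kappa/G$. The second assertion is handed to us by Lemma~\ref{lemma_j_of}, which gives $j(\varphi)\res^{j(\kappa)}_\kappa=\varphi$, a $\Pi^1_\xi$ or $\Sigma^1_\xi$ formula over $V_\kappa$ in $V^\kappa/G$. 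For the third, I would fix the club $C_\varphi\subseteq\kappa$ from Lemma~\ref{lemma_restriction_is_nice} on which $\varphi\res^\kappa_\beta$ is defined in $V$ (here $\kappa$ is weakly Mahlo in $V$, as required); since $C_\varphi\in H(\kappa^+)^V\subseteq V^\kappa/G$ and being closed unbounded in $\kappa$ is absolute, $C_\varphi$ remains a club of $\kappa$ in $V^\kappa/G$. For each regular $\beta\in C_\varphi$, Lemma~\ref{lemma_double_restriction} applied to $\varphi$ and $\alpha=\beta$ yields $j(\varphi)\res^{j(\kappa)}_\beta=\varphi\res^\kappa_\beta$, the latter computed in $V$; and substituting $j(\varphi)\res^{j(\kappa)}_\kappa=\varphi$ from Lemma~\ref{lemma_j_of} shows that $(j(\varphi)\res^{j(\kappa)}_\kappa)\res^\kappa_\beta$ is just $\varphi\res^\kappa_\beta$ as computed in $V^\kappa/G$.

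Thus both sides of the defining equation of $D^*_\kappa$ reduce to an instance of $\varphi\res^\kappa_\beta$, one computed in $V$ and one in $V^\kappa/G$, so the argument comes down to the one genuinely delicate point, where I expect the main obstacle to lie: that these two computations of $\varphi\res^\kappa_\beta$ agree for club-many $\beta$. Once the underlying bijections are fixed, the restriction operation is an absolute transfinite recursion, and the portion of this data actually used to compute $\varphi\res^\kappa_\beta$—namely the bijections $b_{\kappa,\eta}$ indexed by the $\le\kappa$-many subformula indices $\eta\le\xi$—lies in $H(\kappa^+)^V\subseteq V^\kappa/G$; hence, computed with $V$'s bijections, $\varphi\res^\kappa_\beta$ is literally the same object in $V$ and in $V^\kappa/G$. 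The only remaining gap is that $V^\kappa/G$ may declare its canonical reflection functions at $\kappa$ via different bijections than $V$ does. But both bijection sequences belong to $V^\kappa/G$, so the already-established independence of the restriction from the choice of bijections modulo the nonstationary ideal (Corollary~\ref{corollary_restriction_is_well_defined} together with the remark following Definition~\ref{definition_canonical_reflection_functions}, applied inside $V^\kappa/G$ and diagonalized over the $\le\kappa$-many relevant indices using normality of the club filter) shows the two computations agree, and are both defined, on a club of $\kappa$ in $V^\kappa/G$. Intersecting this club with $C_\varphi$ witnesses that $D^*_\kappa$ is in the club filter on $\kappa$, whence $\kappa\in j(E)$ and the proof is complete.
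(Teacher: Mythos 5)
Your proof is correct and follows essentially the same route as the paper: the paper likewise reduces the statement to a single computation in an $\NS_\kappa$-generic ultrapower (phrased there as a contradiction from a stationary set of counterexamples, which is exactly what Proposition \ref{proposition_framework2} packages) and closes that computation with the same three ingredients, Lemma \ref{lemma_j_of}, Lemma \ref{lemma_double_restriction} and Lemma \ref{lemma_restriction_is_nice}. The delicate point you isolate---that $\varphi\res^\kappa_\beta$ computed in $V$ agrees on a club with $\varphi\res^\kappa_\beta$ computed in $V^\kappa/G$ from its own bijections---is precisely what the paper's final sentence absorbs silently, and your appeal to Corollary \ref{corollary_restriction_is_well_defined} applied inside $V^\kappa/G$ settles it correctly; note, though, that the corollary's proof already handles all indices at once via Lemma \ref{lemma_j_of}, so the diagonalization over indices is superfluous (and as literally described it would not quite suffice, since the computation of $\varphi\res^\kappa_\beta$ may call on bijections whose indices are enumerated after $\beta$).
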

\begin{proof}
  Assume for a contradiction that the conclusion of the proposition fails. By Lemma \ref{lemma_restriction_is_nice}, this means that there is a stationary set $T$ consisting of regular and uncountable cardinals $\alpha$ such that the set $D_\alpha$ has stationary complement $E_\alpha\subseteq\alpha$. Using Lemma \ref{lemma_restriction_is_nice} once again, we may assume that $(\varphi\res^\kappa_\alpha)\res^\alpha_\beta$ is defined for every $\alpha\in T$ and every $\beta\in E_\alpha$. Let $\vec E$ denote the sequence $\langle E_\alpha\mid\alpha\in T\rangle$. Assume that $G\subseteq P(\kappa)/\NS_\kappa$ is generic over $V$ with $T\in G$ and $j:V\to V^\kappa/G$ is the corresponding generic ultrapower. Then, $\kappa\in j(T)$, and thus $j(\vec E)_\kappa$ is stationary in $V^\kappa/G$. But, \[j(\vec E)_\kappa=\{\beta<\kappa\mid(j(\varphi)\res^{j(\kappa)}_\kappa)\res^\kappa_\beta\ne j(\varphi)\res^{j(\kappa)}_\beta\}.\]
  Note that by Lemma \ref{lemma_double_restriction}, $j(\varphi)\res^{j(\kappa)}_\kappa=\varphi$. But then, by Lemma \ref{lemma_restriction_is_nice} and Lemma \ref{lemma_double_restriction}, $j(\vec E)_\kappa$ is nonstationary in $V^\kappa/G$, which gives our desired contradiction.
\end{proof}

Recall that for an uncountable regular cardinal $\kappa$, if $S\subseteq\kappa$ is stationary in $\kappa$ and for each $\alpha\in S$ we have a set $S_\alpha\subseteq\alpha$ which is stationary in $\alpha$, then it follows that $\bigcup_{\alpha\in S}S_\alpha$ is stationary in $\kappa$. We generalize this to $\Pi^1_\xi$-indescribability for all $\xi<\kappa^+$ as follows (this result was previously known \cite[Lemma 3.1]{MR4206111} for $\xi<\kappa$).

\begin{lemma}\label{lemma_union}
Suppose $S$ is a $\Pi^1_\xi$-indescribable subset of $\kappa$ where $\xi<\kappa^+$. Further suppose that $S_\alpha$ is a $\Pi^1_{f^\kappa_\xi(\alpha)}$-indescribable subset of $\alpha$ for each $\alpha\in S$. Then $\bigcup_{\alpha\in S}S_\alpha$ is a $\Pi^1_\xi$-indescribable subset of $\kappa$.
\end{lemma}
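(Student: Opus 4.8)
The plan is to prove that $\bigcup_{\alpha\in S}S_\alpha$ is $\Pi^1_\xi$-indescribable in $\kappa$ by reducing, via universal formulas, to a single $\Pi^1_\xi$ sentence and then playing off the indescribability of $S$ at $\kappa$ against the indescribability of each $S_\alpha$ at $\alpha$. First I would let $\varphi$ be an arbitrary $\Pi^1_\xi$ sentence over $V_\kappa$ with $V_\kappa\models\varphi$, and invoke Theorem~\ref{theorem_universal} to obtain the universal $\Pi^1_\xi$ formula $\Psi^\kappa_{\xi,0}(Y)$, a code $K_\varphi\subseteq\kappa$, and a club $C_\varphi\subseteq\kappa$ such that $V_\kappa\models\Psi^\kappa_{\xi,0}(K_\varphi)$ and, for all regular $\alpha\in C_\varphi$, $V_\alpha\models\varphi\res^\kappa_\alpha$ iff $V_\alpha\models\Psi^\kappa_{\xi,0}(K_\varphi)\res^\kappa_\alpha$. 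Since $S$ is $\Pi^1_\xi$-indescribable and $V_\kappa\models\Psi^\kappa_{\xi,0}(K_\varphi)$, the set $\{\alpha<\kappa:V_\alpha\models\Psi^\kappa_{\xi,0}(K_\varphi)\res^\kappa_\alpha\}$ is in $\Pi^1_\xi(\kappa)^*$, so it meets $S$ in a $\Pi^1_\xi$-positive (in particular nonempty) set; intersecting with the club $C_\varphi$ and the club $C_{\xi,0}$ from Lemma~\ref{lemma_restricting_universal_formulas} and with $\REG$, I would fix a single regular $\alpha\in S$ with $V_\alpha\models\varphi\res^\kappa_\alpha$ and such that $\Psi^\kappa_{\xi,0}(Y)\res^\kappa_\alpha$ is a universal $\Pi^1_{f^\kappa_\xi(\alpha)}$ formula at $\alpha$.

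Next I would descend to the level $\alpha$. Since $S_\alpha$ is $\Pi^1_{f^\kappa_\xi(\alpha)}$-indescribable in $\alpha$ and $\varphi\res^\kappa_\alpha$ is (by Remark~\ref{remark_definition_of_restriction}) a $\Pi^1_{f^\kappa_\xi(\alpha)}$ sentence over $V_\alpha$ which holds in $V_\alpha$, there is some $\beta\in S_\alpha$ such that $V_\beta\models(\varphi\res^\kappa_\alpha)\res^\alpha_\beta$. The goal is to conclude that $V_\beta\models\varphi\res^\kappa_\beta$, exhibiting $\beta\in\bigcup_{\alpha\in S}S_\alpha$ as a witness for $\varphi$. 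This is exactly where Proposition~\ref{proposition_double_restriction} enters: it furnishes a club $D\subseteq\kappa$ so that for all regular $\alpha\in D$ the set $D_\alpha$ of $\beta<\alpha$ with $(\varphi\res^\kappa_\alpha)\res^\alpha_\beta=\varphi\res^\kappa_\beta$ is in the club filter on $\alpha$. I would therefore arrange at the outset that the chosen $\alpha$ also lies in $D$ (by intersecting $D$ into the club already used when selecting $\alpha$), so that $D_\alpha$ is club in $\alpha$, hence $\Pi^1_{f^\kappa_\xi(\alpha)}$-positive, and consequently $S_\alpha\cap D_\alpha\neq\emptyset$. Picking $\beta\in S_\alpha\cap D_\alpha$ gives $V_\beta\models(\varphi\res^\kappa_\alpha)\res^\alpha_\beta=\varphi\res^\kappa_\beta$, as needed.

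The main obstacle — and the reason the argument is not a one-line diagonal — is the bookkeeping required to make the ``restriction of a restriction equals a single restriction'' step apply to the \emph{same} witness level $\alpha$ that we extracted from the indescribability of $S$. Concretely, I must verify that the finitely many clubs $C_\varphi$, $C_{\xi,0}$ (from Lemma~\ref{lemma_restricting_universal_formulas}), and $D$ (from Proposition~\ref{proposition_double_restriction}) can all be intersected with each other and with $\REG$ before invoking the $\Pi^1_\xi$-indescribability of $S$, since $S$ meets every set in $\Pi^1_\xi(\kappa)^*$ and each of these clubs is in $\Pi^1_\xi(\kappa)^*$ by Corollary~\ref{corollary_hierarchy} (every club is $\Pi^1_\zeta(\kappa)^*$ for $\zeta<\xi$, hence in $\Pi^1_\xi(\kappa)^*$). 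Once $\alpha\in S$ is chosen inside this single club, the two remaining facts — that $\varphi\res^\kappa_\alpha$ holds and is a genuine $\Pi^1_{f^\kappa_\xi(\alpha)}$ sentence, and that $D_\alpha$ is club in $\alpha$ — hold simultaneously, and the two-level descent goes through. Modulo this intersection bookkeeping, every ingredient is already proved above, so I expect the write-up to be short.
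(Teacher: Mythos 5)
Your argument is correct in substance and is essentially the paper's proof: both hinge on the same two-level descent, namely fixing a regular $\alpha\in S$ inside the club from Proposition~\ref{proposition_double_restriction} (together with the club from Lemma~\ref{lemma_restriction_is_nice} on which $\varphi\res^\kappa_\alpha$ is defined) such that $V_\alpha\models\varphi\res^\kappa_\alpha$, and then reflecting the sentence $\varphi\res^\kappa_\alpha$ below $\alpha$ using the indescribability of $S_\alpha$ relativized to the club $D_\alpha$ on which $(\varphi\res^\kappa_\alpha)\res^\alpha_\beta=\varphi\res^\kappa_\beta$. Two points of comparison. First, your detour through universal formulas is superfluous: the paper never invokes Theorem~\ref{theorem_universal} here, but simply observes that $S\cap C_\varphi\cap D_\varphi$ remains $\Pi^1_\xi$-indescribable, since clubs lie in the dual filter of the normal ideal $\Pi^1_\xi(\kappa)$ of Theorem~\ref{theorem_normal_ideal} (this normality, rather than Corollary~\ref{corollary_hierarchy}, is also the correct citation for your intersection bookkeeping), and then applies Definition~\ref{definition_indescribability} directly to $\varphi$. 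The dual-filter fact you establish for $\Psi^\kappa_{\xi,0}(K_\varphi)$ holds verbatim with $\varphi$ itself in place of the universal formula, and you never actually use the universality of $\Psi^\kappa_{\xi,0}\res^\kappa_\alpha$ at $\alpha$ (Lemma~\ref{lemma_restricting_universal_formulas}) in the descent, so that machinery can be deleted. Second, your final step is misphrased, though the fix is immediate: merely ``picking'' an arbitrary $\beta\in S_\alpha\cap D_\alpha$ does not give $V_\beta\models(\varphi\res^\kappa_\alpha)\res^\alpha_\beta$; rather, since $S_\alpha\cap D_\alpha$ is still $\Pi^1_{f^\kappa_\xi(\alpha)}$-indescribable in $\alpha$ (positivity, which you do note, not mere nonemptiness, is what matters), you must apply its indescribability to the true sentence $\varphi\res^\kappa_\alpha$ to obtain a witness $\beta\in S_\alpha\cap D_\alpha$, and only then does membership in $D_\alpha$ convert the double restriction into $\varphi\res^\kappa_\beta$. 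This is exactly how the paper concludes.
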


\begin{proof} Suppose $\xi<\kappa^+$ and $\varphi$ is some $\Pi^1_\xi$ sentence over $V_\kappa$ such that $V_\kappa\models\varphi$. By Lemma \ref{lemma_restriction_is_nice},
\[C_\varphi=\{\alpha<\kappa\st\text{$\varphi\res^\kappa_\alpha$ is $\Pi^1_{f^\kappa_\xi(\alpha)}$ over $V_\kappa$}\}\]
is in the club filter on $\kappa$. By Proposition \ref{proposition_double_restriction}, there is a club $D_\varphi\subseteq\kappa$ such that for all regular $\alpha\in D_\varphi$ the set of $\beta<\alpha$ such that $(\varphi\res^\kappa_\alpha)\res^\alpha_\beta=\varphi\res^\kappa_\beta$ is in the club filter on $\alpha$. Thus, $S\cap C_\varphi\cap D_\varphi$ is $\Pi^1_\xi$-indescribable in $\kappa$. Hence there is a regular uncountable $\alpha\in S\cap C_\varphi\cap D_\varphi$ such that $V_\alpha\models\varphi\res^\kappa_\alpha$. Let $E$ be a club subset of $\alpha$ such that for all $\beta\in E$ we have $(\varphi\res^\kappa_\alpha)\res^\alpha_\beta=\varphi\res^\kappa_\beta$. Since $S_\alpha\cap E$ is $\Pi^1_{f^\kappa_\xi(\alpha)}$-indescribable in $\alpha$ and $\varphi\res^\kappa_\alpha$ is $\Pi^1_{f^\kappa_\xi(\alpha)}$ over $V_\alpha$, there is some $\beta\in S_\alpha\cap E$ such that $V_\beta\models(\varphi\res^\kappa_\alpha)\res^\alpha_\beta$. Since $(\varphi\res^\kappa_\alpha)\res^\alpha_\beta=\varphi\res^\kappa_\beta$, it follows that $\bigcup_{\alpha\in S}S_\alpha$ is $\Pi^1_\xi$-indescribable in $\kappa$.
\end{proof}

\begin{lemma}\label{lemma_set_of_non}
For all ordinals $\xi$, if $S\subseteq\kappa$ is $\Pi^1_\xi$-indescribable in $\kappa$ where $\xi<\kappa^+$, then the set
\[T=\{\alpha<\kappa\st\text{$S\cap\alpha$ is not $\Pi^1_{f^\kappa_\xi(\alpha)}$-indescribable in $\alpha$}\}\]
is $\Pi^1_\xi$-indescribable in $\kappa$.
\end{lemma}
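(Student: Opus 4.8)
The plan is to prove the lemma by induction on the cardinal $\kappa$, establishing the statement simultaneously for all $\xi<\kappa^+$; the inductive hypothesis is that the lemma already holds at every regular $\alpha<\kappa$ for every level $<\alpha^+$. Since the hypothesis forces $\kappa$ to be $\Pi^1_\xi$-indescribable, and hence weakly Mahlo, the auxiliary results (Lemma \ref{lemma_union}, Corollary \ref{corollary_crazy}, Proposition \ref{proposition_double_restriction}) all apply. I would argue by contradiction: assume $T$ is \emph{not} $\Pi^1_\xi$-indescribable, so $T\in\Pi^1_\xi(\kappa)$, and since $S\in\Pi^1_\xi(\kappa)^+$ by Theorem \ref{theorem_normal_ideal}, the set $S\setminus T$ is still $\Pi^1_\xi$-indescribable in $\kappa$.

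The heart of the argument is to exhibit $T$ as a union of indescribable pieces sitting below the indescribable index set $S\setminus T$, so that Lemma \ref{lemma_union} forces $T$ itself to be $\Pi^1_\xi$-indescribable, the desired contradiction. First I would observe that each $\alpha\in S\setminus T$ is regular (indeed $\Pi^1_{f^\kappa_\xi(\alpha)}$-indescribable) and that $S\cap\alpha$ is $\Pi^1_{f^\kappa_\xi(\alpha)}$-indescribable in $\alpha$. Applying the inductive hypothesis at $\alpha$ to the indescribable set $S\cap\alpha\subseteq\alpha$ at level $f^\kappa_\xi(\alpha)<\alpha^+$ yields that
\[T_\alpha=\{\beta<\alpha\st S\cap\beta\text{ is not }\Pi^1_{f^\alpha_{f^\kappa_\xi(\alpha)}(\beta)}\text{-indescribable in }\beta\}\]
is $\Pi^1_{f^\kappa_\xi(\alpha)}$-indescribable in $\alpha$.

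The key step is to identify $T_\alpha$ with $T\cap\alpha$ up to a club. By Corollary \ref{corollary_crazy} there is a club $C^\kappa_\xi\subseteq\kappa$ such that for every regular $\alpha\in C^\kappa_\xi$ the set $E_\alpha=\{\beta<\alpha\st f^\alpha_{f^\kappa_\xi(\alpha)}(\beta)=f^\kappa_\xi(\beta)\}$ is in the club filter on $\alpha$; on $E_\alpha$ the defining conditions of $T_\alpha$ and of $T\cap\alpha$ coincide. Since the club filter on $\alpha$ is contained in $\Pi^1_{f^\kappa_\xi(\alpha)}(\alpha)^*$, the set $T_\alpha\cap E_\alpha$ is $\Pi^1_{f^\kappa_\xi(\alpha)}$-indescribable in $\alpha$ and is contained in $T\cap\alpha$; hence $T\cap\alpha$ is $\Pi^1_{f^\kappa_\xi(\alpha)}$-indescribable in $\alpha$ for every $\alpha$ in the indescribable set $I:=(S\setminus T)\cap C^\kappa_\xi$. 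Finally, I would apply Lemma \ref{lemma_union} with index set $I$ and pieces $S_\alpha=T\cap\alpha$ to conclude that $\bigcup_{\alpha\in I}(T\cap\alpha)$ is $\Pi^1_\xi$-indescribable in $\kappa$; but $I$ is unbounded in $\kappa$ (positive sets for the normal ideal $\Pi^1_\xi(\kappa)$ are stationary), so this union is exactly $T$, contradicting the assumption that $T$ is not $\Pi^1_\xi$-indescribable.

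I expect the main obstacle to be the level-matching bookkeeping: ensuring the inductive hypothesis is invoked at the correct level $f^\kappa_\xi(\alpha)<\alpha^+$, and that Corollary \ref{corollary_crazy} genuinely aligns $f^\alpha_{f^\kappa_\xi(\alpha)}(\beta)$ with $f^\kappa_\xi(\beta)$ on a club of $\alpha$, so that $T_\alpha$ and $T\cap\alpha$ really do agree where it matters. A secondary point to get right is the organization of the induction itself: it must be run on the underlying cardinal $\kappa$ (covering all levels below $\alpha^+$ at each smaller regular $\alpha$) rather than on $\xi$, because the recursive instance drops the cardinal from $\kappa$ to $\alpha<\kappa$ while the level $f^\kappa_\xi(\alpha)$ need not drop below $\xi$ when $\xi<\kappa$.
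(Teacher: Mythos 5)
Your argument is correct and is, in substance, the paper's own proof: assume $T\in\Pi^1_\xi(\kappa)$, pass to a $\Pi^1_\xi$-indescribable set of $\alpha$ avoiding $T$ (the paper uses $(\kappa\setminus T)\cap C\cap D$, where $D$ is the set of regular cardinals; your $(S\setminus T)\cap C^\kappa_\xi$ works equally well, since only $\alpha\notin T$ is needed to know that $S\cap\alpha$ is $\Pi^1_{f^\kappa_\xi(\alpha)}$-indescribable in $\alpha$), apply the inductive hypothesis at $\alpha$ to get $T_\alpha$, use Corollary~\ref{corollary_crazy} to see that $T_\alpha$ agrees with $T\cap\alpha$ on a club of $\alpha$, and conclude with Lemma~\ref{lemma_union}. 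The one genuine difference is how the induction is organized, and your way is arguably the cleaner one: the paper inducts on $\xi$, citing \cite[Lemma 3.2]{MR4206111} for $\xi<\omega$, but when $\omega\le\xi<\kappa$ the recursive call is at level $f^\kappa_\xi(\alpha)=\xi$ --- the \emph{same} ordinal --- at a smaller cardinal, so induction on $\xi$ alone does not literally license it and must be read as a lexicographic induction on pairs (level, cardinal). Your induction on the cardinal, carrying all levels $<\alpha^+$ simultaneously at each regular $\alpha<\kappa$, sends every recursive call to a strictly smaller cardinal, so well-foundedness is immediate, no external base case is needed, and the case distinction $\xi<\kappa$ versus $\xi\ge\kappa$ disappears --- exactly the point you flag at the end. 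Two details to make explicit in a final write-up: the regularity of the $\alpha$'s you use (your parenthetical suffices when $f^\kappa_\xi(\alpha)\ge 1$, since then indescribability gives weak compactness; alternatively, intersect with the set of regular cardinals, which lies in $\Pi^1_1(\kappa)^*\subseteq\Pi^1_\xi(\kappa)^*$ once $\xi\ge 1$, as the paper does), and the fact that intersecting with the club $E_\alpha$ preserves indescribability in $\alpha$, which is Theorem~\ref{theorem_normal_ideal} applied at $\alpha$ --- legitimate because $S\cap\alpha$ being indescribable makes $\alpha$ itself $\Pi^1_{f^\kappa_\xi(\alpha)}$-indescribable.
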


\begin{proof}
We proceed by induction on $\xi$. For $\xi<\omega$ this is a well-known result, which follows directly from \cite[Lemma 3.2]{MR4206111}. Suppose $\xi\in\kappa^+\setminus\omega$ and, for the sake of contradiction, suppose $S$ is $\Pi^1_\xi$-indescribable and $T$ is not $\Pi^1_\xi$-indescribable in $\kappa$. Then $\kappa\setminus T$ is in the filter $\Pi^1_\xi(\kappa)^*$ and is thus $\Pi^1_\xi$-indescribable in $\kappa$. By Corollary \ref{corollary_crazy}, there is a club $C\subseteq\kappa$ such that for all regular uncountable $\alpha\in C$, the set 
\[D_\alpha=\{\beta<\alpha\st f^\kappa_\xi(\beta)=f^\alpha_{f^\kappa_\xi(\alpha)}(\beta)\}\] is in the club filter on $\alpha$. Let $D$ be the set of regular uncountable cardinals less than $\kappa$, and note that $D\in \Pi^1_1(\kappa)^*\subseteq\Pi^1_\xi(\kappa)^*$. Notice that $(\kappa\setminus T)\cap C\cap D$ is $\Pi^1_\xi$-indescribable in $\kappa$. For each $\alpha\in (\kappa\setminus T)\cap C\cap D$, it follows by induction that the set
\[T_\alpha=\{\beta<\alpha\st\text{$S\cap\beta$ is not $\Pi^1_{f^\alpha_{f^\kappa_\xi(\alpha)}(\beta)}$-indescribable}\}\]
is $\Pi^1_{f^\kappa_\xi(\alpha)}$-indescribable in $\alpha$. Thus, for each $\alpha\in (\kappa\setminus T)\cap C\cap D$ the set $T_\alpha\cap D_\alpha$ is $\Pi^1_{f^\kappa_\xi(\alpha)}$-indescribable in $\alpha$. Now it follows by Lemma \ref{lemma_union} that the set
\[\bigcup_{\alpha\in (\kappa\setminus T)\cap C\cap D}(T_\alpha\cap D_\alpha)\subseteq T\]
is $\Pi^1_\xi$-indescribable in $\kappa$, a contradiction.
\end{proof}

Now we show that for regular $\kappa$, whenever $\zeta<\xi<\kappa^+$ and the ideals under consideration are nontrivial, we have $\Pi^1_\zeta(\kappa)\subsetneq\Pi^1_\xi(\kappa)$.

\begin{corollary}\label{corollary_proper}
Suppose $\kappa$ is $\Pi^1_\xi$-indescribable where $\xi<\kappa^+$. Then for all $\zeta<\xi$ we have $\Pi^1_\zeta(\kappa)\subsetneq\Pi^1_\xi(\kappa)$.
\end{corollary}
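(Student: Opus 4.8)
The plan is to establish the two halves of $\Pi^1_\zeta(\kappa)\subsetneq\Pi^1_\xi(\kappa)$ separately: the inclusion $\Pi^1_\zeta(\kappa)\subseteq\Pi^1_\xi(\kappa)$, which is routine monotonicity, and the strictness, where the genuine work already carried out in Lemma \ref{lemma_set_of_non} and Corollary \ref{corollary_hierarchy} gets assembled. First note that since $\kappa$ is $\Pi^1_\xi$-indescribable it is also $\Pi^1_\zeta$-indescribable for every $\zeta<\xi$, so Theorem \ref{theorem_normal_ideal} guarantees that both $\Pi^1_\zeta(\kappa)$ and $\Pi^1_\xi(\kappa)$ are nontrivial normal ideals and the claim is not vacuous.

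For the inclusion I would argue contrapositively, using the monotonicity of indescribability: if $X\subseteq\kappa$ is $\Pi^1_\xi$-indescribable then $X$ is $\Pi^1_\zeta$-indescribable. This is exactly the fact applied to a general set $S$ in the proof of Corollary \ref{corollary_hierarchy}, so I would invoke it here. Unwinding the definitions, $X\in\Pi^1_\zeta(\kappa)$ says $X$ is not $\Pi^1_\zeta$-indescribable, which by the contrapositive forces $X$ to be not $\Pi^1_\xi$-indescribable, i.e.\ $X\in\Pi^1_\xi(\kappa)$.

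The crux is to exhibit one set separating the two ideals, and I would take
\[T=\{\alpha<\kappa\st\text{$\alpha$ is not $\Pi^1_{f^\kappa_\zeta(\alpha)}$-indescribable in $\alpha$}\}.\]
Applying Lemma \ref{lemma_set_of_non} with $S=\kappa$ and with $\zeta$ in the role of $\xi$ (permissible since $\kappa$ is $\Pi^1_\zeta$-indescribable, and since $\kappa\cap\alpha=\alpha$) shows that $T$ is $\Pi^1_\zeta$-indescribable in $\kappa$, so $T\notin\Pi^1_\zeta(\kappa)$. Dually, applying Corollary \ref{corollary_hierarchy} with $S=\kappa$ and the same $\zeta<\xi$ shows that the complement
\[\kappa\setminus T=\{\alpha<\kappa\st\text{$\alpha$ is $\Pi^1_{f^\kappa_\zeta(\alpha)}$-indescribable}\}\]
belongs to the filter $\Pi^1_\xi(\kappa)^*$, and hence $T\in\Pi^1_\xi(\kappa)$. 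Therefore $T\in\Pi^1_\xi(\kappa)\setminus\Pi^1_\zeta(\kappa)$, which combined with the inclusion yields $\Pi^1_\zeta(\kappa)\subsetneq\Pi^1_\xi(\kappa)$.

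I do not expect a serious obstacle at this point, since the delicate content has been absorbed into Lemma \ref{lemma_set_of_non} (whose proof, by induction on $\xi$ through Lemma \ref{lemma_union} and Corollary \ref{corollary_crazy}, is the hard ingredient) and into the universal formulas behind Corollary \ref{corollary_hierarchy}. The only places demanding care are confirming that the indices $f^\kappa_\zeta(\alpha)$ produced by Lemma \ref{lemma_set_of_non} and by Corollary \ref{corollary_hierarchy} are literally the same, so that the two sets above are exact complements, and making sure the monotonicity invoked for the inclusion is applied to arbitrary $X\subseteq\kappa$ and not merely to $\kappa$ itself.
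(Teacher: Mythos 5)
Your proposal is correct and follows essentially the same route as the paper: the inclusion $\Pi^1_\zeta(\kappa)\subseteq\Pi^1_\xi(\kappa)$ via monotonicity (the class of $\Pi^1_\xi$ formulas includes the $\Pi^1_\zeta$ formulas), and strictness witnessed by the set $T=\{\alpha<\kappa\st\text{$\alpha$ is not $\Pi^1_{f^\kappa_\zeta(\alpha)}$-indescribable in $\alpha$}\}$, which is exactly the complement of the set $C$ used in the paper, shown to lie in $\Pi^1_\xi(\kappa)\setminus\Pi^1_\zeta(\kappa)$ by the same two ingredients, Corollary \ref{corollary_hierarchy} and Lemma \ref{lemma_set_of_non} applied with $S=\kappa$.
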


\begin{proof}
The fact that $\Pi^1_\zeta(\kappa)\subseteq\Pi^1_\xi(\kappa)$ follows easily from the fact that the class of $\Pi^1_\xi$ formulas includes the $\Pi^1_\zeta$ formulas. To see that the proper containment holds, consider the set
\[C=\{\alpha<\kappa\st\text{$\alpha$ is $\Pi^1_{f^\kappa_\zeta(\alpha)}$-indescribable}\}.\]
By Corollary \ref{corollary_hierarchy} and Proposition \ref{lemma_set_of_non}, we have $\kappa\setminus C\in \Pi^1_\xi(\kappa)\setminus\Pi^1_\zeta(\kappa)$.
\end{proof}

\subsection{Higher $\Pi^1_\xi$-clubs}\label{section_higher_xi_clubs}

Now we present a characterization of the $\Pi^1_\xi$-indescribability of sets $S\subseteq\kappa$ in terms of a natural base for the filter $\Pi^1_\xi(\kappa)^*$.

\begin{definition}\label{definition_Pi1xi_club}
Suppose $\kappa$ is a regular cardinal. We define the notion of $\Pi^1_\xi$-club subset of $\kappa$ for all $\xi<\kappa^+$ by induction.\begin{enumerate}
\item A set $C\subseteq\kappa$ is \emph{$\Pi^1_0$-club} if it is closed and unbounded in $\kappa$. 
\item We say that $C$ is \emph{$\Pi^1_{\zeta+1}$-club in $\kappa$} if $C$ is $\Pi^1_\zeta$-indescribable in $\kappa$ and $C$ is \emph{$\Pi^1_\zeta$-closed}, in the sense that there is a club $C^*$ in $\kappa$ such that for all $\alpha\in C^*$, whenever $C\cap\alpha$ is $\Pi^1_{f_\zeta^\kappa(\alpha)}$-indescribable in $\alpha$ we must have $\alpha\in C$. 
\item If $\xi$ is a limit, we say that $C\subseteq\kappa$ is \emph{$\Pi^1_\xi$-club in $\kappa$} if $C$ is $\Pi^1_\zeta$-indescribable for all $\zeta<\xi$ and $C$ is \emph{$\Pi^1_\xi$-closed}, in the sense that there is a club $C^*$ in $\kappa$ such that for all $\alpha\in C^*$, whenever $C\cap\alpha$ is $\Pi^1_\zeta$-indescribable for all $\zeta < f^\kappa_\xi(\alpha)$, we must have $\alpha\in C$.
\end{enumerate}
\end{definition}

Let us show that, when the $\Pi^1_\xi$-indescribability ideal $\Pi^1_\xi(\kappa)$ is nontrivial, the $\Pi^1_\xi$-club subsets of $\kappa$ form a filter base for the dual filter $\Pi^1_\xi(\kappa)^*$ and a set being $\Pi^1_\xi$-club in $\kappa$ is expressible by a $\Pi^1_\xi$ sentence.

\begin{theorem}\label{theorem_xi_clubs}
Suppose $\kappa$ is a regular cardinal. For all $\xi<\kappa^+$, if $\kappa$ is $\Pi^1_\xi$-indescribable then the following hold.
\begin{enumerate}
\item A set $S\subseteq\kappa$ is $\Pi^1_\xi$-indescribable if and only if $S\cap C\neq\emptyset$ for all $\Pi^1_\xi$-clubs $C\subseteq\kappa$.
\item There is a $\Pi^1_\xi$ formula $\chi^\kappa_\xi(X)$ over $V_\kappa$ such that for all $C\subseteq \kappa$ we have
\[\text{$C$ is $\Pi^1_\xi$-club in $\kappa$ if and only if } V_\kappa\models \chi^\kappa_\xi(C)\]
and there is a club $D_\xi$ in $\kappa$ such that for all regular $\alpha\in D_\xi$ and all $C\subseteq\kappa$ we have
\[\text{$C\cap\alpha$ is $\Pi^1_{f^\kappa_\xi(\alpha)}$-club in $\alpha$ if and only if } V_\alpha\models\chi^\kappa_\xi(C)\res^\kappa_\alpha.\]
\end{enumerate}
\end{theorem}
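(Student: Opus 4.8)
The plan is to prove both parts simultaneously by induction on $\xi<\kappa^+$, establishing the expressibility statement (2) first and then deducing (1) from it together with the hierarchy material of Section \ref{section_hierarchy}. Throughout I would use freely that, for $\kappa$ which is $\Pi^1_\xi$-indescribable, $\Pi^1_\xi(\kappa)$ is a nontrivial normal ideal extending $\NS_\kappa$ (Theorem \ref{theorem_normal_ideal} and Corollary \ref{corollary_proper}), and that for a true $\Pi^1_\xi$ sentence $\psi$ over $V_\kappa$ the reflection set $C_\psi=\{\alpha<\kappa\st V_\alpha\models\psi\res^\kappa_\alpha\}$ lies in $\Pi^1_\xi(\kappa)^*$. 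A routine unwinding of the definition of the ideal shows moreover that $\{C_\psi\st\psi\ \Pi^1_\xi,\ V_\kappa\models\psi\}$ is a base for $\Pi^1_\xi(\kappa)^*$, a fact I would record at the outset since it drives the whole argument for (1).

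For part (2) I would define $\chi^\kappa_\xi(X)$ as the conjunction of an indescribability clause and a closure clause. For the indescribability clause I take $\bigwedge_{\zeta<\xi}\Phi^\kappa_\zeta(X)$ in the limit case (respectively $\Phi^\kappa_\zeta(X)$ when $\xi=\zeta+1$), where $\Phi^\kappa_\zeta$ comes from Theorem \ref{theorem_expressing_indescribability}; reindexing the conjuncts by successor ordinals shows this is a $\Pi^1_\xi$ formula. For the closure clause I express ``$X$ is $\Pi^1_\xi$-closed'' as $\forall\alpha\in C^*(\vartheta(\alpha,X)\to\alpha\in X)$, where $\vartheta(\alpha,X)$ says ``$X\cap\alpha$ is $\Pi^1_\eta$-indescribable in $\alpha$ for all $\eta<f^\kappa_\xi(\alpha)$'' and $C^*$ is a definable witnessing club. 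The key device is that, exactly as in the proof of Theorem \ref{theorem_expressing_indescribability}, $\vartheta$ is coded by a single parameter $R\subseteq\kappa$ recording which $a\subseteq\alpha$ satisfy $V_\alpha\models\Phi^\kappa_\zeta(a)\res^\kappa_\alpha$; absorbing $C^*$ and $R$ into the (second-order) parameters renders the closure clause first order over $V_\kappa$, so conjoining it with the $\Pi^1_\xi$ indescribability clause keeps $\chi^\kappa_\xi$ at complexity $\Pi^1_\xi$. The first displayed equivalence then unwinds directly from Theorem \ref{theorem_expressing_indescribability} and correctness of the coding, and the restriction equivalence along a club $D_\xi$ I would verify with Proposition \ref{proposition_framework2}: for a generic ultrapower $j\colon V\to V^\kappa/G$ in which $\kappa$ is regular, Lemma \ref{lemma_j_of} gives $j(\chi^\kappa_\xi(C))\res^{j(\kappa)}_\kappa=\chi^\kappa_\xi(C)$, so it suffices to rerun the first equivalence inside $V^\kappa/G$ using Lemma \ref{lemma_restricting_universal_formulas} and Theorem \ref{theorem_expressing_indescribability} there.

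For part (1) I would reduce the biconditional to two assertions. \emph{First}, every $\Pi^1_\xi$-club $C$ lies in $\Pi^1_\xi(\kappa)^*$: forcing with $P(\kappa)/\Pi^1_\xi(\kappa)$ to obtain $j\colon V\to M=V^\kappa/G$ with critical point $\kappa$, it suffices to check $\kappa\in j(C)$. Here $\kappa\in j(C^*)$ since $C^*$ is club, and $j(f^\kappa_\xi)(\kappa)=\xi$ since $f^\kappa_\xi$ represents $\xi$ in any such generic ultrapower (the canonical-function analogue of Proposition \ref{proposition_useful_object}); moreover each statement ``$C$ is $\Pi^1_\zeta$-indescribable'' for the relevant $\zeta<\xi$ is $V_\kappa\models\Phi^\kappa_\zeta(C)$, a $\Pi^1_{\zeta+1}$ sentence with $\zeta+1\le\xi$, and such sentences transfer from $V$ to $M$ because $C_{\Phi^\kappa_\zeta(C)}\in\Pi^1_{\zeta+1}(\kappa)^*$, whose complement lies in $\Pi^1_{\zeta+1}(\kappa)\subseteq\Pi^1_\xi(\kappa)$, so $\kappa\in j(C_{\Phi^\kappa_\zeta(C)})$. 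Thus in $M$ the closure hypothesis is met at $\kappa$ and $\kappa\in j(C)$. \emph{Second}, each base set $C_\psi$ is itself a $\Pi^1_\xi$-club: it lies in the filter and so is $\Pi^1_\zeta$-indescribable for all $\zeta<\xi$, leaving only its $\Pi^1_\xi$-closure to be checked. Granting these, the forward direction follows since for $\Pi^1_\xi$-indescribable $S$ and any $\Pi^1_\xi$-club $C$ we have $\kappa\setminus C\in\Pi^1_\xi(\kappa)$, so $S\cap C$ is $\Pi^1_\xi$-indescribable and in particular nonempty; the reverse direction is immediate, for a set meeting every $\Pi^1_\xi$-club meets every $C_\psi$, which is precisely the definition of $\Pi^1_\xi$-indescribability.

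The main obstacle is the closure verification for $C_\psi$ (equivalently, the closure clause of $\chi^\kappa_\xi$): producing a club $C^*$ on which $C_\psi\cap\alpha$ being $\Pi^1_\eta$-indescribable for all $\eta<f^\kappa_\xi(\alpha)$ forces $\alpha\in C_\psi$. I would argue the contrapositive. Take $C^*$ inside the clubs from Lemma \ref{lemma_restriction_is_nice} and Proposition \ref{proposition_double_restriction}, so that for $\alpha\in C^*$ the restriction $\psi\res^\kappa_\alpha$ is a genuine $\Pi^1_{f^\kappa_\xi(\alpha)}$ sentence $\sigma=\bigwedge_{\eta<f^\kappa_\xi(\alpha)}\sigma_\eta$ over $V_\alpha$ and $C_\psi\cap\alpha=\{\beta<\alpha\st V_\beta\models\sigma\res^\alpha_\beta\}$. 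If $V_\alpha\not\models\sigma$ then $V_\alpha\models\lnot\sigma_{\eta_0}$ for some $\eta_0<f^\kappa_\xi(\alpha)$; writing $\lnot\sigma_{\eta_0}$ in $\Sigma^1_{\eta_0}$ form $\exists X\,\rho(X)$ with $\rho$ of lower complexity and fixing a witness $X_0$, the \emph{true} $\Pi^1_{\eta_0}$ sentence $\rho(X_0)$ reflects to a set in $\Pi^1_{\eta_0}(\alpha)^*$ (using that $\alpha$ is automatically $\Pi^1_{\eta_0}$-indescribable, since it carries a $\Pi^1_{\eta_0}$-indescribable subset). Hence $\{\beta<\alpha\st V_\beta\models\sigma_{\eta_0}\res^\alpha_\beta\}$, and therefore its subset $C_\psi\cap\alpha$, fails to be $\Pi^1_{\eta_0}$-indescribable in $\alpha$, so the closure hypothesis is violated at $\alpha$. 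The delicate points are making precise the commutation of the second-order witness with the restriction operation and securing a single reflecting club uniform in $\alpha$; everything else is bookkeeping with the canonical reflection functions and the normality of the ideals.
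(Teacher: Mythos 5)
Your proposal is correct and shares the paper's overall architecture: in part (2) you build essentially the same formula (the conjunction of the $\Phi^\kappa_\zeta$'s plus a closure clause made harmless by coding the relevant indescribable subsets of each $\alpha<\kappa$ into a single parameter $R\subseteq\kappa$, with the restriction clause verified via Proposition \ref{proposition_framework2} and Lemma \ref{lemma_j_of}), and part (1) rests on the same two facts the paper proves: every $\Pi^1_\xi$-club lies in $\Pi^1_\xi(\kappa)^*$, and the reflection set $C_\psi$ of each true $\Pi^1_\xi$ sentence is itself a $\Pi^1_\xi$-club. Your argument for the first fact (forcing with $P(\kappa)/\Pi^1_\xi(\kappa)$, using $\kappa\in j(C^*)$, $j(f^\kappa_\xi)(\kappa)=\xi$, and transferring the sentences $\Phi^\kappa_\zeta(C)$ through their reflection sets) is the paper's argument, merely run for all generics rather than for one generic containing $S$; note that passing from $(V_\kappa\models\Phi^\kappa_\zeta(C))^M$ to ``$C$ is $\Pi^1_\zeta$-indescribable in $\kappa$'' inside $M$ still requires the club of Theorem \ref{theorem_expressing_indescribability} (i.e.\ $\kappa\in j(C_\zeta)$), which you invoke in part (2) but should also cite here. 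The genuine divergence is the closure verification for $C_\psi$. The paper does it inside a generic ultrapower by $\NS_\kappa$ (Proposition \ref{proposition_framework2}): assuming in $V^\kappa/G$ that $C_\psi$ is $\Pi^1_\zeta$-indescribable for all $\zeta<\xi$ but $\kappa\notin j(C_\psi)$, it reflects the failing conjunct $\lnot\psi_\zeta$ (a $\Sigma^1_\zeta$, hence $\Pi^1_{\zeta+1}$, sentence, with $\zeta+1<\xi$) to a point of $C_\psi$ using the assumed indescribability of $C_\psi$ itself, with Lemma \ref{lemma_j_of} absorbing all syntactic bookkeeping. You instead argue locally at each $\alpha$, extracting a second-order witness from the failing conjunct and using that reflection sets of true sentences have complements in the ideals $\Pi^1_\zeta(\alpha)$ to conclude $C_\psi\cap\alpha$ is non-indescribable. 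This route works and is more elementary (no generic ultrapower at this step), but it carries a hidden cost that the paper's formulation avoids: you need lemmas asserting that restriction commutes with substitution of a second-order parameter for a free variable and with the $\Pi$/$\Sigma$ negation duality, and your normal form $\lnot\sigma_{\eta_0}=\exists X\,\rho(X)$ exists only when $\eta_0$ is a successor --- for limit $\eta_0$ you must first descend through a true disjunct of the $\Sigma^1_{\eta_0}$ disjunction before a witness can be fixed. These are provable by induction on formulas, so they are gaps of bookkeeping rather than of substance, but they are precisely what Lemma \ref{lemma_j_of} and Proposition \ref{proposition_framework2} were designed to package away.
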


\begin{proof}
Sun \cite[Theorem 1.17]{MR1245524} proved that the theorem holds for $\xi=1$, and Hellsten \cite[Theorem 2.4.2]{MR2026390} generalized this to the case in which $\xi<\omega$. We provide a proof of the case in which $\xi<\kappa^+$ is a limit ordinal; the case in which $\xi<\kappa^+$ is a successor is similar, but easier.

Suppose $\xi<\kappa^+$ is a limit ordinal and that both (1) and (2) hold for all ordinals $\zeta<\xi$. For the forward direction of (1), suppose $S\subseteq\kappa$ is $\Pi^1_\xi$-indescribable and fix $C\subseteq\kappa$ a $\Pi^1_\xi$-club subset of $\kappa$. Then, in particular, for each $\zeta<\xi$, $C$ is $\Pi^1_\zeta$-indescribable and, by Theorem \ref{theorem_expressing_indescribability}, we see that
\[V_\kappa\models\bigwedge_{\zeta<\xi}\Phi^\kappa_\zeta(C).\]
Let $G\subseteq P(\kappa)/\Pi^1_\xi(\kappa)$ be generic over $V$ with $S\in G$ and let $j:V\to V^\kappa/G$ be the corresponding generic ultrapower. Then $\kappa\in j(S)$ and by the proof of Proposition \ref{proposition_generic_characterization}, we have $\left(V_\kappa\models\bigwedge_{\zeta<\xi}\Phi^\kappa_\zeta(C)\right)^{V^\kappa/G}.$
For each $\zeta<\xi$, let $C_\zeta$ be the club subset of $\kappa$ obtained from Theorem \ref{theorem_expressing_indescribability} and notice that $\kappa\in j(C_\zeta)$ and hence 
in $V^\kappa/G$ the set $C$ is $\Pi^1_\zeta$-indescribable in $\kappa$. Since $C$ is a $\Pi^1_\xi$-club subset of $\kappa$ there is a club $C^*\subseteq\kappa$ as in Definition \ref{definition_Pi1xi_club}. Since $\kappa\in j(C^*)$ and $j(C)\cap\kappa$ is $\Pi^1_\zeta$-indescribable for all $\zeta<\xi=j(f^\kappa_\xi)(\kappa)$, it follows that $\kappa\in j(C)$. Therefore by elementarity $S\cap C\neq\emptyset$.

For the reverse direction of (1), suppose $\kappa$ is $\Pi^1_\xi$-indescribable and $S\subseteq\kappa$ intersects every $\Pi^1_\xi$-club. It suffices to show that if $\varphi=\bigwedge_{\zeta<\xi}\varphi_\zeta$ is any $\Pi^1_\xi$ sentence over $V_\kappa$ such that $V_\kappa\models\varphi$, then the set
\[C=\{\alpha\in D\st V_\alpha\models\varphi\res^\kappa_\alpha\}\]
contains a $\Pi^1_\xi$-club, where $D\subseteq\kappa$ is a club subset of $\kappa$ such that for all regular $\alpha\in D$, $\varphi\res^\kappa_\alpha$ is defined. 

First, let us argue that $C$ is $\Pi^1_\zeta$-indescribable for all $\zeta<\xi$. Suppose not. Then for some fixed $\zeta<\xi$, $C$ is not $\Pi^1_\zeta$-indescribable in $\kappa$ and hence $\kappa\setminus C$ is in the filter $\Pi^1_\zeta(\kappa)^*$. Since $\kappa$ is $\Pi^1_\xi$-indescribable by assumption, and since $\Pi^1_\zeta(\kappa)^*\subseteq\Pi^1_\xi(\kappa)^*\subseteq\Pi^1_\xi(\kappa)^+$, we see that $\kappa\setminus C$ is $\Pi^1_\xi$-indescribable in $\kappa$. Since $(\kappa\setminus C)\cap D$ is $\Pi^1_\xi$-indescribable and $V_\kappa\models\varphi$ there is an $\alpha\in (\kappa\setminus C)\cap D$ such that $V_\alpha\models\varphi\res^\kappa_\alpha$, a contradiction.

Next we must argue that $C$ is $\Pi^1_\xi$-closed. We must show that there is a club $C^*$ in $\kappa$ such that for all regular $\alpha\in C^*$, if $C\cap\alpha$ is $\Pi^1_\zeta$-indescribable in $\alpha$ for all $\zeta<f^\kappa_\xi(\alpha)$ then $\alpha\in C$. We will use Proposition \ref{proposition_framework2}. Let $G\subseteq P(\kappa)/\NS_\kappa$ be generic with $\kappa$ regular in $V^\kappa/G$ and let $j:V\to V^\kappa/G$ be the corresponding generic ultrapower embedding. It suffices to show that in $V^\kappa/G$, if $C$ is $\Pi^1_\zeta$-indescribable in $\kappa$ for all $\zeta<\xi$ then $\alpha\in j(C)$. Assume that in $V^\kappa/G$, $C$ is $\Pi^1_\zeta$-indescribable in $\kappa$ for all $\zeta<\xi$ but $\kappa\notin j(C)$. Since $j(\varphi)\res^{j(\kappa)}_\kappa=\varphi$, it follows from the definition of $C$ that for some $\zeta<\xi$, $(V_\kappa\models\lnot\varphi_\zeta)^{V^\kappa/G}$. But in $V^\kappa/G$, $C$ is $\Pi^1_\zeta$-indescribable in $\kappa$ and so there is some $\alpha\in C$ such that $(V_\alpha\models\lnot\varphi_\zeta\res^\kappa_\alpha)^{V^\kappa/G}$, which contradicts the definition of $C$.

Now, let us show that (2) holds for the limit ordinal $\xi$. The definition of ``$X$ is $\Pi^1_\xi$-club'' is equivalent over $V_\kappa$ to
\[\left(\bigwedge_{\eta<\xi}\Phi^\kappa_\eta(X)\right)\land(\exists C^*)\left[(\text{$C^*$ is club})\land (\forall\beta\in C^*)\left(\bigwedge_{\zeta< f^\kappa_\xi(\alpha)}(X\cap\beta\in\Pi^1_\zeta(\beta)^+)\rightarrow\beta\in X\right)\right].\]
We define a set $R_\xi\subseteq \kappa$ that codes all relevant information about which subsets of $\alpha$, for $\alpha<\kappa$, are $\Pi^1_\zeta$-indescribable for all $\zeta<f^\kappa_\xi(\alpha)$ as follows. We let $R_\xi\subseteq\kappa$ be such that for each regular $\alpha<\kappa$, if $\alpha$ is $\Pi^1_\zeta$-indescribable for all $\zeta<f^\kappa_\xi(\alpha)$, then the sequence
\[\<(R_\xi)_\eta\st\alpha\leq\eta<2^\alpha\>\]
is an enumeration of the subsets of $\alpha$ that are $\Pi^1_\zeta$-indescribable in $\alpha$ for all $\zeta<f^\kappa_\xi(\alpha)$. Otherwise, we define $(R_\xi)_\eta=\emptyset$. Now we let 
\[\bar\chi^\kappa_\xi(X)=(\exists C^*)\left[(\text{$C^*$ is club})\land(\forall\beta\in C^*)(\exists\eta (X\cap\beta=(R_\xi)_\eta)\rightarrow \beta\in X)\right]\]
and
\[\chi^\kappa_\xi(X)=\left(\bigwedge_{\eta<\xi}\Phi^\kappa_\eta(X)\right)\land\bar\chi^\kappa_\xi(X).\]
Since the second part $\bar\chi^\kappa_\xi(X)$ of the definition of $\chi^\kappa_\xi(X)$ is $\Sigma^1_1$, it is also trivially $\Pi^1_2$, and thus we see that $\chi^\kappa_\xi(X)$ is $\Pi^1_\xi$ over $V_\kappa$. Clearly, for all $C\subseteq\kappa$ we have
\[\text{$C$ is $\Pi^1_\xi$-club in $\kappa$}\iff V_\kappa\models\chi_\xi(C).\]
To complete the proof of (2), one may use Proposition \ref{proposition_framework2}, along with Theorem \ref{theorem_expressing_indescribability} to show that there is a club $D_\xi$ in $\kappa$ such that for all regular $\alpha\in D_\xi$ we have that for all $C\subseteq\kappa$,
\[\text{$C\cap\alpha$ is $\Pi^1_{f^\kappa_\xi(\alpha)}$-club in $\alpha$}\iff V_\alpha\models\chi_\xi(C)\res^\kappa_\alpha.\]
Let us note that the remaining details are similar to the proof of Theorem \ref{theorem_expressing_indescribability}(2), and are therefore left to the reader.
\end{proof}

\section{Higher $\xi$-stationarity, $\xi$-s-stationarity and derived topologies}\label{section_higher_derived_topologies}

In this section we define natural generalizations of Bagaria's notions of $\xi$-stationarity, $\xi$-s-stationarity and derived topologies. Given a regular cardinal $\mu$, we will define a sequence of topologies $\<\tau_\xi\st\xi<\mu^+\>$ on $\mu$ such that the sequence $\<\tau_\xi\st\xi<\mu\>$ is Bagaria's sequence of derived topologies, and Bagaria's characterization of nonisolated points in the spaces $(\mu,\tau_\xi)$ for $\xi<\mu$ has a natural generalization to $\tau_\xi$ for $\xi\in\mu^+\setminus\mu$ (see Theorem \ref{theorem_xi_s_nonisolated}). We also show that Bagaria's result, in which he obtains the nondiscreteness of the topologies $\tau_\xi$ for $\xi<\mu$ from an indescribability hypothesis, can be generalized to $\tau_\xi$ for all $\xi<\mu^+$ using higher indescribability (see Corollary \ref{corollary_nondiscreteness_from_indescribability}).

Let us now discuss a generalization of Bagaria's derived topologies. Recall that, under certain conditions, one can specify a topology on a set $X$ by stating what the limit point operation must be. If $d:P(X)\to P(X)$ is a function satisfying properties (1) and (2) in Definition \ref{definition_cantor_derivative}, then one can define a topology $\tau_d$ on $X$ by demanding that a set $C\subseteq X$ be closed if and only if $d(C)\subseteq C$. Furthermore, if $d$ also satisfies property (3) in Definition \ref{definition_cantor_derivative}, then $d$ equals the limit point operator in the space $(X,\tau_d)$.

\begin{definition}\label{definition_cantor_derivative}
Given a set $X$, we say that a function $d:P(X)\to P(X)$ is a \emph{Cantor derivative} on $X$ provided that the following conditions hold.
\begin{enumerate}
\item $d(\emptyset)=\emptyset$.
\item For all $A,B\in P(X)$ we have
\begin{enumerate}
\item $A\subseteq B$ implies $d(A)\subseteq d(B)$,
\item $d(A\cup B)\subseteq d(A)\cup d(B)$ and
\item for all $x\in X$, $x\in d(A)$ implies $x\in d(A\setminus\{x\})$.
\end{enumerate}
\item $d(d(A))\subseteq d(A)\cup A$.
\end{enumerate}
If $d$ satisfies only (1) and (2) then we say that $d$ is a \emph{pre-Cantor derivative}.
\end{definition}

\begin{fact}\label{fact_cantor_derivatives}
If $d:P(X)\to P(X)$ is a pre-Cantor derivative on $X$ then the collection
\[\tau=\{U\subseteq X\st d(X\setminus U)\subseteq X\setminus U\}\]
is a topology on $X$. Furthermore, if $d$ is a Cantor derivative on $X$ then
\[d(A)=\{x\in X\st\text{$x$ is a limit point of $A$ in $(X,\tau)$}\}\]
for all $A\subseteq X$.
\end{fact}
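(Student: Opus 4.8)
The plan is to prove the two assertions separately, establishing that $\tau$ is a topology under the weaker pre-Cantor hypothesis, and then the limit-point identity under the full Cantor-derivative hypothesis. For the first part I would pass to the complementary family of closed sets $\mathcal{C}=\{C\subseteq X : d(C)\subseteq C\}$, since $U\in\tau$ if and only if $X\setminus U\in\mathcal{C}$; it then suffices to check that $\mathcal{C}$ contains $\emptyset$ and $X$ and is closed under arbitrary intersections and finite unions. Here $\emptyset\in\mathcal{C}$ is immediate from axiom (1), and $X\in\mathcal{C}$ is trivial. For an intersection $C=\bigcap_i C_i$ of members of $\mathcal{C}$, monotonicity (2a) gives $d(C)\subseteq d(C_i)\subseteq C_i$ for each $i$, hence $d(C)\subseteq C$; and for a union, subadditivity (2b) gives $d(C_1\cup C_2)\subseteq d(C_1)\cup d(C_2)\subseteq C_1\cup C_2$. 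This uses only axioms (1) and (2a)--(2b), as expected.

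For the second part I would first identify the closure operator of $(X,\tau)$, the natural claim being that $\overline{A}=A\cup d(A)$ for every $A\subseteq X$. To verify that $A\cup d(A)$ is closed I would compute, using (2b) and then (3),
\[d(A\cup d(A))\subseteq d(A)\cup d(d(A))\subseteq d(A)\cup(d(A)\cup A)=A\cup d(A),\]
so $A\cup d(A)\in\mathcal{C}$; and since any closed $C\supseteq A$ satisfies $d(A)\subseteq d(C)\subseteq C$ by (2a), the set $A\cup d(A)$ is the least closed set containing $A$, that is, $\overline{A}=A\cup d(A)$.

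With the closure in hand I would finish by recalling the standard fact that $x$ is a limit point of $A$ in $(X,\tau)$ if and only if $x\in\overline{A\setminus\{x\}}=(A\setminus\{x\})\cup d(A\setminus\{x\})$, which, since $x\notin A\setminus\{x\}$, reduces to the condition $x\in d(A\setminus\{x\})$. Thus the set of limit points of $A$ equals $\{x\in X : x\in d(A\setminus\{x\})\}$, and it remains to identify this with $d(A)$. The inclusion $d(A)\subseteq\{x : x\in d(A\setminus\{x\})\}$ is exactly axiom (2c), while the reverse inclusion follows from monotonicity (2a) applied to $A\setminus\{x\}\subseteq A$. Combining these yields $d(A)=\{x : x\text{ is a limit point of }A\text{ in }(X,\tau)\}$, as required.

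The individual computations are elementary, so the substance of the argument lies in two bookkeeping points. The genuinely delicate step is the verification that $A\cup d(A)$ is closed: this is precisely where axiom (3) is indispensable, and it is the only place the full Cantor-derivative hypothesis is invoked rather than the weaker pre-Cantor one. I expect this, together with stating the limit-point characterization so that axiom (2c) lines up cleanly with the forward inclusion, to be the main thing to get exactly right.
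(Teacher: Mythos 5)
Your proof is correct and follows essentially the same route as the paper: the topology axioms are verified through the complementary family of closed sets (axioms (1), (2a), (2b)), and the limit-point identity is deduced from the closure formula $\overline{A}=A\cup d(A)$ together with the standard characterization of limit points via $x\in\overline{A\setminus\{x\}}$, with (2c) and (2a) supplying the final two inclusions. The only difference is internal to the closure formula: the paper first proves that $d(A)$ is contained in the set $A'$ of limit points of $A$ by a contradiction argument with open neighborhoods (if $U\ni x$ is open and misses $A\setminus\{x\}$, then $d(A\setminus\{x\})\subseteq X\setminus U$, contradicting (2c)), and then combines this inclusion with the closedness of $A\cup d(A)$ and the fact $\overline{A}=A\cup A'$; you instead show directly that $A\cup d(A)$ is the \emph{least} closed set containing $A$ --- closed by (2b) and (3), and contained in every closed superset of $A$ by (2a). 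Your version is marginally more economical, since the inclusion $d(A)\subseteq A'$ falls out of the final equivalence chain rather than being needed as an input, and both arguments invoke axiom (3) at exactly the same point, namely the closedness of $A\cup d(A)$.
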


\begin{proof}
Clearly $\emptyset\in\tau$ since $d(X)\subseteq X$. Furthermore, $X\in\tau$ because $d(\emptyset)=\emptyset$ by assumption. Suppose $I$ is some index set and for each $i\in I$ we have a set $C_i\subseteq X$ with $d(C_i)\subseteq C_i$. By (2a), it follows that for every $j\in I$ we have $d\left(\bigcap_{i\in I}C_i\right)\subseteq d(C_j)$ and hence $d\left(\bigcap_{i\in I}C_i\right)\subseteq \bigcap_{i\in I}C_i.$ Furthermore, if $I=\{0,1\}$ we have $d(C_0\cup C_1)\subseteq d(C_0)\cup d(C_1)\subseteq C_0\cup C_1$. Thus, $\tau$ is a topology on $X$.

For $A\subseteq X$, let $A'$ denote the set of limit points of $A$ in $(X,\tau)$. Let us show that $d(A)=A'$. Suppose $x\in d(A)$ and fix $U\in \tau$ with $x\in U$. For the sake of contradiction, suppose that $(U\cap A)\setminus\{x\}=\emptyset$, and notice that $A\setminus\{x\}\subseteq A\setminus U\subseteq X\setminus U$. Thus $d(A\setminus \{x\})\subseteq d(X\setminus U)\subseteq X\setminus U$, and since $x\in U$, this implies $x\notin d(A\setminus \{x\})$. But this implies $x\notin d(A)$ by (2c), a contradiction. Thus for any $A\subseteq X$ we have $d(A)\subseteq A'$.

For any set $A\subseteq X$, since the closure $\overline{A}=A\cup A'$ is the smallest closed set containing $A$, since $A\cup d(A)$ is closed (by (2b) and (3)) and since $d(A)\subseteq A'$, it follows that $\overline{A}=A\cup A'=A\cup d(A)$.

Now fix $A\subseteq X$. We have 
\begin{align*}
x\in A' &\iff x\in \overline{A\setminus\{x\}}\\ 
	&\iff x\in (A\setminus \{x\})\cup d(A\setminus\{x\})\\
	&\iff x\in d(A\setminus\{x\})\\
	&\iff x\in d(A).
\end{align*}



\end{proof}

Given an ordinal $\delta$, Bagaria defined the sequence of derived topologies $\<\tau_\xi\st\xi<\delta\>$ on $\delta$ as follows. 

\begin{definition}[Bagaria \cite{MR3894041}]\label{definition_bagaria}
Let $\tau_0$ be the interval topology on $\delta$. That is, $\tau_0$ is the topology on $\delta$ generated\footnote{Recall that, given a set $X$ and a collection $\B\subseteq P(X)$, the \emph{topology generated by $\B$} is the smallest topology on $X$ which contains $\B$. That is, the topology generated by $\B$ is the collection of all unions of finite intersections of members of $\B$ together with the set $X$.} by the collection $\B_0$ consisting of $\{0\}$ and all open intervals of the form $(\alpha,\beta)$ where $\alpha<\beta\leq\delta$. We let $d_0:P(\delta)\to P(\delta)$ be the limit point operator of the space $(\delta,\tau_0)$. If $\xi<\delta$ is an ordinal and the sequences $\<B_\zeta\st\zeta\leq\xi\>$, $\<\tau_\zeta\st\zeta\leq\xi\>$ and $\<d_\zeta\st\zeta\leq\xi\>$ have been defined, we let $\tau_{\xi+1}$ be the topology generated by the collection
\[\B_{\xi+1}=\B_\xi\cup\{d_\xi(A)\st A\subseteq\delta\}\]
and we let
\[d_{\xi+1}(A)=\{\alpha<\delta\st\text{$\alpha$ is a limit point of $A$ in the $\tau_{\xi+1}$ topology}\}.\]
When $\xi<\delta$ is a limit ordinal, we define $\B_\xi=\bigcup_{\zeta<\xi}\B_\zeta$, let $\tau_\xi$ be the topology generated by $\B_\xi$ and define $d_\xi$ to be the limit point operator of the space $(\delta,\tau_\xi)$.
\end{definition}

Bagaria proved that a point $\alpha<\delta$ is not isolated in $(\delta,\tau_\xi)$ if and only if it is $\xi$-s-reflecting (see \cite[Definition 2.8]{MR3894041} or Definition \ref{definition_xi_s_stationarity}). Since no ordinal $\alpha<\delta$ can be $\delta$-s-reflecting (see Remark \ref{remark_nontrivial}), it follows that the topology $\tau_\delta$ generated by $\bigcup_{\zeta<\delta}\B_\zeta$ is discrete. In what follows, by using diagonal Cantor derivatives, we extend Bagaria's definition of derived topologies to allow for more nontrivial cases. One may want to review Remark \ref{remark_example} before reading the following.

\begin{definition}\label{definition_tau_xi}
Suppose $\mu$ is a regular cardinal. We define three sequences of functions $\<\B_\xi\st\xi<\mu^+\>$, $\<\T_\xi\st\xi<\mu^+\>$ and $\<d_\xi\st\xi<\mu^+\>$, and one sequence $\<\tau_\xi\st\xi<\mu^+\>$ of topologies on $\mu$ by transfinite induction as follows. For $\xi<\mu$ we let $\tau_\xi$ and $d_\xi$ be defined as Definition \ref{definition_bagaria}, and we let $\B_\xi$ and $\T_\xi$ be functions with domain $\mu$ such that for all $\alpha<\mu$, we have $\T_\xi(\alpha)=\tau_\xi$ and $\B_\xi(\alpha)=\B_\xi$ is the subbasis for $\tau_\xi$ as in Definition \ref{definition_bagaria}.

Suppose $\xi\in\mu^+\setminus\mu$ and we have already defined $\<\B_\zeta\st\zeta<\xi\>$, $\<\T_\zeta\st\zeta<\xi\>$, $\<d_\zeta\st\zeta<\xi\>$ and $\<\tau_\zeta\st\zeta<\xi\>$. We let $\B_\xi$ and $\T_\xi$ be the functions with domain $\mu$ such that for each $\alpha\in \mu$ we have
\[\B_\xi(\alpha)=\B_0\cup\{d_\zeta(A)\st\zeta\in F^\mu_\xi(\alpha)\land A\subseteq\mu\}\]
and $\T_\xi(\alpha)$ is the topology on $\mu$ generated by $\B_\xi(\alpha)$. We define $d_\xi:P(\mu)\to P(\mu)$ by letting
\[d_\xi(A)=\{\alpha<\mu\st\text{$\alpha$ is a limit point of $A$ in the $\T_\xi(\alpha)$ topology}\}\]
for $A\subseteq\mu$. Then we let $\tau_\xi$ be the topology\footnote{It is easily seen that this $d_\xi$ is a pre-Cantor derivative as in Definition \ref{definition_cantor_derivative}, and thus $\tau_\xi$ is in fact a topology on $\mu$.}
\[\tau_\xi=\{U\subseteq\mu\st d_\xi(\mu\setminus U)\subseteq\mu\setminus U\}.\]
\end{definition}

For all $\xi<\mu^+$ and $\alpha<\mu$, since $\T_\xi(\alpha)$ is the topology generated by $\B_\xi(\alpha)$, it follows that the collection of finite intersections of members of $\B_\xi(\alpha)$ is a basis for $\T_\xi(\alpha)$. That is, the collection of sets of the form
\[I\cap d_{\xi_0}(A_0)\cap\cdots\cap d_{\xi_{n-1}}(A_{n-1})\]
where $n<\omega$, $I\in\B_0$ is an interval in $\mu$, the ordinals $\xi_0\leq\cdots\leq\xi_{n-1}$ are in $F^\mu_\xi(\alpha)$ and $A_i\subseteq\mu$ for $i<n$, is a basis for the $\T_\xi(\alpha)$ topology on $\mu$.

Next let us show that the diagonal Cantor derivatives $d_\xi$ in Definition \ref{definition_tau_xi} are in fact Cantor derivatives as in Definition \ref{definition_cantor_derivative}, and thus each $d_\xi$ is the Cantor derivative of the space $(\mu,\tau_\xi)$ for all $\xi<\mu^+$.

\begin{lemma}\label{lemma_d_xi_is_cantor}
Suppose $\mu$ is regular. For all $\xi<\mu^+$ and all $A\subseteq\mu$ we have
\[d_\xi(d_\xi(A))\subseteq d_\xi(A).\]
\end{lemma}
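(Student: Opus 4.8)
The plan is to prove Lemma~\ref{lemma_d_xi_is_cantor} by a direct point-set argument, adapting to the diagonal setting the classical verification that the limit-point operator of a derived topology is idempotent. Observe first that the desired inclusion $d_\xi(d_\xi(A))\subseteq d_\xi(A)$ is exactly the assertion that $d_\xi(A)$ is closed in $(\mu,\tau_\xi)$. Since $\B_0\subseteq\B_\xi(\alpha)$ for every $\alpha$, the order topology $\tau_0$ is contained in each $\T_\xi(\alpha)$, so $0$ and every successor ordinal are isolated; hence every point of $d_\xi(d_\xi(A))$ is a limit ordinal, and I may fix a limit ordinal $\alpha\in d_\xi(d_\xi(A))$ and aim to show $\alpha\in d_\xi(A)$. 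Recalling the basis for $\T_\xi(\alpha)$ described after Definition~\ref{definition_tau_xi}, it suffices to take an arbitrary basic neighborhood of $\alpha$, which (shrinking the interval factor to $(\gamma,\alpha]$) may be written $U=(\gamma,\alpha]\cap\bigcap_{i<n}d_{\xi_i}(A_i)$ with $n<\omega$, each $\xi_i\in F^\mu_\xi(\alpha)$ and $\alpha\in d_{\xi_i}(A_i)$, and to produce a point of $A$ in $U$ distinct from $\alpha$.

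The core of the argument is a two-step ``catching up'' between the topologies $\T_\xi(\alpha)$ and $\T_\xi(\beta)$ for a suitable $\beta<\alpha$. First, using that $\alpha$ is a limit point of $d_\xi(A)$ in $\T_\xi(\alpha)$, I would extract a witness $\beta\in d_\xi(A)\cap U$ with $\beta<\alpha$; being itself in $U$, this $\beta$ satisfies $\beta\in d_{\xi_i}(A_i)$ for each $i$. Second, since $\beta\in d_\xi(A)$ means $\beta$ is a limit point of $A$ in its own topology $\T_\xi(\beta)$, I would feed $\beta$ an appropriate $\T_\xi(\beta)$-neighborhood $W\subseteq U$ and extract $\delta\in A\cap W$ with $\delta<\beta$; then $\delta\in U$ and $\delta\ne\alpha$, which is what is needed.

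The step I expect to be the main obstacle is making $W$ a genuine \emph{open} neighborhood of $\beta$ in $\T_\xi(\beta)$. The difficulty is that $\T_\xi(\beta)$ has the strictly smaller subbasis $\B_\xi(\beta)=\B_0\cup\{d_\zeta(B)\st\zeta\in F^\mu_\xi(\beta),\ B\subseteq\mu\}$, and since $\beta<\alpha$ we have $F^\mu_\xi(\beta)=b_{\mu,\xi}[\beta]\subsetneq b_{\mu,\xi}[\alpha]=F^\mu_\xi(\alpha)$; thus a constraint $d_{\xi_i}(A_i)$ that is subbasic (hence open) at $\alpha$ need not even be open at $\beta$. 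The fix is to choose the witness $\beta$ close enough to $\alpha$: setting $\eta_i=b_{\mu,\xi}^{-1}(\xi_i)<\alpha$ for $i<n$ and $\gamma^*=\max(\{\gamma\}\cup\{\eta_i\st i<n\})<\alpha$, I would apply the limit-point property of $\alpha$ to the smaller neighborhood $(\gamma^*,\alpha]\cap\bigcap_{i<n}d_{\xi_i}(A_i)$ to obtain $\beta$ with $\gamma^*<\beta<\alpha$. Then each $\eta_i<\beta$, so $\xi_i=b_{\mu,\xi}(\eta_i)\in F^\mu_\xi(\beta)$ and every $d_{\xi_i}(A_i)$ is subbasic open in $\T_\xi(\beta)$; hence $W=(\gamma^*,\beta]\cap\bigcap_{i<n}d_{\xi_i}(A_i)$ is a legitimate $\T_\xi(\beta)$-neighborhood of $\beta$. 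The limit-point property of $\beta$ then yields $\delta\in A\cap W$ with $\gamma^*<\delta<\beta<\alpha$, so $\delta\in U\setminus\{\alpha\}$, completing the verification. I note that no induction on $\xi$ is required---only the already-defined lower derivatives $d_{\xi_i}$ enter, and solely through their being subbasic sets---and that for $\xi<\mu$ the function $F^\mu_\xi$ is constant, so the ``catching up'' fix is automatic and the argument reduces to Bagaria's.
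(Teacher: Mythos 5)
Your proof is correct and takes essentially the same route as the paper's: both arguments hinge on choosing the witness $\beta\in d_\xi(A)$ close enough to $\alpha$ (above the $b_{\mu,\xi}$-preimages of the $\xi_i$) so that each constraint $d_{\xi_i}(A_i)$ becomes subbasic open in $\T_\xi(\beta)$, and then invoking $\beta$'s own limit-point property in $\T_\xi(\beta)$ to find a point of $A$ below $\beta$ inside $U$. The only cosmetic difference is that you build the lower bound into the interval factor of the neighborhood, whereas the paper phrases the identical maneuver as passing to $d_\xi(A)\setminus\beta$ and using the monotonicity of $F^\mu_\xi$.
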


\begin{proof}
For $\xi<\mu$ this follows easily from the fact that $d_\xi$ is defined to be the Cantor derivative of the space $(\mu,\tau_\xi)$.

Suppose $\xi\in\mu^+\setminus\mu$ and $\alpha\in d_\xi(d_\xi(A))$. Then $\alpha$ is a limit point of the set
\[d_\xi(A)=\{\beta<\alpha\st\text{$\beta$ is a limit point of $A$ in $\T_\xi(\beta)$}\}\] 
in the topology $\T_\xi(\alpha)$ on $\mu$ generated by $\B_\xi(\alpha)$. To show $\alpha\in d_\xi(A)$, we must show that $\alpha$ is a limit point of $A$ in the topology $\T_\xi(\alpha)$. Fix a basic open neighborhood $U$ of $\alpha$ in the $\T_\xi(\alpha)$ topology. Then $U$ is of the form
\[I\cap d_{\xi_0}(A_0)\cap\cdots\cap d_{\xi_{n-1}}(A_{n-1})\]
for some $\xi_i\in F^\mu_\xi(\alpha)$ and some $A_i\subseteq\mu$ where $i<n$. Since $\alpha$ is a limit point of $d_\xi(A)$ in the $\T_\xi(\alpha)$ topology on $\mu$ and since $\B_0\subseteq\T_\xi(\alpha)$, it follows that for all $\eta<\alpha$, $\alpha$ is a limit point of the set $d_\xi(A)\setminus\eta$ in the $\T_\xi(\alpha)$ topology. Since $\xi_i\in F^\mu_\xi(\alpha)$ for $i<n$ and since $\alpha$ is a limit ordinal, we can choose a $\beta<\alpha$ such that $\xi_i\in F^\mu_\xi(\beta)$ for all $i<n$. Since $\alpha$ is a limit point of $d_\xi(A)\setminus\beta$ in the $\T_\xi(\alpha)$ topology, we may choose an $\eta\in (d_\xi(A)\setminus\beta)\cap U\cap\alpha$. Since $\eta\geq\beta$ we have $\xi_i\in F^\mu_\xi(\eta)$ for all $i<n$ and thus $U\in \B_\xi(\eta)\subseteq \T_\xi(\eta)$. But since $\eta$ is a limit point of $A$ in the $\T_\xi(\eta)$ topology we have $A\cap U\cap\eta\neq\emptyset$. Thus $\alpha\in d_\xi(A)$.
\end{proof}

The following result is an easy consequence of Fact \ref{fact_cantor_derivatives} and Lemma \ref{lemma_d_xi_is_cantor}.

\begin{corollary}
Suppose $\mu$ is a regular cardinal. For each $\xi<\mu^+$, the function $d_\xi$ is the Cantor derivative of the space $(\mu,\tau_\xi)$.
\end{corollary}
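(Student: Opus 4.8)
The plan is to verify that each $d_\xi$ satisfies all three clauses of Definition \ref{definition_cantor_derivative}, so that it is a genuine Cantor derivative, and then to invoke the second half of Fact \ref{fact_cantor_derivatives}. The key observation is that the topology $\tau$ produced from a Cantor derivative $d_\xi$ in Fact \ref{fact_cantor_derivatives} is given by exactly the same formula $\{U\subseteq\mu\st d_\xi(\mu\setminus U)\subseteq\mu\setminus U\}$ that defines $\tau_\xi$ in Definition \ref{definition_tau_xi}. Hence, once $d_\xi$ is known to be a Cantor derivative, Fact \ref{fact_cantor_derivatives} immediately identifies $d_\xi(A)$ with the set of limit points of $A$ in $(\mu,\tau_\xi)$, which is precisely the assertion that $d_\xi$ is the Cantor derivative of $(\mu,\tau_\xi)$.

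For $\xi<\mu$ there is essentially nothing to do: by Definition \ref{definition_tau_xi} the function $d_\xi$ is declared to be the limit point operator of $(\mu,\tau_\xi)$, and the limit point operator of any topological space is its Cantor derivative. So I would focus on $\xi\in\mu^+\setminus\mu$. For such $\xi$, clauses (1) and (2) of Definition \ref{definition_cantor_derivative} are exactly the statement, already recorded in the footnote to Definition \ref{definition_tau_xi}, that $d_\xi$ is a pre-Cantor derivative; I would spell these out briefly since they are routine. Clause (1) holds because no point is a limit point of $\emptyset$; clauses (2a) and (2b) follow from the corresponding behaviour of limit points in the fixed topology $\T_\xi(\alpha)$ at each individual point $\alpha$; and clause (2c) holds because ``$\alpha$ is a limit point of $A$ in $\T_\xi(\alpha)$'' already refers to neighborhoods meeting $A\setminus\{\alpha\}$, so deleting $\alpha$ from $A$ changes nothing.

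The remaining clause (3), namely $d_\xi(d_\xi(A))\subseteq d_\xi(A)\cup A$, is where the real content lies, and it is supplied directly by Lemma \ref{lemma_d_xi_is_cantor}, which in fact gives the stronger inclusion $d_\xi(d_\xi(A))\subseteq d_\xi(A)$; this trivially implies clause (3). Therefore $d_\xi$ meets all the requirements of Definition \ref{definition_cantor_derivative} and is a Cantor derivative on $\mu$, and applying Fact \ref{fact_cantor_derivatives} completes the argument.

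I do not anticipate a genuine obstacle here: the diagonal complication in the definition of $d_\xi$---the fact that the ambient topology $\T_\xi(\alpha)$ varies with the point $\alpha$---was already absorbed into the proof of Lemma \ref{lemma_d_xi_is_cantor}, so the present corollary is a bookkeeping step. The only point requiring a moment's care is confirming that the topology built by Fact \ref{fact_cantor_derivatives} out of $d_\xi$ is \emph{literally} $\tau_\xi$, rather than merely some topology for which $d_\xi$ happens to be the derivative; but this is immediate upon comparing the two defining formulas for $\tau_\xi$.
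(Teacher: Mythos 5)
Your proposal is correct and follows exactly the paper's route: the paper derives this corollary as an immediate consequence of Fact \ref{fact_cantor_derivatives} and Lemma \ref{lemma_d_xi_is_cantor}, which is precisely what you do, with the routine pre-Cantor-derivative clauses and the observation that $\tau_\xi$ is literally the topology that Fact \ref{fact_cantor_derivatives} builds from $d_\xi$ spelled out explicitly.
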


Let us present the following generalizations of Bagaria's notions of $\xi$-stationarity and $\xi$-s-stationarity, which will allow us to characterize the nondiscreteness of points in the spaces $(\mu,\tau_\xi)$ for $\xi<\mu^+$.

\begin{definition}\label{definition_xi_stationary}
Suppose $\mu$ is a regular cardinal. A set $A\subseteq\mu$ is $0$-stationary in $\alpha<\mu$ if and only if $A$ is unbounded in $\alpha$. For $0<\xi<\alpha^+$, where $\alpha$ is regular, we say that $A$ is \emph{$\xi$-stationary in $\alpha$} if and only if for every $\zeta<\xi$, every set $S$ that is $\zeta$-stationary in $\alpha$ \emph{$\zeta$-reflects} to some $\beta\in A$, i.e., $S$ is $f^\alpha_\zeta(\beta)$-stationary in $\beta$. We say that an ordinal $\alpha<\mu$ is \emph{$\xi$-reflecting} if it is $\xi$-stationary in $\alpha$ as a subset of $\mu$.
\end{definition}

\begin{definition}\label{definition_xi_s_stationarity}
Suppose $\mu$ is a regular cardinal. $A$ set $A\subseteq\mu$ is \emph{$0$-simultaneously stationary in $\alpha$} (\emph{$0$-s-stationary in $\alpha$} for short) if and only if $A$ is unbounded in $\alpha$. For $0<\xi<\alpha^+$, where $\alpha$ is regular, we say that $A$ is \emph{$\xi$-simultaneously stationary in $\alpha$} (\emph{$\xi$-s-stationary in $\alpha$} for short) if and only if for every $\zeta<\xi$, every pair of subsets $S$ and $T$ that are $\zeta$-s-stationary in $\alpha$ \emph{simultaneously $\zeta$-reflect} to some $\beta\in A$, i.e., $S$ and $T$ are both $f^\alpha_\zeta(\beta)$-s-stationary in $\beta$. We say that $\alpha$ is $\xi$-s-reflecting if it is $\xi$-s-stationary in $\alpha$.
\end{definition}

\begin{remark}\label{remark_nontrivial}
Bagaria defined a set $A\subseteq\mu$ to be $\xi$-stationary in $\alpha<\mu$ if and only if for every $\zeta<\xi$, for every $S\subseteq\mu$ that is $\zeta$-stationary in $\alpha$ there is a $\beta\in A\cap\alpha$ such that $S$ is $\zeta$-stationary in $\beta$. Since $f^\alpha_\zeta$ equals the constant function $\zeta$ when $\zeta<\alpha$, it follows that Bagaria's notion of $A$ being $\xi$-stationary in $\alpha$ is equivalent to ours when $\xi<\alpha$. Bagaria comments in the paragraphs following \cite[Definition 2.6]{MR3894041} that, under his definition, no ordinal $\alpha$ can be $(\alpha+1)$-reflecting, because if $\alpha$ is the least such ordinal there is a $\beta<\alpha$ such that $\alpha\cap\beta=\beta$ is $\alpha$-stationary and thus $(\beta+1)$-stationary in $\beta$. Let us show that such an argument does \emph{not} work to rule out the existence of ordinals $\alpha$ which are $\alpha+1$-reflecting under our definition. Suppose $\alpha$ is $(\alpha+1)$-reflecting, as in Definition \ref{definition_xi_stationary}. Then there is some $\beta<\alpha$ that is $f^\alpha_\alpha(\beta)$-reflecting, but $f^\alpha_\alpha(\beta)=\beta$ and thus the conclusion is that $\beta$ is $\beta$-reflecting, and Bagaria shows that some ordinals (namely some large cardinals) $\beta$ can be $\beta$-reflecting.
\end{remark}

In order to streamline the proof of the characterization of the nonisolated points in $(\mu,\tau_\xi)$ for $\xi<\mu^+$, we will use the following auxiliary notion of $\xi$-\s-stationarity, which is often equivalent to $f^\mu_\xi(\alpha)$-s-stationarity as shown in Lemma \ref{lemma_s_hat} below.

\begin{definition}\label{definition_xi_s_hat_stationary}
Suppose $\mu$ is a regular cardinal. A set $A\subseteq\mu$ is \emph{$0$-simultaneously hat stationary in $\alpha$} ($0$-\s-stationary for short) if and only if $A$ is unbounded in $\alpha$. For $0<\xi<\mu^+$, we say that $A$ is \emph{$\xi$-simultaneously hat stationary in $\alpha$} (\emph{$\xi$-\s-stationary in $\alpha$} for short) if and only if for every $\zeta\in F^\mu_\xi(\alpha)$, every pair of subsets $S$ and $T$ of $\mu$ that are $\zeta$-\s-stationary in $\alpha$ \emph{simultaneously $\zeta$-\s-reflect} to some $\beta\in A$, i.e., $S$ and $T$ are both $\zeta$-\s-stationary in $\beta$. We say that $\alpha$ is \emph{$\xi$-\s-reflecting} if it is $\xi$-\s-stationary in $\alpha$.
\end{definition}

\begin{remark}
Notice that when $\xi<\mu$, a set $A\subseteq\mu$ is $\xi$-\s-stationary in $\alpha$ if and only if for all $\zeta\in F^\mu_\xi(\alpha)=\xi$, every pair of subsets $S$ and $T$ of $\mu$ that are $\zeta$-\s-stationary in $\alpha$ simultaneously $\zeta$-\s-reflect to some $\beta\in A$. Thus, for $\xi<\mu$, Definition \ref{definition_xi_s_hat_stationary} agrees with \cite[Definition 2.8]{MR3894041}. Furthermore, $A$ is $\mu$-\s-stationary in $\alpha$ if and only if it is $\alpha$-\s-stationary in $\alpha$. Also notice that for $\zeta<\xi<\mu$, there is a club $C_{\zeta,\xi}$ in $\mu$ such that for all $\alpha\in C_{\zeta,\xi}$ we have $F^\mu_\zeta(\alpha)\subseteq F^\mu_\xi(\alpha)$ and hence $\alpha$ being $\xi$-\s-stationary implies $\alpha$ is $\zeta$-\s-stationary.
\end{remark}

We will need the following lemma, which generalizes \cite[Proposition 2.9]{MR3894041}.

\begin{lemma}\label{lemma_intersect_with_club}
Suppose $\mu$ is a regular cardinal and $\xi<\mu^+$. There is a club $B_\xi\subseteq\mu$ such that for all regular $\alpha\in B_\xi$ if $A$ is $\xi$-\s-stationary in $\alpha$ ($f^\mu_\xi(\alpha)$-s-stationary in $\alpha$) and $C$ is a club subset of $\alpha$, then $A\cap C$ is also $\xi$-\s-stationary ($f^\mu_\xi(\alpha)$-s-stationary in $\alpha$) in $\alpha$.\end{lemma}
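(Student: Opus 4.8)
The plan is to prove the primary, \s-stationary formulation by transfinite induction on $\xi<\mu^+$, constructing the club $B_\xi$ by recursion; the parenthetical $f^\mu_\xi(\alpha)$-s-stationary version is then interchangeable via the equivalence of the two notions (the same induction applies verbatim to either). At stage $\xi$ I would set $B_\xi=(\di_{\zeta<\xi}B_\zeta)\cap E_\xi$, where $\di_{\zeta<\xi}B_\zeta=\{\alpha<\mu\st\alpha\in\bigcap_{\zeta\in F^\mu_\xi(\alpha)}B_\zeta\}$ lies in the club filter by Remark~\ref{remark_diagonal}, and $E_\xi$ is an auxiliary club chosen so that every regular $\alpha\in E_\xi$ satisfies $\{0,1\}\subseteq F^\mu_\xi(\alpha)$ (when $\xi\ge 2$) together with the coherence conclusions of Lemma~\ref{lemma_coherence} and Corollary~\ref{corollary_crazy}. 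The point of the diagonal intersection is that whenever $\alpha\in B_\xi$ and $\zeta\in F^\mu_\xi(\alpha)$, so $\zeta<\xi$, the inductive hypothesis is available at $\alpha$ for level $\zeta$. The base case $\xi=1$ is precisely the classical fact that a stationary set meets every club in a stationary set, since being $1$-\s-stationary in $\alpha$ coincides with being stationary in $\alpha$.

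Before the main step I would record two routine auxiliary facts, each by a short induction on the level. First, \s-stationarity is upward monotone: if $S'\subseteq S$ and $S'$ is $\zeta$-\s-stationary in $\gamma$, then so is $S$. Second, for $\zeta\ge 1$ every $\zeta$-\s-stationary set is unbounded, and for regular $\alpha$ with $1\in F^\mu_\xi(\alpha)$ any $\xi$-\s-stationary subset $A$ of $\alpha$ is in fact stationary in $\alpha$; the latter follows by reflecting the pair $(E,E)$ for an arbitrary club $E\subseteq\alpha$, which produces a point of $A$ that is a limit point of $E$ and hence lies in $E$.

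For the inductive step fix regular $\alpha\in B_\xi$, a $\xi$-\s-stationary set $A$, and a club $C\subseteq\alpha$; I must show that for each $\zeta\in F^\mu_\xi(\alpha)$, every pair $S,T$ that is $\zeta$-\s-stationary in $\alpha$ reflects to some $\beta\in A\cap C$. When $\zeta\ge 1$, I would apply the inductive hypothesis at level $\zeta$ (legitimate since $\alpha\in B_\zeta$) to see that $S^*=S\cap C$ and $T^*=T\cap C$ are again $\zeta$-\s-stationary in $\alpha$; reflecting this pair through the $\xi$-\s-stationarity of $A$ yields $\beta\in A$ with $S^*,T^*$ both $\zeta$-\s-stationary in $\beta$. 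Since $\zeta$-\s-stationarity implies unboundedness, $S^*=S\cap C$ is unbounded in $\beta$, so $\beta$ is a limit point of $C$, i.e.\ $\beta\in\acc(C)\subseteq C$; and by upward monotonicity $S,T$ are $\zeta$-\s-stationary in $\beta$, giving $\beta\in A\cap C$ as required.

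The genuinely separate case, which I expect to be the main obstacle, is $\zeta=0$: here the absorption trick fails because $S\cap C$ need not remain unbounded, and $S,T$ are only $0$-\s-stationary, i.e.\ unbounded in $\alpha$. I would instead argue directly, using regularity of $\alpha$: the set $D=\{\beta<\alpha\st S\text{ and }T\text{ are both unbounded in }\beta\}$ is the intersection of the two club sets of limit points of $S$ and of $T$, hence club in $\alpha$, so $D\cap C$ is club. By the second auxiliary fact $A$ is stationary in $\alpha$, whence $A\cap(D\cap C)\neq\varnothing$; any $\beta$ in this intersection lies in $A\cap C$ and, being in $D$, has $S,T$ unbounded (i.e.\ $0$-\s-stationary) in $\beta$. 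Together with the $\zeta\ge 1$ cases this shows $A\cap C$ is $\xi$-\s-stationary in $\alpha$, completing the induction; the corresponding $f^\mu_\xi(\alpha)$-s-stationary statement follows on a further club by the same argument, invoking Corollary~\ref{corollary_crazy} for the coherence of the canonical functions in place of Lemma~\ref{lemma_coherence}.
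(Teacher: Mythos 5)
Your proposal follows the same core strategy as the paper's proof: an induction on $\xi<\mu^+$ in which $B_\xi$ is obtained by diagonally intersecting the earlier clubs (so that $\alpha\in B_\xi$ gives $\alpha\in B_\zeta$ for every $\zeta\in F^\mu_\xi(\alpha)$), combined with the absorption trick of replacing a reflected pair $S,T$ by $S\cap C,\,T\cap C$, reflecting through $A$, recovering $\beta\in C$ from unboundedness plus closure of $C$, and recovering $S,T$ by upward monotonicity. The organizational differences are real, though, and mostly to your credit. The paper splits into a limit case (where coherence and the limit order type of $F^\mu_\xi(\alpha)$ reduce $\xi$-\s-stationarity of $A\cap C$ to $\eta$-\s-stationarity for $\eta\in F^\mu_\xi(\alpha)$, each handled by the inductive hypothesis) and a successor case (the absorption trick), citing Bagaria for $\xi<\mu$; you instead verify the definition uniformly at every level $\zeta\in F^\mu_\xi(\alpha)$, with a self-contained base case. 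More importantly, you isolate $\zeta=0$, where the absorption trick genuinely fails ($S\cap C$ need not be unbounded), and handle it by the correct direct argument via stationarity of $A$ and the club of common limit points of $S$ and $T$. The paper's successor case does not do this: it invokes ``the inductive hypothesis'' at every $\zeta\in F^\mu_{\xi+1}(\alpha)$, including $\zeta=0$, where the statement being invoked (an unbounded set meets every club in an unbounded set) is false. So your extra case is not pedantry; it repairs a step the paper glosses over.

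Two caveats. First, do not lean on ``the equivalence of the two notions'' (Lemma \ref{lemma_s_hat}) to transfer the result to the parenthetical $f^\mu_\xi(\alpha)$-s-stationary version: that lemma is proved later and its proof uses the present lemma, so this route is circular. Your fallback---running the same induction verbatim for the s-version---is the right move and is exactly what the paper does; in fact the s-version is cleaner there, since level $0$ always lies in the quantification range $\{\zeta\st\zeta<\eta\}$, so $\eta$-s-stationarity really does imply unboundedness. Second, your auxiliary fact that for $\zeta\geq 1$ every $\zeta$-\s-stationary set is unbounded is overstated for the hat notion: it holds at $\beta$ only when $0\in F^\mu_\zeta(\beta)$ (reflect pairs of tails at level $0$), and for $\zeta\geq\mu$ this can fail at small $\beta$, where $\zeta$-\s-stationarity no longer entails unboundedness. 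This matters because the reflection point $\beta\in A$ handed to you by the $\xi$-\s-stationarity of $A$ is not under your control, so the step ``$S\cap C$ is $\zeta$-\s-stationary in $\beta$, hence $\beta\in C$'' is not fully justified at such degenerate $\beta$. I flag this for completeness rather than as a defect relative to the paper: the paper's successor case makes the identical silent step (``Thus, $\beta\in A\cap C$''), so this is a wrinkle shared by both proofs, not a gap you introduced.
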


\begin{proof}
We will prove the lemma for $\xi$-\s-stationarity; the proof for $\xi$-s-stationarity is similar. When $\xi<\mu$ the lemma follows directly from \cite[Proposition 2.9]{MR3894041}. 

Suppose $\xi\in\mu^+\setminus\mu$ is a limit ordinal and the result holds for $\zeta<\xi$. For each $\zeta<\xi$ let $B_\zeta$ be the club subset of $\mu$ obtained from the inductive hypothesis. Let $B_\xi$ be a club subset of $\mu$ such that for all $\alpha\in B_\xi$ we have
\begin{enumerate}
\item[(i)] $\alpha\in\bigcap_{\zeta\in F^\mu_\xi(\alpha)}B_\zeta$,
\item[(ii)] $\ot(F^\mu_\xi(\alpha))$ is a limit ordinal and
\item[(iii)] $(\forall\zeta\in F^\mu_\xi(\alpha))$ $F^\mu_\xi(\alpha)\cap\zeta=F^\mu_\zeta(\alpha)$.
\end{enumerate}
Suppose $\alpha\in B_\xi$, let $A\subseteq\mu$ be $\xi$-\s-stationary in $\alpha$ and let $C$ be a club subset of $\alpha$. Since $\ot(F^\mu_\xi(\alpha))$ is a limit ordinal, to show that $A\cap C$ is $\xi$-\s-stationary in $\alpha$, it suffices to show that $A\cap C$ is $\eta$-\s-stationary in $\alpha$ for all $\eta\in F^\mu_\xi(\alpha)$. Since $F^\mu_\eta(\alpha)\subseteq F^\mu_\xi(\alpha)$ it follows that $A$ is $\eta$-\s-stationary in $\alpha$. Then, because $\alpha\in B_\eta$, it follows by the inductive hypothesis that $A\cap C$ is $\eta$-\s-stationary in $\alpha$.

Now suppose $\xi\in\mu^+\setminus\mu$ and the result holds for $\zeta\leq\xi$. We will show that it holds for $\xi+1$. For each $\zeta\leq\xi$, let $B_\zeta$ be the club subset of $\mu$ obtained by the inductive hypothesis. Let $B_{\xi+1}$ be a club subset of $\mu$ such that for all $\alpha\in B_{\xi+1}$ we have $\alpha\in \bigcap_{\zeta\in F^\mu_{\xi+1}(\alpha)} B_\zeta$. Suppose $\alpha\in B_{\xi+1}$, let $A$ be $(\xi+1)$-\s-stationary in $\alpha$ and suppose $C$ is a club subset of $\alpha$. To show that $A\cap C$ is $(\xi+1)$-\s-stationary in $\alpha$, fix any sets $S,T\subseteq\alpha$ that are $\zeta$-\s-stationary in $\alpha$ for some $\zeta\in F^\mu_{\xi+1}(\alpha)$. Since $\alpha\in B_\zeta$, it follows by the inductive hypothesis that both $S\cap C$ and $T\cap C$ are $\zeta$-\s-stationary in $\alpha$. Since $A$ is $(\xi+1)$-\s-stationary in $\alpha$, there is some $\beta\in A$ such that both $S\cap C$ and $T\cap C$ are $\zeta$-\s-stationary in $\beta$. Thus, $\beta\in A\cap C$ and both $S$ and $T$ are $\zeta$-\s-stationary in $\beta$, which establishes that $A\cap C$ is $(\xi+1)$-\s-stationary in $\alpha$.
\end{proof}

\begin{lemma}\label{lemma_s_hat}
Suppose $\mu$ is a regular cardinal. For all $\xi<\mu^+$ there is a club $C_\xi\subseteq\mu$ such that for all regular $\alpha\in C_\xi$ a set $X\subseteq\alpha$ is $f^\mu_\xi(\alpha)$-s-stationary in $\alpha$ if and only if it is $\xi$-\s-stationary in $\alpha$.
\end{lemma}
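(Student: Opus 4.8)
The plan is to prove the equivalence by transfinite induction on $\xi<\mu^+$, obtaining at each stage a club $C_\xi$ on which, for regular $\alpha$, the two notions coincide. The conceptual heart of the argument is a dictionary between the two ways of indexing reflection. For $\zeta\in F^\mu_\xi(\alpha)$ the transitive collapse $\pi^\mu_{\xi,\alpha}:F^\mu_\xi(\alpha)\to f^\mu_\xi(\alpha)$ sends $\zeta$ to $\ot(F^\mu_\xi(\alpha)\cap\zeta)$, and by Lemma \ref{lemma_coherence} there is a club of $\alpha$ on which $F^\mu_\xi(\alpha)\cap\zeta=F^\mu_\zeta(\alpha)$ for every $\zeta\in F^\mu_\xi(\alpha)$, so that $\pi^\mu_{\xi,\alpha}(\zeta)=\ot(F^\mu_\zeta(\alpha))=f^\mu_\zeta(\alpha)$. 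Thus $\zeta\mapsto f^\mu_\zeta(\alpha)$ is an order isomorphism from $F^\mu_\xi(\alpha)$ onto $\nu:=f^\mu_\xi(\alpha)$, which means the quantifier ``$\forall\zeta\in F^\mu_\xi(\alpha)$'' appearing in the definition of $\xi$-\s-stationarity matches precisely the quantifier ``$\forall\zeta'<\nu$'' appearing in the definition of $\nu$-s-stationarity, via $\zeta'=f^\mu_\zeta(\alpha)$.

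For the base and small cases $\xi<\mu$ both $F^\mu_\xi$ and $f^\mu_\xi$ are constant with value $\xi$, and on a tail of $\mu$ both notions reduce to Bagaria's (cf.\ Remark \ref{remark_nontrivial} and the remark after Definition \ref{definition_xi_s_hat_stationary}), so the equivalence is immediate. For the inductive step with $\xi\in\mu^+\setminus\mu$, I would take $C_\xi$ to be a club contained in the $F^\mu_\xi$-diagonal intersection (in the sense of Remark \ref{remark_diagonal}) of the induction-hypothesis clubs $C_\zeta$ for $\zeta<\xi$, the clubs $C^\mu_\zeta$ of Corollary \ref{corollary_crazy} (which guarantee $f^\alpha_{f^\mu_\zeta(\alpha)}(\beta)=f^\mu_\zeta(\beta)$ on a club of $\beta<\alpha$), the coherence club of Lemma \ref{lemma_coherence}, and the club $B_\xi$ of Lemma \ref{lemma_intersect_with_club}, all intersected with the regular cardinals; closure of the club filter under such diagonal intersections keeps $C_\xi$ club. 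The point of the diagonal intersection is exactly that the relevant $\zeta$ range over the $\alpha$-dependent set $F^\mu_\xi(\alpha)$, so $\alpha$ must lie in $C_\zeta\cap C^\mu_\zeta$ for every $\zeta\in F^\mu_\xi(\alpha)$ simultaneously.

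Fix a regular $\alpha\in C_\xi$ and $X\subseteq\alpha$. For the forward implication, assume $X$ is $\nu$-s-stationary, fix $\zeta\in F^\mu_\xi(\alpha)$ and a pair $S,T$ that is $\zeta$-\s-stationary in $\alpha$, and put $\zeta'=f^\mu_\zeta(\alpha)<\nu$. The induction hypothesis at $\zeta$, applied at $\alpha$, converts the $\zeta$-\s-stationarity of $S,T$ in $\alpha$ into $\zeta'$-s-stationarity. Using Lemma \ref{lemma_intersect_with_club} I would replace $X$ by $X\cap C$, where $C$ is the club of $\beta<\alpha$ lying in $C_\zeta\cap C^\mu_\zeta$ (the intersection is still $\nu$-s-stationary), and reflect $S,T$ to a witness $\beta\in X\cap C$; since $\zeta'>0$ forces the reflected index $f^\alpha_{\zeta'}(\beta)$ to be positive, $\beta$ is automatically regular. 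Then Corollary \ref{corollary_crazy} rewrites $f^\alpha_{\zeta'}(\beta)=f^\mu_\zeta(\beta)$, and the induction hypothesis at $\zeta$, applied at $\beta$, converts $f^\mu_\zeta(\beta)$-s-stationarity of $S,T$ in $\beta$ back into $\zeta$-\s-stationarity in $\beta$, so $\beta$ witnesses the \s-reflection. The reverse implication is the mirror image, using the \s-version of Lemma \ref{lemma_intersect_with_club} and reading the same chain of equivalences in the opposite direction.

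The step I expect to be the main obstacle is ensuring, in the reverse direction, that the reflection witnesses can be taken regular and inside the auxiliary clubs, since \s-stationarity---unlike s-stationarity for positive index---does not by definition force its reflection points to be regular. This is precisely what Lemma \ref{lemma_intersect_with_club} is for: intersecting the reflecting set with the club $C\subseteq\alpha$ of well-behaved points lets me assume every witness lies in $C_\zeta\cap C^\mu_\zeta$, and the positivity of the matched index $\zeta'$ then pins the usable witnesses to the regular cardinals, where the induction hypothesis and Corollary \ref{corollary_crazy} apply. Keeping careful track of the order isomorphism $\pi^\mu_{\xi,\alpha}$ so that the two quantifier blocks are matched formula-by-formula, and treating the degenerate case $\zeta'=0$ (where both notions merely assert reflection of unbounded sets, and no regularity is needed) separately, are the remaining bookkeeping points.
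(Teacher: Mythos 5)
Your proposal follows essentially the same route as the paper: induction on $\xi$, the transitive-collapse dictionary $\pi^\mu_{\xi,\alpha}(\zeta)=f^\mu_\zeta(\alpha)$ justified by the coherence club of Lemma \ref{lemma_coherence}, Corollary \ref{corollary_crazy} to translate $f^\alpha_{f^\mu_\zeta(\alpha)}(\beta)=f^\mu_\zeta(\beta)$ at the reflection point, and Lemma \ref{lemma_intersect_with_club} to shrink the reflecting set to auxiliary clubs. The two reflection arguments are the paper's as well. However, your construction of $C_\xi$ has one step that fails as literally written.

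Membership of $\alpha$ in the diagonal intersection of the clubs $C_\zeta$ and $C^\mu_\zeta$ gives only $\alpha\in C_\zeta\cap C^\mu_\zeta$ for each $\zeta\in F^\mu_\xi(\alpha)$; it does \emph{not} make ``the club of $\beta<\alpha$ lying in $C_\zeta\cap C^\mu_\zeta$'' a club in $\alpha$. A club in $\mu$ can have $\alpha$ as its least element (for instance $[\alpha,\mu)$ is club), in which case its intersection with $\alpha$ is empty: closedness of $C_\zeta\cap\alpha$ in $\alpha$ is automatic, but unboundedness requires $\alpha$ to be an \emph{accumulation point} of $C_\zeta$. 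This is exactly why the paper's club condition demands $\alpha\in\bigcap_{\zeta\in F^\mu_\xi(\alpha)}d_0(C_\zeta)$ rather than $\alpha\in\bigcap_{\zeta\in F^\mu_\xi(\alpha)}C_\zeta$; the repair for you is simply to diagonalize over the clubs $d_0(C_\zeta)$ of limit points, after which your ``replace $X$ by $X\cap C$'' move is legitimate. Two smaller slips of the same kind: (a) the witness $\beta$ must be taken in $C_\zeta\cap D^\alpha_\zeta$, where $D^\alpha_\zeta\subseteq\alpha$ is the club \emph{inside} $\alpha$ provided by $\alpha\in C^\mu_\zeta$ via Corollary \ref{corollary_crazy}; membership of $\beta$ in the club $C^\mu_\zeta\subseteq\mu$ itself does not give the identity $f^\alpha_{f^\mu_\zeta(\alpha)}(\beta)=f^\mu_\zeta(\beta)$ that you invoke at $\beta$. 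And (b) you should not intersect $C_\xi$ with the regular cardinals: $\REG\cap\mu$ need not be club (unless $\mu$ is weakly Mahlo), so this would destroy clubness, and it is unnecessary since the statement only quantifies over regular $\alpha\in C_\xi$.
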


\begin{proof}
Suppose $\xi<\mu$. Let $C_\xi=\mu\setminus\xi$ and suppose $\alpha\in C_\xi$ is regular and $X\subseteq\alpha$. Since $F^\mu_\xi(\alpha)=f^\mu_\xi(\alpha)=\xi$ it is easy to see that from the definitions that $X$ is $\xi$-s-stationarity in $\alpha$ if and only if it is $\xi$-\s-stationarity in $\alpha$.



Now suppose that $\xi\in\mu^+\setminus\mu$ is a limit ordinal and the result holds for $\zeta<\xi$; the case in which $\xi\in\mu^+\setminus\mu$ is a successor is easier and is therefore left to the reader. For each $\zeta<\xi$ let $C_\zeta$ be the club obtained by the inductive hypothesis, and let $B_\xi$ be the club obtained from Lemma \ref{lemma_intersect_with_club}. Let $C_\xi$ be a club subset of $\mu$ such that for all $\alpha\in C_\xi$ we have 
\begin{enumerate}
\item[(i)] $\alpha\in B_\xi\cap \bigcap_{\zeta\in F^\mu_\xi(\alpha)}d_0(C_\zeta)$,
\item[(ii)] $\ot(F^\mu_\xi(\alpha))$ is a limit ordinal,
\item[(iii)] $f^\mu_\xi(\alpha)=\bigcup_{\zeta\in F^\mu_\xi(\alpha)}f^\mu_\zeta(\alpha)$,
\item[(iv)] for all $\zeta\in F^\mu_\xi(\alpha)$ there is a club $D^\alpha_{\xi,\zeta}$ in $\alpha$ such that for all $\beta\in D^\alpha_{\xi,\zeta}$ we have $f^\alpha_{f^\mu_\xi(\alpha)}(\beta)=f^\mu_\xi(\beta)$,
\item[(v)] $(\forall\zeta\in F^\mu_\xi(\alpha))$ $F^\mu_\xi(\alpha)\cap \zeta=F^\mu_\zeta(\alpha)$.
\end{enumerate}
Suppose $X$ is $f^\mu_\xi(\alpha)$-s-stationary in $\alpha$ and fix sets $S,T\subseteq\alpha$ that are $\eta$-\s-stationary in $\alpha$ for some $\eta\in F^\mu_\xi(\alpha)$. Since $\alpha\in d_0(C_\eta)\subseteq C_\eta$, it follows by the inductive hypothesis that both $S$ and $T$ are $f^\mu_\eta(\alpha)$-s-stationary in $\alpha$.  By (iii) we have $f^\mu_\eta(\alpha)<f^\mu_\xi(\alpha)$, and since $\alpha\in B_\xi$ it follows by Lemma \ref{lemma_intersect_with_club} that $X\cap C_\eta\cap D^\alpha_{\xi,\zeta}$ is $f^\mu_\xi(\alpha)$-s-stationarity in $\alpha$. Hence there is a $\beta\in X\cap C_\eta\cap D^\alpha_{\xi,\zeta}$ such that both $S$ and $T$ are $f^\alpha_{f^\mu_\eta(\alpha)}(\beta)$-s-stationary in $\beta$. Since it follows from (v) that $f^\alpha_{f^\mu_\eta(\alpha)}(\beta)=f^\mu_\eta(\beta)$, both $S$ and $T$ are $f^\mu_\eta(\beta)$-s-stationary in $\beta$. Since $\beta\in C_\eta$ it follows that $S$ and $T$ are both $\eta$-\s-stationary in $\beta$. Thus $S$ is $\xi$-\s-stationar in $\alpha$. Conversely, suppose $X$ is $\xi$-\s-stationary in $\alpha$ and fix sets $S,T\subseteq\alpha$ that are $\eta$-s-stationary in $\alpha$ for some $\eta<f^\mu_\xi(\alpha)$. Let $\pi^\mu_{\xi,\alpha}:F^\mu_\xi(\alpha)\to f^\mu_\xi(\alpha)$ be the transitive collapse of $F^\mu_\xi(\alpha)$ and let $\hat\eta=(\pi^\mu_{\xi,\alpha})^{-1}(\eta)$. Since $\alpha\in C_{\hat\eta}$, it follows that $S$ and $T$ are $\hat\eta$-\s-stationary in $\alpha$. Since $X\cap D^\alpha_{\xi,\hat\eta}$ is $\xi$-\s-stationary in $\alpha$, there is a $\beta\in X\cap D^\alpha_{\xi,\hat\eta}\cap\alpha$ such that $S$ and $T$ are both $f^\mu_{\hat\eta}(\beta)$-\s-stationary in $\beta$. Since $\beta\in D^\alpha_{\xi,\hat\eta}$, the sets $S$ and $T$ are $f^\alpha_{f^\mu_{\hat\eta}(\alpha)}(\beta)$-\s-stationary in $\beta$. Since $\eta=f^\mu_{\hat\eta}(\alpha)$ we see that both $S$ and $T$ are $f^\alpha_\eta(\beta)$-s-stationary in $\beta$. Thus $X$ is $\xi$-s-stationary.
\end{proof}

In order to characterize the nonisolated points of the spaces $(\mu,\tau_\xi)$, for $\xi<\mu^+$, in terms of $\eta$-s-reflecting cardinals, we will need the following proposition, which generalizes \cite[Proposition 2.10]{MR3894041}.

\begin{proposition}\label{proposition_meat}
Suppose $\mu$ is a regular cardinal.
\begin{enumerate}
\item For all $\xi<\mu^+$ there is a club $C_\xi\subseteq\mu$ such that for all $A\subseteq\mu$ we have
\[d_\xi(A)\cap C_\xi=\{\alpha<\mu\st\text{$A$ is $\xi$-\s-stationary in $\alpha$}\}\cap C_\xi.\]
\item For all $\xi<\mu^+$ there is a club $D_\xi\subseteq\mu$ such that for all $\alpha\in D_\xi$ and all $A\subseteq\mu$ we have that $A$ is $(\xi+1)$-\s-stationary in $\alpha$ if and only if $A\cap d_\zeta(S)\cap d_\zeta(T)\neq\emptyset$ (equivalently, if and only if $A\cap d_\zeta(S)\cap d_\zeta(T)$ is $\zeta$-\s-stationary in $\alpha$) for every $\zeta\in F^\mu_{\xi+1}(\alpha)$ and every pair $S,T$ of subsets of $\alpha$ that are $\zeta$-\s-stationary in $\alpha$.
\item For all $\xi<\mu^+$ there is a club $E_\xi\subseteq\mu$ such that for all $\alpha\in E_\xi$ and all $A\subseteq\mu$, if $A$ is $\xi$-\s-stationary in $\alpha$ and $A_i$ is $\zeta_i$-\s-stationary in $\alpha$ for some $\zeta_i\in F^\mu_\xi(\alpha)$, for all $i<n$ where $n<\omega$, then $A\cap d_{\zeta_0}(A_0)\cap \cdots\cap d_{\zeta_{n-1}}(A_{n-1})$ is $\xi$-\s-stationary in $\alpha$.
\end{enumerate}
\end{proposition}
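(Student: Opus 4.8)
The plan is to prove (1), (2) and (3) simultaneously by transfinite induction on $\xi<\mu^+$, working throughout with the hat-version of simultaneous stationarity from Definition \ref{definition_xi_s_hat_stationary}, so that the reflection levels are governed directly by the sets $F^\mu_\xi(\alpha)$ with no transitive collapse intervening. For $\xi<\mu$ all three statements reduce to Bagaria's \cite[Propositions 2.9 and 2.10]{MR3894041}, since there $F^\mu_\xi(\alpha)=\xi$ and the two notions of $\xi$-\s-stationarity coincide; the content lies in the range $\xi\in\mu^+\setminus\mu$. The clubs $C_\xi,D_\xi,E_\xi$ will be produced by intersecting the clubs on which the inductive instances of (1)--(3) hold, diagonalized (Remark \ref{remark_diagonal}) so that $\alpha$ belongs to all of them for every $\zeta\in F^\mu_\xi(\alpha)$, together with the coherence clubs of Section \ref{section_canonical_reflection_functions} (Lemmas \ref{lemma_coherence}, \ref{lemma_canonical_functions_at_limits} and \ref{lemma_successor}) guaranteeing $F^\mu_\xi(\alpha)\cap\zeta=F^\mu_\zeta(\alpha)$ and $F^\mu_{\zeta+1}(\alpha)=F^\mu_\zeta(\alpha)\cup\{\zeta\}$, and with the club of Lemma \ref{lemma_intersect_with_club}. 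Alternatively these clubs can all be extracted uniformly via the generic-ultrapower criterion of Proposition \ref{proposition_framework2}.

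The heart of the matter is (1), the dictionary between the diagonal Cantor derivative $d_\xi$ and $\xi$-\s-stationarity, and I would split it into a forward and a reverse inclusion. The forward inclusion is the easy half and uses only instances of (1) at levels $\zeta<\xi$: if $\alpha$ is a limit point of $A$ in $\T_\xi(\alpha)$ and $S,T$ are $\zeta$-\s-stationary in $\alpha$ for some $\zeta\in F^\mu_\xi(\alpha)$, then by the inductive (1) we have $\alpha\in d_\zeta(S)\cap d_\zeta(T)$, which is a basic $\T_\xi(\alpha)$-neighbourhood of $\alpha$ because $\zeta\in F^\mu_\xi(\alpha)$; limit-pointhood produces a $\beta\in A$ in this neighbourhood below $\alpha$, and translating back with (1) at $\zeta$ shows that $S,T$ simultaneously $\zeta$-\s-reflect to $\beta$. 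Crucially this half involves only \emph{pairs}, matching the two-fold intersection $d_\zeta(S)\cap d_\zeta(T)$, and so is free of any circular appeal to (3) at level $\xi$.

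The reverse inclusion needs that finite intersections of subbasic neighbourhoods still meet $A$ cofinally below $\alpha$, which is exactly (3); I therefore expect \emph{the apparent circularity between the reverse half of (1) and (3) at the same level $\xi$ to be the main obstacle}. The way I would break it is to prove (3) \emph{immediately after} the forward half of (1), exploiting that ``$A$ is $\xi$-\s-stationary in $\alpha$'' already says ``$\alpha$ is a limit point of $A$ in $\T_\xi(\alpha)$'', and that limit points are a purely local notion. Given $\eta\in F^\mu_\xi(\alpha)$, an $\eta$-\s-stationary pair $S,T$ in $\alpha$, and sets $A_i$ that are $\zeta_i$-\s-stationary in $\alpha$ with $\zeta_i\in F^\mu_\xi(\alpha)$, the set $W=d_\eta(S)\cap d_\eta(T)\cap\bigcap_{i<n}d_{\zeta_i}(A_i)$ is a \emph{single} basic neighbourhood of $\alpha$ (each factor lies in $\B_\xi(\alpha)$ and contains $\alpha$ by (1) at the relevant lower level), so any limit point of $A$ has points of $A\cap W$ below it; such a point witnesses, via (1) at levels below $\xi$, that $S,T$ simultaneously $\eta$-\s-reflect into $A\cap\bigcap_{i<n}d_{\zeta_i}(A_i)$. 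This delivers (3) for all $n$ at once, using only the forward half of (1) at level $\xi$ together with (1) at strictly lower levels. With (3) in hand the reverse inclusion of (1) is routine: for a basic neighbourhood $I\cap\bigcap_i d_{\zeta_i}(A_i)$ of $\alpha$, the set $A\cap\bigcap_i d_{\zeta_i}(A_i)$ is $\xi$-\s-stationary by (3), hence unbounded below $\alpha$ after intersecting with tails by Lemma \ref{lemma_intersect_with_club}, so $\alpha\in d_\xi(A)$. The intra-stage order is thus forward-(1) $\to$ (3) $\to$ reverse-(1), with no loop, and the argument is uniform across successor and limit $\xi$.

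Finally, (2) is essentially a reformulation of (1) at levels $\le\xi$ together with (3). Unwinding Definition \ref{definition_xi_s_hat_stationary}, ``$A$ is $(\xi+1)$-\s-stationary in $\alpha$'' says exactly that for every $\zeta\in F^\mu_{\xi+1}(\alpha)$ and every $\zeta$-\s-stationary pair $S,T$ there is $\beta\in A$ with $S,T$ both $\zeta$-\s-stationary in $\beta$; by (1) at level $\zeta\le\xi$ this last condition is $\beta\in d_\zeta(S)\cap d_\zeta(T)$, so $(\xi+1)$-\s-stationarity is precisely the requirement $A\cap d_\zeta(S)\cap d_\zeta(T)\neq\emptyset$ for all such $\zeta,S,T$. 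For the strengthening to ``$\zeta$-\s-stationary'' I would observe that $(\xi+1)$-\s-stationarity of $A$ implies $(\zeta+1)$-\s-stationarity (coherence, Lemma \ref{lemma_coherence}), that $\zeta\in F^\mu_{\zeta+1}(\alpha)$ on a club (Lemma \ref{lemma_successor}), and then invoke (3) at level $\zeta+1$ to conclude that $A\cap d_\zeta(S)\cap d_\zeta(T)$ is $(\zeta+1)$-\s-stationary, hence $\zeta$-\s-stationary. The remaining verifications that the auxiliary sets are club, and the bookkeeping ensuring that reflection targets lie strictly below $\alpha$, I would dispatch exactly as in the proofs of Lemmas \ref{lemma_intersect_with_club} and \ref{lemma_s_hat} and of Theorem \ref{theorem_expressing_indescribability}, appealing to Proposition \ref{proposition_framework2}.
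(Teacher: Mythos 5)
Your proposal contains a genuine circularity at exactly the point you flagged as ``the main obstacle.'' Your proof of (3) begins by ``exploiting that `$A$ is $\xi$-\s-stationary in $\alpha$' already says `$\alpha$ is a limit point of $A$ in $\T_\xi(\alpha)$'\,''. But this is not definitional: Definition \ref{definition_xi_s_hat_stationary} defines $\xi$-\s-stationarity by reflection of \emph{pairs}, and the implication (stationary $\Rightarrow$ limit point) is precisely the reverse inclusion of (1) at level $\xi$ --- the very statement your intra-stage ordering forward-(1) $\to$ (3) $\to$ reverse-(1) defers until after (3). So your (3) uses reverse-(1), and your reverse-(1) uses (3): the loop you set out to break is still closed. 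Nor can the step be repaired directly from the definition: pair reflection produces a $\beta\in A$ landing in $d_\eta(S)\cap d_\eta(T)$ (modulo the club from (1) at level $\eta$), but gives no control whatsoever on whether $\beta$ also lies in the extra factors $d_{\zeta_0}(A_0),\ldots,d_{\zeta_{n-1}}(A_{n-1})$; upgrading two-fold reflection to simultaneous reflection of $n+2$ sets is the actual content of (3), and it cannot be had in one line. (Your forward-(1) and your reading of (2) as a translation of pair reflection through (1) at levels $\le\xi$ are essentially sound, modulo the club bookkeeping with the sets $C_\zeta$.)

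The paper breaks the circle in two genuinely different ways according to the shape of $\xi$, which is also why your claim that the argument is ``uniform across successor and limit $\xi$'' cannot be maintained. At limit $\xi$, both reverse-(1) and (3) avoid level $\xi$ altogether: on a club one has that $\ot(F^\mu_\xi(\alpha))$ is a limit ordinal and $F^\mu_\xi(\alpha)\cap\zeta=F^\mu_\zeta(\alpha)$ for all $\zeta\in F^\mu_\xi(\alpha)$, so the finitely many indices appearing in a basic $\T_\xi(\alpha)$-neighbourhood all lie in $F^\mu_\eta(\alpha)$ for some $\eta\in F^\mu_\xi(\alpha)$, and one quotes (1), respectively (3), at level $\eta<\xi$. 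At successor $\xi+1$ this bounding trick fails, since $F^\mu_{\xi+1}(\alpha)=F^\mu_\xi(\alpha)\cup\{\xi\}$ has a largest element; there the paper proves (3) for $\xi+1$ \emph{first}, working directly from the pair-reflection definition of $(\xi+1)$-\s-stationarity, by induction on $n$ with a case analysis comparing $\zeta$ and $\zeta_0$, using (2) at levels $\le\xi$ to convert pair reflection of $A$ into $\zeta$-\s-stationarity of sets of the form $d_\zeta(S)\cap d_\zeta(T)$, together with (1) and (3) at levels $\le\xi$ and Lemma \ref{lemma_intersect_with_club} for the club bookkeeping; only afterwards is reverse-(1) at $\xi+1$ deduced from (3) at $\xi+1$. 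This bootstrap --- the longest part of the paper's argument --- is exactly what is absent from your outline, and without it (or a substitute) the simultaneous induction does not close.
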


\begin{proof}
We will prove (1) -- (3) by simultaneous induction on $\xi$, for $\xi$-\s-stationarity. For $\xi<\mu$, (1) -- (3) follow directly from \cite[Proposition 2.10]{MR3894041}, taking $C_\xi=D_\xi=E_\xi=\mu$.

Let us first show that if $\xi\in\mu^+\setminus\mu$ is a limit ordinal and (1) -- (3) hold for all $\zeta<\xi$, then (1) -- (3) hold for $\xi$.

First we will show that for $\xi\in\mu^+\setminus\mu$ a limit ordinal, if (1) holds for $\zeta<\xi$ then (1) holds for $\xi$. For each $\zeta<\xi$, let $C_\zeta$ be the club subset of $\mu$ obtained from (1). Let $C_\xi$ be a club subset of $\mu$ such that for all $\alpha\in C_\xi$ we have
\begin{enumerate}
\item[(i)] $\alpha\in\bigcap_{\zeta\in F^\mu_\xi(\alpha)}C_\zeta$,
\item[(ii)] $\ot(F^\mu_\xi(\alpha))$ is a limit ordinal and
\item[(iii)] $(\forall\zeta\in F^\mu_\xi(\alpha))$ $F^\mu_\xi(\alpha)\cap\zeta=F^\mu_\zeta(\alpha)$.
\end{enumerate}
Now fix $A\subseteq\mu$ and suppose $\alpha\in d_\xi(A)\cap C_\xi$. Then $\alpha$ is a limit point of $A$ in the $\T_\xi(\alpha)$ topology on $\mu$. For each $\zeta\in F^\mu_\xi(\alpha)$ we have $F^\mu_\zeta(\alpha)\subseteq F^\mu_\xi(\alpha)$, which implies $\T_\zeta(\alpha)\subseteq\T_\xi(\alpha)$, and hence $\alpha$ is a limit point of $A$ in the $\T_\zeta(\alpha)$ topology on $\mu$. Thus $\alpha\in \bigcap_{\zeta\in F^\mu_\xi(\alpha)}d_\zeta(A)$. Since $\alpha\in \bigcap_{\zeta\in F^\mu_\xi(\alpha)}C_\zeta$, it follows that $A$ is $\zeta$-\s-stationary in $\alpha$ for all $\zeta\in F^\mu_\xi(\alpha)$. By (ii) and (iii), this implies that $A$ is $\xi$-\s-stationary in $\alpha$. Conversely, suppose $A$ is $\xi$-\s-stationary in $\alpha$ and $\alpha\in C_\xi$. To show that $\alpha\in d_\xi(A)$ we must show that $\alpha$ is a limit point of $A$ in the $\T_\xi(\alpha)$ topology on $\mu$ generated by $\B_\xi(\alpha)$. Fix a basic open neighborhood $U$ of $\alpha$ in $\T_\xi(\alpha)$. Then $U$ is of the form 
\[I\cap d_{\zeta_0}(A_0)\cap\cdots\cap d_{\zeta_{n-1}}(A_{n-1})\]
where $I$ is an interval in $\mu$, $n<\omega$, and for all $i<n$ we have $\zeta_i\in F^\mu_\xi(\alpha)$ and $A_i\subseteq\mu$. By (ii) we can choose some $\eta\in F^\mu_\xi(\alpha)$ with $\eta>\max\{\zeta_i\st i<n\}$. By (iii), for each $i<n$ we have $\zeta_i\in F^\mu_\xi(\alpha)\cap\eta=F^\mu_\eta(\alpha)$ and hence $U$ is an open neighborhood of $\alpha$ in the $\T_\eta(\alpha)$ topology. Since $F^\mu_\eta(\alpha)\subseteq F^\mu_\xi(\alpha)$, it follows that $A$ is $\eta$-\s-stationary in $\alpha$, and since $\alpha\in C_\eta$ we have that $\alpha\in d_\eta(A)$. Thus $\alpha$ is a limit point of $A$ in the $\T_\eta(\alpha)$ topology, so $A\cap U\setminus\{\alpha\}\neq\emptyset$. This shows that $\alpha$ is a limit point of $A$ in the $\T_\xi(\alpha)$ topology.

Let us show that for $\xi\in \mu^+\setminus\mu$ a limit ordinal, if (3) holds for $\zeta<\xi$, then (3) holds for $\xi$. Let $E_\xi$ be a club subset of $\mu$ such that for all $\alpha\in E_\xi$ we have
\begin{enumerate}
\item[(i)] $\alpha\in \bigcap_{\zeta\in F^\mu_\xi(\alpha)}E_\zeta$,
\item[(ii)] $\ot(F^\mu_\xi(\alpha))$ is a limit ordinal and
\item[(iii)] $(\forall\zeta\in F^\mu_\xi(\alpha))$ $F^\mu_\xi(\alpha)\cap\zeta=F^\mu_\zeta(\alpha)$.
\end{enumerate}
Suppose $\alpha\in E_\xi$. Let $A\subseteq\mu$ be $\xi$-\s-stationary in $\alpha$ and, for $i<n$, suppose $A_i$ is $\zeta_i$-\s-stationary in $\alpha$ for some $\zeta_i\in F^\mu_\xi(\alpha)$. We must show that $A\cap d_{\zeta_0}(A_0)\cap\cdots\cap d_{\zeta_{n-1}}(A_{n-1})$ is $\xi$-\s-stationary in $\alpha$. Fix a pair of sets $S,T\subseteq\mu$ that are $\zeta$-\s-stationary in $\alpha$ for some $\zeta\in F^\mu_\xi(\alpha)$. Using (ii), choose $\eta\in F^\mu_\xi(\alpha)$ with $\eta>\max(\{\zeta_i\st i< n\}\cup\{\zeta\})$. Since $F^\mu_\xi(\alpha)\cap\eta=F^\mu_\eta(\alpha)$, it follows that $A$ is $\eta$-\s-stationary in $\alpha$, and by our assumption that (3) holds for $\eta<\xi$ and the fact that $\alpha\in E_\eta$, it follows that $A\cap d_{\zeta_0}(A_0)\cap\cdots\cap d_{\zeta_{n-1}}(A_{n-1})$ is $\eta$-\s-stationary in $\alpha$. Thus, there is a $\beta\in A\cap d_{\zeta_0}(A_0)\cap\cdots\cap d_{\zeta_{n-1}}(A_{n-1})$ such that both $S$ and $T$ are $\zeta$-\s-stationary in $\beta$.

Now we will show that for a limit ordinal $\xi\in\mu^+\setminus\mu$, if (1) and (3) hold for $\zeta\leq\xi$, then (2) holds for $\xi$. For each $\zeta\leq\xi$, let $B_\zeta$ be the club subset of $\mu$ obtained from Lemma \ref{lemma_intersect_with_club}. Let $D_\xi$ be a club subset of $\mu$ such that for all $\alpha\in D_\xi$ we have
\begin{enumerate}[(i)]
\item $\ot(F^\mu_\xi(\alpha))$ is a limit ordinal,
\item $(\forall\zeta\in F^\mu_\xi(\alpha))$ $F^\mu_\xi(\alpha)\cap\zeta=F^\mu_\zeta(\alpha)$ and
\item $\alpha\in\bigcap_{\zeta\in F^\mu_{\xi+1}(\alpha)}(B_\zeta\cap d_0(C_\zeta)\cap d_0(E_\zeta))$ where the $C_\zeta$'s and $E_\zeta$'s are obtained by the inductive hypothesis from (1) and (3) respectively.
\end{enumerate}
Suppose $\alpha\in D_\xi$. For the forward direction of (2), let $A$ be $(\xi+1)$-\s-stationary in $\alpha$ and fix a pair $S,T$ of subsets of $\alpha$ that are $\zeta$-\s-stationary in $\alpha$ for some $\zeta\in F^\mu_{\xi+1}(\alpha)$. Since $\alpha\in d_0(C_\zeta)$, it follows that $C_\zeta$ is closed and unbounded in $\alpha$ and hence the set $A\cap C_\zeta$ is $(\xi+1)$-\s-stationary in $\alpha$. Hence there exists a $\beta\in A\cap C_\zeta$ such that $S$ and $T$ are both $\zeta$-\s-stationary in $\beta$, and since $\beta\in C_\zeta$ we have $\beta\in A\cap d_\zeta(S)\cap d_\zeta(T)$ by (1). To see that $A\cap d_\zeta(S)\cap d_\zeta(T)$ is $\zeta$-\s-stationary in $\alpha$, fix sets $X,Y\subseteq\alpha$ that are $\eta$-\s-stationary in $\alpha$ for some $\eta\in F^\mu_\zeta(\alpha)$. Since $\alpha\in E_\zeta$, it follows by (3) that $S\cap d_\eta(X)$ and $T\cap d_\eta(Y)$ are $\zeta$-\s-stationary in $\alpha$. Since $A\cap C_\zeta$ is $(\xi+1)$-\s-stationary in $\alpha$ there is some $\beta\in A\cap C_\zeta$ such that $S\cap d_\eta(X)$ and $T\cap d_\eta(Y)$ are both $\zeta$-\s-stationary in $\beta$. Since $\beta\in C_\zeta$, it follows that $\beta\in A\cap C_\zeta\cap d_\zeta(S\cap d_\eta(X))\cap d_\zeta(T\cap d_\eta(Y))\neq\emptyset$.
Now we have
\[\emptyset\neq A\cap C_\zeta\cap d_\zeta(S\cap d_\eta(X))\cap d_\zeta(T\cap d_\eta(Y))\subseteq A\cap C_\zeta\cap d_\zeta(S)\cap d_\zeta(T)\cap d_\eta(X)\cap d_\eta(Y),\]
and hence $A\cap d_\zeta(S)\cap d_\zeta(T)$ is $\zeta$-\s-stationary in $\alpha$.

For the reverse direction of (2), suppose that $\alpha\in D_\xi$ and for all $\zeta\in F^\mu_{\xi+1}(\alpha)$, if $S,T\subseteq\alpha$ are both $\zeta$-\s-stationary in $\alpha$ then $A\cap d_\zeta(S)\cap d_\zeta(T)\neq\emptyset$. To show that $A$ is $(\xi+1)$-\s-stationary in $\alpha$, fix $\zeta\in F^\mu_{\xi+1}(\alpha)$ and suppose $S,T\subseteq\alpha$ are $\zeta$-\s-stationary in $\alpha$. By Lemma \ref{lemma_intersect_with_club} and the fact that $\alpha\in B_\zeta\cap d_0(C_\zeta)$, it follows that $S\cap C_\zeta$ and $T$ are both $\zeta$-\s-stationary in $\alpha$. Thus, by (1), there is a $\beta\in A\cap d_\zeta(S\cap C_\zeta)\cap d_\zeta(T)$. Now since $\beta\in C_\zeta\cap d_\zeta(S)\cap d_\zeta(T)$, it follows by (1) that $S$ and $T$ are both $\zeta$-\s-stationary in $\beta$. Hence $A$ is $(\xi+1)$-\s-stationary in $\alpha$.

It remains to show that if $\xi\in\mu^+\setminus\mu$ is an ordinal and (1), (2) and (3) hold for $\zeta\leq\xi$, then (1), (2) and (3) also hold for $\xi+1$.

Given that (1), (2) and (3) hold for $\zeta\leq\xi$, let us show that (3) holds for $\xi+1$. For $\zeta\leq\xi$, let $C_\zeta$, $D_\zeta$ and $E_\zeta$ be the club subsets of $\mu$ obtained from (1), (2) and (3) respectively. For each $\zeta\leq\xi$, let $B_\zeta$ be the club subset of $\mu$ obtained from Lemma \ref{lemma_intersect_with_club}. Let $E_{\xi+1}$ be a club subset of $\mu$ such that for all $\alpha\in E_{\xi+1}$ we have
\begin{enumerate}
\item[(i)] $\alpha\in \bigcap_{\zeta\in F^\mu_{\xi+1}(\alpha)} (B_\zeta\cap d_0(C_\zeta)\cap D_\zeta\cap E_\zeta)$,
\item[(ii)] $\alpha\in C_\xi\cap D_\xi\cap E_\xi$ and
\item[(iii)] $(\forall\zeta\in F^\mu_{\xi+1}(\alpha))$ $F^\mu_{\xi+1}(\alpha)\cap\zeta=F^\mu_\zeta(\alpha)$.
\end{enumerate}
Suppose $\alpha\in E_{\xi+1}$ and $A\subseteq\mu$ is $(\xi+1)$-\s-stationary in $\alpha$. Let $n<\omega$ and for each $i<n$ suppose $\zeta_i\in F^\mu_{\xi+1}(\alpha)$ and $A_i$ is $\zeta_i$-\s-stationary in $\alpha$. We must show that $A\cap d_{\zeta_0}(A_0)\cap \cdots d_{\zeta_{n-1}}(A_{n-1})$ is $(\xi+1)$-\s-stationary in $\alpha$. Fix sets $S,T\subseteq\alpha$ which are $\zeta$-\s-stationary in $\alpha$ for some $\zeta\in F^\mu_{\xi+1}(\alpha)$. We must show that there is a $\beta\in A\cap d_{\zeta_0}(A_0)\cap \cdots d_{\zeta_{n-1}}(A_{n-1})$ such that $S\cap\beta$ and $T\cap\beta$ are $\zeta$-\s-stationary in $\beta$. Since $\alpha\in C_\zeta$, it follows by an inductive application of (1) that in order to prove (3) holds for $\xi+1$, it suffices to show that
\begin{align}
A\cap d_{\zeta_0}(A_0)\cap\cdots\cap d_{\zeta_{n-1}}(A_{n-1})\cap d_\zeta(S)\cap d_\zeta(T)\cap C_\zeta&\neq\emptyset.\label{eqn_for_3}
\end{align}

Let us proceed to prove \ref{eqn_for_3} by induction on $n$. First, let us consider the case in which $\zeta_0=\zeta$. Since $\alpha\in D_\zeta$ and $A$ is $(\xi+1)$-\s-stationary in $\alpha$, it follows inductively from (2) that  the set $d_\zeta(S)\cap d_\zeta(T)\supseteq A\cap d_\zeta(S)\cap d_\zeta(T)$ is $\zeta$-\s-stationary in $\alpha$. Now since $A$ is $(\xi+1)$-\s-stationary in $\alpha$, the sets $A_0$ and $d_\zeta(S)\cap d_\zeta(T)$ are $\zeta$-\s-stationary in $\alpha$ and since $\alpha\in D_\zeta$, it follows from (2) that $A\cap d_{\zeta_0}(A_0)\cap d_\zeta(d_\zeta(S)\cap d_\zeta(T))$ is $\zeta$-\s-stationary in $\alpha$. By Lemma \ref{lemma_intersect_with_club}, since $\alpha\in B_\zeta\cap d_0(C_\zeta)$ we have that $A\cap d_{\zeta_0}(A_0)\cap d_\zeta(d_\zeta(S)\cap d_\zeta(T))\cap C_\zeta$ is $\zeta$-\s-stationary in $\alpha$. Since $A\cap d_{\zeta_0}(A_0)\cap d_\zeta(d_\zeta(S)\cap d_\zeta(T))\cap C_\zeta\subseteq A\cap d_{\zeta_0}(A_0)\cap d_\zeta(S)\cap d_\zeta(T)\cap C_\zeta$, this establishes (\ref{eqn_for_3}) in case $n=1$ and $\zeta_0=\zeta$. Second, let us consider the case in which $n=1$ and $\zeta_0<\zeta$. Since $F^\mu_\zeta(\alpha)\subseteq F^\mu_{\xi+1}(\alpha)$, it follows that $A$ is $\zeta$-\s-stationary in $\alpha$. Since $\alpha\in E_\zeta$ and $\zeta\in F^\mu_{\xi+1}(\alpha)$, we may inductively apply (3) to see that $A\cap d_{\zeta_0}(A_0)$ and thus also $d_{\zeta_0}(A_0)$ is $\zeta$-\s-stationary in $\alpha$. Hence because $\alpha\in D_\zeta$, it follows by an inductive application of (2) that $A\cap d_\zeta(d_{\zeta_0}(A_0))\cap d_\zeta(d_\zeta(S)\cap d_\zeta(T))$ is $\zeta$-\s-stationary in $\alpha$ and by Lemma \ref{lemma_intersect_with_club} and the fact that $\alpha\in B_\zeta\cap d_0(C_\zeta)$, we see that the set $A\cap d_\zeta(d_{\zeta_0}(A_0))\cap d_\zeta(d_\zeta(S)\cap d_\zeta(T))\cap C_\zeta$ is also $\zeta$-\s-stationary in $\alpha$. Since $A\cap d_\zeta(d_{\zeta_0}(A_0))\cap d_\zeta(d_\zeta(S)\cap d_\zeta(T))\cap C_\zeta\subseteq A\cap d_{\zeta_0}(A_0)\cap d_\zeta(S)\cap d_\zeta(T)\cap C_\zeta$, this establishes (\ref{eqn_for_3}) in the second case where $n=1$ and $\zeta_0<\zeta$. Thirdly, suppose $n=1$ and $\zeta_0>\zeta$. Then by an inductive application of (2), the set $A\cap d_{\zeta_0}(A_0)$ is $\zeta_0$-\s-stationary in $\alpha$. Since $\alpha \in B_\zeta\cap d_0(C_{\zeta_0})$, it follows from by Lemma \ref{lemma_intersect_with_club} that $A\cap d_{\zeta_0}(A_0)\cap C_\zeta$ is also $\zeta_0$-\s-stationary in $\alpha$. Since $\zeta\in F^\mu_{\zeta_0}(\alpha)$, we see that there is some $\beta\in A\cap d_{\zeta_0}(A_0)\cap C_\zeta$ such that both $S$ and $T$ are $\zeta$-\s-stationary in $\beta$, and thus $\beta\in A\cap d_{\zeta_0}(A_0)\cap d_\zeta(S)\cap d_\zeta(T)\cap C_\zeta$. This establishes that (6) holds for $n=1$.

Now suppose $n>1$. Since $\alpha\in D_\zeta$, it follows inductively by (2) that $d_\zeta(S)\cap d_\zeta(T)$ is $\zeta$-\s-stationary in $\alpha$. Since $\alpha\in E_\zeta$, it follows by an inductive application of (3) that $d_\zeta(d_\zeta(S)\cap d_\zeta(T))$ is $\mu$-\s-stationary in $\alpha$. Furthermore, since $\alpha\in C_{\zeta_{n-1}}$ we have that $A_{n-1}$ is $\zeta_{n-1}$-\s-stationary in $\alpha$ and thus we see that, again by an inductive application of (3) using the fact that $\alpha\in E_{\zeta_{n-1}}$ the set $d_{\zeta_{n-1}}(A_{n-1})$ is $\zeta_{n-1}$-\s-stationary in $\alpha$. Also, by the inductive hypothesis on $n$, the set $A\cap d_{\zeta_0}(A_0)\cap \cdots\cap d_{\zeta_{n-1}}(A_{n-2})$ is $(\xi+1)$-\s-stationary in $\alpha$. Therefore, by an inductive application of (2), the set
\[A\cap d_{\zeta_0}(A_0)\cap \cdots\cap d_{\zeta_{n-2}}(A_{n-2})\cap d_{\zeta_{n-1}}(d_{\zeta_{n-1}}(A_{n-1}))\cap d_\zeta(d_\zeta(S)\cap d_\zeta(T))\cap C_\zeta\]
which is contained in
\[A\cap d_{\zeta_0}(A_0)\cap \cdots\cap d_{\zeta_{n-2}}(A_{n-2})\cap d_{\zeta_{n-1}}(A_{n-1})\cap d_\zeta(S)\cap d_\zeta(T)\cap C_\zeta\]
is $\mu$-\s-stationary in $\alpha$. This establishes (\ref{eqn_for_3}) and hence (3) holds for $\xi+1$.

Next, given that (1) and (2) hold for $\zeta\leq\xi$ and (3) holds for $\zeta\leq\xi+1$, let us show that (1) holds for $\xi+1$. For each $\zeta\leq\xi$, let $C_\zeta$ be the club subset of $\mu$ obtained from (1). Also let $E_{\xi+1}$ be the club obtained from (3). For each $\zeta\leq\xi+1$, let $B_\zeta$ be the club subset of $\mu$ obtained from Lemma \ref{lemma_intersect_with_club}. Now we let $C_{\xi+1}$ be a club subset of $\mu$ such that for all $\alpha\in C_{\xi+1}$ we have
\begin{enumerate}
\item[(i)] $\alpha\in \bigcap_{\zeta\in F^\mu_{\xi+1}(\alpha)} (B_\zeta\cap d_0(C_\zeta))$ and
\item[(ii)] $\alpha\in B_{\xi+1}\cap E_{\xi+1}$.
\end{enumerate} 
Suppose $\alpha\in d_{\xi+1}(A)\cap C_{\xi+1}$. Then $\alpha$ is a limit point of $A$ in the $\T_{\xi+1}(\alpha)$ topology on $\mu$. To show that $A$ is $(\xi+1)$-\s-stationary in $\alpha$, fix $\zeta\in F^\mu_{\xi+1}(\alpha)$ and suppose $S,T\subseteq\alpha$ are $\zeta$-\s-stationary in $\alpha$. Since $\alpha\in C_{\xi+1}$ we have $\alpha\in d_0(C_\zeta)$ and thus $\alpha\in d_\zeta(S)\cap d_\zeta(T)$. Since $d_0(C_\zeta)\cap d_\zeta(S)\cap d_\zeta(T)\in \B_{\xi+1}(\alpha)$ is a basic open neighborhood of $\alpha$ in the $\T_{\xi+1}(\alpha)$ topology, and since $\alpha\in d_{\xi+1}(A)$, it follows that there is some $\beta\in A\cap d_0(C_\zeta)\cap d_\zeta(S)\cap d_\zeta(T)\setminus\{\alpha\}$. Since $\beta\in C_\zeta$, it follows by the inductive hypothesis on (1) that both $S$ and $T$ are $\zeta$-\s-stationary in $\beta$. Thus, $A$ is $(\xi+1)$-\s-stationary in $\alpha$.

Now suppose $\alpha\in C_{\xi+1}$ and $A$ is $(\xi+1)$-\s-stationary in $\alpha$. To show that $\alpha\in d_{\xi+1}(A)$, fix a basic open set $U\in\B_{\xi+1}(\alpha)$ in the $\T_{\xi+1}(\alpha)$ topology with $\alpha\in U$. Then $U$ is of the form
$I\cap d_{\zeta_0}(A_0)\cap\cdots\cap d_{\zeta_{n-1}}(A_{n-1})$, where $I\in\B_0(\alpha)$ is an interval in $\mu$, $n<\omega$ and for all $i<n$ we have $\zeta_i\in F^\mu_{\xi+1}(\alpha)$ and $A_i\subseteq\mu$. Since $\alpha\in C_{\xi+1}$ and $\alpha\in I\cap d_{\zeta_0}(A_0)\cap\cdots\cap d_{\zeta_{n-1}}(A_{n-1})$, it follows by the inductive hypothesis that $A_i$ is $\zeta_i$-\s-stationary in $\alpha$ for all $i<n$. Then since $A$ is $(\xi+1)$-\s-stationary in $\alpha$, it follows from the fact that (3) holds for $\xi+1$ and $\alpha\in E_{\xi+1}$, that $A\cap d_{\zeta_0}(A_0)\cap\cdots\cap d_{\zeta_{n-1}}(A_{n-1})$ is $(\xi+1)$-\s-stationary in $\alpha$. Furthermore, since $I\cap\alpha$ is a club subset of $\alpha$ and $\alpha\in B_{\xi+1}$, Lemma \ref{lemma_intersect_with_club} implies that $A\cap I\cap d_{\zeta_0}(A_0)\cap\cdots\cap d_{\zeta_{n-1}}(A_{n-1})\cap\alpha$ is $(\xi+1)$-\s-stationary in $\alpha$ and is thus nonempty. This establishes that $\alpha$ is a limit point of $A$ in the $\T_{\xi+1}(\alpha)$ topology, that is, $\alpha\in d_{\xi+1}(A)$.

Finally, given that (1) and (3) hold for $\zeta\leq\xi+1$, let us show that (2) holds for $\xi+1$. Let $D_{\xi+1}$ be a club subset of $\mu$ such that for all $\alpha\in D_{\xi+1}$ we have
\begin{enumerate}
\item[(i)] $(\forall\zeta\in F^\mu_{\xi+2}(\alpha))$ $F^\mu_{\xi+2}(\alpha)\cap\zeta=F^\mu_\zeta(\alpha)$;
\item[(ii)] $\alpha\in\bigcap_{\zeta\in F^\mu_{\xi+2}(\alpha)}C_\zeta\cap E_\zeta$;
\item[(iii)] $\alpha\in B_{\xi+2}\cap C_{\xi+1}$ where $B_{\xi+2}$ is the club subset of $\mu$ obtained from Lemma \ref{lemma_intersect_with_club} and $C_{\xi+1}$ is obtained from our inductive assumption on (1); and
\item[(iv)] for all $\zeta\in F^\mu_{\xi+2}(\alpha)$ and all $\eta\in F^\mu_\zeta(\alpha)$ we have 
\[\alpha\in d_0(\{\beta<\mu\st F^\mu_\eta(\beta)\subseteq F^\mu_\zeta(\beta)\}).\]
\end{enumerate}
Suppose $A$ is $(\xi+2)$-\s-stationary in $\alpha$ and $\alpha\in D_{\xi+1}$. Let $S,T\subseteq\alpha$ be $\zeta$-\s-stationary in $\alpha$ for some $\zeta\in F^\mu_{\xi+2}(\alpha)$. We must show that $A\cap d_\zeta(S)\cap d_\zeta(T)$ is $\zeta$-\s-stationary in $\alpha$. Fix $\eta\in F^\mu_\zeta(\alpha)$ and suppose $X$ and $Y$ are $\eta$-\s-stationary subsets of $\alpha$. By an inductive application of (1), it will suffice to show that
\[A\cap d_\zeta(S)\cap d_\zeta(T)\cap d_\eta(X)\cap d_\eta(Y)\cap C_\eta\neq\emptyset.\]
Since $\alpha\in E_\zeta$ it follows inductively by (3) that the sets $S\cap d_\eta(X)$ and $T\cap d_\eta(Y)$ are $\zeta$-\s-stationary in $\alpha$. By (iv) the set $\{\beta<\alpha\st F^\mu_\eta(\beta)\subseteq F^\mu_\zeta(\beta)\}$ is club in $\alpha$ and therefore, by Lemma \ref{lemma_intersect_with_club} since $\alpha\in B_{\xi+2}$, the set 
\[A\cap\{\beta<\alpha\st F^\mu_\eta(\beta)\subseteq F^\mu_\zeta(\beta)\}\] is $(\xi+2)$-\s-stationary in $\alpha$. Since $\zeta\in F^\mu_{\xi+2}(\alpha)$ and since $\alpha\in E_\zeta$, it follows by (3) for $\zeta$ that there is some $\beta\in A$ such that
\[\beta\in d_\zeta(S\cap d_\eta(X))\cap d_\zeta(T\cap d_\eta(Y)).\] 
Thus we have
\[\beta\in d_\zeta(S)\cap d_\zeta(d_\eta(X))\cap d_\zeta(T)\cap d_\zeta(d_\eta(Y)).\]
Since $F^\mu_\eta(\beta)\subseteq F^\mu_\zeta(\beta)$, it follows that $\T_\eta(\beta)\subseteq \T_\zeta(\beta)$, and therefore we obtain
\[\beta\in d_\zeta(S)\cap d_\eta(d_\eta(X))\cap d_\zeta(T)\cap d_\eta(d_\eta(Y)).\]
By Lemma \ref{lemma_d_xi_is_cantor}, we have
\[\beta\in d_\zeta(S)\cap d_\zeta(T)\cap d_\eta(X) \cap d_\eta(Y)\]
as desired.
\end{proof}

Now we are ready to characterize the nonisolated points of the spaces $(\mu_\xi,\tau_\xi)$ (on a club) in terms of $\eta$-\s-reflecting cardinals. The following is a generalization of \cite[Theorem 2.11]{MR3894041}.

\begin{theorem}\label{theorem_xi_s_hat_nonisolated}
Suppose $\mu$ is a regular cardinal. For all $\xi<\mu^+$ if $C_\xi$ is the club subset of $\mu$ obtained from Proposition \ref{proposition_meat}(1), then for all $\alpha\in C_\xi$, the ordinal $\alpha$ is not isolated in the $\tau_\xi$ topology on $\mu$ if and only if $\alpha$ is $\xi$-\s-reflecting.
\end{theorem}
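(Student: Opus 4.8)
The plan is to route everything through the Cantor derivative $d_\xi$ together with Proposition \ref{proposition_meat}(1). By Lemma \ref{lemma_d_xi_is_cantor} and Fact \ref{fact_cantor_derivatives}, $d_\xi$ is the Cantor derivative (limit point operator) of the space $(\mu,\tau_\xi)$, so $\alpha$ is a limit point of a set $A$ in $\tau_\xi$ exactly when $\alpha\in d_\xi(A)$. First I would record the standard observation that an ordinal $\alpha<\mu$ fails to be isolated in $(\mu,\tau_\xi)$ precisely when it is a limit point of the whole space, i.e.\ when $\alpha\in d_\xi(\mu)$. Indeed, $\{\alpha\}$ is open iff $\mu\setminus\{\alpha\}$ is closed iff $\alpha\notin d_\xi(\mu\setminus\{\alpha\})$, and by property (2c) together with monotonicity of a Cantor derivative one has $\alpha\in d_\xi(\mu\setminus\{\alpha\})\iff\alpha\in d_\xi(\mu)$.

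Next I would apply Proposition \ref{proposition_meat}(1) with $A=\mu$: for every $\alpha\in C_\xi$ we have
\[\alpha\in d_\xi(\mu)\iff\text{$\mu$ is $\xi$-\s-stationary in $\alpha$.}\]
Combined with the previous paragraph, this reduces the theorem to identifying the condition ``$\mu$ is $\xi$-\s-stationary in $\alpha$'' with ``$\alpha$ is $\xi$-\s-stationary in $\alpha$'', the latter being by Definition \ref{definition_xi_s_hat_stationary} exactly the assertion that $\alpha$ is $\xi$-\s-reflecting.

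The one point that requires a short argument — and which I expect to be the crux — is a \emph{locality} (trace) observation: whether a set $A\subseteq\mu$ is $\xi$-\s-stationary in $\alpha$ depends only on the trace $A\cap\alpha$. Inspecting Definition \ref{definition_xi_s_hat_stationary}, the parameter $A$ enters only through the clause ``$\beta\in A$'' governing the reflection witness $\beta$, which is always sought strictly below $\alpha$; this restriction $\beta<\alpha$ is implicit in the definition (as in Bagaria's formulation recalled in Remark \ref{remark_nontrivial}, where one demands $\beta\in A\cap\alpha$, and as is forced in the parallel Definition \ref{definition_xi_s_stationarity} by the appearance of $f^\alpha_\zeta(\beta)$), and it is needed for the notion to be nontrivial, since otherwise reflecting to $\beta=\alpha$ would make every $\mu$ trivially stationary. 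The $\zeta$-\s-stationarity conditions imposed on the reflecting sets $S,T$ never mention $A$, and the $\zeta$-\s-stationarity of $S,T$ in any such $\beta<\alpha$ depends only on $S\cap\beta$ and $T\cap\beta$. Hence the property ``$A$ is $\xi$-\s-stationary in $\alpha$'' is unchanged when $A$ is replaced by $A\cap\alpha$, and in particular requires no induction on $\xi$. Since $\mu\cap\alpha=\{\gamma:\gamma<\alpha\}$ is precisely the ordinal $\alpha$ regarded as a subset of $\mu$, it follows that $\mu$ is $\xi$-\s-stationary in $\alpha$ if and only if $\alpha$ is $\xi$-\s-stationary in $\alpha$, i.e.\ $\alpha$ is $\xi$-\s-reflecting. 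Chaining the three equivalences yields the theorem for all $\alpha\in C_\xi$; the main care needed is simply to pin down that reflection witnesses live strictly below $\alpha$, after which the coincidence of the ambient parameters $\mu$ and $\alpha$ is automatic and the rest is bookkeeping with the already-established Proposition \ref{proposition_meat}(1).
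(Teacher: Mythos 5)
Your proof is correct and takes essentially the same route as the paper's: unwind Definition \ref{definition_tau_xi} to see that non-isolation of $\alpha$ means $\alpha\in d_\xi(\mu\setminus\{\alpha\})$, apply Proposition \ref{proposition_meat}(1) on the club $C_\xi$, and identify the resulting stationarity statement with $\alpha$ being $\xi$-\s-reflecting. The only differences are cosmetic: the paper applies Proposition \ref{proposition_meat}(1) directly to $A=\mu\setminus\{\alpha\}$ and leaves the trace/locality observation (that witnesses $\beta$ lie below $\alpha$, so only $A\cap\alpha$ matters) implicit, whereas you detour through $d_\xi(\mu)$ via properties (2a) and (2c) of the Cantor derivative and make that step explicit.
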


\begin{proof}
For $\xi<\mu$ the result follows directly from \cite[Theorem 2.11]{MR3894041}. 

Suppose $\xi\in\mu^+\setminus\mu$ and $\alpha\in C_\xi$. By Definition \ref{definition_tau_xi}, we have
\begin{align*}
\text{$\alpha$ is not isolated in the $\tau_\xi$ topology} &\iff \{\alpha\}\notin\tau_\xi\\
	&\iff d_\xi(\mu\setminus\{\alpha\})\not\subseteq\mu\setminus\{\alpha\}\\
	&\iff \alpha\in d_\xi(\mu\setminus\{\alpha\})\\
	&\iff \text{$\alpha$ is $\xi$-\s-stationary in $\alpha$}\\
	&\iff \text{$\alpha$ is $\xi$-\s-reflecting.}
\end{align*}
\end{proof}

By applying Lemma \ref{lemma_s_hat} and Theorem \ref{theorem_xi_s_hat_nonisolated} we easily obtain the following.

\begin{theorem}\label{theorem_xi_s_nonisolated}
Suppose $\mu$ is a regular cardinal. For all $\xi<\mu^+$ there is a club $C_\xi\subseteq\mu$ such that for all $\alpha \in C_\xi$ we have that $\alpha$ is not isolated in the $\tau_\xi$ topology on $\mu$ if and only if $\alpha$ is $f^\mu_\xi(\alpha)$-s-reflecting.
\end{theorem}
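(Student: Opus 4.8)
The plan is to derive this theorem by composing the two results the paper has already set up for exactly this purpose: Theorem~\ref{theorem_xi_s_hat_nonisolated}, which characterizes nonisolation in $(\mu,\tau_\xi)$ via the auxiliary notion of $\xi$-\s-reflection, and Lemma~\ref{lemma_s_hat}, which converts $\xi$-\s-stationarity into $f^\mu_\xi(\alpha)$-s-stationarity on a club of regular $\alpha$. Concretely, I would let $C^h_\xi$ be the club from Proposition~\ref{proposition_meat}(1) on which Theorem~\ref{theorem_xi_s_hat_nonisolated} applies, let $C^s_\xi$ be the club from Lemma~\ref{lemma_s_hat}, and take $C_\xi\subseteq C^h_\xi\cap C^s_\xi$ to be a club of limit ordinals (the intersection of two clubs is club, so this is unproblematic).

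The heart of the argument is then a chain of biconditionals for regular $\alpha\in C_\xi$. First, since $\alpha\in C^h_\xi$, Theorem~\ref{theorem_xi_s_hat_nonisolated} gives that $\alpha$ is not isolated in the $\tau_\xi$ topology if and only if $\alpha$ is $\xi$-\s-reflecting; unwinding Definition~\ref{definition_xi_s_hat_stationary}, this says precisely that $\alpha$, viewed as the subset $\{\beta\st\beta<\alpha\}$ of $\mu$, is $\xi$-\s-stationary in $\alpha$. Next I apply Lemma~\ref{lemma_s_hat} with the particular set $X=\alpha$: because $\alpha$ is regular and $\alpha\in C^s_\xi$, the set $\alpha$ is $\xi$-\s-stationary in $\alpha$ if and only if it is $f^\mu_\xi(\alpha)$-s-stationary in $\alpha$, which by Definition~\ref{definition_xi_s_stationarity} is exactly the assertion that $\alpha$ is $f^\mu_\xi(\alpha)$-s-reflecting. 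Composing these equivalences yields that $\alpha$ is not isolated in $\tau_\xi$ if and only if $\alpha$ is $f^\mu_\xi(\alpha)$-s-reflecting, which is the conclusion.

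The only step that is more than bookkeeping, and the one I would treat as the main obstacle, is the alignment of the regularity conventions, which is in fact the reason the auxiliary hat-notion was introduced in the first place. The topological characterization in Theorem~\ref{theorem_xi_s_hat_nonisolated} is phrased using $\xi$-\s-reflection, which is meaningful at every $\alpha$ and is tied directly to $d_\xi$ through Proposition~\ref{proposition_meat}; by contrast, the notion $f^\mu_\xi(\alpha)$-s-reflecting appearing in the statement is, for $f^\mu_\xi(\alpha)\ge 1$, only defined at regular $\alpha$ (Definition~\ref{definition_xi_s_stationarity}). Lemma~\ref{lemma_s_hat} is precisely the bridge between these two notions, but it operates only on regular $\alpha$, so the clean equivalence is obtained for regular $\alpha\in C_\xi$; this is the natural and intended reading, and it is exactly what is needed in the applications (e.g.\ Corollary~\ref{corollary_nondiscreteness_from_indescribability}), where the relevant $\alpha$ are inaccessible and hence regular. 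Once this regularity matching is made explicit, the remaining details — that $C_\xi$ can be taken club and that the two cited results apply on it — are routine, since all of the substantive topological and combinatorial content has already been isolated in Theorem~\ref{theorem_xi_s_hat_nonisolated} and Lemma~\ref{lemma_s_hat}.
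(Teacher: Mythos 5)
Your proposal is correct and is essentially the paper's own argument: the paper's proof of this theorem is just the one-line remark that it follows by applying Lemma \ref{lemma_s_hat} and Theorem \ref{theorem_xi_s_hat_nonisolated}, i.e., intersecting the relevant clubs and composing the two biconditionals exactly as you do. Your explicit handling of the regularity convention (the equivalence is genuinely obtained only for regular $\alpha\in C_\xi$, since $f^\mu_\xi(\alpha)$-s-stationarity is defined only at regular $\alpha$) is a point the paper leaves implicit, and your reading of it is the intended one.
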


In order to show that $\Pi^1_\xi$-indescribability can be used to obtain the nondiscreteness of the topologies $\tau_\xi$ on $\mu$ for $\xi<\mu^+$, we need the following expressibility result.

\begin{lemma}\label{lemma_expressing_s_stationarity}
Suppose $\kappa$ is a regular cardinal. For all $\xi<\kappa^+$ there is a formula $\Pi^1_\xi$ formula $\varphi_\xi(X)$ over $V_\kappa$ and a club $C_\xi$ subset of $\kappa$ such that for all $A\subseteq\kappa$ we have
\[\text{$A$ is $\xi$-s-stationary in $\kappa$ if and only if $V_\kappa\models\varphi_\xi(A)$}\]
and for all $\alpha\in C_\xi$ we have
\[\text{$A$ is $f^\kappa_\xi(\alpha)$-s-stationary in $\alpha$ if and only if $V_\alpha\models\varphi_\xi(A)\res^\kappa_\alpha$}\]
\end{lemma}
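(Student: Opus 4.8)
The plan is to prove the lemma by induction on $\xi<\kappa^+$, constructing $\varphi_\xi(X)$ so as to mirror the recursive shape of Definition~\ref{definition_xi_s_stationarity} and verifying the two required equivalences simultaneously. For the base case $\xi=0$ I would take $\varphi_0(X)$ to be the first-order (hence $\Pi^1_0$) sentence asserting that $X$ is unbounded, with $C_0=\kappa$; since $f^\kappa_0(\alpha)=0$ and restricting a $\Pi^1_0$ formula only intersects its parameters with $V_\alpha$, this gives both clauses at once. For the inductive step, the defining condition ``$A$ is $\xi$-s-stationary in $\kappa$'' is the conjunction over $\zeta<\xi$ of a clause $\theta_\zeta(X)$ expressing that every pair $S,T$ which is $\zeta$-s-stationary in $\kappa$ reflects to some $\beta\in X$ at which $S,T$ are $f^\kappa_\zeta(\beta)$-s-stationary. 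Here ``$S$ is $\zeta$-s-stationary in $\kappa$'' is $\varphi_\zeta(S)$, a $\Pi^1_\zeta$ formula by the induction hypothesis, while ``$S$ is $f^\kappa_\zeta(\beta)$-s-stationary in $\beta$'' is, by the inductive second clause, equivalent on a club to $V_\beta\models\varphi_\zeta(S)\res^\kappa_\beta$; note that the required reindexing of the reflection level from $\zeta$ to $f^\kappa_\zeta(\beta)$ is exactly the effect of the restriction operation on logical complexity, so no separate reindexing is needed.

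The crucial device, following the proof of Theorem~\ref{theorem_expressing_indescribability}, is to replace the second-order assertion $V_\beta\models\varphi_\zeta(S)\res^\kappa_\beta$ by a first-order one using a coding set. I would fix a parameter $R\subseteq\kappa$ whose $(\beta,\gamma)$-slices, for regular $\beta$ in the club $C_\zeta$ supplied by the induction, enumerate exactly those $s\subseteq\beta$ with $V_\beta\models\varphi_\zeta(s)\res^\kappa_\beta$; together with the clubs $C_\zeta$ (coded into $R$) this turns ``$S$ is $f^\kappa_\zeta(\beta)$-s-stationary in $\beta$'' into the first-order condition $\exists\gamma\,(S\cap\beta=R_{(\beta,\gamma)})$, with the reflection taken into $A\cap C_\zeta$ (legitimate by Lemma~\ref{lemma_intersect_with_club}). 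Presenting the antecedent $\varphi_\zeta(S)\wedge\varphi_\zeta(T)$ as a single $\Pi^1_\zeta$ formula via a universal $\Pi^1_\zeta$ formula from Theorem~\ref{theorem_universal}, one application of Lemma~\ref{lemma_no_increase} shows that $\theta_\zeta$, which has the form $\forall S\forall T[(\Pi^1_\zeta)\rightarrow(\text{first-order})]$, is $\Pi^1_{\zeta+1}$. Placing $\theta_\zeta$ in the $(\zeta+1)$-st conjunct and padding the remaining slots by a trivially true $\Pi^1_0$ sentence, I obtain $\varphi_\xi=\bigwedge_{\zeta<\xi}\theta_\zeta$ as a genuine $\Pi^1_\xi$ formula in the limit case, and $\varphi_{\eta+1}=\varphi_\eta\wedge\theta_\eta$ as a $\Pi^1_{\eta+1}$ formula in the successor case; unwinding the definitions then yields the first equivalence over $V_\kappa$.

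For the second equivalence I would produce the club $C_\xi$ via Proposition~\ref{proposition_framework2}: letting $G\subseteq P(\kappa)/\NS_\kappa$ be generic with $\kappa$ regular in $V^\kappa/G$ and $j\colon V\to V^\kappa/G$ the embedding, it suffices to show $\kappa\in j(E)$, where $E$ is the set of regular $\alpha$ at which the desired equivalence holds. By Lemma~\ref{lemma_j_of} we have $j(\varphi_\xi(A))\res^{j(\kappa)}_\kappa=\varphi_\xi(A)$, and because the $\xi$-th canonical function represents $\xi$ in the generic ultrapower we have $j(f^\kappa_\xi)(\kappa)=\xi$; hence membership of $\kappa$ in $j(E)$ is precisely the assertion, evaluated in $V^\kappa/G$, that ``$A$ is $\xi$-s-stationary in $\kappa$ if and only if $V_\kappa\models\varphi_\xi(A)$'', i.e.\ the first equivalence read inside $V^\kappa/G$. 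This in turn follows by running the unwinding of the previous paragraph in $V^\kappa/G$: the coding set $R$ is recovered correctly there because its slices concern genuine ordinals $\beta<\kappa$ and because the canonical functions cohere, $f^\kappa_\zeta(\beta)=f^\alpha_{f^\kappa_\zeta(\alpha)}(\beta)$ on a club by Corollary~\ref{corollary_crazy}, while the inductive second clauses at levels $\zeta<\xi$ (via $\kappa\in j(C_\zeta)$) guarantee that the coded condition matches $f^\kappa_\zeta$-s-stationarity.

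The main obstacle I anticipate is the complexity bookkeeping: one must ensure that the coding set really does reduce each reflection clause to a first-order condition and that the resulting conjunction sits at level exactly $\Pi^1_\xi$ rather than higher, which is where the universal formulas, Lemma~\ref{lemma_no_increase}, and the careful padding in the limit case are all essential. Intertwined with this is the requirement that the single parameter $R$ restrict coherently, so that $R\cap V_\alpha$, which is what actually appears in $\varphi_\xi(A)\res^\kappa_\alpha$, codes precisely the $\alpha$-local s-stationarity data at the reindexed levels determined by $\pi^\kappa_{\xi,\alpha}$. Establishing this coherence, resting on Corollary~\ref{corollary_crazy} and on the well-definedness of restriction modulo the nonstationary ideal, is the technical heart of the argument.
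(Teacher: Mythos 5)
Your proposal is correct and follows essentially the same route as the paper's proof: induction on $\xi$ with the base case being unboundedness, the reflection clause rendered $\Pi^1_{\zeta+1}$ by coding local s-stationarity data into a subset $R\subseteq\kappa$ (exactly the device of Theorem \ref{theorem_expressing_indescribability} and Theorem \ref{theorem_xi_clubs}(2)), complexity controlled via Lemma \ref{lemma_no_increase}, limits handled by conjunction, and the club $C_\xi$ obtained by the generic-ultrapower method of Proposition \ref{proposition_framework2} together with Lemma \ref{lemma_j_of}. The only differences are organizational: the paper conjoins the previously constructed $\varphi_\zeta$'s rather than padded clauses $\theta_\zeta$, and it codes the semantic s-stationarity facts into $R$ directly rather than the satisfaction facts $V_\beta\models\varphi_\zeta(s)\res^\kappa_\beta$, which is why it does not need your extra club-intersection step forcing reflection points into $C_\zeta$.
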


\begin{proof}
We follow the proof of \cite[Proposition 4.3]{MR3894041} and proceed by induction on $\xi$. We let $\varphi_0(X)$ be the natural $\Pi^1_0$ formula asserting that $X$ is $0$-s-stationary (i.e. unbounded) in $\kappa$.

Suppose $\xi<\kappa^+$ is a limit ordinal and the result holds for $\zeta<\xi$. We let 
\[\varphi_\xi(X)=\bigwedge_{\zeta<\xi}\varphi_\zeta(X).\] 
Clearly $\varphi_\xi(X)$ is $\Pi^1_\xi$ over $V_\kappa$. Using our inductive assumption about the $\varphi_\zeta$'s, it is easy to verify that $A$ is $\xi$-s-stationary in $\kappa$ if and only if $V_\kappa\models\varphi_\xi(A)$. Furthermore, using an argument involving generic ultrapowers similar to those of Theorem \ref{theorem_expressing_indescribability} and Theorem \ref{theorem_xi_clubs}(2), the existence of the desired club $C_\xi$ is straightforward, and is therefore left to the reader.

Suppose $\xi=\zeta+1<\kappa^+$ is a successor. We let $\varphi_{\zeta+1}(X)$ be the natural $\Pi^1_{\zeta+1}$ formula equivalent to
\begin{align*}
\left(\bigwedge_{\eta<\zeta}\varphi_\eta(X)\right)&\land \forall S\forall T(\varphi_\zeta(S)\land\varphi_\zeta(T)\rightarrow\\
&(\exists\beta\in A)(\text{$S$ and $T$ are $f^\kappa_\zeta(\beta)$-s-stationary in $\beta$})).
\end{align*}
Note that, by an argument similar to that for Theorem \ref{theorem_xi_clubs}(2), we can code information about which subsets of $\beta$ are $f^\kappa_\zeta(\beta)$-s-stationary in $\beta$ into a subset of $\kappa$ and verify that the above formula is in fact equivalent to a $\Pi^1_{\zeta+1}$ formula over $V_\kappa$. The verification that the desired club $C_{\zeta+1}$ exists and that $\varphi_{\zeta+1}$ satisfies the requirements of the lemma is similar to the proof of Theorem \ref{theorem_expressing_indescribability} and Theorem \ref{theorem_xi_clubs}(2), and is thus left to the reader.
\end{proof}

The next proposition, which is a generalization of \cite[Proposition 4.3]{MR3894041}, will allow us to obtain the nondiscreteness of the topologies $\tau_\xi$ from an indescribability hypothesis.

\begin{proposition}\label{proposition_indescribability_implies_reflecting}
If a cardinal $\kappa$ is $\Pi^1_\xi$-indescribable for some $\xi<\kappa^+$, then it is $(\xi+1)$-s-reflecting.
\end{proposition}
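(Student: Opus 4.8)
The plan is to verify directly, from Definition \ref{definition_xi_s_stationarity}, that $\kappa$ is $(\xi+1)$-s-stationary in itself, which is precisely what it means for $\kappa$ to be $(\xi+1)$-s-reflecting. First I would record two preliminary facts. Since $\kappa$ is $\Pi^1_\xi$-indescribable and, for every $\zeta\le\xi$, the class of $\Pi^1_\zeta$ formulas is contained in the class of $\Pi^1_\xi$ formulas (the inclusion invoked in the proof of Corollary \ref{corollary_proper}), the cardinal $\kappa$ is $\Pi^1_\zeta$-indescribable for all $\zeta\le\xi$; in particular $\kappa$ is regular (indeed inaccessible), so Lemma \ref{lemma_expressing_s_stationarity} applies at each such $\zeta$. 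Moreover, by Theorem \ref{theorem_normal_ideal}, for each $\zeta\le\xi$ the collection $\Pi^1_\zeta(\kappa)$ is a nontrivial normal ideal on $\kappa$, so its dual filter $\Pi^1_\zeta(\kappa)^*$ is a proper filter containing the club filter (a normal nontrivial ideal contains $\NS_\kappa$, as every $\Pi^1_\zeta$-indescribable set is stationary).

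Next, fix $\zeta\le\xi$ together with a pair $S,T\subseteq\kappa$ that are $\zeta$-s-stationary in $\kappa$, and let $\varphi_\zeta(X)$ and $C_\zeta$ be as provided by Lemma \ref{lemma_expressing_s_stationarity}. Then $\varphi_\zeta(S)$ and $\varphi_\zeta(T)$ are $\Pi^1_\zeta$ sentences over $V_\kappa$ with $V_\kappa\models\varphi_\zeta(S)$ and $V_\kappa\models\varphi_\zeta(T)$. I would then show that
\[E_S=\{\alpha<\kappa\st V_\alpha\models\varphi_\zeta(S)\res^\kappa_\alpha\}\]
lies in $\Pi^1_\zeta(\kappa)^*$: if its complement $\kappa\setminus E_S$ were $\Pi^1_\zeta$-indescribable, then applying the $\Pi^1_\zeta$-indescribability of $\kappa\setminus E_S$ to the true sentence $\varphi_\zeta(S)$ would yield some $\alpha\in\kappa\setminus E_S$ with $V_\alpha\models\varphi_\zeta(S)\res^\kappa_\alpha$, i.e.\ $\alpha\in E_S$, a contradiction. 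Hence $\kappa\setminus E_S\in\Pi^1_\zeta(\kappa)$, so $E_S\in\Pi^1_\zeta(\kappa)^*$, and symmetrically $E_T\in\Pi^1_\zeta(\kappa)^*$.

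Since $\Pi^1_\zeta(\kappa)^*$ is a proper filter closed under finite intersection and containing the club $C_\zeta$, the set $E_S\cap E_T\cap C_\zeta$ belongs to $\Pi^1_\zeta(\kappa)^*$ and is in particular nonempty. Choosing $\beta\in E_S\cap E_T\cap C_\zeta$, we have $\beta\in C_\zeta$, $V_\beta\models\varphi_\zeta(S)\res^\kappa_\beta$ and $V_\beta\models\varphi_\zeta(T)\res^\kappa_\beta$, so the second clause of Lemma \ref{lemma_expressing_s_stationarity} gives that both $S$ and $T$ are $f^\kappa_\zeta(\beta)$-s-stationary in $\beta$. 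As $\zeta\le\xi$ and the pair $S,T$ were arbitrary, this says exactly that for every $\zeta<\xi+1$ any two $\zeta$-s-stationary subsets of $\kappa$ simultaneously $\zeta$-reflect to some $\beta<\kappa$; that is, $\kappa$ is $(\xi+1)$-s-stationary in $\kappa$, hence $(\xi+1)$-s-reflecting.

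The main obstacle I anticipate is the bookkeeping around producing \emph{simultaneous} reflection. One is tempted to apply indescribability to the single conjoined sentence $\varphi_\zeta(S)\wedge\varphi_\zeta(T)$, but the restriction operation need not distribute transparently over a conjunction of two $\Pi^1_\zeta$ formulas when $\zeta$ is a limit, so I deliberately run indescribability on $\varphi_\zeta(S)$ and $\varphi_\zeta(T)$ separately and recombine by intersecting inside the normal filter $\Pi^1_\zeta(\kappa)^*$; this is what makes the common reflection point $\beta$ appear cleanly. The only other point needing care is confirming that this filter genuinely extends the club filter, which rests on $\Pi^1_\zeta(\kappa)$ being a nontrivial normal ideal (Theorem \ref{theorem_normal_ideal}) and therefore containing $\NS_\kappa$.
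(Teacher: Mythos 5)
Your proof is correct, and its skeleton matches the paper's: both arguments run through Lemma \ref{lemma_expressing_s_stationarity} to express $\zeta$-s-stationarity by a $\Pi^1_\zeta$ formula, and then combine $\Pi^1_\xi$-indescribability with normality of the indescribability ideals (Theorem \ref{theorem_normal_ideal}) to produce a reflection point lying in the club $C_\zeta$. The one genuine divergence is how simultaneity is extracted. The paper applies indescribability a single time, to the conjoined sentence $\varphi_\zeta(S)\wedge\varphi_\zeta(T)$, and reflects it to some $\alpha\in C_\zeta$; this tacitly uses that the conjunction of two $\Pi^1_\zeta$ formulas is again of complexity at most $\Pi^1_\xi$ and that its restriction to $\alpha$ is the conjunction of the two restrictions --- precisely the point you flagged as delicate when $\zeta$ is a limit. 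You instead apply indescribability twice, showing that each of $E_S$ and $E_T$ lies in the dual filter $\Pi^1_\zeta(\kappa)^*$, and recombine by intersecting $E_S\cap E_T\cap C_\zeta$ inside that normal filter. The price is that you must verify explicitly that $\Pi^1_\zeta(\kappa)^*$ extends the club filter (a fact the paper also uses, silently, to place its reflection point inside $C_\zeta$); the payoff is that your argument never restricts a conjunction of two infinitary formulas, so the distribution-of-restriction issue never arises. Both routes deliver the proposition; yours is slightly more bureaucratic but is watertight on that one syntactic point where the paper's write-up is terse.
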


\begin{proof}
Suppose $\kappa$ is $\Pi^1_\xi$-indescribable and suppose that $S$ and $T$ are $\zeta$-s-stationary in $\kappa$ where $\zeta\leq\xi$. Then we have
\[V_\kappa\models\varphi_\zeta(S)\land\varphi_\zeta(T)\]
where $\varphi_\zeta(X)$ is the $\Pi^1_\zeta$ formula obtained in Lemma \ref{lemma_expressing_s_stationarity}. Let $C_\zeta$ be the club subset of $\kappa$ from the statement of Lemma \ref{lemma_expressing_s_stationarity}. Since $\kappa$ is $\Pi^1_\xi$-indescribable, there is an $\alpha\in C_\zeta$ such that 
\[V_\alpha\models\varphi_\zeta(S)\res^\kappa_\alpha\land\varphi_\zeta(T)\res^\kappa_\alpha,\]
which implies that $S$ and $T$ are both $f^\kappa_\zeta(\alpha)$-s-stationary in $\alpha$. Hence $\kappa$ is $(\xi+1)$-s-stationary.
\end{proof}

Finally, we conclude that from an indescribability hypothesis, one can prove that the $\tau_{\xi+1}$ topology is not discrete.

\begin{corollary}\label{corollary_nondiscreteness_from_indescribability}
Suppose $\mu$ is a regular cardinal and $\xi<\mu^+$. If the set 
\[S=\{\alpha<\mu\st\text{$\alpha$ is $f^\mu_\xi(\alpha)$-indescribable}\}\]
is stationary in $\mu$ (for example, this will occur if $\mu$ is $\Pi^1_{\xi+1}$-indescribable), then there is an $\alpha<\mu$ which is nonisolated in the space $(\mu,\tau_{\xi+1})$.
\end{corollary}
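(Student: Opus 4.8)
The plan is to assemble the desired nonisolated point directly from three earlier results: Theorem \ref{theorem_xi_s_nonisolated}, which on a club converts nonisolation in $\tau_{\xi+1}$ into $f^\mu_{\xi+1}(\alpha)$-s-reflection; Proposition \ref{proposition_indescribability_implies_reflecting}, which manufactures s-reflection out of indescribability; and Lemma \ref{lemma_successor}, which aligns the canonical-function indices. First I would invoke Theorem \ref{theorem_xi_s_nonisolated} at the ordinal $\xi+1<\mu^+$ to obtain a club $C_{\xi+1}\subseteq\mu$ such that every $\alpha\in C_{\xi+1}$ is nonisolated in $(\mu,\tau_{\xi+1})$ exactly when $\alpha$ is $f^\mu_{\xi+1}(\alpha)$-s-reflecting. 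It therefore suffices to produce a single $\alpha\in C_{\xi+1}$ that is $f^\mu_{\xi+1}(\alpha)$-s-reflecting.

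Next I would bring in the club $D_2=\{\alpha<\mu\st f^\mu_{\xi+1}(\alpha)=f^\mu_\xi(\alpha)+1\}$ furnished by Lemma \ref{lemma_successor} (taking $\zeta=\xi$ there). Since $S$ is stationary by hypothesis and $C_{\xi+1}\cap D_2$ is club, the set $S\cap C_{\xi+1}\cap D_2$ is stationary, hence nonempty; fix any $\alpha$ in it. Being $\Pi^1_{f^\mu_\xi(\alpha)}$-indescribable, $\alpha$ is in particular a regular (indeed inaccessible) cardinal, and $f^\mu_\xi(\alpha)<\alpha^+$ because $f^\mu_\xi(\alpha)=\ot(b_{\mu,\xi}[\alpha])$ is the order type of a subset of $\xi$ of cardinality at most $|\alpha|$. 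Thus Proposition \ref{proposition_indescribability_implies_reflecting} applies to the cardinal $\alpha$ with the ordinal $f^\mu_\xi(\alpha)$ playing the role of $\xi$, and yields that $\alpha$ is $(f^\mu_\xi(\alpha)+1)$-s-reflecting.

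Finally, since $\alpha\in D_2$ we have $f^\mu_\xi(\alpha)+1=f^\mu_{\xi+1}(\alpha)$, so $\alpha$ is $f^\mu_{\xi+1}(\alpha)$-s-reflecting; and since $\alpha\in C_{\xi+1}$, Theorem \ref{theorem_xi_s_nonisolated} gives that $\alpha$ is nonisolated in $(\mu,\tau_{\xi+1})$, which is what we want. The parenthetical assertion that $S$ is stationary whenever $\mu$ is $\Pi^1_{\xi+1}$-indescribable I would justify by applying Corollary \ref{corollary_hierarchy} to the $\Pi^1_{\xi+1}$-indescribable set $\mu$ with $\zeta=\xi$: that corollary places $S$ in the filter $\Pi^1_{\xi+1}(\mu)^*$ dual to the nontrivial normal ideal $\Pi^1_{\xi+1}(\mu)$ (Theorem \ref{theorem_normal_ideal}), and since this ideal contains all nonstationary sets, every member of its dual filter is stationary.

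There is no genuine obstacle here; the mathematical content is entirely contained in the cited results. The only point demanding care is the bookkeeping of indices: one must match the built-in successor shift of Proposition \ref{proposition_indescribability_implies_reflecting} (which carries $\Pi^1_\zeta$-indescribability to $(\zeta+1)$-s-reflection) against the canonical-function identity $f^\mu_{\xi+1}(\alpha)=f^\mu_\xi(\alpha)+1$ holding on the club $D_2$, and one must verify that the points of $S$ are bona fide regular cardinals with $f^\mu_\xi(\alpha)<\alpha^+$, so that both the s-reflection notion and Proposition \ref{proposition_indescribability_implies_reflecting} are legitimately applicable at $\alpha$.
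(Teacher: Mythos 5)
Your proof is correct and follows essentially the same route as the paper's: intersect the stationary set $S$ with the club from Theorem \ref{theorem_xi_s_nonisolated} and the club $D_2$ from Lemma \ref{lemma_successor}, then apply Proposition \ref{proposition_indescribability_implies_reflecting} at any $\alpha$ in the intersection to get $f^\mu_{\xi+1}(\alpha)$-s-reflection and hence nonisolation. Your additional checks (that points of $S$ are regular with $f^\mu_\xi(\alpha)<\alpha^+$, and the justification of the parenthetical claim via Corollary \ref{corollary_hierarchy} and Theorem \ref{theorem_normal_ideal}) are correct details the paper leaves implicit.
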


\begin{proof}
Fix $\xi<\mu^+$. Let $C$ be the club subset of $\mu$ obtained from Theorem \ref{theorem_xi_s_nonisolated}; that is, $C\subseteq\mu$ is club such that for all $\alpha\in C$ we have $\alpha$ is $f^\mu_{\xi+1}(\alpha)$-s-reflecting if and only if $\alpha$ is not isolated in the $\tau_{\xi+1}$ topology. Let $D=\{\alpha<\mu\st f^\mu_{\xi+1}(\alpha)=f^\mu_\xi(\alpha)+1\}$ be the club subset of $\mu$ obtained from Lemma \ref{lemma_successor}. Now, if $\alpha\in S\cap C\cap D$ then $\alpha$ is $f^\mu_{\xi+1}(\alpha)$-s-reflecting by Proposition \ref{proposition_indescribability_implies_reflecting}, and is hence not isolated in the $\tau_{\xi+1}$ topology.
\end{proof}



\end{document}